\newtheorem{thm}{Theorem}[section]
\newtheorem{lem}{Lemma}[section]
\newtheorem{prop}[lem]{Proposition}
\newtheorem{cor}[lem]{Corollary}
\newtheorem{q}[lem]{Question}
\newtheorem{defn}[lem]{Definition}
\newtheorem{rem}[lem]{Remark}
\newtheorem{conj}[lem]{Conjecture/Question}
\numberwithin{equation}{section}
\newtheorem{assum}[lem]{Assumption}
\newcommand{\bR}{ \mathbb{R}} 
\newcommand{\bC}{ \mathbb{C}} 
\newcommand{\diam}{ \mbox{diam}}
\newcommand \eps{\varepsilon}
\newcommand \Mp{\mathcal G}
\newlength{\originalbase}
\title{viscosity solution to complex Hessian quotient equation}
\author{Jingrui Cheng, Yulun Xu}
\date{\today}
\begin{document}

\maketitle
\begin{abstract}
In this paper,  we prove the existence of viscosity solutions to complex Hessian equations on compact Hermitian manifolds,  assuming the existence of a strict subsolution in the viscosity sense.  The results cover the complex Hessian quotient equations.
This generalized our previous results in \cite{CX} where the equation needs to satisfy a determinant domination condition.  
\end{abstract}

\section{Introduction}
This paper is a continuation of our previous one \cite{CX} and we wish to generalize our existence results to more general complex Hessian equations.  The Hessian equation we consider takes the following form:
\begin{equation}\label{1.1N}
f\big(\lambda[\chi+dd^c\varphi]\big)=e^{G(x)},\,\,\,\, \lambda[\chi+dd^c\varphi]\in \Gamma.
\end{equation}
In the above,  $G(x)$ is a given continuous function on $M$,  $\chi$ is a given smooth real $(1,1)$ form on $M$,  and $\lambda[\chi+dd^c\varphi]=(\lambda_1,\cdots,\lambda_n)$ denotes the $n$-tuple of eigenvalues of $\chi+dd^c\varphi$ with respect to $\omega_0$,  where $\omega_0$ is a Hermitian metric on $M$.
Sometimes we are going to simply denote:
\begin{equation*}
F(\chi+dd^c\varphi)\equiv f\big(\lambda[\chi+dd^c\varphi]\big).
\end{equation*}
We also denote $d=\frac{1}{2}(\partial+\bar{\partial})$ and $d^c=\frac{\sqrt{-1}}{2}(\bar{\partial}-\partial)$ so that $dd^c=\sqrt{-1}\partial\bar{\partial}$.

Moreover,  we make the following assumptions on $f$ and $\Gamma$.
\begin{assum}\label{a1.1N}
\begin{enumerate}
\item $\Gamma\subset \bR^n$ is an open symmetric convex cone which contains $\{\lambda\in \bR^n:\lambda_i > 0,\,1\le i\le n\}$ and is contained in $\{\lambda\in \bR^n:\sum_i\lambda_i > 0\}$.  $f:\Gamma\rightarrow \bR_+$ is a smooth and symmetric function.
\item $\frac{\partial f}{\partial \lambda_i}>0,\,\,1\le i\le n$,  $f$ is concave and strictly positive on $\Gamma$ and $\lim_{\Gamma\ni \lambda\rightarrow \lambda_0}f(\lambda)=0$ for any $\lambda_0\in \partial \Gamma$.
\item For any $\lambda\in \Gamma$,  one has $\lim_{R\rightarrow +\infty}f(R\lambda)=+\infty$.
\end{enumerate}
\end{assum}
On a closed Hermitian manifold,  one can only hope to solve (\ref{1.1N}) up to a multiplicative factor on the right hand side,  namely we can only hope to solve:
\begin{equation}\label{1.2NN}
f\big(\lambda[\chi+dd^c\varphi]\big)=e^{G(x)+c},\,\,\,\lambda[\chi+dd^c\varphi]\in \Gamma.
\end{equation}
It is not hard to see that given $G\in C^{\infty}(M)$,  there is at most one constant $c$ that makes (\ref{1.2NN}) admit a smooth solution,  and in general,  this constant (if exists) is very difficult to determine from $G$.  
The question whether (\ref{1.2NN}) admits a solution is a delicate one,  and involves the notion of subsolution,  which is instrumental in the study of Hessian equations and will be explained below.  

Using that $f(\lambda_1,\cdots,\lambda_n)$ is strictly monotone increasing in each variable,  we can define:
\begin{equation}\label{1.3N}
\begin{split}
&f_{\infty,i}(\lambda_1,\cdots,\hat{\lambda}_i,\cdots,\lambda_{n})=\lim_{R\rightarrow +\infty}f(\lambda_1,\cdots,\lambda_{i-1},R,\lambda_{i+1},\cdots\lambda_{n}),\\
&f_{\infty}(\lambda_1,\cdots,\lambda_n)=\min_{1\le i\le n}f_{\infty,i}(\lambda_1,\cdots,\hat{\lambda}_i,\cdots,\lambda_n).
\end{split}
\end{equation}
Here $\hat{\lambda}_i$ simply means this variable is skipped.
For each $f_{\infty,i}$,  the limit is either $+\infty$,  or a concave function defined $\Gamma'$,  which is the projection of $\Gamma$ onto $\bR^{n-1}$ where $\lambda_i$ is excluded.  Here $f_{\infty}(\lambda_1,\cdots,\lambda_n)$ would be defined on 
\begin{equation}\label{1.4N}
\Gamma_{\infty}:=\{\lambda\in \bR^n:\text{ for any $i$,  $(\lambda_1,\cdots,\lambda_{i-1},R,\lambda_{i+1},\cdots,\lambda_n)\in \Gamma$ for all sufficiently large $R$}\}.
\end{equation}
We need to make the following additional assumption on $\chi$:

\begin{assum}\label{chi positivity-2}
$\lambda[\chi](x)\in \Gamma$ for any $x\in M$.
\end{assum}

From (\ref{1.2NN}) and that $f$ is strictly monotone increasing in each variable,  we see that the solution $\varphi$ would satisfy:
\begin{equation}\label{1.5N}
f_{\infty}\big(\lambda[\chi+dd^c\varphi]\big)>e^{G(x)+c},\,\,\,\lambda[\chi+dd^c\varphi]\in \Gamma_{\infty}.
\end{equation}
Therefore,  a necessary condition for the solvability of (\ref{1.2NN}) is the existence of a smooth function $\underline{\varphi}$ that satisfies (\ref{1.5N}).  Such $\underline{\varphi}$ is called a strict subsolution.  In the case of Dirichlet problem (either for a smooth bounded domain in Euclidean spaces,  or manifolds with boundary),  it has been shown by Trudinger \cite{T} and Guan \cite{Guan} that the existence of a subsolution is also sufficient for the solvability (\ref{1.2NN}).  

The case with closed manifold is more subtle.
G.  Székelyhidi \cite{S} proved that if there is a smooth function $\underline{\varphi}$ which is a ``$\mathcal{C}$-subsolution" with respect to a smooth right hand side $G(x)$,  and if there is a smooth solution to $f(\lambda[\chi+dd^c\varphi])=e^{G(x)}$,  then one can estimate $||\varphi||_{2,\alpha}$.  Chu-McCleerey \cite{CM} derived full $C^2$ estimate for solutions with degenerate right hand side. However,  such apriori estimate does not immediately translates to solvability of (\ref{1.2NN}).  Indeed,  if one tries to use continuity path,  and deform the right hand side $G$,  then the corresponding constant $c$ will also change.  Because of this,  a $\mathcal{C}$-subsolution in terms of the function $G$ (together with its constant $c$) in the sense defined by Székelyhi \cite{S} does not necessarily translates to a subsolution along the continuity path.

Note that the situation is much simpler in the special case where $f_{\infty}=+\infty$,  so that (\ref{1.5N}) holds unconditionally,  and the value of the constant $c$ becomes irrelevant.  In our previous work \cite{CX},  we assumed that $f$ satisfies a determinant domination condition,  namely:
\begin{equation*}
f(\lambda)\ge c\big(\Pi_{i=1}^n\lambda_i\big)^{\frac{1}{n}},\text{ for some $c>0$ and any $\lambda\in \Gamma_n$}.
\end{equation*}
This in particular implies $f_{\infty}\equiv +\infty$.  

If $f_{\infty}<+\infty$,  which is the case for complex Hessian quotient equation,  it is of critical importance to understand how the constant $c$ is determined through the equation and the right hand side $e^G$.  
This delicate issue was resolved by the recent work of Guo-Song \cite{GS},  where the authors made the observation that the constant $c$ can be characterized as:
\begin{equation}\label{1.6NNN}
e^c=\inf_{u\in \mathcal{E}_{\Gamma,\chi}}\max_Me^{-G(x)}f\big(\lambda[\chi+dd^cu]\big).
\end{equation}
Here $\mathcal{E}_{\Gamma,\chi}$ denotes the class of smooth functions $u$ on $M$ such that $\lambda[\chi+dd^cu](x)\in \Gamma$,  for any $x$ on $M$.  The authors have shown that the existence of a strict subsolution (a smooth function satisfying (\ref{1.5N}) with the constant $c$ given by (\ref{1.6NNN})) is equivalent to the solvability of (\ref{1.2NN}).  More precisely,  they proved:
\begin{thm}
(Guo-Song,  \cite{GS})
Assume that $f$ and $\Gamma$ satisfy Assumption \ref{a1.1N}.  Let $G\in C^{\infty}(M)$.  Define $c$ by (\ref{1.6NNN}).  
Then the following two statements are equivalent:
\begin{enumerate}
\item There exists a smooth solution to (\ref{1.2NN}).
\item There exists $\underline{\varphi}\in C^{\infty}(M)$,  such that:
\begin{equation*}
f_{\infty}\big(\lambda[\chi+dd^c\underline{\varphi}]\big)>e^{G(x)+c},\,\,\,\lambda[\chi+dd^c\underline{\varphi}]\in \Gamma_{\infty}.
\end{equation*}
\end{enumerate}
\end{thm}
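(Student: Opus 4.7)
The direction $(1)\Rightarrow(2)$ is immediate: any smooth solution $\varphi$ of (\ref{1.2NN}) is itself a strict subsolution, since strict monotonicity of $f$ in each variable yields $f_{\infty,i}(\lambda_1,\ldots,\hat\lambda_i,\ldots,\lambda_n)>f(\lambda)$ for any finite $\lambda_i$, hence $f_\infty(\lambda)>f(\lambda)$ pointwise, and (\ref{1.4N}) gives $\Gamma\subset\Gamma_\infty$, so $\lambda[\chi+dd^c\varphi]\in\Gamma_\infty$ automatically.

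\smallskip
For $(2)\Rightarrow(1)$, I would run a continuity method along a carefully chosen path. Using Assumption \ref{chi positivity-2}, set $\psi_0\equiv 0$ and $G_0:=\log f(\lambda[\chi])$, so that at the starting point the equation is solved by $\psi_0$ with additive constant $0$. Interpolate $G_t:=(1-t)G_0+tG$ for $t\in[0,1]$ and define $c_t$ by (\ref{1.6NNN}) with $G$ replaced by $G_t$. Let $T\subset[0,1]$ collect those $t$ for which $f(\lambda[\chi+dd^c\varphi_t])=e^{G_t+c_t}$ admits a smooth admissible solution. Then $0\in T$, and I want $1\in T$. Openness at $t_0\in T$ follows from the implicit function theorem on $C^{2,\alpha}(M)/\bR$: by concavity and monotonicity of $f$ the linearization is a uniformly elliptic operator with no zeroth-order term, and the free additive constant is absorbed by readjusting $c_t$ consistently with (\ref{1.6NNN}). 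Closedness along a sequence $t_k\to t_\infty\in\overline T$ invokes the a priori $C^{2,\alpha}$ estimates of Székelyhidi \cite{S} and Chu-McCleerey \cite{CM}, whose key hypothesis is the existence of a $\mathcal{C}$-subsolution at each level; this role is played by the fixed $\underline{\varphi}$, provided it satisfies (\ref{1.5N}) with $(G,c)$ replaced by $(G_{t_k},c_{t_k})$. Evans-Krylov together with standard Schauder bootstrapping then produce a smooth limit $\varphi_{t_\infty}$.

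\smallskip
The heart of the argument, and the step I expect to be the main obstacle, is verifying that $\underline{\varphi}$ is a strict $\mathcal{C}$-subsolution at every $t\in[0,1]$, not only at $t=1$. This requires two ingredients: (a) continuity of $t\mapsto c_t$, which follows from the variational characterization (\ref{1.6NNN}) together with the $C^0$-continuity of $t\mapsto G_t$; and (b) a uniform upper bound of the form $e^{c_t}\le\max_M e^{-G_t}f(\lambda[\chi+dd^c\underline{\varphi}_\delta])$, obtained by plugging a regularization $\underline{\varphi}_\delta$ of $\underline{\varphi}$ with eigenvalues inside $\Gamma$ (rather than merely $\Gamma_\infty$) into (\ref{1.6NNN}). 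Combining (a), (b), and the strict buffer $f_\infty(\lambda[\chi+dd^c\underline{\varphi}])-e^{G+c}\ge\delta_0>0$ provided by the hypothesis at $t=1$, one propagates the strict subsolution condition uniformly over the compact interval $[0,1]$. The subtle point, which is why the hypothesis is framed via $f_\infty$ and $\Gamma_\infty$ rather than $f$ and $\Gamma$, is that the subsolution cannot be plugged into (\ref{1.6NNN}) directly, and controlling the error introduced by the regularization $\underline{\varphi}\mapsto\underline{\varphi}_\delta$ without losing the strict inequality is where the bulk of the real work lies.
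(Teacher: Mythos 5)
Your direction $(1)\Rightarrow(2)$ is fine: strict monotonicity of $f$ in each variable gives $f_{\infty,i}>f$ pointwise and $\Gamma\subset\Gamma_\infty$, so a smooth solution is automatically a strict subsolution; this is exactly the observation the paper makes right before the theorem. Note, however, that the paper does not prove this theorem at all — it is quoted from Guo--Song \cite{GS} — so the only meaningful comparison is with the difficulty the paper itself flags and which the cited work resolves.

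For $(2)\Rightarrow(1)$ there is a genuine gap, and it sits precisely at the step you defer. Your continuity path $G_t=(1-t)G_0+tG$ with $c_t$ defined by (\ref{1.6NNN}) needs the fixed $\underline{\varphi}$ to be a strict $\mathcal{C}$-subsolution with respect to $(G_t,c_t)$ for \emph{every} $t\in[0,1]$ (closedness uses it at all parameters reached from $t=0$). The hypothesis gives the strict inequality only at $t=1$; continuity of $t\mapsto c_t$ then yields it only for $t$ in a neighborhood of $1$, not on the whole interval, and nothing in your items (a) and (b) rules out that $e^{G_t+c_t}$ exceeds $f_\infty\big(\lambda[\chi+dd^c\underline{\varphi}]\big)$ somewhere for intermediate $t$. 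Item (b) is also of the wrong shape: plugging an admissible competitor into (\ref{1.6NNN}) bounds $e^{c_t}$ by a maximum over $M$, whereas the subsolution condition is a pointwise strict inequality at every $x$; moreover $\underline{\varphi}$ has eigenvalues only in $\Gamma_\infty$, and there is no general procedure to perturb it into a $\Gamma$-admissible competitor without losing the strict buffer. This is exactly the obstruction the paper describes in the introduction ("a $\mathcal{C}$-subsolution in terms of $G$ together with its constant $c$ does not necessarily translate to a subsolution along the continuity path"), and overcoming it is the actual content of Guo--Song's argument, which goes beyond the naive interpolation you propose (it exploits the sup-slope characterization (\ref{1.6NNN}) in a more structural way rather than by endpoint continuity). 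As written, your proposal is a reasonable skeleton — trivial starting point at $t=0$ with $c_0=0$, openness via the linearization modulo constants, closedness via the Székelyhidi/Chu--McCleerey estimates — but the central step is asserted, not proved, so the hard direction remains open in your write-up.
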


The purpose of this work is to develop the viscosity side of the theory.  In literature, there are several works about applying viscosity theory to PDEs, including Eyssidieux-Guedj-Zeriahi's work \cite{EGZ} on complex Monge-Amp\'ere equation on closed K\"ahler manifolds, Lu's work \cite{L} on $\sigma_k$ equation on domains in $\mathbb{C}^n$ or homogeneous Hermitian manifolds, Dinew-Do-To's work \cite{DDT} on Dirichlet problems of complex Hessian type equations on domain in $\mathbb{C}^n$.  In our setting,  we wish to show that the existence of a strict subsolution in the viscosity sense will lead to a viscosity solution to (\ref{1.2NN}).  To be more specific,  let $G\in C(M)$,  and assume that there exists a viscosity solution $\varphi$ to (\ref{1.2NN}) for some constant $c\in \bR$.  It is not hard to see that $\varphi$ will also satisfy the following in the viscosity sense:
\begin{equation}\label{1.7N}
f_{\infty}\big(\lambda[\chi+dd^c\varphi]\big)\ge e^{G(x)+c},\,\,\,\,\lambda[\chi+dd^c\varphi]\in \Gamma_{\infty}.
\end{equation}
The first result we show is that (\ref{1.7N}) is almost sufficient for the existence of a viscosity solution to (\ref{1.2NN}),   provided that we take $c$ to be defined (\ref{1.6NNN}),  and we assume the subsolution to be strict.  More precisely,  we have:
\begin{thm}
Assume that Assumption \ref{a1.1N}, \ref{chi positivity-2} and \ref{a1.2N} hold.  Let $G\in C(M)$.  Define $c$ by (\ref{1.6NNN}).  Assume that there exists $\underline{u}\in C(M)$ which is a strict subsolution in the viscosity sense: for some $\delta_0>0$,
\begin{equation*}
f_{\infty}\big(\lambda[\chi+dd^c\underline{u}]\big)\ge e^{G+c}+\delta_0,\,\,\,
\lambda[\chi+dd^c\underline{u}]\in \Gamma_{\infty}.
\end{equation*}
Then there exists a viscosity solution to:
\begin{equation*}
f\big(\lambda[\chi+dd^cu]\big)=e^{G(x)+c},\,\,\,\lambda[\chi+dd^cu]\in \Gamma.
\end{equation*}
\end{thm}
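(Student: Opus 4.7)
The plan is to reduce to the smooth setting handled by Guo--Song via a simultaneous regularization of the datum $G$ and of the strict viscosity subsolution $\underline{u}$, solve the resulting smooth equations, and pass to the limit by viscosity stability. More precisely, I would regularize $G$ via a partition of unity and mollification in finitely many coordinate charts to obtain $G_{\eps}\in C^{\infty}(M)$ with $G_{\eps}\to G$ uniformly, and regularize $\underline{u}$ via a sup-convolution (performed locally in charts and glued). A standard calculation shows that sup-convolution of a viscosity subsolution of a concave, monotone operator remains a viscosity subsolution with only an $O(\eps)$ loss in the Hessian. Using the continuity of the operator $f_{\infty}$ on compact subsets of $\Gamma_\infty$, one concludes that for all $\eps$ small enough there is a smooth $\underline{u}_{\eps}$ with
\[
f_{\infty}\big(\lambda[\chi+dd^c\underline{u}_{\eps}]\big)\ge e^{G_{\eps}+c}+\tfrac{\delta_0}{2},\qquad \lambda[\chi+dd^c\underline{u}_{\eps}]\in \Gamma_{\infty}.
\]

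Next I would define $c_{\eps}$ from (\ref{1.6NNN}) applied to $G_{\eps}$ and observe that uniform convergence $G_{\eps}\to G$ gives $c_{\eps}\to c$; hence for small $\eps$ the smoothed $\underline{u}_{\eps}$ is still a strict subsolution for the equation with right hand side $e^{G_{\eps}+c_{\eps}}$. Applying the Guo--Song theorem cited in the excerpt then yields a smooth solution $u_{\eps}$ to
\[
f\big(\lambda[\chi+dd^c u_{\eps}]\big)=e^{G_{\eps}+c_{\eps}}.
\]
After normalizing $\sup_M u_{\eps}=0$, I would extract a uniform $C^0$ bound: an upper bound follows from the normalization, and a lower bound comes from comparison against $\underline{u}_{\eps}$ combined with the standard ABP/$L^\infty$ estimates that are available once one has a strict subsolution (as developed already in \cite{S, GS} and in our previous work \cite{CX}).

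With these uniform estimates in hand, I would pass to the limit by half-relaxed limits
\[
\bar{v}(x)=\limsup{}^{*}_{\eps\to 0,\,y\to x} u_{\eps}(y),\qquad \underline{v}(x)=\liminf{}_{*,\,\eps\to 0,\,y\to x} u_{\eps}(y),
\]
which are respectively upper- and lower-semicontinuous, with $\underline{v}\le \bar{v}$, and which are respectively a viscosity sub- and supersolution of the limit equation by the standard Crandall--Ishii--Lions stability principle (the continuity of $f$ and the uniform convergence of $G_{\eps}+c_{\eps}$ to $G+c$ are exactly what is needed). The comparison principle for (\ref{1.2NN}) with continuous right hand side --- which in our setting is extracted by using $\underline{u}$ as a strict subsolution in a doubling-of-variables argument, exactly as in \cite{CX} --- then forces $\bar{v}\le \underline{v}$, so $u:=\bar{v}=\underline{v}\in C(M)$ is the desired viscosity solution.

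The main obstacle is the regularization step combined with the convergence of the constants: I need the smooth approximation of the merely continuous strict subsolution to remain strictly subsolution for the \emph{perturbed} equation with constant $c_\eps$, which requires simultaneously controlling the $O(\eps)$ error from sup-convolution, the uniform error $\|G_\eps-G\|_\infty$, and the deviation $|c_\eps-c|$. The constant-convergence $c_\eps \to c$ itself requires some care, since the infimum in (\ref{1.6NNN}) is taken over the non-compact class $\mathcal{E}_{\Gamma,\chi}$; I would prove it by using $\underline{u}$ as a competitor (with a small perturbation) to show the inequality in one direction and a near-optimizer for $c$ for the other. A secondary difficulty is the comparison principle at this level of generality: one has to exploit the strict subsolution and the structure of $f_\infty$ near $\partial \Gamma$ to rule out the usual degeneracy issues arising when the operator is not uniformly elliptic.
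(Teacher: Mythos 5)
Your overall skeleton (smooth $G_j$, convergence of the constants, Guo--Song to solve the approximate equations, pass to the limit) matches the paper, and your treatment of $c_\eps\to c$ is fine (it follows directly from the definition (\ref{1.6NNN}), as in Lemma \ref{l4.2NNew}). However, there are two genuine gaps. First, the regularization of $\underline{u}$: sup-convolution does not produce a smooth function, only a semi-convex one (punctually second order differentiable a.e.), and on a Hermitian manifold with variable $\chi$ and $\omega_0$ neither sup-convolution nor mollification in charts turns a viscosity strict subsolution into a \emph{smooth} strict subsolution with a controlled loss; this is exactly the difficulty the paper addresses with the Richberg technique in Section 3. There one must first replace $f_{\infty}$ by a smooth, strictly monotone, concave approximation $g_{\eps_1}$ (Proposition \ref{p3.2N}), then build \emph{local} smooth strict subsolutions by solving Dirichlet problems $g_{\eps_1}(\lambda[\chi+dd^cv])=e^{G+c}+\delta_1-\tfrac1k$ on small balls via Yuan's theorem, and glue them with the regularized maximum (Lemma \ref{l3.5}). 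The non-degeneracy condition needed to solve those Dirichlet problems is precisely where Assumption \ref{a1.2N} enters; your argument never invokes Assumption \ref{a1.2N}, which signals that the step ``there is a smooth $\underline{u}_\eps$ with the stated strict subsolution property'' is unsupported as written.

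Second, the limit passage: you invoke a comparison principle for (\ref{1.2NN}) with right-hand side $e^{G(x)+c}$ independent of $u$ to force the half-relaxed limits to coincide. Such a comparison principle is not available at this level of generality --- the operator has no strict monotonicity in $u$, and the paper states explicitly that uniqueness of viscosity solutions to (\ref{1.2NN}) is open and only follows from an affirmative answer to Conjecture \ref{conj1.3} (Theorem \ref{t4.2} is conditional on it). The paper avoids this entirely: it shows $\{\varphi_j\}$ is uniformly bounded (Sz\'ekelyhidi) with $\nabla\varphi_j$ bounded in $L^2$ (using $\Gamma\subset\Gamma_1$), extracts an $L^1$-convergent subsequence, and then upgrades $L^1$ to uniform convergence via the quantitative stability estimate of Theorem \ref{t4.1N} (a Guo--Phong--Tong-type $L^{\infty}$ argument using an auxiliary Monge--Amp\`ere equation and the smooth strict subsolution), so the viscosity solution is obtained as a uniform limit without any comparison principle. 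To repair your proof you would need either to prove the comparison principle you invoke (which would in fact settle the open uniqueness question) or to replace that step by a stability/uniform-convergence argument of this kind.
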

In the above,  we need the following assumption in addition to Assumption \ref{a1.1N}:
\begin{assum}\label{a1.2N}
Either $f_{\infty}=+\infty$ on $\Gamma_{\infty}$,  or $f_{\infty}<+\infty$ on $\Gamma_{\infty}$ and:
\begin{equation*}
\lim\sup_{\Gamma_{\infty} \ni \lambda\rightarrow \lambda_0}f_{\infty}(\lambda)=0,\,\,\,\text{ for any $\lambda_0\in \partial\Gamma_{\infty}$}.
\end{equation*}
\end{assum}
This assumption is satisfied if $f(\lambda)=\sigma_k^{\frac{1}{k}}(\lambda),\,\,1\le k\le n$ as well as complex quotient Hessian equations where $f(\lambda)=\big(\frac{\sigma_k(\lambda)}{\sigma_l(\lambda)}\big)^{\frac{1}{k-l}},\,\,1\le l<k\le n$,  $\lambda\in \Gamma_k$.  In the first example,  one has $f_{\infty}(\lambda)=+\infty$,  and in the second example,  one has:
\begin{equation*}
\begin{split}
&f_{\infty}(\lambda)=\min_{1\le i\le n}\big(\frac{\sigma_{k-1}(\lambda_1,\cdots,\hat{\lambda}_i,\cdots,\lambda_n)}{\sigma_{l-1}(\lambda_1,\cdots,\hat{\lambda}_i,\cdots,\lambda_n)}\big)^{\frac{1}{k-l}},\\
&\Gamma_{\infty}=\{\lambda\in \bR^n:\text{ for any $1\le i\le n$,  }\sigma_{j}(\lambda_1,\cdots,\hat{\lambda}_i,\cdots,\lambda_n)>0,\,1\le j\le k-1\}.
\end{split}
\end{equation*}

Unfortunately Assumption \ref{a1.2N} does not follow from Assumption \ref{a1.1N}.  We will explain where we need this assumption when we explain our strategy of proof.

Regarding uniqueness,  we proved:
\begin{thm}\label{t1.3}
Assume that Assumption \ref{chi positivity-2} holds. 
\begin{enumerate}
\item  Assume also that $G(x,z)\in C(M\times \bR)$ with $G$ strictly monotone increasing in the $z$ variable.  Then there is at most one viscosity solution to:
\begin{equation*}
f\big(\lambda[\chi+dd^c\varphi]\big)=e^{G(x,\varphi)},\,\,\,\lambda[\chi+dd^c\varphi]\in \Gamma.
\end{equation*}
\item Let $G\in C(M)$,  then there is at most one constant $c\in \bR$,  such that there exists a viscosity solution to 
\begin{equation*}
f\big(\lambda[\chi+dd^c\varphi]\big)=e^{G(x)+c},\,\,\,\lambda[\chi+dd^c\varphi]\in \Gamma.
\end{equation*}
\end{enumerate}
\end{thm}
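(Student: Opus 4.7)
The plan is to derive both statements from the standard viscosity comparison principle via doubling of variables, leveraging two structural facts: the strict monotonicity of $f$ in each eigenvalue, and the invariance of $\bar\Gamma$ under adding a positive semidefinite matrix. The latter holds because $\Gamma$ is a symmetric convex cone containing the positive orthant: if $B \ge A$ in Hermitian order then $\lambda_i^{\uparrow}(B) \ge \lambda_i^{\uparrow}(A)$ for all $i$, so $\lambda[A] \in \bar\Gamma$ implies $\lambda[B] \in \bar\Gamma$.

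For Part (1), suppose for contradiction that $\varphi_1, \varphi_2$ are distinct viscosity solutions; then without loss of generality $\max_M(\varphi_1-\varphi_2) > 0$ is attained at some $x_0 \in M$. Working in a holomorphic chart around $x_0$, consider the standard penalization
\[
\Phi_\eps(x,y) = \varphi_1(x)-\varphi_2(y)-\tfrac{1}{2\eps}|x-y|^2,
\]
whose maximizer $(x_\eps,y_\eps)$ satisfies $(x_\eps,y_\eps)\to (x_0,x_0)$ and $|x_\eps-y_\eps|^2/\eps\to 0$. The Crandall--Ishii theorem of sums, in its complex Hessian version on a Hermitian manifold (cf.\ \cite{EGZ,DDT}), produces Hermitian matrices $X,Y$ in the appropriate closed semijets of $\varphi_1$ at $x_\eps$ and of $\varphi_2$ at $y_\eps$ with $X \le Y$. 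Since $f>0$ and $f$ vanishes on $\partial\Gamma$ (Assumption \ref{a1.1N}(2)), the subsolution condition $f(\lambda[\chi(x_\eps)+X]) \ge e^{G(x_\eps,\varphi_1(x_\eps))}$ forces $\lambda[\chi(x_\eps)+X]$ into the interior of $\Gamma$, and cone monotonicity together with $|\chi(x_\eps)-\chi(y_\eps)|=o(1)$ places $\lambda[\chi(y_\eps)+Y]$ in $\Gamma$ as well, so the supersolution inequality for $\varphi_2$ becomes applicable. Chaining by monotonicity of $f$,
\[
e^{G(x_\eps,\varphi_1(x_\eps))} \le f(\lambda[\chi(x_\eps)+X]) \le f(\lambda[\chi(y_\eps)+Y]) + o(1) \le e^{G(y_\eps,\varphi_2(y_\eps))} + o(1).
\]
Sending $\eps \to 0$ gives $G(x_0,\varphi_1(x_0)) \le G(x_0,\varphi_2(x_0))$, and strict monotonicity of $G$ in the second slot forces $\varphi_1(x_0) \le \varphi_2(x_0)$, contradicting the positivity of the maximum. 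By symmetry $\varphi_1 \equiv \varphi_2$.

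For Part (2), given solutions $\varphi_1, \varphi_2$ with constants $c_1, c_2$, I use the fact that the equation is invariant under adding constants to the unknown: setting $K = \max_M(\varphi_2-\varphi_1)$ and replacing $\varphi_1$ by $\varphi_1 + K$ yields $\varphi_1 \ge \varphi_2$ on $M$ with equality at some $x_0$. The same doubling argument at $x_0$, with $\varphi_2$ playing the role of the subsolution (right-hand side $e^{G+c_2}$) and $\varphi_1$ that of the supersolution (right-hand side $e^{G+c_1}$), produces $e^{G(x_0)+c_2} \le e^{G(x_0)+c_1}$, so $c_2 \le c_1$. Interchanging the roles by the analogous renormalization gives $c_1 \le c_2$, hence $c_1 = c_2$.

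The main obstacle is the rigorous execution of Crandall--Ishii on a Hermitian manifold for an operator only defined on the cone $\Gamma$: the delicate point is ensuring that the matrix $Y$ in the closed lower semijet actually gives $\lambda[\chi+Y]\in \Gamma$, so the supersolution inequality is not vacuous. This is precisely where the cone monotonicity property from the opening paragraph is used, together with the boundary-vanishing of $f$ from Assumption \ref{a1.1N}(2) to promote $\bar\Gamma$-membership on the subsolution side to strict $\Gamma$-membership. Once these standard viscosity adaptations are in place (following \cite{EGZ,DDT}), both conclusions follow from the single comparison chain above.
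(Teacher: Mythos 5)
Your overall strategy (comparison at a maximum point, then monotonicity of $f$ and of $G$ in $z$) is the right heuristic, but the key analytic step in your chain is genuinely gapped, and it is exactly the step the paper's machinery is built to circumvent. From the theorem of sums you get $X\le Y$, hence $\chi(y_\eps)+Y\ \ge\ \chi(x_\eps)+X-o(1)\,\mathrm{Id}$, where the $o(1)$ error comes from $\chi(x_\eps)-\chi(y_\eps)$ and is \emph{not} one-sided. Two problems follow. First, your claim that this ``places $\lambda[\chi(y_\eps)+Y]$ in $\Gamma$'' does not follow: a point of $\Gamma$ with $f\ge \delta>0$ can be arbitrarily close to $\partial\Gamma$ (for $f=(\prod_i\lambda_i)^{1/n}$ take $\lambda=(s,R,\dots,R)$ with $s$ small and $R$ huge), so subtracting $o(1)\,\mathrm{Id}$ may exit the cone and the supersolution inequality for $\varphi_2$ becomes vacuous. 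Second, even when one stays in $\Gamma$, the inequality $f(\lambda[\chi(x_\eps)+X])\le f(\lambda[\chi(y_\eps)+Y])+o(1)$ is unjustified: the matrices produced by doubling blow up like $1/\eps$, $f$ has no uniform modulus of continuity on $\Gamma$, and the concavity estimate $f(\mu-s\mathbf 1)\ge f(\mu)-s\sum_i f_i(\mu-s\mathbf 1)$ is useless because $\sum_i\partial f/\partial\lambda_i$ is not bounded under Assumption \ref{a1.1N} (again visible for $(\prod_i\lambda_i)^{1/n}$, where $f(\mu)-f(\mu-s\mathbf 1)$ can be of order $(sR^{n-1})^{1/n}$, not $o(1)$). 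So the displayed comparison chain, which both parts of your argument rest on, does not close.

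The paper's proof is designed around precisely these two obstructions, and it is worth seeing how. Instead of doubling variables, it takes sup/inf convolutions $\varphi^{\eps}$, $\varphi_{\eps}$ (semiconvex/semiconcave, hence punctually second order differentiable a.e.) and, crucially, rescales them multiplicatively: Proposition \ref{p4.3} shows $\tfrac{\varphi^{\eps}}{1+a_1}$ is an approximate subsolution for $\tfrac{\chi+\rho(\eps)\omega_0}{1+a_1}$ and $\tfrac{\varphi_{\eps}}{1-a_2}$ an approximate supersolution for $\tfrac{\chi-\rho(\eps)\omega_0}{1-a_2}$, with Lemma \ref{lem 2.3} serving as a substitute for homogeneity of $f$. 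The hypothesis $\lambda[\chi]\in\Gamma$ (Assumption \ref{chi positivity-2}) and the gap condition $\tfrac{c_*+\rho(\eps)}{1+a_1}<\tfrac{c_*-\rho(\eps)}{1-a_2}$ in Proposition \ref{p4.12} are exactly what guarantee that the supersolution's eigenvalues at the maximum point lie in $\Gamma$ and that the two rescaled forms compare in the right order, so the error from the varying form $\chi$ is absorbed into the $(1\pm a_i)$ factors rather than appearing as an additive $o(1)$ acting on unbounded Hessians; a Jensen-type perturbation (Lemma \ref{l5.4N}, Lemma \ref{l4.14N}) replaces the theorem of sums at points where twice differentiability may fail. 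If you want to salvage your route, you would have to supply a comparison theorem of Crandall--Ishii type valid for operators defined only on a cone, with non-constant $\chi$, and degenerate $f$ without uniform ellipticity or bounded $\sum_i f_i$ --- which is not available off the shelf from \cite{EGZ,DDT} in this generality; the multiplicative-perturbation argument of the paper (or of \cite{CX}) is the standard way around it. Your reduction of part (2) to a comparison statement is fine in spirit (it matches the paper's Proposition \ref{p4.15}), but it inherits the same gap.
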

The second part of Theorem \ref{t1.3} shows that the constant $c$ given by (\ref{1.6NNN}) is the ``canonical" constant that one can hope to solve (\ref{1.2NN}),  even in the viscosity setting.  Of course,  whether one can actually solve (\ref{1.2NN}) depends on the availability of a subsolution (whether it being smooth or merely in the viscosity sense).  On the other hand,  we don't know whether one has the uniqueness of viscosity solution to (\ref{1.2NN}) in general.  At this moment,  we can only get uniqueness of viscosity solutions contingent on the knowledge of strict monotonicity of the constant $c$ in terms of the right hand side.  The question that is still not fully understood is the following:
\begin{conj}\label{conj1.3}
Let $G\in C(M)$,  we define $c(G)$ using (\ref{1.6NNN}).  Assume that $G'\in C(M)$,  $G'\le G$ and $c(G')=c(G)$.  Does it follow that $G'=G$?
\end{conj}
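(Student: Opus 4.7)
The plan is to prove the contrapositive: assuming $G'\le G$ with $G'\not\equiv G$, show that $c(G')>c(G)$. The inequality $c(G')\ge c(G)$ is immediate from (\ref{1.6NNN}), since $e^{-G'}\ge e^{-G}$ pointwise, and hence $\max_M e^{-G'}f(\lambda[\chi+dd^cu])\ge \max_M e^{-G}f(\lambda[\chi+dd^cu])$ for every admissible $u$; passing to the infimum in $u$ yields $e^{c(G')}\ge e^{c(G)}$. The content of the conjecture is strictness.

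The natural approach is by contradiction: suppose $c(G')=c(G)=:c$. In the favorable case where a strict viscosity subsolution $\underline u$ for $G$ (in the sense of Assumption \ref{a1.2N}) is available, it is automatically a strict subsolution for $G'$ as well, since $e^{G'+c}\le e^{G+c}\le f_\infty(\lambda[\chi+dd^c\underline u])-\delta_0$. Applying the existence theorem above to each of $G$ and $G'$ produces viscosity solutions $\varphi,\varphi'\in C(M)$ of $f(\lambda[\chi+dd^c\varphi])=e^{G+c}$ and $f(\lambda[\chi+dd^c\varphi'])=e^{G'+c}$ respectively. Let $x_0$ be a point where $\varphi-\varphi'$ attains its maximum on $M$, and run Ishii's doubling-of-variables argument on the pair $(\varphi,\varphi')$, viewing $\varphi$ as a subsolution of the $G$-equation and $\varphi'$ as a supersolution of the $G'$-equation (which is stronger than being a supersolution of the $G$-equation). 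The output is Hessian-like matrices $X\le Y$ (modulo an $\alpha$-dependent error vanishing in the limit) at doubled points $x_\alpha,y_\alpha\to x_0$; combining the subsolution and supersolution inequalities with monotonicity of $F$ yields $e^{G(x_\alpha)+c}\le F(\chi+X)\le F(\chi+Y)\le e^{G'(y_\alpha)+c}$, and letting $\alpha\to\infty$ gives $G(x_0)\le G'(x_0)$, which combined with $G'\le G$ forces $G(x_0)=G'(x_0)$.

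The main obstacle, and the reason this remains only a conjecture, is twofold. First, the doubling argument above only pins down the coincidence $G=G'$ at the single point $x_0$ maximizing $\varphi-\varphi'$, and not globally on $M$; symmetric reasoning at the minimum of $\varphi-\varphi'$ does not help because there the available sub/super pairings go in the wrong direction. Propagating the coincidence throughout $M$ would require a strong minimum principle for the degenerate fully nonlinear operator obtained by differencing the two Hessian-type equations. The classical Bony strong minimum principle requires uniform ellipticity and $C^{1,1}$ regularity, but $f$ may degenerate as $\lambda\to\partial\Gamma$ and viscosity solutions for merely continuous $G$ are not a priori $C^{1,1}$. A natural but delicate route would be quasi-linearization along the segment $v_t:=t\varphi+(1-t)\varphi'$, producing a linear second-order operator in $w:=\varphi-\varphi'$ with coefficients $\int_0^1 F^{ij}(\chi+dd^cv_t)\,dt$; if one could secure uniform ellipticity and sufficient regularity of this linearization, a Hopf--Harnack argument would force $w$ to be constant and hence $G\equiv G'$ via the two equations. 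Second, if no strict subsolution is available one loses the existence theorem and must work directly with near-minimizers of (\ref{1.6NNN}) without the benefit of a priori compactness; recovering the analogous extremal information in that setting appears to be genuinely new territory.
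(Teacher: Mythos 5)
You should note first that the paper does not prove this statement: it is posed explicitly as an open question (the surrounding text says the issue ``is still not fully understood''), and the only thing proved about it is Theorem \ref{t4.2}, namely that an affirmative answer \emph{would imply} uniqueness of viscosity solutions. So there is no proof in the paper to compare against, and your proposal — which candidly stops short of a proof — is appropriately calibrated. Your preliminary observations are correct: the soft inequality $c(G')\ge c(G)$ for $G'\le G$ follows immediately from the definition (\ref{1.6NNN}), and, granting a strict viscosity subsolution (note: this is Definition \ref{d2.2}, not Assumption \ref{a1.2N}, which is the boundary-vanishing condition on $f_{\infty}$), the existence theorem does give viscosity solutions $\varphi,\varphi'$ for $G$ and $G'$ with the common constant $c$, and a comparison at a maximum point of $\varphi-\varphi'$ yields $G(x_0)\le G'(x_0)$, hence $G(x_0)=G'(x_0)$ there. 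Two caveats on that step: on a compact Hermitian manifold the rigorous version should be run through the paper's sup/inf-convolution machinery (Section 5, Proposition \ref{p4.3}, Proposition \ref{p4.12}, Lemma \ref{l5.4N}) rather than a bare Ishii doubling, exactly as in the heuristic-then-rigorous treatment of Proposition \ref{p4.15}; and, as you correctly diagnose, the argument only forces coincidence of $G$ and $G'$ at a single contact point, and propagating it globally would require a strong maximum/minimum principle for a degenerate, merely-continuous-coefficient linearization, which is precisely the obstruction that leaves the statement a conjecture. Your second remark — that without a subsolution one must work directly with near-minimizers of (\ref{1.6NNN}) — is also consistent with how the constant is used in the paper (cf.\ Lemma \ref{l4.2NNew}); no flaw there, just unexplored ground. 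In short: there is no gap to report relative to the paper, because neither you nor the paper proves the conjecture, and the partial reasoning you do give is sound modulo the technical substitutions noted above.
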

We prove that an affirmative answer to Conjecture \ref{conj1.3} will imply uniqueness of viscosity solutions.

Now we explain our strategies of proof.
For existence,  we use an approximation argument.  Namely,  we choose $G_j\in C^{\infty}(M)$ such that $G_j\rightarrow G$ uniformly,  and the first step is to solve:
\begin{equation}\label{1.8New}
f\big(\lambda[\chi+dd^cu_j]\big)=e^{G_j+c_j},\,\,\lambda[\chi+dd^cu_j]\in \Gamma.
\end{equation}
In order to solve (\ref{1.8New}),  we need a strict subsolution,  namely a function $\tilde{u}\in C^{\infty}(M)$,  such that for some $\tilde{\delta}_0>0$:
\begin{equation}\label{1.9N}
f_{\infty}\big(\lambda[\chi+dd^c\tilde{u}]\big)\ge e^{G_j+c_j}+\tilde{\delta}_0,\,\,\,\lambda[\chi+dd^c\tilde{u}]\in \Gamma_{\infty}.
\end{equation}
Here $c_j$ is given by (\ref{1.6NNN}) with $G_j$ replacing $G$ there.
In order to get the smooth strict subsolution,  we apply the Richberg technique to regularize $\underline{u}$,  which is the strict subsolution in the viscosity sense.  When applying the Richberg technique,  we need to construct smooth subsolutions locally,  which will be achieved by solving the following Dirichlet problem (very roughly speaking):
\begin{equation}
f_{\infty}\big(\lambda[\chi+dd^cv]\big)=e^{G+c}-\eps,\,\,\lambda[\chi+dd^cv]\in \Gamma_{\infty}\text{ in $B_r(x_0)$},\,\,v=\underline{u}-\eps\text{ on $\partial B_r(x_0)$}.
\end{equation}
Here $B_r(x_0)$ is a ball centered at $x_0$ with small enough radius.
In order to solve the Dirichlet problem,  we will need a non-degeneracy condition,  namely,  we need (with small enough $\eps$):
\begin{equation}\label{1.11}
\inf_{\bar{B}_r(x_0)}\big(e^{G+c}-\eps\big)>\sup_{\partial \Gamma_{\infty}}f_{\infty}.
\end{equation}
The Assumption \ref{a1.2N} is needed exactly to guarantee (\ref{1.11}).  After we are done with constructing the smooth strict subsolution locally,  we can used the regularized maximum to patch the local subsolutions into a global one.

Our next goal is to extract a subsequence of $u_j$ that converges uniformly.  It will be relatively easy to see that the sequence $\{u_j\}$ is precompact in $L^1$.  Then a key step is to improve the $L^1$ convergence to $L^{\infty}$ convergence,  for which we need a stability estimate,  whose proof is a variant of the PDE proof of $L^{\infty}$ estimate for complex Monge-Ampère equations by Guo-Phong-Tong \cite{GPT}.  Related $L^{\infty}$ estimate is obtained for complex Hessian quotient equations on compact K\"ahler manifolds by Sui-Sun in \cite{SS}.

The proof of uniqueness is very similar to our previous work \cite{CX} where we additionally assumed that:
\begin{equation*}
f(\lambda)\ge c\big(\Pi_{i=1}^n\lambda_i\big)^{\frac{1}{n}},\,\,\,\lambda\in \Gamma_n.
\end{equation*}
We will sketch the proof and highlight the difference from our previous proof.  

Now we explain the organization of this paper.  

In Section 2,  we explain some preliminary definitions,  as well as discuss some basic properties on $f$,  as well as $f_{\infty}$.

In Section 3,   we use the Richberg technique to regularize the strict subsolution in the viscosity sense to obtain a smooth strict subsolution,  following the work of Harvey-Lawson-Pliś \cite{HLP}.

In Section 4,  we prove the stability estimate that upgrades the $L^1$ convergence to uniform convergence.

In Section 5,  we discuss the uniqueness issues.

\section{Preliminaries}
\subsection{Some basic definitions}
First we explain what is meant when we say ``touch from above/below". 
\begin{defn}\label{touch}
Let $\varphi$ be a function defined on $M$ and $x_0\in M$.  Let $\psi$ be another function defined on an open subset of $M$ containing $x_0$.  
\begin{enumerate}
\item We say that $\psi$ touches $\varphi$ from above at $x_0$,  if there exists an open neighborhood $U$ of $x_0$ such that $\psi(x_0)=\varphi(x_0)$ and $\psi\ge\varphi$ on $U$,
\item We say that $\psi$ touches $\varphi$ from below at $x_0$,  if there exists an open neighborhood $U$ of $x_0$ such that $\psi(x_0)=\varphi(x_0)$ and $\psi\le\varphi$ on $U$.
\end{enumerate}
\end{defn}

Now we define the notion of viscosity solution for (\ref{1.2NN}):
\begin{defn}
Let $G\in C(M)$ and $c\in \bR$.  Let $\varphi\in C(M)$.  We say that $\varphi$ is a viscosity solution to (\ref{1.2NN}) if the following hold:
\begin{enumerate}
\item For any $x_0\in M$ and any $C^2$ function $P$ defined in a neighborhood of $x_0$ that touches $\varphi$ from above at $x_0$,  one has
\begin{equation*}
\lambda[\chi+dd^cP](x_0)\in \Gamma,\,\,f\big(\lambda[\chi+dd^cP]\big)(x_0)\ge e^{G(x_0)+c}.
\end{equation*}
\item For any $x_0\in M$ and any $C^2$ function $P$ defined in a neighborhood of $x_0$ that touches $\varphi$ from below at $x_0$,  one has: either $\lambda[\chi+dd^cP](x_0)\notin \Gamma$,  or $\lambda[\chi+dd^cP](x_0)\in \Gamma$ and $f\big(\lambda[\chi+dd^cP]\big)(x_0)\le e^{G(x_0)+c}.$
\end{enumerate}
\end{defn}
In the same spirit,  we can generalize the notion of strict subsolution from classical smooth sense to viscosity sense:
\begin{defn}\label{d2.2}
Let $\underline{u}\in C(M),\,G\in C(M)$ and $c\in \bR$.  We say that $\underline{u}$ is a strict subsolution with respect to $(G,c)$ in the viscosity sense if there exists $\delta_0>0$ such that for any $x_0\in M$,  any $C^2$ function $P$ defined in a neighborhood of $x_0$ that touches $\varphi$ from above at $x_0$ one has:
\begin{equation*}
\lambda\big[\chi+dd^cP](x_0)\in \Gamma_{\infty},\,\,f_{\infty}\big(\lambda[\chi+dd^cP]\big)(x_0)\ge e^{G(x_0)+c}+\delta_0.
\end{equation*}
Here $f_{\infty}$ and $\Gamma_{\infty}$ are given by (\ref{1.3N}) and (\ref{1.4N}) respectively.
\end{defn}

Next we need to make precise the notion of strict convex used in this paper.
\begin{defn}\label{d2.4}
Let $\Omega\subset \bR^n$ be an open convex set.  Let $u$ be a function defined on $\Omega$.  We say $u$ is strictly convex,  if for any $x_0\in \Omega$,  there exists an affine function $l_{x_0}$,  such that $u(x)\ge l_{x_0}(x)$ for any $x\in \Omega$,  with $``="$ holds if and only if $x=x_0$.  We say that $u$ is strictly concave if $-u$ is strictly convex.  
\end{defn}
It is clear that if $u\in C^2(\Omega)$ and $D^2u(x)>0$ for $x\in \Omega$,  then $u$ is strictly convex.  It is also clear that if $u_1$ is convex on $\Omega$,  $u_2$ is strictly convex,  then $u_1+u_2$ will be strictly convex. 

Finally,  we make clear the notion of punctually second order differentiable.  
\begin{defn}
Let $\varphi$ be a function on $M$ and $x_0\in M$.  We say that $\varphi$ is punctually second order differentiable at $x_0$ if there is a neighborhood $U$ of $x_0$ and a $C^2$ function $\psi$ defined on $U$ such that 
\begin{equation*}
\lim_{r\rightarrow 0}r^{-2}\sup_{x\in B_r(x_0)}|\varphi(x)-\psi(x)|=0.
\end{equation*}
Here $B_r(x_0)$ denotes the geodesic ball with radius $r$(under the metric $\omega_0$).
\end{defn}

\subsection{Some properties of the operators $f$,  $f_{\infty}$}
First we observe that:
\begin{lem}\label{lem 2.3}
Let $f$ be a nonnegative concave function defined on a closed convex cone $\Gamma$.  Then for any $\lambda\in \Gamma$,  $(0,+\infty)\ni t\mapsto \frac{f(t\lambda)}{t}$ is monotone decreasing.  In particular,  $f(\lambda)\le f(t\lambda)\le tf(\lambda),\,\,t\ge 1$,  and $tf(\lambda)\le f(t\lambda)\le f(\lambda)$ for $0<t\le 1$.
\end{lem}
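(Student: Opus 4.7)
Fix $\lambda\in\Gamma$ and set $g(t):=f(t\lambda)$ for $t\ge 0$; this is a nonnegative concave function of one variable (since $\Gamma$ is a closed convex cone, $t\lambda\in\Gamma$ for all $t\ge 0$, and in particular $0\in\Gamma$ so $f(0)$ is defined and nonnegative). The monotonicity of $t\mapsto f(t\lambda)/t$ will come from a single convex combination identity; the two auxiliary inequalities then split into an easy direction (from the same identity) and a slightly more delicate direction (which uses that a nonnegative concave function on a half-line is nondecreasing).

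For the main statement, pick $0<s<t$ and write the convex combination
\[
s\lambda=\tfrac{s}{t}(t\lambda)+\bigl(1-\tfrac{s}{t}\bigr)\cdot 0.
\]
Concavity of $f$ on $\Gamma$ then yields $f(s\lambda)\ge \tfrac{s}{t}f(t\lambda)+(1-\tfrac{s}{t})f(0)$, and since $f(0)\ge 0$ by the nonnegativity hypothesis, we obtain $f(s\lambda)\ge \tfrac{s}{t}f(t\lambda)$, i.e.
\[
\frac{f(s\lambda)}{s}\ge \frac{f(t\lambda)}{t},
\]
which is the desired monotone decrease.

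For the ``in particular'' statement with $t\ge 1$: taking $s=1$ in the inequality above gives $f(\lambda)\ge f(t\lambda)/t$, i.e.\ $f(t\lambda)\le tf(\lambda)$. The remaining inequality $f(\lambda)\le f(t\lambda)$ cannot be obtained from the same convex combination trick in the other direction; instead, I would invoke the elementary fact that $g(t)=f(t\lambda)$ is nondecreasing on $[0,\infty)$. Indeed, if there were $0<a<b$ with $g(a)>g(b)$, concavity would force $g(r)\le g(b)+\frac{g(b)-g(a)}{b-a}(r-b)\to -\infty$ as $r\to\infty$, contradicting $g\ge 0$. Hence $g(1)\le g(t)$ whenever $t\ge 1$, which is exactly $f(\lambda)\le f(t\lambda)$.

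The case $0<t\le 1$ follows by applying the previously established bounds to $\lambda':=t\lambda\in\Gamma$ and $t':=1/t\ge 1$: from $f(t'\lambda')\le t'f(\lambda')$ we get $f(\lambda)\le \tfrac{1}{t}f(t\lambda)$, i.e.\ $tf(\lambda)\le f(t\lambda)$, and from $f(\lambda')\le f(t'\lambda')$ we get $f(t\lambda)\le f(\lambda)$. There is no serious obstacle here; the only mildly non-routine step is the half-line monotonicity argument used to get the lower bound $f(\lambda)\le f(t\lambda)$ for $t\ge 1$.
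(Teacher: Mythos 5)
Your proof is correct, and it rests on the same underlying mechanism as the paper's: compare $f$ along the ray through $\lambda$ with its value at the origin, using nonnegativity, and then get $f(\lambda)\le f(t\lambda)$ for $t\ge 1$ from the fact that a nonnegative concave function on a half-line cannot have a point of negative slope. The difference is only in the implementation: the paper differentiates $g(t)=f(t\lambda)/t$ and invokes the first-order concavity inequality $0\le f(0)\le f(t_0\lambda)-t_0\sum_i\lambda_i\frac{\partial f}{\partial\lambda_i}(t_0\lambda)$, whereas you use the chord (zeroth-order) form of concavity via the convex combination $s\lambda=\frac{s}{t}(t\lambda)+(1-\frac{s}{t})\cdot 0$. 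Your version is derivative-free, which matches the hypotheses of the lemma as stated (only nonnegativity and concavity, no smoothness) slightly more faithfully; the paper's computation implicitly uses differentiability, harmless in its applications since $f$ is smooth there. Both arguments, yours and the paper's, tacitly use that the closed cone contains $0$ so that $f(0)$ is defined and $\ge 0$, which you correctly made explicit.
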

\begin{proof}
Let $g(t)=\frac{f(t\lambda)}{t}$,  then we find that: $g'(t)=\frac{\sum_i\lambda_i\frac{\partial f}{\partial \lambda_i}(t\lambda)\cdot t-f(t\lambda)}{t^2}.$
On the other hand,  using the concavity of $f$, we see that for any $t_0>0$:
\begin{equation*}
0\le f(0)\le f(t_0\lambda)+\frac{d}{dt}|_{t=t_0}(f(t\lambda))\cdot (-t_0)=f(t_0\lambda)-t_0\sum_i\lambda_i\frac{\partial f}{\partial \lambda_i}(t_0\lambda).
\end{equation*}
To see that $f(t\lambda)\ge f(\lambda)$ for $t\ge 1$,  we observe that $t\mapsto f(t\lambda)$ is monotone increasing.  Indeed,  this function is concave,  and if its derivative is $<0$ for some $t_0>0$,  then it will tends to $-\infty$ as $t\rightarrow +\infty$,  contradicting that $f\ge 0$.
\end{proof}

Next we observe some elementary properties about $f_{\infty}$ and $\Gamma_{\infty}$.
\begin{lem}\label{l2.4N}
Assume that $f$ and $\Gamma$ satisfies Assumption \ref{a1.1N}.  
Define $f_{\infty,i}$ by (\ref{1.3N}).  We also define 
\begin{equation*}
\Gamma_i'=\{(\lambda_1,\cdots,\hat{\lambda}_i,\cdots,\lambda_n)\in \bR^{n-1}:\text{$(\lambda_1,\cdots,\lambda_{i-1},R,\lambda_{i+1},\cdots,\lambda_n)\in \Gamma$ for all $R$ large enough}\}.
\end{equation*}
Here $\hat{\lambda}_i$ means $\lambda_i$ is skipped.  Then:
\begin{enumerate}
\item $\Gamma_i'$ is an open convex cone which is symmetric in the variables $\{\lambda_j\}_{j\neq i}$,  and $\Gamma_{i_1}'=\Gamma_{i_2}'$.  So we may denote them simply as $\Gamma'$.
\item $\Gamma_i'$ contains $\{(\lambda_1,\cdots,\hat{\lambda}_i,\cdots,\lambda_n):\lambda_j > 0,\,1\le j\le n,\,j\neq i\}$.
\item $f_{\infty,i}(\lambda')$ is either identically $+\infty$ or is a concave function on $\Gamma_i'$.  Moreover,  the convergence is uniform on every compact subset of $\Gamma_i'$.
\end{enumerate}
\end{lem}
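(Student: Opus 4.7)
The plan is to first establish an auxiliary shift-invariance property of $\Gamma$ itself, which underpins all of part (1). Specifically, I would show: if $\lambda\in\Gamma$ and $s\geq 0$, then $\lambda+se_i\in\Gamma$, where $e_i$ is the $i$-th standard basis vector. To prove this, I observe that $\{s\geq 0:\lambda+se_i\in\Gamma\}$ is of the form $[0,s_+)$ with $s_+\in(0,+\infty]$ by openness and convexity of $\Gamma$. If $s_+<\infty$, then $\lambda+s_+e_i\in\partial\Gamma$, so Assumption \ref{a1.1N}(2) gives $f(\lambda+se_i)\to 0$ as $s\to s_+^-$; but $s\mapsto f(\lambda+se_i)$ is monotone increasing (since $\partial f/\partial\lambda_i>0$) and strictly positive at $s=0$, a contradiction. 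Once this is in hand, $\Gamma_i'$ is simply the projection of $\Gamma$ onto $\mathbb{R}^{n-1}$, and openness, the convex cone property, symmetry in $\{\lambda_j\}_{j\neq i}$, and the equality $\Gamma_{i_1}'=\Gamma_{i_2}'$ (from the full symmetry of $\Gamma$) all transfer from the corresponding properties of $\Gamma$. Item (2) is immediate, since appending any $R>0$ to a positive $(n-1)$-tuple produces a point of the positive orthant of $\mathbb{R}^n$, hence of $\Gamma$.

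For the concavity part of (3), I would pass to the limit in the slice concavity of $f$: for $\lambda',\mu'\in\Gamma_i'$ and $t\in[0,1]$, both slice-points with $i$-th coordinate $R$ lie in $\Gamma$ for large $R$, and since the $i$-th coordinate of any convex combination is still $R$, concavity of $f$ gives
\begin{equation*}
f\bigl((1-t)(\lambda_1,\ldots,R,\ldots,\lambda_n)+t(\mu_1,\ldots,R,\ldots,\mu_n)\bigr)\geq (1-t)f(\lambda_1,\ldots,R,\ldots,\lambda_n)+tf(\mu_1,\ldots,R,\ldots,\mu_n).
\end{equation*}
Each term is monotone increasing in $R$ (again by $\partial f/\partial\lambda_i>0$), so the limits as $R\to\infty$ exist in $[0,+\infty]$ and yield the concavity inequality for $f_{\infty,i}$. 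For the dichotomy, suppose $f_{\infty,i}(\mu_0')=+\infty$ at some $\mu_0'\in\Gamma_i'$; given any $\lambda'\in\Gamma_i'$, I use openness of $\Gamma_i'$ to extend the segment from $\mu_0'$ through $\lambda'$ slightly past $\lambda'$ to a point $\nu'\in\Gamma_i'$, so that $\lambda'=(1-t)\mu_0'+t\nu'$ with $t\in(0,1)$. The concavity inequality, combined with $f_{\infty,i}(\nu')\geq 0$ and extended-real arithmetic, forces $f_{\infty,i}(\lambda')=+\infty$, proving the dichotomy.

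For uniform convergence on a compact $K\subset\Gamma_i'$, I would split into two cases. In the finite case, the family $g_R(\lambda'):=f(\lambda_1,\ldots,R,\ldots,\lambda_n)$ is continuous and monotone increasing in $R$, converging pointwise to the continuous limit $f_{\infty,i}$ (continuity follows from $f_{\infty,i}$ being a finite-valued concave function on the open convex set $\Gamma_i'$), so Dini's theorem yields uniform convergence on $K$. In the $+\infty$ case, for any $N>0$ the sets $E_R:=\{\lambda'\in K:g_R(\lambda')\leq N\}$ form a nested decreasing family of compact subsets of $K$ whose intersection is empty by pointwise divergence, so some $E_R$ is already empty by the finite intersection property, giving uniform divergence. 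The main obstacle I anticipate is the dichotomy step in (3): it is the one place where the conclusion is genuinely non-algebraic, and the argument only works because of the interplay between openness of $\Gamma_i'$ (proven back in (1)) and the behavior of concavity under extended-real arithmetic.
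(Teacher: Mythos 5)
Your proposal is correct, and its overall architecture is the same as the paper's: identify $\Gamma_i'$ with the projection of $\Gamma$ onto $\bR^{n-1}$ via the shift-invariance $\lambda\in\Gamma,\ s\ge 0\Rightarrow \lambda+se_i\in\Gamma$, transfer openness/convexity/cone/symmetry from $\Gamma$, and treat $f_{\infty,i}$ as an increasing limit of the concave slice functions $f_{R,i}$. The one genuine divergence is how you justify the shift-invariance: you argue analytically, using Assumption \ref{a1.1N}(2) (that $f\to 0$ at $\partial\Gamma$ together with $\partial f/\partial\lambda_i>0$) to rule out a finite exit time $s_+$, whereas the paper's argument is purely geometric and does not touch $f$ at all: since $te_i\in\bar{\Gamma}_n\subset\bar{\Gamma}$ and $\Gamma$ is an open convex cone containing $\Gamma_n$, one has $\lambda+te_i\in\Gamma$ directly. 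The geometric route is shorter and shows the statement about $\Gamma_i'$ is independent of $f$; your route is a correct alternative but imports hypotheses that are not needed for part (1). For part (3) you supply the details the paper only asserts — the extended-real concavity of the monotone limit, the dichotomy via extending the segment past $\lambda'$ inside the open set, Dini's theorem in the finite case, and the nested-compact-sets argument for uniform divergence in the infinite case — all of which are sound; the only point worth making explicit is that a compact $K\subset\Gamma_i'$ is contained in $\Gamma_{R,i}'$ for all large $R$ (by compactness and the nested exhaustion $\Gamma_{R,i}'\nearrow\Gamma_i'$), so that the $g_R$ are indeed defined on all of $K$ before Dini or the finite-intersection argument is invoked.
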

\begin{proof}
To see (1),  we observe that $(\lambda_1,\cdots,\lambda_{i-1},R,\lambda_{i+1},\cdots,\lambda_n)\in \Gamma$ for all $R$ large enough is actually equivalent to $(\lambda_1,\cdots,\lambda_{i-1},R_0,\lambda_{i+1},\cdots,\lambda_n)\in \Gamma$ for some $R_0\in \bR$.  Indeed,  since $\Gamma_n\subset \Gamma$,  we see that if for some $R_0\in \bR$,  $(\lambda_1,\cdots,\lambda_{i-1},R_0,\lambda_{i+1},\cdots,\lambda_n)\in \Gamma$,  then 
\begin{equation*}
(\lambda_1,\cdots,\lambda_{i-1},R_0,\lambda_{i+1},\cdots,\lambda_n)+te_i\in \Gamma,\,\,t>0.
\end{equation*}
This observation suggests that $\Gamma_i'$ is actually the projection of $\Gamma$ into $\bR^{n-1}$ under the map $\bR^n\ni \lambda\mapsto (\lambda_j)_{j\neq i}$.  Since $\Gamma$ is open,  convex,  and symmetric in $\lambda_i$,  we see that $\Gamma_i'$ is also open,  convex,  and symmetric in $(\lambda_j)_{j\neq i}$ and also $\Gamma_{i_1}'=\Gamma_{i_2}'$.

(2) also follows easily,  by considering the projection of $\Gamma_n$.

To prove (3),  we observe that for each $R>0$,  \sloppy $f_{R,i}(\lambda_1,\cdots,\hat{\lambda}_i,\cdots,\lambda_n):=f(\lambda_1,\cdots,\lambda_{i-1},R,\lambda_{i+1},\cdots,\lambda_n)$ is a concave function defined on $\Gamma_{R,i}'=\{(\lambda_1,\cdots,\hat{\lambda}_i,\cdots,\lambda_n)\in \bR^{n-1}:(\lambda_1,\cdots,\lambda_{i-1},R,\lambda_{i+1},\cdots,\lambda_n)\in \Gamma\}.$ Each $\Gamma_{R,i}'$ is convex,  and is expanding as $R$ increases.  Moreover,  $f_{R,i}$ is mononone increasing in $R$.  Hence if we take the limit as $R\rightarrow \infty$,  we see the limit is either $+\infty$ or is a finite concave function.  Moreover,  the convergence is uniform on compact subsets of $\Gamma_i'$.
\end{proof}
As a consequence,  we see that:
\begin{lem}\label{l2.5}
Assume that $f$ and $\Gamma$ satisfies Assumption \ref{a1.1N}.  Let $f_{\infty}$ and $\Gamma_{\infty}$ be defined through (\ref{1.3N}) and (\ref{1.4N}).  Then one has:
\begin{enumerate}
\item $\Gamma\subset \Gamma_{\infty}$.  
\item $\Gamma_{\infty}$ is an open convex symmetric cone which contains $\Gamma_n$. 
\item Either $f_{\infty}=+\infty$ on $\Gamma_{\infty}$ or $f_{\infty}$ is a concave function on $\Gamma_{\infty}$.
\end{enumerate}
If $f_{\infty}<+\infty$,  then:
\begin{enumerate}
\item $f_{\infty}$ is concave,  symmetric and monotone increasing in every variable.
\item $f_{\infty}>0$ on $\Gamma_{\infty}$.
\end{enumerate}
\end{lem}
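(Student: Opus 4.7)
The plan is to reduce each item to the corresponding fact in Lemma \ref{l2.4N}, exploiting the symmetry of $f$ throughout. The key observation is that one can write $\Gamma_{\infty}=\bigcap_{i=1}^n \pi_i^{-1}(\Gamma_i')$ and $f_{\infty}(\lambda)=\min_{1\le i\le n}f_{\infty,i}(\pi_i(\lambda))$, where $\pi_i:\bR^n\to\bR^{n-1}$ is the projection deleting the $i$th coordinate. With this language each assertion becomes a short statement about intersections of open convex cones and minima of concave functions.

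For (1), I would show that for any $\lambda\in\Gamma$ and any $R>0$ one has $\lambda+Re_i\in\Gamma$, which immediately gives $\lambda\in\Gamma_{\infty}$. The point is that $Re_i+\eps\mathbf{1}\in\Gamma_n\subset\Gamma$ for every $\eps>0$, so $\lambda+Re_i\in\overline{\Gamma}$; combining this with openness and convexity of $\Gamma$ (so that $\Gamma=\mathrm{int}\,\overline{\Gamma}$, the sum of an element of $\Gamma$ with an element of $\overline{\Gamma}$ lies in $\Gamma$) yields $\lambda+Re_i\in\Gamma$. For (2), openness, convexity and conicality of $\Gamma_{\infty}$ follow at once from the intersection formula combined with Lemma \ref{l2.4N}(1), since each $\Gamma_i'$ is an open convex cone and each $\pi_i$ is linear; symmetry of $\Gamma_{\infty}$ comes from the fact that the $\Gamma_i'$ are all equal and each symmetric in its own $n-1$ variables, so permuting coordinates of $\lambda$ merely reshuffles the indexed intersection; and $\Gamma_n\subset\Gamma_{\infty}$ is immediate because if every $\lambda_j>0$ then the truncated vector stays in $\Gamma_n\subset\Gamma$ when a large $R$ is inserted.

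Item (3) is the main content. The central observation is that by symmetry of $f$, all the $f_{\infty,i}$ coincide with a single function $g:=f_{\infty,1}$ on the common cone $\Gamma'$, up to relabeling of arguments; hence by Lemma \ref{l2.4N}(3), either $g\equiv +\infty$ (in which case $f_{\infty}\equiv +\infty$) or $g$ is a finite concave function on $\Gamma'$. In the latter case each $f_{\infty,i}\circ\pi_i$ is concave on $\Gamma_{\infty}$ (concave composed with linear), and $f_{\infty}$, as the pointwise minimum of finitely many concave functions, is itself concave (the statement dual to ``max of convex is convex''). Symmetry of $f_{\infty}$ is inherited from the formula; monotonicity in $\lambda_k$ follows since raising $\lambda_k$ leaves $f_{\infty,k}\circ\pi_k$ unchanged and weakly increases every $f_{\infty,i}\circ\pi_i$ for $i\ne k$, via the monotonicity of $g$ inherited from $\partial f/\partial\lambda_j>0$. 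For positivity, given $\lambda\in\Gamma_{\infty}$, for each $i$ one can pick $R_0$ so large that $(\lambda_1,\dots,R_0,\dots,\lambda_n)\in\Gamma$; then by monotonicity of $f$ in its $i$th slot, $f_{\infty,i}(\pi_i(\lambda))\ge f(\lambda_1,\dots,R_0,\dots,\lambda_n)>0$ by Assumption \ref{a1.1N}(2), whence $f_{\infty}(\lambda)>0$.

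The only subtle point is the symmetry reduction identifying $f_{\infty}$ with $\min_i g\circ\pi_i$ for a single concave $g$, which converts the potentially delicate question of concavity of a pointwise minimum of a priori distinct functions into the standard fact that the minimum of concave functions is concave. Everything else is routine bookkeeping built on top of Lemma \ref{l2.4N}.
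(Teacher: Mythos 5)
Your proof is correct and follows essentially the same route as the paper's: reduce to Lemma \ref{l2.4N} by writing $\Gamma_{\infty}=\cap_i\pi_i^{-1}(\Gamma')$ and $f_{\infty}=\min_i f_{\infty,i}\circ\pi_i$, get openness/convexity/symmetry and $\Gamma_n\subset\Gamma_{\infty}$ from that description, obtain $\Gamma\subset\Gamma_{\infty}$ from $\lambda+te_i\in\Gamma$, and deduce concavity, symmetry and monotonicity of $f_{\infty}$ as a minimum of concave increasing functions. The only (minor) divergence is the positivity step: you argue directly that $f_{\infty,i}(\pi_i(\lambda))\ge f(\lambda_1,\dots,R_0,\dots,\lambda_n)>0$ by monotonicity in the $i$th slot, whereas the paper observes that a nonnegative concave function vanishing at an interior point of $\Gamma_{\infty}$ would vanish identically, contradicting $f_{\infty}\ge f>0$ on $\Gamma$; both arguments are valid.
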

\begin{proof}
First we prove $\Gamma\subset \Gamma_{\infty}$.  This follows from $\Gamma_n\subset \Gamma$.  Indeed,  if $\lambda\in \Gamma$,  since $te_i\in \bar{\Gamma}_n$,  then $\lambda+te_i\in \Gamma$,  for any $1\le i\le n$ and $t>0$,  which implies $\lambda\in \Gamma_{\infty}$.

To prove (2),  we simply note that with $\Gamma_{\infty}$ defined by (\ref{1.4N}),  we have:
\begin{equation*}
\Gamma_{\infty}:=\{\lambda \in \bR^n:(\lambda_1,\cdots,\hat{\lambda}_i,\cdots,\lambda_n)\in \Gamma',\,\,1\le i\le n\},
\end{equation*}
where $\Gamma'$ is the projection of $\Gamma$ into $\bR^{n-1}$ as defined in Lemma \ref{l2.4N},  item (1).
From this,  we immediately see that $\Gamma_{\infty}$ is open,  convex and symmetric.  Also we see that it contains $\Gamma_n$ because $\Gamma$ does.

To see item (3),  we note that since 
\begin{equation*}
f_{\infty}(\lambda)=\min_{1\le i\le n}f_{\infty,i}(\lambda_1,\cdots,\hat{\lambda}_i,\cdots,\lambda_n),
\end{equation*}
if for all $i$,  one has $f_{\infty,i}(\lambda')=+\infty$ for $\lambda'\in \Gamma'$,  then we see $f_{\infty}=+\infty$.  If for some $i$,  $f_{\infty,i}$ is a finite function,  then we get $f_{\infty}$ is a finite concave function on $\Gamma_{\infty}$.

Assuming that $f_{\infty}<+\infty$.  It is clearly symmetric,  using the symmetry of each $f_{\infty,i}$.  Also it is concave,  being the minimum of a family of concave functions.  It is monotone increasing in each variable,  since each $f_{\infty,i}$ has the same property.

To see $f_{\infty}>0$ on $\Gamma_{\infty}$,  first we observe that it is clear $f_{\infty}\ge 0$ on $\Gamma_{\infty}$.  Since $f_{\infty}$ is concave,  we can conclude that $f_{\infty}\equiv 0$ if $f_{\infty}(\lambda)=0$ for some $\lambda\in \Gamma_{\infty}$.  This is clearly impossible,  because $f_{\infty}(\lambda)\ge f(\lambda)>0$ for $\lambda \in \Gamma$.
\end{proof}
Later we are going to need:
\begin{lem}\label{l2.9New}
Assume that $f$ and $\Gamma$ satisfies Assumption \ref{a1.1N}.  
Let $K\subset \Gamma_{\infty}$ be a compact subset.
\begin{enumerate}
\item There exists $R_0>0$ sufficiently large,  such that for any $\lambda\in K$ and any $\tilde{\lambda}\in \bar{\Gamma}_n$ with $|\tilde{\lambda}|\ge R_0$,  one has $\lambda+\tilde{\lambda}\in \Gamma$.
\item If $f_{\infty}=+\infty$,  then for any $B>0$,  there exists $R_0>0$ sufficiently large,  such that for any $\lambda\in K$ and any $\tilde{\lambda}\in \bar{\Gamma}_n$ with $|\tilde{\lambda}|\ge R_0$,  one has $f(\lambda+\lambda')\ge B$.
\item If $f_{\infty}<+\infty$,  then for any $\eps>0$,  there exists $R_0>0$ such that for any $\lambda\in K$ and any $\tilde{\lambda}\in \bar{\Gamma}_n$ with $|\tilde{\lambda}|\ge R_0$,  one has $f(\lambda+\tilde{\lambda})\ge f_{\infty}(\lambda)-\eps$.
\end{enumerate}
\end{lem}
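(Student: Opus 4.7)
The plan is to reduce all three claims to bounds in the coordinate directions $e_i$ using the coordinatewise monotonicity of $f$. If $\tilde\lambda\in\bar\Gamma_n$ with $|\tilde\lambda|\ge R_0$, then some component $\tilde\lambda_{i_0}\ge R_0/\sqrt{n}$, and writing $\tilde\lambda=\tilde\lambda_{i_0}e_{i_0}+\tilde\lambda''$ with $\tilde\lambda''\in\bar\Gamma_n$, together with the inclusion $\Gamma+\bar\Gamma_n\subset\Gamma$, reduces each claim to the corresponding statement with $\tilde\lambda$ replaced by $\tilde\lambda_{i_0}e_{i_0}$. The inclusion itself follows from openness of $\Gamma$: for $\lambda\in\Gamma$, one has $\lambda-\eps\mathbf{1}\in\Gamma$ for small $\eps>0$, so $\lambda+\mu=(\lambda-\eps\mathbf{1})+(\mu+\eps\mathbf{1})$ with $\mu+\eps\mathbf{1}\in\Gamma_n\subset\Gamma$, and $\Gamma+\Gamma\subset\Gamma$ by convexity together with the cone property.

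The key preparatory step I would carry out is a uniform version of the definition of $\Gamma_\infty$: for every compact $K\subset\Gamma_\infty$ and each $i$, there exists $R_i^{(0)}>0$ such that $\lambda+Re_i\in\Gamma$ for all $\lambda\in K$ and all $R\ge R_i^{(0)}$. By definition, for each $\lambda^*\in K$ there is some $R^*(\lambda^*)$ with $\lambda^*+R^*e_{i}\in\Gamma$; openness of $\Gamma$ makes this persist on an open neighborhood $V$ of $\lambda^*$, and $\Gamma+\bar\Gamma_n\subset\Gamma$ extends the property from $R^*$ to all larger $R$ throughout $V$. A finite subcover of $K$ then yields the uniform $R_i^{(0)}$. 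Setting $R_0=\sqrt{n}\,\max_i R_i^{(0)}$ and applying the axis reduction proves part (1) directly.

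For parts (2) and (3), monotonicity of $f$ in each coordinate, combined with $\lambda+\tilde\lambda\ge\lambda+\tilde\lambda_{i_0}e_{i_0}$ componentwise and both points lying in $\Gamma$ by part (1), gives $f(\lambda+\tilde\lambda)\ge f(\lambda+\tilde\lambda_{i_0}e_{i_0})$. Rewritten in the notation $f_{R,i}$ of Lemma \ref{l2.4N}, the latter is $f_{\lambda_{i_0}+\tilde\lambda_{i_0},\,i_0}$ evaluated on the compact projection of $K$ onto $\Gamma'$, and the uniform convergence on compact subsets stated in Lemma \ref{l2.4N}(3) (supplemented, in the case $f_{\infty,i}\equiv+\infty$, by a Dini-type argument exploiting monotonicity in $R$ and continuity of $f_{R,i}$) produces the desired lower bounds: in case (2), one obtains $f(\lambda+\tilde\lambda_{i_0}e_{i_0})\ge B$ once $\tilde\lambda_{i_0}$ is uniformly large; in case (3), one obtains $f(\lambda+\tilde\lambda_{i_0}e_{i_0})\ge f_{\infty,i_0}((\lambda_j)_{j\neq i_0})-\eps\ge f_\infty(\lambda)-\eps$. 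Taking the maximum over $i$ of these thresholds and multiplying by $\sqrt{n}$ produces the required $R_0$.

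I expect the main obstacle to be the uniform axis statement in the second paragraph: the definition of $\Gamma_\infty$ is purely pointwise in $\lambda$, and there is no a priori control on how $R^*(\lambda)$ varies with $\lambda$, so the combination of openness of $\Gamma$ with the closure property $\Gamma+\bar\Gamma_n\subset\Gamma$ is exactly what turns it into a uniform statement on compact sets. Once this is in hand, everything else is a routine consequence of monotonicity of $f$ and the convergence already established in Lemma \ref{l2.4N}.
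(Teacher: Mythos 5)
Your argument is correct, and it reaches the conclusion by a different mechanism than the paper where uniformity over the compact set $K$ is concerned. The pointwise core is the same in both: pick the large coordinate $\tilde{\lambda}_{i_0}\ge R_0/\sqrt{n}$, split $\tilde{\lambda}=\tilde{\lambda}_{i_0}e_{i_0}+\tilde{\lambda}''$ with $\tilde{\lambda}''\in \bar{\Gamma}_n$, and use $\Gamma+\bar{\Gamma}_n\subset \Gamma$ together with monotonicity of $f$ to reduce to the single axis direction. Where you diverge is in passing from one point to all of $K$: the paper first proves the statement at the vertices of a polytope and then propagates it to the whole polytope using convexity of $\Gamma$ and concavity of $f$ (writing $\lambda+\tilde{\lambda}=\sum_l\mu_l(\lambda^{(l)}+\tilde{\lambda})$), finally covering a general compact $K$ by finitely many polytopes; you instead prove a uniform axis statement ($\lambda+Re_i\in\Gamma$ for all $\lambda\in K$, $R\ge R_i^{(0)}$) by an open-cover/finite-subcover argument combined with $\Gamma+\bar{\Gamma}_n\subset\Gamma$, and for (2)--(3) you lean on the locally uniform convergence $f_{R,i}\to f_{\infty,i}$ already recorded in Lemma \ref{l2.4N}(3), supplemented by a Dini-type exhaustion in the $f_{\infty,i}\equiv+\infty$ case. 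Your route buys a proof of (1) that uses no convexity of $K$ or of $f$ at all, and makes (2)--(3) essentially a citation of Lemma \ref{l2.4N}; the paper's polytope route is self-contained in that it re-derives the uniformity from concavity rather than invoking the uniform convergence statement. One small point you pass over silently but which is harmless: the relevant parameter is $R=\lambda_{i_0}+\tilde{\lambda}_{i_0}$ rather than $\tilde{\lambda}_{i_0}$ itself, so you implicitly use that $\lambda_{i_0}$ is bounded below on the compact $K$ when choosing the final threshold $R_0$; it would be worth one sentence to make that explicit.
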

\begin{proof}
First we prove the three statements when $K=\{\lambda_*\}$ for some $\lambda_*\in \Gamma_{\infty}$.  If $\tilde{\lambda}\in \Gamma_n$,  and $|\tilde{\lambda}|\ge R_0$,  we may assume that $\tilde{\lambda}_1\ge \frac{R_0}{\sqrt{n}}$ without loss of generality.  Therefore,  with $R_0$ sufficiently large,  we have $\lambda_*+\frac{R_0}{\sqrt{n}}e_1\in \Gamma$.  Moreover,  $\lambda_*+\tilde{\lambda}=\lambda_*+\frac{R_0}{\sqrt{n}}e_1+\hat{\lambda},\,\,\hat{\lambda}\in \Gamma_n$.  Therefore,  we see that $\lambda_*+\tilde{\lambda}\in \Gamma$.

If $f_{\infty}=+\infty$,  then one has:
\begin{equation*}
\lim_{R\rightarrow \infty}f(R,\lambda_2,\cdots,\lambda_n)=+\infty.
\end{equation*}
Hence with $R_0$ sufficiently large,  one has $f(\lambda_*+\frac{R_0}{\sqrt{n}}e_1)\ge B$.  Using the monotonicity of $f$,  we get:
\begin{equation*}
f(\lambda_*+\tilde{\lambda})\ge f(\lambda_*+\frac{R_0}{\sqrt{n}}e_1)\ge B.
\end{equation*}

If $f_{\infty}<+\infty$,  then one has:
\begin{equation*}
\lim_{R\rightarrow \infty}f(R,\lambda_{2*},\cdots,\lambda_{n*})\ge f_{\infty}(\lambda_*).
\end{equation*}
Hence with $R_0$ sufficiently large,  one has $f\big(\lambda_*+\frac{R_0}{\sqrt{n}}e_1)\ge f_{\infty}(\lambda_*)-\eps$.  Hence $f(\lambda_*+\tilde{\lambda})\ge f(\lambda_*+\frac{R_0}{\sqrt{n}}e_1)\ge f(\lambda_*)-\eps$.

Next if $K$ is a polytope,  meaning that it is the closed convex hull of a finite set,  we can choose $R_0$ sufficiently large so that (1)-(3) holds for its vertices.  If $\lambda\in K$ and $\tilde{\lambda}\in \Gamma_n$ with $|\tilde{\lambda}|\ge R_0$,  we see that:
\begin{equation*}
\lambda+\tilde{\lambda}=\sum_l\mu_l\lambda^{(l)}+\tilde{\lambda}=\sum_l\mu_l(\lambda^{(l)}+\tilde{\lambda}),\,\,\,\mu_l\ge 0,\,\,\,\sum_l\mu_l=1.
\end{equation*}
In the above,  we have represented $\lambda$ as a convex combination of its vertices.  Now we know that each $\lambda^{(l)}+\tilde{\lambda}\in \Gamma$,  we see $\lambda+\tilde{\lambda}\in \Gamma$,  because of convexity.

The argument for (2) and (3) is similar,  using that $f$ is a concave function.

If $K$ is a general compact set,  then it can be covered by finitely many open polytopes compactly contained in $ \Gamma_{\infty}$.
\end{proof}

We also observe that:
\begin{lem}\label{l2.7}
Assume that $f$ and $\Gamma$ satisfies Assumption \ref{a1.1N}.  
If $f_{\infty}<+\infty$,  then $\lim_{R\rightarrow +\infty}f_{\infty}(R\lambda)=+\infty$.  Moreover,  this convergence is uniform on any compact set $K\subset \Gamma_{\infty}$.
\end{lem}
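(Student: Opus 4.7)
The plan is to bound $f_\infty(R\lambda)$ from below by $f(R\mu)$ for a suitable $\mu \in \Gamma$ and then invoke Assumption \ref{a1.1N}(3) for the divergence of $f$. In this way the claim for $f_\infty$ on $\Gamma_\infty$ reduces to the already-known growth of $f$ on $\Gamma$.

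\textbf{Pointwise convergence.} Fix $\lambda \in \Gamma_\infty$. By the definition (\ref{1.4N}), for each $i \in \{1,\dots,n\}$ there exists $M_0 = M_0(\lambda, i) > 0$ such that
$$\mu^{(i)} := (\lambda_1, \ldots, \lambda_{i-1}, M_0, \lambda_{i+1}, \ldots, \lambda_n) \in \Gamma.$$
Since $\Gamma$ is a cone, $R\mu^{(i)} \in \Gamma$ for all $R > 0$. Using monotonicity of $f$ in the $i$-th variable, the map $S \mapsto f(R\lambda_1, \ldots, S, \ldots, R\lambda_n)$ is non-decreasing on the set of valid $S$ (which is an upper-open interval, compare the argument in Lemma \ref{l2.4N}(1)), hence the limit as $S \to \infty$ dominates its value at $S = RM_0$:
$$f_{\infty,i}\big(R\lambda_1, \ldots, \widehat{R\lambda_i}, \ldots, R\lambda_n\big) \ge f(R\mu^{(i)}).$$
Assumption \ref{a1.1N}(3) yields $f(R\mu^{(i)}) \to +\infty$ as $R \to \infty$. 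Taking the minimum over $i$ delivers $f_\infty(R\lambda) \to +\infty$.

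\textbf{Uniformity on a compact $K \subset \Gamma_\infty$.} Two additional ingredients are needed. First, one must choose $M_0$ uniform in $(\lambda, i)$: since $\Gamma$ is open, whenever $\mu^{(i)}(\lambda) \in \Gamma$ for one $\lambda$, it persists in a neighborhood of $\lambda$, so a finite subcover of $K$ (together with the max over $i$) produces a single $M_0$ that works for every $\lambda \in K$ and every $i$. With this $M_0$ fixed, the image $K_i := \{\mu^{(i)}(\lambda) : \lambda \in K\}$ is a compact subset of $\Gamma$. Second, one must upgrade the pointwise divergence $f(R\mu) \to \infty$ to uniform divergence as $\mu$ ranges over the compact $K_i$. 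Here the key tool is Lemma \ref{lem 2.3}, which implies $t \mapsto f(t\mu)$ is non-decreasing on $(0,\infty)$ for each $\mu \in \Gamma$. Given any threshold $B$, for each $\mu \in K_i$ pick $R(\mu)$ with $f(R(\mu)\mu) > B+1$; continuity of $f$ extends this to $f(R(\mu)\mu') > B$ on an open neighborhood $U_\mu \ni \mu$, and $R$-monotonicity then extends it to all $R \ge R(\mu)$ and $\mu' \in U_\mu$. A finite subcover of $K_i$ by such $U_\mu$ yields a uniform $R_0$ beyond which $f(R\mu^{(i)}(\lambda)) > B$ for every $\lambda \in K$, and hence $f_\infty(R\lambda) > B$ as well.

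\textbf{Where the difficulty lies.} The pointwise statement is essentially a one-line bookkeeping of Assumption \ref{a1.1N}(3). The actual work is in the two uniformity steps: selecting a single $M_0$ over $K$ (openness of $\Gamma$ plus a finite cover) and lifting the pointwise divergence of $f(R\mu)$ to a uniform one over the compact $K_i \subset \Gamma$. The second of these is the only place a nontrivial tool enters, namely the monotonicity $t\mapsto f(t\mu)$ of Lemma \ref{lem 2.3}, without which one could not convert a local-in-$R$ bound into a bound valid for all large $R$.
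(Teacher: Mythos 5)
Your proof is correct, but it takes a genuinely different route from the paper. The paper works entirely at the level of $f_{\infty}$: using concavity of $f_{\infty}$ (Lemma \ref{l2.5}) it first shows $\sum_i\lambda_i\partial_{\lambda_i}f_{\infty}\ge 0$, so that $g_R(\lambda):=f_{\infty}(R\lambda)$ is an increasing-in-$R$ family of concave functions on $\Gamma_{\infty}$; such a family either tends to $+\infty$ everywhere or to a finite concave limit, locally uniformly, and since $f_{\infty}\ge f$ on $\Gamma$ together with Assumption \ref{a1.1N}(3) forces divergence on the open subset $\Gamma\subset\Gamma_{\infty}$, the finite alternative is excluded. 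You instead reduce directly to the divergence of $f$ itself: the bound $f_{\infty,i}(R\lambda_1,\dots,\widehat{R\lambda_i},\dots,R\lambda_n)\ge f(R\mu^{(i)})$ (valid since $R\mu^{(i)}\in\Gamma$ and $f$ is increasing in the $i$-th slot along the ray, using that adding $te_i$ preserves $\Gamma$) turns the statement into uniform divergence of $f(R\,\cdot)$ on the compact sets $K_i\subset\Gamma$, which you obtain from Assumption \ref{a1.1N}(3), continuity, finite covers, and the radial monotonicity $f(t\mu)\ge f(\mu)$ for $t\ge 1$ from Lemma \ref{lem 2.3}. What your approach buys: it never uses concavity or any regularity of $f_{\infty}$ (only its definition as a monotone limit and the structure of $\Gamma$), so it sidesteps the paper's slightly delicate appeal to derivatives of $f_{\infty}$ in (\ref{2.1N}) and the concave-family dichotomy; the cost is the somewhat longer compactness bookkeeping (uniform choice of $M_0$ and of the threshold $R_0$), whereas the paper gets local uniformity for free from concavity via the same convex-combination trick as in Lemma \ref{l2.9New}. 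Both arguments are sound.
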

\begin{proof}
First we observe that one has:
\begin{equation}\label{2.1N}
\sum_i\lambda_i\frac{\partial f_{\infty}}{\partial \lambda_i}(\lambda)\ge 0,\,\,\,\lambda\in \Gamma_{\infty}.
\end{equation}
Indeed,  for any $R>0$,  one concludes from concavity:
\begin{equation*}
0\le f(R\lambda)\le f(\lambda)+\sum_{i=1}^n\lambda_i\frac{\partial f_{\infty}}{\partial \lambda_i}(\lambda)\cdot (R-1).
\end{equation*}
Therefore,  $\sum_{i=1}^n\lambda_i\frac{\partial f_{\infty}}{\partial \lambda_i}(\lambda)\ge \frac{-f(\lambda)}{R-1}.$  Letting $R\rightarrow \infty$,  one gets (\ref{2.1N}).  
Now we define $g_R(\lambda)=f_{\infty}(R\lambda)$.  Then each $g_R(\lambda)$ is a concave function on $\Gamma_{\infty}$ and it is monotone increasing in $R$,  due to (\ref{2.1N}).  Therefore,  $g_R(\lambda)$ either converges to $+\infty$ everywhere on $\Gamma_{\infty}$ or converge to a finite concave function on $\Gamma_{\infty}$.  Moreover,  this convergence is uniform on compact subsets of $\Gamma_{\infty}$.  On the other hand,  for $\lambda\in \Gamma$,  one has $f_{\infty}(R\lambda)\ge f(R\lambda)$ and the latter tends to $+\infty$ as $R\rightarrow +\infty$.  Hence we must have $\lim_{R\rightarrow \infty}f_{\infty}(R\lambda)=+\infty$,  uniform on compact subsets of $\Gamma_{\infty}$.
\end{proof}

One more thing we need is that:
\begin{lem}\label{l2.8}
Assume that $f$ and $\Gamma$ satisfies Assumption \ref{a1.1N}.  Let $f_{\infty}$ and $\Gamma_{\infty}$ be defined by (\ref{1.3N}) and (\ref{1.4N}).  Then:
\begin{enumerate}
\item Either $\Gamma_{\infty}=\bR^n$ or $\Gamma_{\infty}\subset \Gamma_1:=\{\lambda\in \bR^n:\sum_i\lambda_i > 0\}$.
\item If $\Gamma_{\infty}=\bR^n$,  then one must have $f_{\infty}=+\infty$.
\end{enumerate}
\end{lem}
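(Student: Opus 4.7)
\textbf{Proof proposal for Lemma \ref{l2.8}.} The plan is to exploit the fact that $\Gamma_{\infty}$ is an open, convex, symmetric cone containing $\Gamma_n$ (Lemma \ref{l2.5}). Both parts will reduce to exhibiting a single element $(c,\ldots,c)$ with $c\le 0$ in $\Gamma_{\infty}$ and then using the convex cone structure to leverage Lemma \ref{l2.7}.

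For part (1), suppose $\Gamma_{\infty}\not\subset \Gamma_1$, so there exists $\lambda^*\in \Gamma_{\infty}$ with $\sigma:=\sum_i \lambda^*_i\le 0$. Using the symmetry of $\Gamma_{\infty}$, every permutation $\pi(\lambda^*)$ lies in $\Gamma_{\infty}$, and by convexity the average $\frac{1}{n!}\sum_{\pi}\pi(\lambda^*)=(c,\ldots,c)$ with $c=\sigma/n\le 0$ belongs to $\Gamma_{\infty}$. If $c=0$, then $0\in \Gamma_{\infty}$ and openness plus the positive-homogeneity of $\Gamma_{\infty}$ force $\Gamma_{\infty}=\mathbb{R}^n$. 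If $c<0$, the cone property gives the whole negative diagonal ray $\{-t(1,\ldots,1): t>0\}$ in $\Gamma_{\infty}$. Combined with $\Gamma_n\subset \Gamma_{\infty}$, for any $\mu\in -\Gamma_n$ one can pick $t>0$ large so $\mu+t(1,\ldots,1)\in \Gamma_n$, and then
\begin{equation*}
\mu=(\mu+t(1,\ldots,1))+(-t(1,\ldots,1))\in \Gamma_{\infty},
\end{equation*}
because convex cones are closed under addition. Hence $-\Gamma_n\subset \Gamma_{\infty}$. Finally, any $w\in \mathbb{R}^n$ can be written as $w=u-v$ with $u,v\in \Gamma_n$ (after a shift by a small multiple of $(1,\ldots,1)$), so $w\in \Gamma_{\infty}+(-\Gamma_n)\subset \Gamma_{\infty}$, giving $\Gamma_{\infty}=\mathbb{R}^n$.

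For part (2), assume $\Gamma_{\infty}=\mathbb{R}^n$ and argue by contradiction that $f_{\infty}<+\infty$. By Lemma \ref{l2.5}, $f_{\infty}$ is then a finite concave function on $\mathbb{R}^n$ which is monotone increasing in every variable. In particular, for any $R>0$, the inequality $-R(1,\ldots,1)\le (0,\ldots,0)$ componentwise gives $f_{\infty}(-R(1,\ldots,1))\le f_{\infty}(0,\ldots,0)<+\infty$. On the other hand, since $-(1,\ldots,1)\in \Gamma_{\infty}=\mathbb{R}^n$, Lemma \ref{l2.7} applies and yields $\lim_{R\to +\infty}f_{\infty}(-R(1,\ldots,1))=+\infty$. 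The two statements are incompatible, so $f_{\infty}\equiv +\infty$ on $\Gamma_{\infty}$.

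I don't anticipate a genuine obstacle here; the only point that requires care is the passage $-\Gamma_n\subset \Gamma_{\infty}$ in part (1), where one must be precise about the closure-under-addition property of convex cones and handle the boundary case $c=0$ separately using openness. Everything else is a direct application of Lemma \ref{l2.5} and Lemma \ref{l2.7}.
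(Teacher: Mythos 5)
Your proof is correct. Part (1) is essentially the paper's argument: average the permutations of a point with non-positive coordinate sum to land on the non-positive diagonal, then use $\Gamma_n\subset\Gamma_\infty$ and closure of the convex cone under addition to fill out all of $\bR^n$ (the paper states this more compactly as $\{(-t,\dots,-t):t\ge 0\}+\Gamma_n=\bR^n$); your separate treatment of the case $c=0$ via openness is a small refinement, since the paper tacitly assumes a point with strictly negative sum. Part (2) takes a mildly different route: the paper fixes $\lambda'\in\bR^{n-1}$, writes $f_\infty(\lambda',R)=f_\infty\bigl(R(\lambda'/R,1)\bigr)$, applies the uniform divergence in Lemma \ref{l2.7} on a compact set to conclude $f_{\infty,i}\equiv+\infty$ for every $i$ and hence $f_\infty\equiv+\infty$, whereas you apply Lemma \ref{l2.7} once at the single direction $-(1,\dots,1)\in\Gamma_\infty=\bR^n$ and contradict the bound $f_\infty(-R,\dots,-R)\le f_\infty(0)$ coming from monotonicity. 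Both hinge on Lemma \ref{l2.7}; yours is shorter and avoids the uniform-convergence statement, while the paper's version yields the slightly stronger pointwise information that each $f_{\infty,i}$ is identically $+\infty$.
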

\begin{proof}
First we prove (1).  Assume that $\Gamma_{\infty}\neq \bR^n$,  we just need to show that $\Gamma_{\infty}\subset \bar{\Gamma}_1$.  Assume that $\Gamma_{\infty}$ is not contained in $\Gamma_1$.  That is,  there exists $\lambda_0\in \Gamma_{\infty}$,  such that $\sum_{i}\lambda_{0,i}<0$.  Using that $\Gamma_{\infty}$ is symmetric in its variables,  we see that for any permutation $\sigma\in S_n$,  one has $\lambda_{\sigma,0}\in \Gamma_{\infty}$,  where $\lambda_{\sigma,0}=(\lambda_{0,\sigma(1)},\cdots,\lambda_{0,\sigma(n)})$.
Using the convexity of $\Gamma_{\infty}$,  we see that $\frac{1}{n!}\sum_{\sigma\in S_n}\lambda_{\sigma,0}\in \Gamma_{\infty}$,  which is a multiple of $(-1,\cdots,-1)$.  Hence $\Gamma_{\infty}$ contains $\{(-t,\cdots,-t):t>0\}$.  On the other hand,  we know that $\Gamma_n\subset \Gamma_{\infty}$ and that:
\begin{equation*}
\{(-t,\cdots,-t):t\ge 0\}+\Gamma_n=\bR^n.
\end{equation*}
This proves $\Gamma_{\infty}=\bR^n$.

Now we prove (2).  Assume that $f_{\infty}<\infty$,  we know from Lemma \ref{l2.5} that $f_{\infty}>0$ on $\bR^n$.  For any $\lambda \in \bR^n$,  we have:
\begin{equation*}
f_{\infty}(\lambda_1,\cdots,\lambda_{n-1},R)=f_{\infty}\big(R(\frac{\lambda_1}{R},\cdots,\frac{\lambda_{n-1}}{R},1)\big)
\end{equation*}
Note that for $R\ge 1$ is chosen sufficiently large,  one has $(\frac{\lambda_1}{R},\cdots,\frac{\lambda_{n-1}}{R},1)$ remains in a compact set (of $\Gamma_{\infty}$ which is $\bR^n$).  Therefore,  we may use Lemma \ref{l2.7} to conclude that $f_{\infty}(\lambda_1,\cdots,\lambda_{n-1},R)\rightarrow +\infty$ as $R\rightarrow +\infty$.  That is,  $f_{\infty,n}(\lambda_1,\cdots,\lambda_{n-1})=+\infty$,  for any $(\lambda_1,\cdots,\lambda_{n-1})\in \bR^{n-1}$.  Similarly,  one has $f_{\infty,i}(\lambda_1,\cdots,\hat{\lambda}_i,\cdots,\lambda_n)=+\infty$.  This proves $f_{\infty}=+\infty$.
\end{proof}

The following lemma should be known to the experts:
\begin{lem}\label{lem 2.10}
Exactly one of the following two statements hold for $\Gamma_{\infty}$:
\begin{enumerate}
\item $\partial \Gamma_{\infty}$ contains $\{\lambda e_i: \lambda \ge 0\}$ for any $i$.
\item $\Gamma_{\infty}= \mathbb{R}^n$
\end{enumerate}
\end{lem}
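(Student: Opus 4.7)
The plan is to exploit the projection description of $\Gamma_{\infty}$ that drops out of (\ref{1.4N}) together with Lemma \ref{l2.4N}. Writing $\pi_i\colon \bR^n \to \bR^{n-1}$ for the projection dropping the $i$-th entry, and $\Gamma' = \Gamma_i'$ for the common value furnished by Lemma \ref{l2.4N}(1), the definition (\ref{1.4N}) becomes $\Gamma_{\infty} = \{\lambda \in \bR^n : \pi_i(\lambda) \in \Gamma' \text{ for each } i\}$. This reformulation is essentially all the structural input I would need.

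I would first apply Lemma \ref{l2.8}(1) to separate the cases $\Gamma_{\infty} = \bR^n$ and $\Gamma_{\infty} \subsetneq \bR^n$. The former gives statement (2) and rules out (1), since $\partial \bR^n = \emptyset$ cannot contain the nonempty set $\{\lambda e_i : \lambda \ge 0\}$; this also handles mutual exclusivity. In the latter case I would show $\Gamma' \neq \bR^{n-1}$: otherwise the characterization above would force $\Gamma_{\infty} = \bR^n$, a contradiction. Since $\Gamma'$ is an open convex cone by Lemma \ref{l2.4N}(1) and is strictly smaller than $\bR^{n-1}$, combining openness with the cone property rules out $0 \in \Gamma'$ (an open cone containing the origin must be the whole ambient space).

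Finally, for each $i$ and each $\lambda \ge 0$, the computation $\pi_i(\lambda e_i) = 0 \notin \Gamma'$ gives $\lambda e_i \notin \Gamma_{\infty}$, while the approximation $\lambda e_i + s(1,\dots,1) \in \Gamma_n \subset \Gamma_{\infty}$ (using Lemma \ref{l2.5}(2)) with $s \to 0^+$ places $\lambda e_i$ in $\bar{\Gamma}_{\infty}$. Together these yield $\lambda e_i \in \partial \Gamma_{\infty}$, proving statement (1). No step is genuinely hard here: the substantive structural work has already been packaged into Lemmas \ref{l2.4N}, \ref{l2.5}, and \ref{l2.8}, and the present argument reduces to unwinding definitions.
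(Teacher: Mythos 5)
Your proposal is correct, and every external ingredient you invoke (the projection description of $\Gamma_{\infty}$ via $\Gamma'$, the openness/convexity/cone structure from Lemma \ref{l2.4N} and Lemma \ref{l2.5}, and $\Gamma_n\subset\Gamma_{\infty}$) is established in the paper before Lemma \ref{lem 2.10}, so there is no circularity; the appeal to Lemma \ref{l2.8}(1) is superfluous since you only use the trivial dichotomy $\Gamma_{\infty}=\bR^n$ or not. Your route is, however, genuinely different from the paper's. The paper argues by contraposition inside $\bR^n$: if some axis point $\lambda e_i$ fails to lie on $\partial\Gamma_{\infty}$, then (being in $\bar{\Gamma}_{\infty}$) it is interior, so $e_i\in\Gamma_{\infty}$ for all $i$; openness then gives $-\eps e_1+e_i\in\Gamma_{\infty}$, the definition of $\Gamma_{\infty}$ upgrades this to $-\eps e_1\in\Gamma_{\infty}$, and symmetry plus convexity of the cone forces $\Gamma_{\infty}=\bR^n$. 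You instead push the whole question down to the projected cone $\Gamma'\subset\bR^{n-1}$: $\Gamma_{\infty}\neq\bR^n$ forces $\Gamma'\neq\bR^{n-1}$, hence $0\notin\Gamma'$ (open cone), hence $\pi_i(\lambda e_i)=0$ shows every point of every axis ray lies outside $\Gamma_{\infty}$ while the approximation by $\Gamma_n$ puts it in $\bar{\Gamma}_{\infty}$, so the entire ray sits on $\partial\Gamma_{\infty}$. What your version buys is a direct (non-contrapositive) proof of statement (1) for the full ray at once, with the case analysis reduced to the single clean statement ``$0\in\Gamma'$ iff $\Gamma'=\bR^{n-1}$ iff $\Gamma_{\infty}=\bR^n$''; what the paper's version buys is an argument phrased entirely at the level of $\Gamma_{\infty}$ itself, using its symmetry and convexity rather than the explicit projection characterization (though its step from $-\eps e_1+e_i\in\Gamma_{\infty}$ to $-\eps e_1\in\Gamma_{\infty}$ is secretly the same projection mechanism). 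Both proofs are sound and of comparable length.
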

\begin{proof}
Since $\Gamma_n \subset \Gamma \subset \Gamma_{\infty}$, we have that $(0,0,...,0,1)\in \overline{\Gamma_{\infty}}$. As a result, if (1) doesn't hold, we have that $e_i \in \Gamma_{\infty}$ for any $i$. Then there exists a small positive constant $\eps$ such that $-\eps e_1 + e_i \in \Gamma_{\infty}$ for any $i \ge 2$. Then, using the definition of $\Gamma_{\infty}$, we have that $-\eps e_1 \in \Gamma_{\infty}$. Using the symmetry of $\Gamma_{\infty}$, we have that $- \eps e_i \in \Gamma_{\infty}$ for any $i$. Combining this fact and the fact that $e_i \in \Gamma_{\infty}$ and $\Gamma_{\infty}$ is a convex cone, we get that $\Gamma_{\infty}= \mathbb{R}^n$.
\end{proof}

\begin{prop}\label{prop full measure}
Suppose that $\Gamma_{\infty}\neq \mathbb{R}^n$. Then for any $k$, we have that 
\begin{equation*}\label{e 3.1}
\Gamma_{\infty} \subset \{\lambda \in \mathbb{R}^n: \sum_{i\neq k}\lambda_i > 0\}.
\end{equation*}
\end{prop}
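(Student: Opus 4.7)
The plan is to argue by contradiction, exploiting the symmetry and openness of $\Gamma_\infty$ together with Lemma \ref{l2.8} and Lemma \ref{lem 2.10}. Suppose some $\lambda_0\in\Gamma_\infty$ satisfies $\sum_{i\neq k}\lambda_{0,i}\le 0$; by the symmetry of $\Gamma_\infty$ we may assume $k=n$. Since $\Gamma_\infty$ is invariant under any permutation of $\{1,\dots,n-1\}$ and is convex, averaging $\lambda_0$ over the $(n-1)!$ permutations that fix the last coordinate produces a point
\begin{equation*}
\bar\lambda=(a,a,\dots,a,b)\in\Gamma_\infty,\qquad a=\tfrac{1}{n-1}\sum_{i=1}^{n-1}\lambda_{0,i}\le 0,\;\; b=\lambda_{0,n}.
\end{equation*}

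Because we assumed $\Gamma_\infty\neq\mathbb{R}^n$, Lemma \ref{l2.8}(1) forces $\Gamma_\infty\subset\Gamma_1$, which applied to $\bar\lambda$ gives $(n-1)a+b>0$, hence $b>-(n-1)a\ge 0$. I would now push $\bar\lambda$ along the $\bar{\Gamma}_n$-direction $(-a,\dots,-a,0)$ to collapse the first $n-1$ coordinates to zero. The key technical step is that although $\bar\Gamma_n$ is only a closed cone, the sum $\bar\lambda+(-a,\dots,-a,0)$ still lies in the \emph{open} set $\Gamma_\infty$: since $\Gamma_\infty$ is open, $\bar\lambda-\delta\mathbf{1}\in\Gamma_\infty$ for all sufficiently small $\delta>0$, while $(-a,\dots,-a,0)+\delta\mathbf{1}$ has all strictly positive entries and thus lies in $\Gamma_n\subset\Gamma_\infty$; using that $\Gamma_\infty$ is a convex cone (so $\Gamma_\infty+\Gamma_\infty\subset\Gamma_\infty$), the resulting point
\begin{equation*}
(0,\dots,0,b)=(\bar\lambda-\delta\mathbf{1})+\bigl((-a,\dots,-a,0)+\delta\mathbf{1}\bigr)
\end{equation*}
lies in $\Gamma_\infty$.

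Finally, this contradicts Lemma \ref{lem 2.10}: since $\Gamma_\infty\neq\mathbb{R}^n$, that lemma tells us the ray $\{te_n:t\ge 0\}$ is contained in $\partial\Gamma_\infty$, so $b\,e_n\in\partial\Gamma_\infty$, which is incompatible with $(0,\dots,0,b)$ lying in the open set $\Gamma_\infty$. This contradiction completes the proof. The only nontrivial point in the whole argument is the step that keeps us inside the open set $\Gamma_\infty$ after adding the boundary direction $(-a,\dots,-a,0)$; everything else is a clean application of symmetric convex-cone geometry together with the two structural lemmas already established.
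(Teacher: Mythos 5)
Your argument is correct, and it takes a genuinely different route from the paper's. The paper works dually: it takes a supporting hyperplane of $\Gamma_{\infty}$ at $e_n\in\partial\Gamma_{\infty}$ (using Lemma \ref{l2.14N} to see the plane passes through the origin), notes the normal $v$ has $v_n=0$ and $v_i\ge 0$, and then symmetrizes the \emph{normal vector} over permutations fixing the last coordinate to conclude that $(1,\dots,1,0)$ is itself a supporting normal, which is exactly the desired inequality. You instead argue primally by contradiction: you symmetrize the hypothetical bad \emph{point} to get $\bar\lambda=(a,\dots,a,b)$ with $a\le 0$, use Lemma \ref{l2.8}(1) (i.e.\ $\Gamma_{\infty}\subset\Gamma_1$, which the paper's proof does not need) to force $b>0$, and then your $\delta$-shift trick correctly shows $b\,e_n=(\bar\lambda-\delta\mathbf{1})+\bigl((-a,\dots,-a,0)+\delta\mathbf{1}\bigr)\in\Gamma_{\infty}$, since the first summand stays in the open set $\Gamma_{\infty}$ for small $\delta$, the second lies in $\Gamma_n\subset\Gamma_{\infty}$, and a convex cone is closed under addition; this contradicts Lemma \ref{lem 2.10}, which places the ray $\{t e_n:t\ge0\}$ in $\partial\Gamma_{\infty}$ when $\Gamma_{\infty}\neq\mathbb{R}^n$. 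Your version is more elementary (no supporting hyperplanes or the convexity of the set of supporting normals) and makes the strict inequality automatic, at the mild cost of invoking the extra structural fact $\Gamma_{\infty}\subset\Gamma_1$; the paper's dual argument is self-contained at the level of convex geometry and identifies the supporting plane explicitly, which is conceptually aligned with how $\Gamma_{\infty}$ is used later.
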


\begin{proof}
According to Lemma \ref{lem 2.10}, we have that $e_n \in \partial \Gamma_{\infty}.$ Using Lemma \ref{l2.14N} below,  we may assume that $\Sigma_v:= \{\lambda \in \mathbb{R}^n: (\lambda, v)=0\}$ be a supporting plane of $\Gamma_{\infty}$ at $e_n$ with $\Gamma_{\infty}\subset \{(\lambda,v)\ge 0\}$ and $|v|=1$.
Denote $v=(v_1,...,v_n)$. Since both $e_n$ and $0$ are on $\Sigma_v$, we have that $v_n=0$. Since $\Gamma_n \subset \Gamma_{\infty}$, we have that $v_i \ge 0$ for any $i$,  by taking $\lambda=e_i$.  Using the symmetry of $\Gamma_{\infty}$, we have that the following permutation of the components of $v$: $(v_{n-1},v_1,...,v_{n-2},0)$, $(v_{n-2}, v_{n-1},v_1,...,v_{n-3},0)$, ..., $(v_2,v_3,...,v_{n-1},v_1,0)$ each defines a supporting plane of $\Gamma_{\infty}$ at $(0,...,0,1)$.  Since the space of normal vectors of supporting planes at a point is convex, we have that the mean of the vectors $v$, $(v_{n-1},v_1,...,v_{n-2},0)$, $(v_{n-2}, v_{n-1},v_1,...,v_{n-3},0)$, ..., $(v_2,v_3,...,v_{n-1},v_1,0)$ which is $\frac{\sum_{i=1}^{n-1}v_i}{n-1}(1,1,...,1,0)$ defines a supporting plane of $\Gamma_{\infty}$ at $(0,0,...,0,1)$. Since $v_i \ge 0$ and there exists $v_k$ that is positive. we have that $\frac{\sum_{i=1}^{n-1}v_i}{n-1} >0$. Then we have that  $(1,1,...,1,0)$ defines a supporting plane of $\Gamma_{\infty}$ at $(0,0,...,0,1)$. This concludes the proof of the (\ref{e 3.1}) for $k=n$. Using the symmetry of $\Gamma_{\infty}$ we see that (\ref{e 3.1}) holds for any $k$.
\end{proof}
In the above proof,  we used the following lemma:
\begin{lem}\label{l2.14N}
Let $\Gamma$ be an open convex cone in $\bR^n$.  Let $\Sigma$ be a supporting plane of $\Gamma$ at $\lambda_0\in \partial \Gamma$,  namely there exists $v\in \bR^n$, $|v|=1$ and $c\in \bR$,  such that $(\lambda,v)\ge c,\,\,\forall \lambda\in \Gamma$,  and $(\lambda_0,v)=c$.  Then one must have $c=0$.
\end{lem}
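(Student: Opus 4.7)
The plan is to exploit the fact that $\Gamma$ is a cone with vertex at the origin, together with continuity of the linear functional $\lambda\mapsto (\lambda,v)$, to pin down both $c\le 0$ and $c\ge 0$.

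First I would observe that $0\in\bar\Gamma$: pick any $\lambda_*\in\Gamma$ (nonempty since the cone has nonempty interior), and note that $t\lambda_*\in\Gamma$ for all $t>0$ by the cone property, so letting $t\to 0^+$ gives $0\in\bar\Gamma$. The supporting inequality $(\lambda,v)\ge c$ on $\Gamma$ extends by continuity to $\bar\Gamma$, so evaluating at $0$ yields $0\ge c$.

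For the reverse direction, I would use the cone property in the other direction. For any $\lambda\in\Gamma$ and any $t>0$, one has $t\lambda\in\Gamma$, hence $t(\lambda,v)\ge c$. If $(\lambda,v)<0$ for some $\lambda\in\Gamma$, sending $t\to+\infty$ would force $t(\lambda,v)\to-\infty$, contradicting the lower bound $c$. Therefore $(\lambda,v)\ge 0$ for every $\lambda\in\Gamma$, and by continuity $(\lambda,v)\ge 0$ on $\bar\Gamma$. Since $\lambda_0\in\partial\Gamma\subset\bar\Gamma$, we obtain $c=(\lambda_0,v)\ge 0$. Combining with $c\le 0$ from the previous paragraph forces $c=0$.

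There is no real obstacle here; the only thing to be careful about is making sure that the supporting inequality extends from $\Gamma$ to $\bar\Gamma$, which is immediate from continuity of the linear functional. The argument uses neither symmetry nor convexity of $\Gamma$ beyond what is needed for the standard definition of a supporting plane (which is already given by hypothesis); only the cone structure and the fact that $\lambda_0$ lies on the boundary are essential.
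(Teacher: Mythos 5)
Your proof is correct and rests on the same idea as the paper's: exploiting the scaling invariance of the cone against the supporting inequality. The paper does it in one stroke by scaling the contact point itself ($t\lambda_0\in\bar\Gamma$ gives $tc\ge c$ for all $t>0$, forcing $c=0$), whereas you split the argument into $c\le 0$ (via $0\in\bar\Gamma$) and $c\ge 0$ (via nonnegativity of $(\cdot,v)$ on $\Gamma$); both are valid and essentially equivalent.
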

\begin{proof}
It is clear that $(\lambda,v)\ge c$ will hold for $\lambda\in \bar{\Gamma}$.  Since $\lambda_0\in \partial \Gamma$,  we also have $t\lambda_0\in \partial \Gamma$ for any $t>0$,  hence we get $tc\ge c$ for any $t>0$.  This implies $c=0$.
\end{proof}

\section{Regularization of subsolution}
The goal of this section is to show that a strict subsolution in the viscosity sense can be regularized into a smooth strict subsolution.  This is necessary only if $f_{\infty}<+\infty$,  so that we must have $\Gamma_{\infty}\subset \Gamma_1$ by Lemma \ref{l2.8}.   So we are going to assume the following holds throughout this section:
\begin{equation}
f_{\infty}(\lambda)<+\infty,\,\,\,\lambda\in \Gamma_{\infty},\,\,\,\Gamma_{\infty}\subset \Gamma_1.
\end{equation}
More precisely,  we wish to show that:
\begin{thm}\label{t3.1}
Let $G\in C(M)$,  we define $c$ to be given by (\ref{1.6NNN}).  Let $\underline{u}\in C(M)$ be a strict subsolution with respect to $(G,c)$ in the viscosity sense as defined by Definition \ref{d2.2} for some $\delta_0>0$.  Then for any $0<\eps\le \frac{1}{2}\delta_0$,  there exists $u_{\eps}\in C^{\infty}(M)$ such that: 
\begin{enumerate}
\item $\lambda[\chi+dd^cu_{\eps}]\in \Gamma_{\infty}$,  $f_{\infty}\big(\lambda[\chi+dd^cu_{\eps}]\big)\ge e^{G(x)+c}+\delta_0-\eps$,
\item $||u_{\eps}-\underline{u}||_{L^{\infty}}\le \eps$.
\end{enumerate}
\end{thm}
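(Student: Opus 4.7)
The plan is to implement a Richberg-type regularization in the sense of Harvey-Lawson-Pli\'s, tailored to the pair $(f_\infty, \Gamma_\infty)$. Two ingredients drive it: (i) a local construction producing, on each small coordinate ball, a smooth strict subsolution of the $f_\infty$-equation that pointwise dominates $\underline{u} - \eps - \gamma$ but lies strictly below $\underline{u} - \eps$ near the ball's boundary; and (ii) a global gluing by the regularized maximum operator, which preserves $\Gamma_\infty$-admissibility because $\Gamma_\infty$ is a convex cone (Lemma \ref{l2.5}) and preserves the inequality $f_\infty(\cdot) \geq h$ because $f_\infty$ is concave (Lemma \ref{l2.5}).

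For the local step, fix $x_0 \in M$, work in a coordinate ball $B_r(x_0)$ with $r$ small, choose an auxiliary parameter $0 < \gamma \ll \eps$, and consider the Dirichlet problem
\begin{equation*}
f_\infty\bigl(\lambda[\chi+dd^c v]\bigr) = e^{G(x)+c} + \delta_0 - \tfrac{\eps}{2},\quad \lambda[\chi+dd^c v]\in \Gamma_\infty \IN B_r,\qquad v = \underline{u}-\eps-\gamma \ON \partial B_r.
\end{equation*}
The decisive input is Assumption \ref{a1.2N}: since $f_\infty$ is finite and tends to $0$ at $\partial \Gamma_\infty$, the prescribed right-hand side is uniformly bounded away from $\sup_{\partial\Gamma_\infty} f_\infty = 0$. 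Together with Lemma \ref{l2.7} (saying $f_\infty(R\lambda)\to+\infty$ on $\Gamma_\infty$), this lets one build a smooth classical strict subsolution barrier on $B_r$, for instance $\underline{u}(x_0)-\eps-\gamma + A(|z|^2-r^2)$ for large $A$, and standard existence theory for concave Hessian-type equations on Euclidean balls then yields $v\in C^\infty(B_r)\cap C^0(\bar B_r)$.

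The viscosity comparison principle, applied using the strict subsolution property of $\underline{u}$ at the higher level $e^{G+c}+\delta_0 > e^{G+c}+\delta_0 - \eps/2$, gives $\underline{u}-\eps-\gamma \leq v$ throughout $B_r$ (boundary data match), while $v|_{\partial B_r}=\underline{u}-\eps-\gamma<\underline{u}-\eps$ forces $v<\underline{u}-\eps$ on a collar neighborhood of $\partial B_r$. Cover $M$ by finitely many half-balls $B_{r_i/2}(x_i)$ with corresponding local solutions $v_i$ on $B_{r_i}(x_i)$; extend each $v_i$ outside $B_{r_i}$ by a sufficiently negative constant, exploiting the collar to make the extension smooth and $\Gamma_\infty$-admissible. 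Define
\begin{equation*}
u_\eps := \max\nolimits_\eta (v_1,\ldots,v_N)
\end{equation*}
with $\eta>0$ small enough that at every point the regularized max coincides with those $v_i$ that dominate by more than $\eta$. By the convexity of $\Gamma_\infty$ and concavity of $f_\infty$, $u_\eps \in C^\infty(M)$ is $\Gamma_\infty$-admissible and satisfies $f_\infty(\lambda[\chi+dd^c u_\eps]) \geq e^{G+c}+\delta_0-\eps$; the comparison bounds give $|u_\eps-\underline{u}|\leq \eps$ after absorbing $\gamma,\eta$ into the error.

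The main obstacle is the solvability of the local Dirichlet problem for $f_\infty$ with merely continuous boundary data, including boundary $C^0$-regularity --- the non-degeneracy condition needed for this is precisely what Assumption \ref{a1.2N} provides. The patching step is technically delicate: the parameters $r_i$, $\gamma$, $\eta$ and the negative extensions of the $v_i$ must be chosen so that $\max_\eta$ is globally smooth, $\Gamma_\infty$-admissible, and a strict subsolution uniformly across all overlap regions, but this follows the template already established by Harvey-Lawson-Pli\'s.
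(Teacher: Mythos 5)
Your overall architecture (local Dirichlet problem on small coordinate balls, non-degeneracy from Assumption \ref{a1.2N}, then Richberg gluing with the regularized maximum, using convexity of $\Gamma_{\infty}$ and concavity of $f_{\infty}$) is the same as the paper's, but there is a genuine gap at the heart of your local step: you propose to solve the Dirichlet problem \emph{directly for $f_{\infty}$} and invoke ``standard existence theory for concave Hessian-type equations.'' That theory (e.g.\ Yuan's theorem, which is what the paper uses) requires the operator to be smooth on its cone with $\partial f/\partial\lambda_i>0$; but $f_{\infty}=\min_i f_{\infty,i}$ is in general only concave and Lipschitz --- it is not smooth where the minimum switches branches (already for the Hessian quotient), and its partial derivatives need only be $\ge 0$. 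This is precisely why the paper inserts Proposition \ref{p3.2N}: it constructs a smooth, symmetric, strictly concave surrogate $g_{\eps_1}$ with $\partial g_{\eps_1}/\partial\lambda_i>0$, $(1-\eps_1)f_{\infty}\le g_{\eps_1}\le f_{\infty}+\tfrac{\pi}{2}\eps_1$ and $g_{\eps_1}(R\lambda)\to+\infty$, via the $\arctan$ strict-convexification (which needs Proposition \ref{prop full measure}) and the Richberg-type smoothing of convex functions (Proposition \ref{p3.4}); only then is the local Dirichlet problem solvable (Lemma \ref{l3.6N}), with continuous boundary data handled by smooth approximations $\varphi_k$ rather than by boundary $C^0$-regularity for rough data. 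Without some such regularization of the operator, your local existence claim does not follow from the cited theory, and this is the main missing idea.

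Two secondary points. First, to make the regularized maximum globally smooth you need \emph{strict} domination $v_i>\underline{u}-\text{(shift)}+\eps_i$ on the inner sets $K_i$ (so that the boundary-adjacent competitors drop out via Lemma \ref{l3.4}(5)); your comparison argument only yields the non-strict bound $v\ge \underline{u}-\eps-\gamma$, and upgrading it to a strict bound on $\bar B_{r/2}$ requires subtracting a small positive multiple of $\omega_0$ from the subsolution inequality --- this is exactly the role of the slack produced in Lemma \ref{l3.1} (and carried as $\eps_2'\omega_0$ into Proposition \ref{p3.2}), which your sketch omits. Second, ``extend each $v_i$ outside $B_{r_i}$ by a sufficiently negative constant'' is not how the gluing works and would not be smooth or a subsolution there; in the Harvey--Lawson--Pli\'{s} scheme one never extends the $v_i$ at all --- smoothness across $\partial\Omega_{\alpha}$ comes from the drop-out property of $M_{\eps}$, which again hinges on the strict inequalities above. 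These are repairable, but as written the local solvability for the non-smooth operator $f_{\infty}$ is a real gap rather than a routine citation.
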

We are going to construct $u_{\eps}$ via Richberg technique,  following the argument by Harvey-Lawson-Pliś \cite{HLP}.  The details will be carried out in the subsequent sections.  As a preliminary step,  we first show that the above notion of strict subsolution can be slightly strengthened.  More precisely,  
\begin{lem}\label{l3.1}
Let $\underline{u}\in C(M)$ be a strict subsolution with respect to $(G,c)$ in the viscosity sense as defined by Definition \ref{d2.2} for some $\delta_0>0$.  There exists a universal constant $\eps_0'>0$ such that for any $0<\eps\le \frac{1}{2}\delta_0$,  one has
\begin{equation*}
\lambda[\chi-\eps_0'\eps\omega_0+dd^c\frac{\underline{u}}{1+\eps}]\in \Gamma_{\infty},\,\,f_{\infty}\big(\lambda[\chi-\eps_0'\eps\omega_0+dd^c
\frac{\underline{u}}{1+\eps}]\big)\ge e^{G+c}+\delta_0-C_0\eps.
\end{equation*}
Here the choice of $\eps_{0}$ depends on the form $\chi$ and $\omega_0$,  and $C_0$ will additionally depend on an upper bound of $e^{G+c}$.
\end{lem}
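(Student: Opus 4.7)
The plan is to translate the viscosity subsolution condition for $\underline{u}$ into one for $\frac{\underline{u}}{1+\eps}$ via a test-function change of variables, and then exploit concavity of $f_\infty$ (viewed as a function of Hermitian matrices) to absorb the perturbation $-\eps_0'\eps\omega_0$.

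The first step is to observe that if $P\in C^2$ touches $\frac{\underline{u}}{1+\eps}$ from above at $x_0$, then $(1+\eps)P$ touches $\underline{u}$ from above at $x_0$, because $(1+\eps)P(x_0)=\underline{u}(x_0)$ and $(1+\eps)P\ge\underline{u}$ locally. Applying the viscosity strict subsolution hypothesis to $(1+\eps)P$ would then yield
\begin{equation*}
\lambda[A](x_0)\in\Gamma_{\infty},\qquad f_{\infty}\bigl(\lambda[A]\bigr)(x_0)\ge e^{G(x_0)+c}+\delta_0,\qquad A:=\chi+(1+\eps)dd^cP.
\end{equation*}
The next step would be the algebraic decomposition of the target matrix
\begin{equation*}
\chi-\eps_0'\eps\omega_0+dd^cP=\tfrac{1}{1+\eps}A+\tfrac{\eps}{1+\eps}X,\qquad X:=\chi-(1+\eps)\eps_0'\omega_0,
\end{equation*}
as a convex combination with weights $\tfrac{1}{1+\eps}$ and $\tfrac{\eps}{1+\eps}$. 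This decomposition is essentially forced by the requirement that $X$ be free of the a priori unbounded term $dd^cP$.

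I would then select $\eps_0'>0$ small enough, depending on $\chi$ and $\omega_0$ (and on the allowed range of $\eps$), so that $\lambda[X]\in\Gamma\subset\Gamma_{\infty}$ uniformly on $M$. This is possible because $\lambda[\chi]\in\Gamma$ stays at a uniform positive distance from $\partial\Gamma$ by Assumption \ref{chi positivity-2}, openness of $\Gamma$, and compactness of $M$. With both $A$ and $X$ having eigenvalues in $\Gamma_{\infty}$, I would invoke the classical fact that, for a symmetric convex cone $\Gamma_{\infty}$, the set $\{M\in\mathrm{Herm}(n):\lambda[M]\in\Gamma_{\infty}\}$ is convex in $\mathrm{Herm}(n)$ and $F(M):=f_{\infty}(\lambda[M])$ is concave on it (this follows from the concavity and symmetry of $f_\infty$ established in Lemma \ref{l2.5}). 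Concavity of $F$ would then give $\lambda[\chi-\eps_0'\eps\omega_0+dd^cP]\in\Gamma_{\infty}$ together with
\begin{equation*}
F\bigl(\chi-\eps_0'\eps\omega_0+dd^cP\bigr)\ge\tfrac{1}{1+\eps}F(A)+\tfrac{\eps}{1+\eps}F(X)\ge\tfrac{1}{1+\eps}\bigl(e^{G(x_0)+c}+\delta_0\bigr),
\end{equation*}
using only $F(X)\ge 0$. Finally $\tfrac{1}{1+\eps}\ge 1-\eps$, combined with $C_0:=\sup_M e^{G+c}+\delta_0$ (depending on an upper bound of $e^{G+c}$), yields the desired $F\ge e^{G(x_0)+c}+\delta_0-C_0\eps$.

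The main obstacle will be the quantitative choice of $\eps_0'$: one must keep $\lambda[X]\in\Gamma$ throughout the allowed range $\eps\in(0,\delta_0/2]$, which requires $\eps_0'$ to be small relative to the margin of $\lambda[\chi]$ inside $\Gamma$. All remaining steps are mechanical algebra plus the matrix concavity of $F$.
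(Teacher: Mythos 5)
Your proof is correct, and it reaches the same estimate as the paper's, but the mechanics are packaged differently. The paper tests with $P$ touching $\underline{u}$ from above and then (i) uses the gradient form of concavity of $F_{\infty}$ at the shifted form $(1+\eps)\chi-\eps\eps_0\omega_0+dd^cP$ together with Lemma \ref{l3.4N} to conclude $F_{\infty}$ does not decrease when $\eps(\chi-\eps_0\omega_0)$ is added, and then (ii) divides by $1+\eps$ using Lemma \ref{lem 2.3} ($f(t\lambda)\le tf(\lambda)$ for $t\ge1$). You instead test with $(1+\eps)P$ (where $P$ touches $\tfrac{\underline{u}}{1+\eps}$, which is the same family of test objects up to rescaling) and write the target form as an explicit convex combination $\tfrac{1}{1+\eps}A+\tfrac{\eps}{1+\eps}X$ with $X=\chi-(1+\eps)\eps_0'\omega_0$, then apply the two-point concavity of $A\mapsto f_{\infty}(\lambda[A])$ on Hermitian matrices plus $f_{\infty}(\lambda[X])\ge0$ (Lemma \ref{l2.5}); this bypasses Lemma \ref{lem 2.3} and Lemma \ref{l3.4N} entirely, at the cost of invoking the matrix concavity and convexity of $\{A:\lambda[A]\in\Gamma_{\infty}\}$, facts the paper itself uses later (in the proof of Lemma \ref{l3.5}), so nothing new is needed. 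One small shared caveat: to keep $\eps_0'$ ``universal'' you need $(1+\eps)\eps_0'$ small uniformly over the allowed range $\eps\le\tfrac12\delta_0$, which implicitly restricts $\eps$ to be bounded (the paper's choice $\eps_0'=\tfrac12\eps_0$ likewise needs ``$\eps$ small enough''), and your $C_0=\sup_Me^{G+c}+\delta_0$ carries the same harmless $\delta_0$-dependence as the paper's constant; neither point is a genuine gap.
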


The next step will be to regularize the operator $f_{\infty}$ to a smooth operator $g_{\eps_1}$.  More precisely,  we have the following technical result:
\begin{prop}\label{p3.2N}
For any $0<\eps<\frac{1}{2}$,  there exists a function $g_{\eps}(\lambda):\Gamma_{\infty}\rightarrow \bR$ such that the following properties hold:
\begin{enumerate}
\item $g_{\eps}\in C^{\infty}(\Gamma_{\infty})$ and symmetric in $(\lambda_1,\cdots,\lambda_n)\in \Gamma_{\infty}$,
\item $(1-\eps)f_{\infty}\le g_{\eps}\le f_{\infty}+\frac{\pi}{2}\eps$ on $\Gamma_{\infty}$.  
\item $g_{\eps}$ is concave,  $\frac{\partial g_{\eps}}{\partial\lambda_i}>0$ on $\Gamma_{\infty}$ and that $\lim_{R\rightarrow +\infty}g_{\eps}(R\lambda)=+\infty$ for any $\lambda \in \Gamma_{\infty}$.
\end{enumerate}
\end{prop}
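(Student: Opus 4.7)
The plan is to construct $g_\eps$ by first shifting the argument of $f_\infty$ slightly into the interior of $\Gamma_\infty$ so that a standard mollification is well defined on all of $\Gamma_\infty$, and then adding a small bounded concave correction to secure both strict monotonicity and the additive part of the upper bound.

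Pick $A > 0$ large enough that $\lambda_* := A(1,\ldots,1)$ satisfies $\lambda_* + B_1 \subset \Gamma_n$, so in particular $\dist(\lambda_*, \partial\Gamma_\infty) \ge 1$. Fix a smooth, permutation-invariant, nonnegative mollifier $\rho$ supported in the unit ball of $\bR^n$ with $\int \rho = 1$. Set
\begin{equation*}
g_\eps(\lambda) \;=\; \int_{B_1} f_\infty\bigl((1-\eps)\lambda + \eps(\lambda_* + y)\bigr)\, \rho(y)\, dy \;+\; \eps \arctan\!\Bigl(\textstyle\sum_{i=1}^n \lambda_i\Bigr).
\end{equation*}
For any $\lambda \in \Gamma_\infty$, concavity of the distance-to-boundary function on the convex set $\Gamma_\infty$ gives $\dist((1-\eps)\lambda + \eps \lambda_*, \partial\Gamma_\infty) \ge \eps \cdot \dist(\lambda_*, \partial\Gamma_\infty) \ge \eps$, so $(1-\eps)\lambda + \eps(\lambda_* + y) \in \Gamma_\infty$ for all $|y| \le 1$ and the integral is well defined. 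Smoothness follows by differentiating under the integral; $S_n$-symmetry in $\lambda$ comes from the invariance of $\rho$, $\lambda_*$, and $f_\infty$ under coordinate permutations; concavity holds because the integrand is the composition of the concave $f_\infty$ with an affine map in $\lambda$ and integration preserves concavity, while $\arctan\!:[0,\infty) \to [0,\pi/2)$ is concave nondecreasing and composes with the linear $\sum\lambda_i$ (which is positive on $\Gamma_\infty \subset \Gamma_1$ by Lemma \ref{l2.8}). Strict positivity of $\partial_i g_\eps$ is supplied by the $\arctan$ term, and $\lim_{R\to\infty} g_\eps(R\lambda) = +\infty$ follows from Lemma \ref{l2.7} applied to the first term while the $\arctan$ contribution stays bounded by $\pi/2 \cdot \eps$.

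The lower bound $g_\eps \ge (1-\eps) f_\infty$ is immediate from the concavity of $f_\infty$,
\begin{equation*}
f_\infty\bigl((1-\eps)\lambda + \eps(\lambda_* + y)\bigr) \;\ge\; (1-\eps) f_\infty(\lambda) + \eps f_\infty(\lambda_* + y) \;\ge\; (1-\eps) f_\infty(\lambda),
\end{equation*}
combined with the nonnegativity of $\arctan(\sum\lambda_i)$. The upper bound $g_\eps \le f_\infty + \tfrac{\pi}{2}\eps$ is the main obstacle. Since $\eps \arctan(\sum\lambda_i) < \tfrac{\pi}{2}\eps$, it suffices to show that the mollified integral is at most $f_\infty(\lambda)$. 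Jensen applied to the concave $f_\infty$ yields that the integral is bounded by $f_\infty((1-\eps)\lambda + \eps\lambda_*)$, and on the region where $\lambda \ge \lambda_*$ componentwise this is at most $f_\infty(\lambda)$ by monotonicity. The problematic region is where some $\lambda_i < \lambda_{*,i}$: the shift toward $\lambda_*$ then raises those coordinates, and naive monotonicity fails because $f_\infty$ may have unbounded gradient. The expected remedy is to replace the fixed center $\lambda_*$ by a smooth, symmetric, concave, $\lambda$-dependent reference $\lambda_*(\lambda)$ that tracks $\lambda$ from below in those coordinates while still providing the $\eps$-neighborhood room inside $\Gamma_\infty$ needed for the mollification. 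Verifying that this refined construction retains smoothness, symmetry, concavity, and monotonicity, while yielding both inequalities, is the delicate technical step, and this is where one follows the strategy of Harvey--Lawson--Pliś \cite{HLP}.
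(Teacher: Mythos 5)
Your construction runs into a genuine gap exactly where you flag it: the upper bound $g_{\eps}\le f_{\infty}+\tfrac{\pi}{2}\eps$. The shift $(1-\eps)\lambda+\eps\lambda_*$ moves points toward the interior, and by monotonicity this can \emph{increase} $f_{\infty}$; near $\partial\Gamma_{\infty}$ the gradient of $f_{\infty}$ is not bounded, so the increase is not controlled by any fixed multiple of $\eps$, and Jensen plus monotonicity only helps on the region $\lambda\ge\lambda_*$. More fundamentally, any fixed-scale mollification (even with a $\lambda$-dependent center) has to cope with the fact that the admissible error here degenerates as $\lambda\to\partial\Gamma_{\infty}$ (the natural tolerance is $\eps f_{\infty}$, which tends to $0$ there by Assumption \ref{a1.2N}), so a convolution at a uniform scale cannot deliver the two-sided bound; the ``refined construction'' you defer to is precisely the hard part, and your proposal does not supply it.

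The paper takes a different route that avoids mollification altogether. It first perturbs $f_{\infty}$ by $\frac{\eps}{n}\sum_{i=1}^n\arctan\bigl(\sum_{k\neq i}\lambda_k\bigr)$; by Proposition \ref{prop full measure} each argument is positive on $\Gamma_{\infty}$, the perturbation lies in $[0,\tfrac{\pi}{2}\eps]$, and (unlike your single $\arctan(\sum_i\lambda_i)$, whose Hessian is rank one) the $n$ directions $v_i=\sum_{k\neq i}e_k$ form a basis, so $-\tilde f_{\eps}$ is \emph{strictly} convex. Then, instead of convolving, it applies Proposition \ref{p3.4}: a Richberg-type gluing, via the regularized maximum of local affine-plus-small-quadratic supports, which produces a smooth function squeezed between $-\tilde f_{\eps}$ and $-\tilde f_{\eps}+\eps f_{\infty}$ with $D^2>0$. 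Because the gluing uses locally chosen parameters $\eps_{\alpha}$, it respects the position-dependent tolerance $\eps f_{\infty}$ — exactly the feature a fixed-scale convolution lacks — and yields $(1-\eps)f_{\infty}\le\hat f_{\eps}\le f_{\infty}+\tfrac{\pi}{2}\eps$ directly. A final averaging over $S_n$ restores symmetry, strict concavity gives $\partial g_{\eps}/\partial\lambda_i>0$ (if some $\partial_i g_{\eps}\le 0$, strict concavity in that direction would force $g_{\eps}\to-\infty$, contradicting $g_{\eps}\ge(1-\eps)f_{\infty}>0$), and the growth $g_{\eps}(R\lambda)\to+\infty$ follows from the lower bound and Lemma \ref{l2.7}, as in your proposal. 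If you want to salvage your approach, you would essentially have to reprove such a gluing statement with variable error, which is what Proposition \ref{p3.4} (following Harvey--Lawson--Pli\'s) is for.
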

The uniform closeness of $g_{\eps_1}$ to $f_{\infty}$ will make sure that a strict subsolution in terms of $f_{\infty}$ will also be a strict subsolution of $g_{\eps_1}$ and vice versa.  The smoothness of $g_{\eps_1}$ will be needed when we construct the strict subsolution locally.  Then we need the Richberg technique to patch the local strict subsolution to a global one on $M$.  More precisely,  we have:
\begin{prop}\label{p3.2}
Let $G\in C^{\infty}(M)$,  we define $c$ to be given by (\ref{1.6NNN}).  Assume that for some $\delta_1>0$,  $\eps_2'>0$,  there exists $u_1\in C(M)$ that satisfies the following in the viscosity sense,  for any $\eps_1$ small enough:
\begin{equation}\label{3.1NN}
\lambda[\chi-\eps_2'\omega_0+dd^cu_1]\in \Gamma_{\infty},\,\,g_{\eps_1}\big(\lambda[\chi-\eps_2'\omega_0+dd^cu_1\big)\ge e^{G(x)+c}+\delta_1.
\end{equation}
Then for such $\eps_1>0$ and any $\eps>0$,  there exists $u_{\eps_1,\eps}\in C^{\infty}(M)$ such that:
\begin{enumerate}
\item $\lambda[\chi+dd^cu_{\eps_1,\eps}]\in \Gamma_{\infty}$ and $g_{\eps_1}\big(\lambda[\chi+dd^cu_{\eps_1,\eps}]\big)\ge e^{G(x)+c}+\delta_1-\eps$,
\item $||u_{\eps_1,\eps}-\underline{u}||_{L^{\infty}}\le \eps$.
\end{enumerate}
\end{prop}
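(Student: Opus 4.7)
The plan is to adapt the Richberg-type regularization of Harvey--Lawson--Pliś \cite{HLP} to our operator $g_{\eps_1}$. The procedure has two stages: (i) produce smooth local strict subsolutions for $g_{\eps_1}$ near each point of $M$, and (ii) patch them into a single global smooth function via a regularized maximum. We essentially trade the $\eps_2'\omega_0$ slack in (\ref{3.1NN}) for smoothness.

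For the local stage, fix $x_0\in M$ and a small geodesic ball $B_r(x_0)$, with $r=r(x_0)$ to be chosen. I aim to construct a smooth $v_{x_0}$ defined in a neighborhood of $\overline{B_r(x_0)}$ such that $\lambda[\chi+dd^c v_{x_0}]\in\Gamma_\infty$ and $g_{\eps_1}(\lambda[\chi+dd^c v_{x_0}])\ge e^{G+c}+\delta_1-\eps/2$, with $v_{x_0}$ staying within $\eps/2$ of $u_1$ in a subneighborhood of $x_0$ but dropping strictly below $u_1$ on $\partial B_r(x_0)$. There are two natural routes: either mollify $u_1$ in local coordinates, using the $\eps_2'\omega_0$ slack to absorb the $O(\rho)$ commutator error between $dd^c$ and convolution coming from the non-K\"ahler Christoffel symbols; or solve the local Dirichlet problem $g_{\eps_1}(\lambda[\chi+dd^c v])=e^{G+c}+\delta_1-\eps/2$ in $B_r(x_0)$ with boundary data $v=u_1-\eps/2$. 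The latter falls under the classical Caffarelli--Nirenberg--Spruck/Guan theory for concave, monotone, nondegenerate operators, and the required nondegeneracy $e^{G+c}+\delta_1-\eps/2>\sup_{\partial\Gamma_\infty} g_{\eps_1}$ is ensured by Assumption \ref{a1.2N} combined with the sandwich $g_{\eps_1}\le f_\infty+\tfrac{\pi}{2}\eps_1$ from Proposition \ref{p3.2N}. In either route, $v_{x_0}$ will stay within $\eps$ of $u_1$ (and hence of $\underline{u}$) in $L^\infty(B_r(x_0))$.

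For the global stage, extract a finite subcover $\{B_{r_i/2}(x_i)\}_{i=1}^N$ and extend each $v_{x_i}$ to all of $M$ by setting it to $-\infty$ (or equivalently $u_1-2\eps$) off its defining ball. Apply the smooth regularized maximum operator $M_\eta$ of Richberg/Demailly to the family $\{v_{x_i}\}$. Because $g_{\eps_1}$ is smooth, concave, and monotone, and $\Gamma_\infty$ is an open convex cone, $M_\eta$ of a family of smooth $g_{\eps_1}$-strict subsolutions is again a smooth $g_{\eps_1}$-strict subsolution, with slack degraded by at most $O(\eta)$; the interior constructions guarantee that at each $x\in M$ at least one $v_{x_i}$ is active in the regularized max. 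Taking $\eta$ small relative to $\eps$ then produces $u_{\eps_1,\eps}$ satisfying (1) and (2).

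The main obstacle is the local construction. Via mollification the difficulty is quantitative: errors of order $O(\rho)$ arising from non-commutation of $dd^c$ with convolution on the Hermitian manifold must be absorbed by $\eps_2'\omega_0$, forcing $\rho\ll\eps_2'$ and a delicate interplay with the modulus of continuity of $u_1$ and the radius $r$. Via the Dirichlet route, the obstacle is establishing smooth solvability and the nondegeneracy estimate that keeps the eigenvalues uniformly inside $\Gamma_\infty$ up to the boundary---this is precisely where Assumption \ref{a1.2N} is indispensable, since it forces $f_\infty$ (hence $g_{\eps_1}$) to vanish at $\partial\Gamma_\infty$ and thereby rules out the pathological boundary behavior. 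Once the local smooth strict subsolutions are produced, the Richberg patching is essentially formal.
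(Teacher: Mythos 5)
Your overall strategy (solve a local Dirichlet problem for $g_{\eps_1}$ on small coordinate balls, then patch with a regularized maximum) is the same as the paper's, but the local stage as you describe it has a genuine gap. For the Richberg gluing to work you need the local function $v_{x_0}$ to lie \emph{strictly above} $u_1$ on an inner compact set $K\subset B_r(x_0)$ (this is what guarantees that at every point of $M$ some $v_{x_i}$ is active, that the regularized maximum is smooth near each $\partial B_{r_i}$, and that the glued function dominates $u_1$). Solving $g_{\eps_1}(\lambda[\chi+dd^cv])=e^{G+c}+\delta_1-\eps/2$ with boundary data $u_1-\eps/2$ and invoking comparison only yields $v\ge u_1-\eps/2$ in $B_r$, never $v>u_1$ in the interior, so your assertion that ``at each $x\in M$ at least one $v_{x_i}$ is active in the regularized max'' is not justified by your construction. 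The paper obtains the strict crossing by comparing the Dirichlet solution not with $u_1-\eps/2$ but with the barrier-modified function $w=u_1-\eps_5(\rho+s)$, $\rho=|z|^2-r^2$: the term $-\eps_5\,dd^c\rho$ is absorbed precisely by the $\eps_2'\omega_0$ slack in (\ref{3.1NN}), so $w$ is still a viscosity subsolution with right-hand side $e^{G+c}+\delta_1$, strictly larger than the Dirichlet right-hand side $e^{G+c}+\delta_1-\tfrac1k$; comparison then gives $v\ge w>u_1$ on the inner ball where $\rho+s<0$, while $v<u_1$ on $\partial B_r$. In your write-up the slack $\eps_2'\omega_0$ is used only in the (optional) mollification route, so in your main route it plays no role and the crucial strictness is lost.

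Two further points are asserted rather than proved. First, the upper closeness $v\le u_1+O(\eps)$ inside the ball does not follow from comparison; the paper derives it from the Alexandrov maximum principle, using $\Gamma_\infty\subset\Gamma_1$ (so that $\mathrm{tr}_{\omega_0}(\chi+dd^cv)\ge 0$) to get $\sup_{B_r}v\le\sup_{\partial B_r}v+Cr^2$, combined with choosing $r$ so small that $\mathrm{osc}_{\bar B_r}u_1\le\eps$; without this the glued function need not satisfy $\|u_{\eps_1,\eps}-u_1\|_{L^\infty}\le\eps$ from above. Second, the Dirichlet solvability you attribute to Caffarelli--Nirenberg--Spruck/Guan needs the Hermitian-manifold-with-boundary version (the paper uses Yuan's theorem) and smooth boundary data, so one must first approximate $u_1$ on $\partial B_r$ by smooth functions $\varphi_k$; this is routine but necessary. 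The mollification alternative you sketch is more problematic than you suggest: convolving a viscosity subsolution against a variable-coefficient background does not produce a subsolution with errors controlled only by the modulus of continuity of $u_1$, which is exactly why the Richberg technique is used here instead.
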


First we explain how to use Lemma \ref{l3.1},  as well as Proposition \ref{p3.2N} and \ref{p3.2} below to prove Theorem \ref{t3.1}.
\begin{proof}
(of Theorem \ref{t3.1}) We fix $0<\eps\le \frac{1}{2}\delta_0$,  First we may use Lemma \ref{l3.1} to get that:
\begin{equation*}
\lambda[\chi-\eps_0'\eps\omega_0+dd^c\frac{\underline{u}}{1+\eps}]\in \Gamma_{\infty},\,\,\,f_{\infty}\big(\lambda[\chi-\eps_0'\eps\omega_0+dd^c\frac{\underline{u}}{1+\eps}]\big)\ge e^{G+c}+\delta_0-C_0\eps.
\end{equation*}
Next we wish to use Proposition \ref{p3.2N} to translate this viscosity condition in terms of $g_{\eps_1}$.  Indeed,  let $x_0\in M$ and $P$ is a $C^2$ function that touches $\frac{\underline{u}}{1+\eps}$ from above at $x_0$,  we then have:
\begin{equation*}
\lambda[\chi-\eps_0'\eps\omega_0+dd^cP](x_0)\in \Gamma_{\infty},\,\,\,f_{\infty}\big(\lambda[\chi-\eps_0'\eps\omega_0+dd^cP]\big)(x_0)\ge e^{G+c}+\delta_0-C_0\eps.
\end{equation*}
Let $\eps_1>0$,  from Proposition \ref{p3.2N} we see that:
\begin{equation*}
g_{\eps_1}\big(\lambda[\chi-\eps_0'\eps\omega_0+dd^cP]\big)\ge (1-\eps_1)\big(e^{G+c}+\delta_0-C_0\eps\big)\ge e^{G+c}+\delta_0-C_1\eps_1-C_0\eps.
\end{equation*}
Here $C_1$ is universal,  and depends only on the upper bound of $e^{G+c}$.
Now we are in a position to apply Proposition \ref{p3.2},  with $\eps_2'=\eps_0\eps$,  $\delta_1=\delta_0-C\eps_1-C_0\eps$.  Also we are taking $u_1=\frac{\underline{u}}{1+\eps}$.  Then we can conclude that for any $\eps'>0$,  one can find a function $\tilde{u}\in C^{\infty}(M)$ such that:
\begin{equation*}
\lambda[\chi+dd^c\tilde{u}]\in \Gamma_{\infty},\,\,\,g_{\eps_1}\big(\lambda[\chi+dd^c\tilde{u}]\big)\ge e^{G+c}+\delta_1-\eps'.
\end{equation*}
Moreover,  $||\tilde{u}-\frac{\underline{u}}{1+\eps}||_{L^{\infty}}\le \eps'$.
Hence,  we have:
\begin{equation*}
\begin{split}
&f_{\infty}\big(\lambda[\chi+dd^c\tilde{u}]\big)\ge g_{\eps_1}\big(\lambda[\chi+dd^c\tilde{u}]\big)-\frac{\pi}{2}\eps_1\ge e^{G+c}+\delta_1-\eps'-\frac{\pi}{2}\eps_1\\
&=e^{G+c}+\delta_0-C'\eps_1-\eps'-C_0'\eps.
\end{split}
\end{equation*}
Also we have 
\begin{equation*}
||\tilde{u}-\underline{u}||_{L^{\infty}}\le ||\tilde{u}-\frac{\underline{u}}{1+\eps}||_{L^{\infty}}+||\frac{\underline{u}}{1+\eps}-\underline{u}||_{L^{\infty}}\le \eps'+||\underline{u}||_{L^{\infty}}\eps.
\end{equation*}
 Therefore,  we just need to choose $\eps_1>0,\,\eps'>0$ small enough,  such that $C'\eps_1<\eps,\,\eps'<\eps$,  which gives us $f_{\infty}\big(\lambda[\chi+dd^c\tilde{u}]\big)\ge e^{G+c}+\delta_0-C_0''\eps.$
\end{proof}
Now we prove Lemma \ref{l3.1}.  Proposition \ref{p3.2} and \ref{p3.2N} will be proved in the subsequent subsections.
\begin{proof}
(of Lemma \ref{l3.1})
Since $\lambda[\chi] \in \Gamma_{\infty}$ and $\chi$ is smooth, we have that there exists a small constant $\eps_0>0$ such that $\lambda[\chi-\eps_0 \omega]\in \Gamma_{\infty}$. 
Now let $x_0\in M$ and $P$ is a $C^2$ function defined in a neighborhood of $x_0$ that touches $\underline{u}$ from above at $x_0$. 
We then have:
\begin{equation*}
\lambda[\chi+dd^cP](x_0)\in \Gamma_{\infty},\,\,f_{\infty}\big(\lambda[\chi+dd^cP]\big)(x_0)\ge e^{G+c}+\delta_0.
\end{equation*}
For any $\eps>0$,  since $\lambda[\chi-\eps_0\omega]\in \Gamma$,  we see that $\lambda[\chi+dd^cP+\eps(\chi-\eps_0\omega_0)]\in \Gamma_{\infty}$.

Next we use the concavity of $F_{\infty}$:
\begin{equation}
\begin{split}
&f_{\infty}\big(\lambda[\chi+dd^cP])(x_0)\le f_{\infty}\big(\lambda[\eps(\chi-\eps_0\omega_0)+\chi+dd^cP]\big)(x_0)\\
&+\frac{\partial F_{\infty}}{\partial h_{i\bar{j}}}\big(\lambda[\eps(\chi-\eps_0\omega_0)+\chi+dd^cP]\big)(x_0)\eps\cdot (-\chi_{i\bar{j}}+\eps_0(\omega_0)_{i\bar{j}})\le 0.
\end{split}
\end{equation}
The last inequality follows from the following Lemma \ref{l3.4N} with $A_{i\bar{j}}=\eps(\chi_{i\bar{j}}-\eps_0(\omega_0)_{i\bar{j}})+\chi_{i\bar{j}}+P_{i\bar{j}}$,  $B_{i\bar{j}}=\chi_{i\bar{j}}-\eps_0(\omega_0)_{i\bar{j}}$.
Therefore we get:
\begin{equation*}
(1+\eps)f_{\infty}\big(\lambda[\chi-\frac{\eps\eps_0}{1+\eps}\omega_0+dd^c\frac{P}{1+\eps}]\big)\ge f_{\infty}\big(\lambda[(1+\eps)\chi-\eps\eps_0\omega_0+dd^cP]\big)\ge e^{G+c}+\delta_0.
\end{equation*}
Therefore,
\begin{equation*}
f_{\infty}\big(\lambda[\chi-\frac{\eps\eps_0}{1+\eps}\omega_0+dd^c\frac{P}{1+\eps}]\big)\ge \frac{e^{G+c}+\delta_0}{1+\eps}\ge e^{G+c}+\delta_0-C_0\eps.
\end{equation*}
Here $C_0$ depends on an upper bound of $e^{G+c}$.  Also we may take $\eps_0'=\frac{1}{2}\eps_0$ because $\frac{\eps_0}{1+\eps}\ge \frac{1}{2}\eps_0$ with $\eps$ small enough.
\end{proof}
In the following,  we use the following lemma which can be found in the Lemma 4.7 of \cite{CX}:
\begin{lem}\label{l3.4N}
Let $A$,  $B$ be two Hermitian matrices such that $\lambda(g^{i\bar{k}}A_{j\bar{k}}),\,\lambda(g^{i\bar{k}}B_{j\bar{k}})\in \Gamma$.  Define $F(h)=f\big(\lambda(g^{i\bar{k}}h_{j\bar{k}})\big)$.  Then we have:
\begin{equation*}
\sum_{j,k=1}^n\frac{\partial F}{\partial h_{j\bar{k}}}(A)B_{j\bar{k}}\ge 0.
\end{equation*}
\end{lem}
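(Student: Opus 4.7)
The plan is to reduce the matrix inequality to a scalar one via a diagonalization and then invoke the Schur--Horn theorem. The underlying scalar fact I want to isolate first is: for any $\lambda_0,\mu\in\Gamma$, one has $\nabla f(\lambda_0)\cdot\mu\ge 0$. This is established directly from the hypotheses on $f$: since $\Gamma$ is a convex cone, $\lambda_0+t\mu\in\Gamma$ for all $t\ge0$, so concavity of $f$ yields
\[
\nabla f(\lambda_0)\cdot\mu\ge\frac{f(\lambda_0+t\mu)-f(\lambda_0)}{t}\ge -\frac{f(\lambda_0)}{t},
\]
where the second inequality uses $f\ge 0$ on $\Gamma$. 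Sending $t\to\infty$ gives the claim.

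Next I pass to the matrix statement. Choose an orthonormal frame with respect to $g$ in which $A$ is diagonal, with diagonal entries $\lambda(g^{i\bar k}A_{j\bar k})$. Because $f$ is a smooth symmetric function, at such a diagonal point the derivative of $F$ is itself diagonal:
\[
\frac{\partial F}{\partial h_{j\bar k}}(A)=\frac{\partial f}{\partial\lambda_j}\bigl(\lambda(A)\bigr)\,\delta_{jk}.
\]
This identity is standard and holds even in the presence of repeated eigenvalues precisely because of the symmetry of $f$. Consequently,
\[
\sum_{j,k}\frac{\partial F}{\partial h_{j\bar k}}(A)\,B_{j\bar k}=\sum_{j=1}^n\frac{\partial f}{\partial\lambda_j}\bigl(\lambda(A)\bigr)\,B_{j\bar j},
\]
where $(B_{j\bar j})$ is the real diagonal of $B$ in this frame. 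By the Schur--Horn theorem, the vector $(B_{1\bar 1},\ldots,B_{n\bar n})$ is majorized by $\lambda(B)$, hence equals a convex combination $\sum_\sigma c_\sigma\,\sigma\!\cdot\!\lambda(B)$ of permutations of $\lambda(B)$. Since $\Gamma$ is symmetric and convex and $\lambda(B)\in\Gamma$, every such permutation lies in $\Gamma$, and therefore so does the convex combination $(B_{j\bar j})$.

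Applying the scalar key fact with $\lambda_0=\lambda(A)\in\Gamma$ and $\mu=(B_{j\bar j})\in\Gamma$ then immediately gives $\sum_j \frac{\partial f}{\partial\lambda_j}(\lambda(A))\,B_{j\bar j}\ge 0$, which is the desired inequality. The only delicate point I anticipate is justifying the diagonal form of $\partial F/\partial h_{j\bar k}$ at a matrix with degenerate eigenvalues and keeping track of the $g$-orthonormality in the change of frame; these are routine but worth being explicit about to avoid ambiguity in the choice of diagonalizing basis.
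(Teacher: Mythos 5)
Your proof is correct. The scalar fact $\nabla f(\lambda_0)\cdot\mu\ge 0$ for $\lambda_0,\mu\in\Gamma$ is exactly the concavity-plus-nonnegativity argument (it is the same computation as in Lemma \ref{lem 2.3} of the paper), the first-derivative formula $\frac{\partial F}{\partial h_{j\bar k}}(A)=f_j(\lambda(A))\delta_{jk}$ at a $g$-orthonormal frame diagonalizing $A$ does hold even with repeated eigenvalues (by symmetry of $f$, as you note), and Schur--Horn plus the symmetry and convexity of $\Gamma$ correctly places the diagonal $(B_{j\bar j})$ in $\Gamma$. Note, however, that the paper does not prove this lemma here; it cites Lemma 4.7 of \cite{CX}, where the argument is run directly at the matrix level: since $\{h \text{ Hermitian}: \lambda(g^{i\bar k}h_{j\bar k})\in\Gamma\}$ is a convex cone containing both $A$ and $B$, and $h\mapsto F(h)$ is concave and nonnegative on it, one gets $\sum_{j,k}\frac{\partial F}{\partial h_{j\bar k}}(A)B_{j\bar k}\ge \frac{F(A+tB)-F(A)}{t}\ge -\frac{F(A)}{t}\to 0$ as $t\to\infty$. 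That route is your scalar argument applied verbatim to $F$ itself, and it buys brevity: it needs neither the diagonal-gradient identity (and its repeated-eigenvalue discussion) nor majorization, only the concavity of $F$ on Hermitian matrices, which follows from concavity and symmetry of $f$. Your route buys a reduction to purely scalar statements at the price of importing Schur--Horn; both are complete proofs.
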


\subsection{Regularization of $f_{\infty}$}
The goal of this subsection is to prove Proposition \ref{p3.2N}.  It will follow from the following general regularization result of a strictly convex function.
\begin{prop}\label{p3.4}
Let $\Omega\subset \bR^n$ be an open convex subset (possibly unbounded).  Let $u$ be a strictly convex function on $\Omega$.  Let $h\in C(\Omega)$ with $h>0$.  Then there exists a smooth convex function $\tilde{u}$ defined on $\Omega$ with $u\le \tilde{u}\le u+h$ and $D^2\tilde{u}>0$ on $\Omega$.
\end{prop}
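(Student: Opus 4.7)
The plan is to apply the classical Richberg smoothing technique: produce smooth strictly convex local approximations of $u$ by mollification plus a small quadratic perturbation, then patch them globally using the regularized maximum operation. Since $u$ is finite and convex on the open convex set $\Omega$, it is automatically locally Lipschitz, hence continuous, so uniform control on compact subsets is available.

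For the local step I would first exhaust $\Omega$ by compact convex sets $K_0 \Subset \Omega_0 \Subset K_1 \Subset \Omega_1 \Subset \cdots$ with $\bigcup_j K_j=\Omega$. Letting $\rho_\eps$ denote the standard mollifier on $\bR^n$, form $u_\eps = u*\rho_\eps$ at points of distance $>\eps$ from $\partial\Omega$. Jensen's inequality applied to the convex $u$ gives $u_\eps \ge u$, and $u_\eps$ is smooth and convex with $u_\eps \to u$ uniformly on compacta. I would choose $\eps_j>0$ small enough that $u_{\eps_j}$ is defined on $\overline{\Omega_j}$ and $0\le u_{\eps_j}-u\le \eta_j$ there, with $4\eta_j<\min_{\overline{\Omega_j}}h$. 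Then set $v_j := u_{\eps_j} + \delta_j(|x-p_j|^2 - C_j) + c_j$ with $p_j$ a fixed reference point, $C_j=\sup_{\overline{\Omega_j}}|x-p_j|^2$, $\delta_j>0$ small, and $c_j$ a free constant to be fixed later. Then $v_j$ is smooth with $D^2 v_j \ge 2\delta_j I>0$, and $u \le v_j \le u+h$ on $\overline{\Omega_j}$ for suitable choices of parameters.

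For the global patching, define the regularized maximum $M_\eta(a,b):=\int_{\bR^2}\max(a+s,b+t)\,\theta_\eta(s,t)\,ds\,dt$ with $\theta_\eta$ a smooth symmetric mollifier on $\bR^2$ supported in $B_\eta(0)$. A standard calculation shows that $M_\eta$ is smooth, jointly convex, non-decreasing in each variable, equals $\max(a,b)$ whenever $|a-b|>2\eta$, and satisfies $\partial_a M_\eta + \partial_b M_\eta \equiv 1$. If $f,g$ are smooth convex functions of $x$ with $D^2 f, D^2 g \ge \mu I$, a direct chain-rule computation expresses the Hessian of $M_\eta(f,g)$ in $x$ as $\partial_a M_\eta \cdot D^2 f + \partial_b M_\eta \cdot D^2 g$ plus a non-negative symmetric quadratic form in $(Df,Dg)$ built from $D^2 M_\eta\ge 0$; since $\partial_a M_\eta,\partial_b M_\eta\ge 0$ sum to $1$, one gets $D^2[M_\eta(f,g)] \ge \mu I$, so strict convexity is preserved. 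I would then build $\tilde u$ inductively: set $w_0=v_0$ on $\Omega_0$; given $w_j\in C^\infty(\Omega_j)$ smooth strictly convex with $u\le w_j\le u+h$, tune the free shift $c_{j+1}$ so that $v_{j+1}>w_j+2\eta$ on a collar neighborhood of $\partial\Omega_{j+1}$ while $w_j>v_{j+1}+2\eta$ on $K_j$; this is possible because strict convexity of $u$ (Definition \ref{d2.4}) forces strict separation between $u$ and each affine minorant away from a single point, so $w_j$ and the mollification-based $v_{j+1}$ can be arranged to have the correct relative ordering on the two regions by a positive shift $c_{j+1}$. Then $w_{j+1}:=M_\eta(w_j,v_{j+1})$ agrees with $w_j$ on $K_j$, agrees with $v_{j+1}$ near $\partial\Omega_{j+1}$, is smooth strictly convex on $\Omega_{j+1}$, and still satisfies $u\le w_{j+1}\le u+h$. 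The sequence $\{w_j\}$ stabilizes on every compact subset of $\Omega$, yielding the desired smooth strictly convex $\tilde u$.

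The main obstacle will be the coordinated recursive choice of the parameters $\eps_j,\delta_j,c_{j+1},\eta$ so that: the shift $c_{j+1}$ produces the required clean sign-change structure of $v_{j+1}-w_j$ across the transition annulus $\Omega_{j+1}\setminus K_j$; strict positive-definiteness of the Hessian survives every application of $M_\eta$ (guaranteed by $\partial_a M_\eta+\partial_b M_\eta\equiv 1$ together with $\partial_a M_\eta,\partial_b M_\eta\ge 0$); and the uniform upper bound $\tilde u\le u+h$ is not spoiled by error accumulation across infinitely many patchings, which is ensured by the arrangement $w_{j+1}=w_j$ on $K_j$, so that no further alteration occurs on any fixed compact subset after finitely many steps.
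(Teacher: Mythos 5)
Your overall strategy (smooth strictly convex local pieces glued by regularized maxima) is in the same Richberg/Greene--Wu spirit as the paper, but there is a genuine gap at exactly the step you flag as the main obstacle: the claim that a constant shift $c_{j+1}$ can be tuned so that $v_{j+1}\ge w_j+2\eta$ on a collar (of $\partial\Omega_j$ --- as written you ask for a collar of $\partial\Omega_{j+1}$, where $w_j$ is not even defined) while $w_j\ge v_{j+1}+2\eta$ on $K_j$. Write
$v_{j+1}-w_j=(u_{\eps_{j+1}}-u)-(w_j-u)+\delta_{j+1}\bigl(|x-p_{j+1}|^2-C_{j+1}\bigr)+c_{j+1}$.
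The first two terms are uniformly small (of the order of your approximation errors), and the quadratic term has no favorable monotonicity from $K_j$ toward $\partial\Omega_j$ for a general convex exhaustion: points of $K_j$ may well be farther from $p_{j+1}$ than points of $\partial\Omega_j$, so it does not grow toward the transition annulus. Hence the non-constant part of $v_{j+1}-w_j$ need not oscillate by the $4\eta$ required between $K_j$ and the collar, and a constant shift merely translates the difference; no choice of $c_{j+1}$ can then produce the needed sign change. The appeal to strict convexity at this point is also misplaced: your $v_{j+1}$ is a mollification, not a supporting affine function, so ``strict separation from affine minorants'' plays no role in the comparison of $v_{j+1}$ with $w_j$. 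A second, related gap: even granting the ordering, $v_{j+1}\ge w_j+2\eta$ on the collar is in tension with the requirement $v_{j+1}\le u+h$, since inductively you only know $w_j\le u+h$ there; the induction needs a strengthened hypothesis (for instance $w_j\le u+(1-2^{-j})h$, or quantitative closeness of $w_j$ to $u$ near $\partial\Omega_j$), which the proposal does not set up.

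For comparison, the paper avoids any sequential tuning: for each $x_0$ it takes the supporting affine function $l_{x_0}$ and the sublevel set $S_t(x_0)=\{u<l_{x_0}+t\}$, whose diameter shrinks as $t\rightarrow 0+$ precisely by strict convexity (Lemma \ref{l3.7New}); the local piece $v=l_{x_0}+\tfrac{3t}{4}+\eps|x-x_0|^2$ is then \emph{automatically} above $u$ on $\bar{S}_{t/2}(x_0)$ and below $u$ on $\partial S_t(x_0)$, so the crossing is built into the local construction, and a single regularized maximum over a locally finite family of such pieces (Lemma \ref{l3.6New}) is smooth, has positive Hessian, and stays between $u$ and $u+h$. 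If you wish to keep the mollification-based route, you must likewise encode the crossing into $v_{j+1}$ itself --- e.g.\ add a convex function vanishing on $K_j$ and bounded below by a definite amount near $\partial\Omega_j$, or use sublevel sets of $u$ as the exhaustion so that the value of $u$ detects the collar --- and carry a quantitative upper bound near $\partial\Omega_j$ through the induction.
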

Before we prove this proposition,  first we explain how to use it to prove Proposition \ref{p3.2N}.  
\begin{proof}
(of Proposition \ref{p3.2N}) Let $0<\eps<1$.   We can take $\Omega=\Gamma_{\infty}$ and we must modify $-f_{\infty}$ to be strictly convex in order to apply Proposition \ref{p3.4}.  Therefore,  we consider:
\begin{equation}\label{g eps}
\widetilde{f}_{\eps} (\lambda) = f_{\infty}(\lambda) + \frac{ \eps}{n} \sum_{i=1}^n \arctan(\sum_{k\neq i}\lambda_k),
\end{equation}
for any $\lambda \in \overline{\Gamma}_{\infty}.$

\begin{lem}\label{strict convexity}
Let $\widetilde{f}_{\eps}$ be defined by (\ref{g eps}). Then we have that $-\widetilde{f}_{\eps}$ is strictly convex on $\Gamma_{\infty}$ in the sense defined by Definition \ref{d2.4}.
\end{lem}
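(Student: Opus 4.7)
The plan is to decompose $-\widetilde{f}_{\eps} = -f_\infty - \frac{\eps}{n}\sum_{i=1}^n \arctan(L_i)$, with $L_i(\lambda):=\sum_{k\neq i}\lambda_k$, and show that $-f_\infty$ is convex on $\Gamma_\infty$ while the perturbation is strictly convex in the sense of Definition \ref{d2.4}. The remark immediately after that definition (convex plus strictly convex is strictly convex) then yields the conclusion. Concretely I will construct, at any $\lambda_0 \in \Gamma_\infty$, an affine minorant of each summand on $\Gamma_\infty$ that touches at $\lambda_0$, with the touching point unique for the perturbation.

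For the $-f_\infty$ contribution: since $f_\infty$ is a finite concave function on the open convex set $\Gamma_\infty$ by Lemma \ref{l2.5}, standard convex analysis supplies an affine support $l_0$ with $-f_\infty \ge l_0$ on $\Gamma_\infty$ and equality at $\lambda_0$. For the perturbation, the current section works under the hypothesis $f_\infty < +\infty$, so $\Gamma_\infty \neq \bR^n$ by Lemma \ref{l2.8}, and Proposition \ref{prop full measure} then gives $L_i(\lambda) > 0$ on $\Gamma_\infty$ for every $i$. Since $-\arctan$ has second derivative $2t/(1+t^2)^2 > 0$ on $(0,+\infty)$, it is strictly convex there in the classical sense, so its tangent line $m_i$ at $t_0 = L_i(\lambda_0)$ satisfies $m_i(t) \le -\arctan(t)$ on $(0,+\infty)$ with equality only at $t = t_0$. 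Setting $l(\lambda) := l_0(\lambda) + \frac{\eps}{n}\sum_{i} m_i(L_i(\lambda))$ gives an affine minorant of $-\widetilde{f}_\eps$ on $\Gamma_\infty$ that touches at $\lambda_0$, and the equality $l(\lambda) = -\widetilde{f}_\eps(\lambda)$ forces both $-f_\infty(\lambda) = l_0(\lambda)$ and $L_i(\lambda) = L_i(\lambda_0)$ for every $i$.

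The only step requiring verification is the linear-algebra assertion $\bigcap_i \ker L_i = \{0\}$. Writing $L_i(v) = \bigl(\sum_k v_k\bigr) - v_i$, the system $L_i(v) = 0$ for all $i$ forces the $v_i$ to equal a common value $a$, and then substituting back yields $(n-1)a = 0$, hence $v = 0$ when $n \ge 2$. This elementary identity is the whole reason for summing over all $n$ indices in the definition of $\widetilde{f}_\eps$: any single summand alone would give strict convexity only transversal to its kernel. The argument is essentially bookkeeping once the ingredients (Lemma \ref{l2.5}, Lemma \ref{l2.8}, Proposition \ref{prop full measure}, strict convexity of $-\arctan$ on $(0,+\infty)$) are identified; I do not expect any deeper obstacle.
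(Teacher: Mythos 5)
Your proof is correct, and it rests on exactly the same skeleton as the paper's: the same decomposition $-\widetilde{f}_{\eps}=-f_{\infty}-\frac{\eps}{n}\sum_i\arctan(L_i)$, the same appeal to Proposition \ref{prop full measure} to get $L_i>0$ on $\Gamma_{\infty}$, the same use of $\arctan''(t)<0$ for $t>0$, and the same linear-algebra fact (your $\bigcap_i\ker L_i=\{0\}$ is precisely the paper's statement that the vectors $v_i=\sum_{k\neq i}e_k$ form a basis, since $L_i(\lambda)=(v_i,\lambda)$). Where you diverge is in how strict convexity of the perturbation is certified: the paper computes its Hessian, $\mathrm{Hess}_{pq}=-\sum_i\arctan''(L_i)(v_i)_p(v_i)_q$, shows it is positive definite, and then invokes the remark after Definition \ref{d2.4} that a $C^2$ function with positive Hessian is strictly convex; you instead build the supporting affine function by hand, composing the tangent lines of $-\arctan$ at $L_i(\lambda_0)$ with the linear forms $L_i$, and extract uniqueness of the touching point from one-dimensional strict convexity plus the kernel argument. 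Your route works directly with Definition \ref{d2.4} and so bypasses the (unproved in the paper) Hessian-implies-strict-convexity remark, at the cost of a slightly longer bookkeeping step; both arguments still lean on the other unproved remark that a convex function plus a strictly convex one is strictly convex, and both implicitly assume $n\ge 2$ (for $n=1$ the perturbation is constant and the kernel argument degenerates), which is harmless in this setting.
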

\begin{proof}
Since we already know that $-f_{\infty}$ is convex, it suffices to show that \sloppy $-\sum_{i=1}^n \arctan(\sum_{k\neq i}\lambda_k)$ is strictly convex. Since $-\sum_{i=1}^n \arctan(\sum_{k\neq i}\lambda_k)$ is smooth, the strict convexity can be follows from the positivity of its Hessian matrix. Denote $v_i =\sum_{k\neq i} e_k.$ Then we have that the Hessian matrix of $-\sum_{i=1}^n \arctan(\sum_{k\neq i}\lambda_k)$ is
\begin{equation*}
\text{Hess}_{pq} = - \sum_{i=1}^n \arctan''(\sum_{k\neq i}\lambda_k) (v_i)_p \cdot (v_i)_q.
\end{equation*}
According to the Proposition \ref{prop full measure}, we have that for $\sum_{k\neq i}\lambda_k >0$ on $\Gamma_{\infty}$. Since $\arctan''(t)<0$ for any $t >0$, we have that $-\arctan''(\sum_{k\neq i}\lambda_k) >0$ for each $i$. Since $\{v_i\}$ is a basis of $\mathbb{R}^n$, we have that $M$ is a positive matrix.  Indeed,  for $\xi\in \bR^n$,  one has:
\begin{equation*}
\begin{split}
&\sum_{p,q=1}^n\text{Hess}_{pq}\xi_p\xi_q=-\sum_{p,q=1}^n\sum_{i=1}^n\arctan''(\sum_{k\neq i}\lambda_k)(v_i)_p\xi_p(v_i)_q\xi_q\\
&=-\sum_{i=1}^n\arctan''(\sum_{k\neq i}\lambda_k)|(v_i,\xi)|^2\ge 0.
\end{split}
\end{equation*}
If it is zero,  we would have $(v_i,\xi)=0$ for all $i$.  Since $\{v_i\}_{1\le i\le n}$ forms a basis,  we see that one must have $\xi=0$.  This proves $\text{Hess}>0$.
\end{proof}

In summary,  now we have that $\tilde{f}_{\eps}$ satisfies:
\begin{enumerate}
\item $\tilde{f}_{\eps}$ is strictly concave,  symmetric,  and $\frac{\partial \tilde{f}_{\eps}}{\partial \lambda_i}>0,\,\,1\le i\le n$,
\item $f_{\infty}\le \tilde{f}_{\eps}\le f_{\infty}+\frac{\pi}{2}\eps$.
\end{enumerate}
In the above $f_{\infty}\le \tilde{f}_{\eps}$ used Proposition \ref{prop full measure}.

Now we wish to apply Proposition \ref{p3.4} to $-\tilde{f}_{\eps}$ which is strictly convex,  and $h=\eps f_{\infty}$,  which is strictly positive on $\Gamma_{\infty}$. 
Therefore,  we get a strictly convex smooth function $-\hat{f}_{\eps}(\lambda)$ defined on $\Gamma_{\infty}$ which satisfies:
\begin{equation*}
-\tilde{f}_{\eps}\le -\hat{f}_{\eps}\le -\tilde{f}_{\eps}+\eps f_{\infty}.
\end{equation*}
Finally,  we need to make the function symmetric in all variables,  so we define:
\begin{equation*}
g_{\eps}(\lambda_1,\cdots,\lambda_n)=\frac{1}{n!}\sum_{\sigma\in S_n}\hat{f}_{\eps}\big(\lambda_{\sigma(1)},\cdots,\lambda_{\sigma(n)}\big),
\end{equation*}
where $S_n$ denotes the group of permutations of $n$ elements.
Now it only remains to check item (1)-(3) in Proposition \ref{p3.2N}.  Item (1) is clear.  To see item (2),  we note that:
\begin{equation}\label{3.4NNew}
\hat{f}_{\eps}\le \tilde{f}_{\eps}\le f_{\infty}+\frac{\pi}{2}\eps,\,\,\,\,\hat{f}_{\eps}\ge \tilde{f}_{\eps}-\eps f_{\infty}\ge (1-\eps)f_{\infty}.
\end{equation}
Since $f_{\infty}$ is symmetric in $(\lambda_1,\cdots,\lambda_n)$,  (\ref{3.4NNew}) would carry over to $g_{\eps}$,  hence we have:
\begin{equation}
(1-\eps)f_{\infty}\le g_{\eps}\le f_{\infty}+\frac{\pi}{2}\eps.
\end{equation}
This verifies (2).

The strict concavity of $g_{\eps}$ follows from the strict concavity of $\hat{f}_{\eps}$ which is built into the construction since we have $D^2\hat{f}_{\eps}<0$.  Therefore we have $D^2g_{\eps}<0$ on $\Gamma_{\infty}$.  It only remains to check $\frac{\partial g_{\eps}}{\partial \lambda_i}>0$.  If not,  then there exists $\lambda_0\in \Gamma_{\infty}$ such that $\frac{\partial g_{\eps}}{\partial \lambda_i}(\lambda_0)\le 0$.  Now we consider the function $k(t):=t\mapsto g_{\eps}(\lambda_0+te_i)$.   We know that $k''(t)<0$ (since $D^2g_{\eps}<0$) and $k'(0) \le 0$.  Hence $k'(t)\le -\eps'$ for $t\in [1,+\infty)$.  In particular,  $k(t)\rightarrow -\infty$ as $t\rightarrow +\infty$.  On the other hand,  $g_{\eps}\ge (1-\eps)f_{\infty}>0$ on $\Gamma_{\infty}$,  which is a contradiction.

Finally we observe that $\lim_{R\rightarrow +\infty}g_{\eps}(R\lambda)=+\infty$,  which follows from $g_{\eps}\ge (1-\eps)f_{\infty}$ as well as Lemma \ref{l2.7}.
\end{proof}

Now it only remains to establish Proposition \ref{p3.4} which we are going to use Richberg technique.  
The key ingredient in the Richberg technique is the notion of regularized maximum.  Define:
\begin{equation}
M(t)=\max\big(t_1,\cdots,t_m),\,\,\,(t_1,\cdots,t_m)\in \bR^m. 
\end{equation}
Let $\varphi(s)\in C_c^{\infty}(\bR)$ be an even function such that $\varphi\ge 0$,  $supp\,\varphi\subset [-1,1]$ and $\int_{\bR}\varphi(s)ds=1$.  For each $(\eps_1,\cdots,\eps_m)\in \bR_{>0}^m$,  we put 
\begin{equation*}
\varphi_{\eps}(y)=\varphi(\frac{y_1}{\eps_1})\cdots \varphi(\frac{y_m}{\eps_m})\frac{1}{\eps_1\cdots \eps_m},\,\,y\in \bR^m.
\end{equation*}
We then define:
\begin{equation*}
M_{\eps}(t)=\int_{\bR^m}M(t+y)\varphi_{\eps}(y)dy,\,\,\,t\in \bR^m.
\end{equation*}
The function $M_{\eps}(t)$ is called the regularized maximum of $t_1,\,t_2,\cdots,t_m$.
The following lemma is taken from Harvey-Lawson-Pliś \cite{HLP},  Properties 2.1:
\begin{lem}\label{l3.4}
\begin{enumerate}
\item $M_{\eps}(t)$ is monotone increasing in all variables,  smooth and convex on $\bR^m$,
\item $M_{\eps}(t)$ is symmetric in the following sense: for any $i,\,j\in \{1,\cdots,m\}$,  if we swap $\eps_i$ and $\eps_j$,  and $t_i$ and $t_j$,  the value of $M_{\eps}(t)$ does not change.
\item $M_{\eps}(t+se)=M_{\eps}(t)+s$,  where $s\in \bR$ and $e=(1,\cdots,1)\in \bR^m$.  As an immediate consequence:
\begin{equation*}
\sum_{j=1}^m\frac{\partial M_{\eps}}{\partial t_j}\equiv 1.
\end{equation*}
\item $M(t)\le M_{\eps}(t)\le M(t+\eps)$.  Here $\eps=(\eps_1,\cdots,\eps_m)$.
\item If $t_j+\eps_j\le \max_{i\neq j}(t_i-\eps_i)$,  then
\begin{equation*}
M_{\eps}(t)=M_{\eps_1,\cdots,\hat{\eps}_j,\cdots,\eps_m}\big(t_1,\cdots,\hat{t}_j,\cdots,t_m\big).
\end{equation*}
Here $\hat{\eps}_j$ and $\hat{t}_j$ means this component is skipped.
\end{enumerate}
\end{lem}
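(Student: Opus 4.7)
The plan is to verify the five properties directly from the definition $M_\eps(t) = \int_{\bR^m} M(t+y)\varphi_\eps(y)dy$, exploiting the fact that $M$ is a max of linear functions (hence convex, Lipschitz, and monotone in each variable) and that $\varphi_\eps$ is a nonnegative, smooth, compactly supported, even product mollifier with $\int \varphi_\eps = 1$.

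For item (1), smoothness follows because $M$ is locally bounded and $\varphi_\eps \in C_c^\infty$, so $M_\eps = M * \varphi_\eps$ (with the sign convention in the integral) is smooth and all derivatives land on $\varphi_\eps$. Monotonicity in $t_j$ follows from $\partial_{t_j} M_\eps(t) = \int (\partial_{t_j} M)(t+y)\varphi_\eps(y)dy$ in the distributional sense (or by a difference-quotient argument using that $M$ is monotone in each argument). Convexity follows from $M(\theta t + (1-\theta) s + y) \le \theta M(t+y) + (1-\theta) M(s+y)$ and then integrating against the nonnegative kernel $\varphi_\eps$.

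For item (2), I would change variables $y_i \leftrightarrow y_j$ in the integral: since $\varphi_\eps$ is a product, swapping the roles of $\eps_i$ and $\eps_j$ inside $\varphi_\eps$ exactly compensates for swapping the arguments $t_i$ and $t_j$, while $M$ itself is symmetric. Item (3) is immediate from $M(t + se + y) = M(t+y)+s$ and $\int \varphi_\eps = 1$; differentiating both sides of $M_\eps(t+se)=M_\eps(t)+s$ in $s$ at $s=0$ gives $\sum_j \partial_{t_j} M_\eps = 1$.

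For item (4), I would pick an index $\bar\imath$ achieving $M(t) = t_{\bar\imath}$, use $M(t+y) \ge t_{\bar\imath} + y_{\bar\imath}$, integrate against $\varphi_\eps$, and observe that the $y_{\bar\imath}$ contribution vanishes because $\varphi$ is even, giving $M_\eps(t) \ge M(t)$. The upper bound uses monotonicity of $M$ and $y_i \le \eps_i$ on $\supp \varphi_\eps$, whence $M(t+y) \le M(t+\eps)$ pointwise. For item (5), the hypothesis $t_j+\eps_j \le \max_{i\neq j}(t_i-\eps_i)$ guarantees that for every $y$ in the support of $\varphi_\eps$ (so $|y_i|\le \eps_i$), one has $t_j+y_j \le t_j+\eps_j \le \max_{i\neq j}(t_i-\eps_i) \le \max_{i\neq j}(t_i+y_i)$, so the $j$-th slot is never the maximum; therefore $M(t+y)$ is independent of $(t_j,y_j)$ on the support, and Fubini together with $\int \varphi(y_j/\eps_j)\,dy_j/\eps_j = 1$ collapses the integral to the lower-dimensional $M_\eps$ with the $j$-th slot removed. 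The only mildly delicate point is the evenness trick in (4); everything else is bookkeeping with Fubini and convolution identities, so I do not anticipate a substantive obstacle.
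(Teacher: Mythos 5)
Your proof is correct. Note, however, that the paper does not prove this lemma at all: it is quoted verbatim from Harvey--Lawson--Pli\'s (Properties 2.1 of \cite{HLP}), so there is nothing internal to compare against; what you have written is a self-contained elementary verification of the cited facts. Your arguments for each item are sound: (1) smoothness via $M_{\eps}=M\ast\varphi_{\eps}$ (the identification with a convolution uses that $\varphi_{\eps}$ is even, which you implicitly invoke with the ``sign convention'' remark), monotonicity and convexity by integrating the pointwise properties of $M$ against the nonnegative kernel; (2) the coordinate swap in the $y$-integral, which works precisely because $\varphi_{\eps}$ is a product and $M$ is symmetric; (3) translation invariance along $e$ plus differentiation in $s$; (4) the lower bound via the evenness of $\varphi$ killing the $\int y_{\bar\imath}\varphi_{\eps}\,dy$ term (equivalently one could quote Jensen, since the kernel has mean zero), and the upper bound via monotonicity on the support; (5) the observation that the hypothesis forces the $j$-th slot never to attain the maximum on $\supp\varphi_{\eps}$, so Fubini and $\int\varphi(y_j/\eps_j)\,dy_j/\eps_j=1$ collapse the integral to the $(m-1)$-variable regularized maximum. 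No gaps.
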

Using the regularized maximum,  we can formulate the following gluing lemma:
\begin{lem}\label{l3.6New}
Let $\Omega$,  $u$,  $h$ be as in Proposition \ref{p3.4}.  Assume that there exists a locally finite family of bounded open sets $\Omega_{\alpha}$ with $\bar{\Omega}_{\alpha}\subset \Omega$ together with the following properties hold:
\begin{enumerate}
\item Each $\Omega_{\alpha}$ is convex and contains a compact subset $K_{\alpha}$ such that $\Omega\subset \cup_{\alpha}K_{\alpha}$.  Moreover,  $\sup_{\bar{\Omega}_{\alpha}}u\le \inf_{\bar{\Omega}_{\alpha}}(u+\frac{h}{2})$.
\item For each $\alpha$,  there exists $v_{\alpha}\in C^{\infty}(\bar{\Omega}_{\alpha})$ and $0<\eps_{\alpha}<\frac{1}{2}\inf_{\bar{\Omega}_{\alpha}}h$,  such that $v_{\alpha}-\eps_{\alpha}>u$ on $K_{\alpha}$,  $v_{\alpha}+\eps_{\alpha}<u$ on $\partial \Omega_{\alpha}$.  Moreover,  each $v_{\alpha}$ satisfies $D^2v_{\alpha}\ge \eps_{\alpha}I$ on $\Omega_{\alpha}$.
\end{enumerate}
We define:
\begin{equation*}
\tilde{u}(x)=M_{\eps}\{v_{\alpha}(x):x\in \Omega_{\alpha}\},\,\,\eps=(\eps_{\alpha})_{\alpha}.
\end{equation*}

Then one has:
\begin{enumerate}
\item $\tilde{u}\in C^{\infty}(\Omega)$,  $D^2\tilde{u}>0$ on $\Omega$,
\item $u\le \tilde{u}\le u+h$.
\end{enumerate}
\end{lem}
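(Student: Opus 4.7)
The plan is to verify properties (1) and (2) by repeatedly applying Lemma \ref{l3.4}. A preliminary remark: since $\Omega = \bigcup_\alpha K_\alpha$ with each $K_\alpha \subset \Omega_\alpha$, the set $\{v_\alpha(x) : x \in \Omega_\alpha\}$ is nonempty at every $x \in \Omega$, and by local finiteness it contains only finitely many elements in a neighborhood of any given point. For smoothness I would fix $x_0 \in \Omega$, choose a neighborhood $V$ meeting only finitely many $\Omega_{\alpha_1}, \ldots, \Omega_{\alpha_N}$, and argue that boundary indices $\alpha_i$ with $x_0 \in \partial \Omega_{\alpha_i}$ can be suppressed near $x_0$. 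Indeed, for such an $\alpha_i$ hypothesis (2) gives $v_{\alpha_i}(x_0) + \eps_{\alpha_i} < u(x_0)$, while picking $\beta$ with $x_0 \in K_\beta$ yields $v_\beta(x_0) - \eps_\beta > u(x_0)$; by continuity the strict gap $v_{\alpha_i} + \eps_{\alpha_i} < v_\beta - \eps_\beta$ persists in a smaller neighborhood $V' \subset V$, so property (5) of Lemma \ref{l3.4} allows us to strike $v_{\alpha_i}$ from the list without altering the regularized max. After suppressing every such boundary index, on $V'$ the function $\tilde{u}$ equals the regularized max of a fixed finite collection of smooth functions defined on a common open set, hence $\tilde{u} \in C^\infty(V')$, and since $x_0$ was arbitrary, $\tilde{u} \in C^\infty(\Omega)$.

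For strict convexity, locally writing $\tilde{u}(x) = M_\eps(v_{\alpha_1}(x), \ldots, v_{\alpha_k}(x))$ and differentiating twice yields
\begin{equation*}
D^2 \tilde{u} \;=\; \sum_{i} \frac{\partial M_\eps}{\partial t_i}\, D^2 v_{\alpha_i} \;+\; \sum_{i,j}\frac{\partial^2 M_\eps}{\partial t_i\partial t_j}\, \nabla v_{\alpha_i} \otimes \nabla v_{\alpha_j}.
\end{equation*}
The second sum is positive semidefinite by convexity of $M_\eps$ (item (1) of Lemma \ref{l3.4}). The coefficients $\partial M_\eps/\partial t_i$ are nonnegative (monotonicity) and sum to $1$ (item (3)), and $D^2 v_{\alpha_i} \ge \eps_{\alpha_i} I$, whence $D^2\tilde{u} \ge (\min_i \eps_{\alpha_i})\, I > 0$ at every point.

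The sandwich $u \le \tilde{u} \le u + h$ rests on item (4) of Lemma \ref{l3.4}, which gives $M(t) \le M_\eps(t) \le M(t+\eps)$. For the lower bound, for any $x \in \Omega$ I pick $\alpha$ with $x \in K_\alpha$ so that $v_\alpha(x) > u(x) + \eps_\alpha$; then $\tilde{u}(x) \ge v_\alpha(x) > u(x)$. For the upper bound, strict convexity of $v_\alpha$ forces its maximum on $\bar{\Omega}_\alpha$ onto $\partial \Omega_\alpha$, where $v_\alpha + \eps_\alpha < u$; hence $\sup_{\bar{\Omega}_\alpha}(v_\alpha + \eps_\alpha) \le \sup_{\partial \Omega_\alpha} u \le \sup_{\bar{\Omega}_\alpha} u$. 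Hypothesis (1) then gives $v_\alpha(x) + \eps_\alpha \le u(x) + h(x)/2$ for $x \in \bar{\Omega}_\alpha$, so $\tilde{u}(x) \le \max_\alpha(v_\alpha(x) + \eps_\alpha) \le u(x) + h(x)/2 \le u(x) + h(x)$. The main obstacle throughout is the smoothness step: the set $\{\alpha : x \in \Omega_\alpha\}$ is not locally constant, and only the sharp separation $v_\alpha + \eps_\alpha < u < v_\beta - \eps_\beta$ encoded in the hypotheses lets property (5) of the regularized max neutralize boundary indices; once that is in place, the remaining estimates are essentially formal.
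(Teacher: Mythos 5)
Your proposal is correct and follows essentially the same route as the paper: suppress boundary indices with item (5) of Lemma \ref{l3.4} to get local smoothness, use $\sum_p \partial M_{\eps}/\partial t_p=1$ together with $D^2v_{\alpha}\ge \eps_{\alpha}I$ for strict convexity, and sandwich $\tilde{u}$ via item (4). Your upper bound is a minor repackaging of the paper's chain of inequalities (comparing $v_{\alpha}+\eps_{\alpha}$ with $u$ on $\partial\Omega_{\alpha}$ first, which in fact yields the slightly sharper bound $\tilde{u}\le u+h/2$), but it is not a genuinely different argument.
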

\begin{proof}
First we prove that $\tilde{u}\in C^{\infty}(M)$.  It will be sufficient to show that,  for any $x\in M$,  there is a finite subset $\mathcal{I}_x$ of the index set $\alpha$,  and a neighborhood $U$ of $x$,  such that:
\begin{equation*}
\tilde{u}(y)=M_{\eps}\{v_{\alpha}(y):\alpha\in \mathcal{I}_x\},\,\,\,y\in U.
\end{equation*}
Indeed,  we are going to take $\mathcal{I}_x$ to be the set of $\alpha$ such that $x\in \Omega_{\alpha}$.  Because of the local finiteness assumption,  we see that $\mathcal{I}_x$ must be finite.
Likewise,  we define $\mathcal{I}'_x$ to be the set of $\alpha$ such that $x\in \partial \Omega_{\alpha}$,  and $\mathcal{I}''_x$ to be the set of $\alpha$ such that $x\in \bar{\Omega}_{\alpha}^c$.  Using the local finiteness of $\Omega_{\alpha}$,  we can find a neighborhood $U_1$ of $x$ such that
\begin{enumerate}
\item $\bar{U}_1$ only intersects with finitely many $\Omega_{\alpha}$,
\item $U_1\subset \cap_{\alpha\in \mathcal{I}_x}\Omega_{\alpha}$,  $U_1\subset \cap_{\alpha\in \mathcal{I}''_x}\bar{\Omega}_{\alpha}^c$.
\end{enumerate}

 That is, for $y\in U_1$,
\begin{equation*}
w(y)=M_{\eps}\{v_{\alpha}(y):\alpha\in \mathcal{I}_x\cup \mathcal{I}_x',\,\,y\in \Omega_{\alpha}\}.
\end{equation*}

For any $\alpha\in \mathcal{I}_x'$,  one has $x\in \partial \Omega_{\alpha}$,  so that by assumption (2),  one has:
\begin{equation*}
v_{\alpha}(x)+\eps_{\alpha}<u(x).
\end{equation*}
On the other hand,  since $M\subset\cup_{\alpha}K_{\alpha}$,  there exists $\beta$ such that $x\in K_{\beta}$,  which implies:
\begin{equation*}
v_{\beta}(x)-\eps_{\beta}>u(x).
\end{equation*}
Therefore,  one gets:
\begin{equation*}
v_{\alpha}(x)+\eps_{\alpha}<\max_{\alpha'\in \mathcal{I}_x}(v_{\alpha'}(x)-\eps_{\alpha'}).
\end{equation*}
Hence by continuity of $v_{\alpha}$,  there is a neighborhood $U_2$ of $x$,  contained in $U_1$,  such that for any $\alpha\in \mathcal{I}_x'$ (note that $\mathcal{I}_x'$ is a finite set because of local finiteness of the covering) and any $y\in U_2\cap \bar{\Omega}_{\alpha}$,  one has
\begin{equation}
v_{\alpha}(y)+\eps_{\alpha}\le \max_{\alpha'\in \mathcal{I}_x}(v_{\alpha'}(y)-\eps_{\alpha'}),\,\,\,\alpha\in \mathcal{I}_x'.
\end{equation}
Hence we may use item (5) of Lemma \ref{l3.4} to conclude that for $y\in U_2$,  $\alpha\in \mathcal{I}_x'$,  $v_{\alpha}(y)$ can be skipped when taking the regularized maximum,  that is:
\begin{equation*}
\tilde{u}(y)=M_{\eps}\{v_{\alpha}(y):\alpha\in \mathcal{I}_x\},\,\,\,y\in U_2.
\end{equation*}
Now we are done proving (1).   Moreover,  we know from local finiteness that for any $x_0\in \Omega$,  there is a neighborhood $U$ of $x_0$,  such that:
\begin{equation*}
\tilde{u}(y)=M_{\eps}\{v_{\alpha_j}(y):1\le j\le m\},\,\,\,\,y\in U.
\end{equation*}
Then we can compute the Hessian of $\tilde{u}$:
\begin{equation*}
\tilde{u}_{x_ix_j}(x_0)=\sum_{p=1}^m\frac{\partial M_{\eps}}{\partial t_p}(v_{\alpha_p})_{x_ix_j}+\sum_{p,q=1}^m\frac{\partial^2M_{\eps}}{\partial t_p\partial t_q}(v_{\alpha_p})_{x_i}(v_{\alpha_q})_{x_j}.
\end{equation*}
Using that $\frac{\partial M_{\eps}}{\partial t_p}\ge 0,\,\,\sum_{p=1}^m\frac{\partial M_{\eps}}{\partial t_p}=1$,  and that $M_{\eps}$ is convex from Lemma \ref{l3.4},   
we can conclude that $D^2\tilde{u}(x_0)\ge \sum_{p=1}^m\frac{\partial M_{\eps}}{\partial t_p}D^2v_{\alpha_p}\ge \sum_{p=1}^m\frac{\partial M_{\eps}}{\partial t_p}\eps_pI>0.$

Next we show that $u\le \tilde{u}\le u+h$.  For any $x_0\in \Omega$,  there exists $\beta$ so that $x_0\in K_{\beta}$ and we have $u(x_0)<v_{\beta}(x_0)-\eps_{\beta}$.  In particular:
\begin{equation*}
u(x_0)\le M\{v_{\alpha}(x_0):x_0\in \Omega_{\alpha}\}\le M_{\eps}\{v_{\alpha}(x_0):x_0\in \Omega_{\alpha}\}=\tilde{u}(x_0).
\end{equation*}
To prove the other inequality,  we have:
\begin{equation*}
\begin{split}
&\tilde{u}(x_0)=M_{\eps}\{v_{\alpha_j}(x_0):1\le j\le m\}\le M\{v_{\alpha_j}(x_0)+\eps_{\alpha_j}:1\le j\le m\}\\
&\le M\{\sup_{\bar{\Omega}_{\alpha_j}}(v_{\alpha_j}(x)+\eps_{\alpha_j}):1\le j\le m\}=M\{\sup_{\partial \Omega_{\alpha_j}}(v_{\alpha_j}(x)+\eps_{\alpha_j}):1\le j\le m\}\\
&\le M\{\sup_{\partial \Omega_{\alpha_j}}(u(x)+\eps_{\alpha_j}):1\le j\le m\}\le M\{\inf_{\bar{\Omega}_{\alpha_j}}(u(x)+\frac{h(x)}{2})+\eps_{\alpha_j}\}\le u(x_0)+h(x_0).
\end{split}
\end{equation*}
In the equality above on the second line,  we used that each $v_{\alpha_j}$ is convex,  so $\sup$ must be achieved on the boundary.  In the first inequality of the last line,  we used that $v_{\alpha_j}(x)<u(x)$ on $\partial \Omega_{\alpha_j}$.  In the last inequality of the last line,  we used that each $\eps_{\alpha_j}\le \frac{1}{2}\inf_{\bar{\Omega}_{\alpha_j}}h\le \frac{1}{2}h(x_0)$.
\end{proof}

Our next goal is to construct the family of open sets $\Omega_{\alpha}$.  For this we observe that:
\begin{lem}\label{l3.7New}
Let $u$,  $\Omega$ be as in Proposition \ref{p3.4}.  Let $x_0\in \Omega$.  Let $l_{x_0}$ be an affine function such that $\{x\in \Omega:l_{x_0}(x)=u(x)\}=\{x_0\}$.  For any $t>0$,  we define:
\begin{equation}\label{3.8NNew}
S_t(x_0):=\{x\in \Omega:u(x)<l_{x_0}(x)+t\}.
\end{equation}
Then one has $\diam\,S_t(x_0)\rightarrow 0$ as $t\rightarrow 0+$.
\end{lem}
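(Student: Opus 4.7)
I will argue by contradiction via the convex ``pullback to a small sphere'' trick, which lets me reduce a potentially far-away sequence to one trapped on a compact sphere around $x_0$.

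Set $v(x) := u(x) - l_{x_0}(x)$. By Definition \ref{d2.4}, $v$ is a nonnegative convex function on $\Omega$ with $v(x) = 0$ if and only if $x = x_0$, and $S_t(x_0) = \{v < t\}$ is a convex subset of $\Omega$ containing $x_0$. Since a convex function on an open convex set is automatically continuous, $v$ is continuous on $\Omega$.

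Suppose for contradiction that $\diam S_t(x_0) \not\to 0$ as $t \to 0+$. Then there exists $\eta > 0$ and sequences $t_k \to 0+$ and $x_k \in S_{t_k}(x_0)$ with $|x_k - x_0| \ge \eta$; to see this, note that if $\diam S_{t_k} \ge 2\eta$ for some $\eta > 0$ along a subsequence, then picking $x, y \in S_{t_k}$ with $|x-y|$ within $\eta$ of the diameter and using that $x_0 \in S_{t_k}$ gives at least one of the three points (which we relabel $x_k$) at distance $\ge \eta$ from $x_0$. Choose $\delta \in (0, \eta)$ small enough that $\overline{B_\delta(x_0)} \subset \Omega$, which is possible since $\Omega$ is open.

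For each $k$, set $s_k := \delta / |x_k - x_0| \in (0, 1]$ and $y_k := x_0 + s_k(x_k - x_0)$, so that $y_k$ lies on the segment from $x_0$ to $x_k$, hence in $\Omega$, and $|y_k - x_0| = \delta$. By convexity of $v$ and $v(x_0) = 0$,
\begin{equation*}
v(y_k) \le (1 - s_k) v(x_0) + s_k v(x_k) \le s_k t_k \le \frac{\delta}{\eta}\, t_k \to 0.
\end{equation*}
The points $y_k$ lie on the compact sphere of radius $\delta$ around $x_0$, which is contained in $\overline{B_\delta(x_0)} \subset \Omega$, so after passing to a subsequence $y_k \to y^* \in \Omega$ with $|y^* - x_0| = \delta > 0$. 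By continuity of $v$, $v(y^*) = 0$, which contradicts the strict convexity property $v > 0$ on $\Omega \setminus \{x_0\}$.

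The only subtle point is the possibility that $\Omega$ is unbounded, which would allow the $x_k$ to escape to infinity; this is exactly why one must pull back along the segment to the fixed small sphere around $x_0$ before extracting a convergent subsequence. With that observation the argument is straightforward, and I do not anticipate any other obstacle.
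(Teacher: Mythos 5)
Your proof is correct, and it takes a somewhat different (and in fact more careful) route than the paper's. The paper argues by contradiction using the convexity of the sublevel sets $S_{t_k}(x_0)$: it picks two points of $S_{t_k}(x_0)$ at distance $\ge \eta/2$, notes the whole segment between them lies in $S_{t_k}(x_0)$, and then ``passes to the limit'' to conclude that the contact set $\{u=l_{x_0}\}$ contains a nontrivial segment, contradicting that it is the single point $\{x_0\}$. That limiting step tacitly requires the points to subconverge inside $\Omega$, which is delicate when $\Omega$ is unbounded or when the limit segment could meet $\partial\Omega$. You instead use convexity of the function $v=u-l_{x_0}$ along the ray from $x_0$ to the far-away point, rescaling it onto the fixed compact sphere $\partial B_\delta(x_0)\subset\subset\Omega$, so that compactness and continuity of $v$ (convex, hence locally Lipschitz) are available for free, and the contradiction is a second zero of $v$ at distance $\delta$ from $x_0$. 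This buys robustness exactly where the paper's sketch is thinnest; the paper's version, when the compactness issue is resolved, yields the slightly stronger-looking conclusion that the contact set contains a segment, but for the lemma both give the same contradiction. One shared point worth flagging: both arguments use $u\ge l_{x_0}$ on $\Omega$ (i.e.\ that $l_{x_0}$ is the supporting affine function furnished by Definition \ref{d2.4}), which is not literally stated in the lemma but is the intended hypothesis — you invoke it when asserting $v\ge 0$, and the paper uses it implicitly when identifying the limit set with $\{u=l_{x_0}\}$; without it the statement is false, so your reading is the right one.
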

\begin{proof}
If not,  there there is a sequence $t_k\rightarrow 0+$,  and $\eta>0$ such that $\diam\,S_{t_k}(x_0)\ge \eta$ for all $k$.  Hence we may choose $x_k,\,y_k\in S_{t_k}(x_0)$,  $|x_k-y_k|\ge \frac{\eta}{2}$.  That is,  $u(x_k)<l_{x_0}(x_k)+t_k,\,\,u(y_k)<l_{x_0}(y_k)+t_k$.  From convexity,  the segment $[x_k,y_k]\subset S_{t_k}(x_0)$.  Passing to the limit,  we see that $\{x\in \Omega:l_{x_0}(x)=u(x)\}$ will contain a nontrivial segment with length $\ge \frac{\delta}{2}$,  which is a contradiction.
\end{proof}
Now we are ready to prove Proposition \ref{p3.4}:
\begin{proof}
For each $x_0\in \Omega$,  we consider $S_t(x_0)$ defined by (\ref{3.8NNew}).  From Lemma \ref{l3.7New},  we know that $S_t(x_0)$ will be compactly contained in $\Omega$ if $t$ is small enough.  Moreover,  from Lemma \ref{l3.7New},  we see that with $t$ small enough one has $\sup_{\bar{S}_t(x_0)}u\le \inf_{\bar{S}_t(x_0)}(u+\frac{h}{2})$.  This is possible because of the continuity of $u$ and $h$ on $\Omega$ and that $h$ is strictly positive on $\Omega$.  Denote this small enough $t$ to be $t_{x_0}$.  

Next we wish to find a sub-family of $\{S_{t_{x_0}}(x_0)\}_{x_0\in \Omega}$ which is locally finite on $\Omega$,  after possibly decreasing $t_{x_0}$ further.  For this,  we can choose a sequence of open subsets $\Omega_j\subset \Omega,\,j\ge 1$ with the following properties:
\begin{enumerate}
\item For each $j\ge 1$,  $\Omega_j$ is bounded,  with $\bar{\Omega}_j\subset \Omega_{j+1}$,  and $\cup_j\Omega_j=\Omega$,
\item For $j\ge 3$,  $\bar{\Omega}_j-\Omega_{j-1}$ is a compact subset of $\Omega_{j+1}-\bar{\Omega}_{j-2}$.
\end{enumerate}

First we consider $\bar{\Omega}_2$,  which is compact,  and we have a covering $\bar{\Omega}_2\subset \cup_{x_0\in \bar{\Omega}_2}S_{\frac{t_{x_0}}{2}}(x_0)$.  So we may take a finite sub-family $S_{\frac{t_j}{2}}(x_j)$ that covers $\bar{\Omega}_2$.  Next for each $l\ge3$,  we consider $\bar{\Omega}_l-\Omega_{l-1}$.  For $x_0\in \bar{\Omega}_l-\Omega_{l-1}$,  we may need to decrease $t_{x_0}$ so as to make sure that:
\begin{equation}\label{3.10NNew}
\bar{S}_{t_{x_0}}(x_0)\subset \Omega_{l+1}-\bar{\Omega}_{l-2}.
\end{equation}
This is possible because of Lemma \ref{l3.7New}.
Then we take a finite family $S_{\frac{t_j}{2}}(x_j)$ with $x_j\in \bar{\Omega}_l-\Omega_{l-1}$ which covers $\bar{\Omega}_l-\Omega_{l-1}$.    Moreover,  $\bar{S}_{t_j}(x_j)\subset \Omega_{l+1}-\bar{\Omega}_{l-2}$.
We combine all the $S_{t_j}(x_j)$ obtained this way,  and it is a locally finite family of open sets,  due to (\ref{3.10NNew}).

Now we have a covering of $\Omega$ by $S_{\frac{t_j}{2}}(x_j),\,j\ge 1$.  We now wish to take $\{\Omega_{\alpha}\}$ to be $S_{t_j}(x_j)$ and $K_{\alpha}=\bar{S}_{\frac{t_j}{2}}(x_j)=\{x\in \Omega:u(x)\le l_{x_j}(x)+\frac{t_j}{2}\}$.  Then we wish to take 
\begin{equation*}
v_j(x)=l_{x_j}(x)+\frac{3t_j}{4}+\eps_j|x-x_j|^2.
\end{equation*}
We see that,  with $\eps_j>0$ chosen small enough,  we can guarantee that $v_j(x)>u(x)+\eps_j$ on $K_j$,  $v_j(x)<u(x)-\eps_j$ on $\partial \Omega_j$ and $\eps_j<\frac{1}{2}\inf_{\bar{\Omega}_j}h$.  Moreover,  $D^2v_j\ge \eps_jI$.
Therefore,  we have verified the assumptions of Lemma \ref{l3.6New} and it gives us the function asserted in Proposition \ref{p3.4}.
\end{proof}

\subsection{Regularization of the strict viscosity subsolution}
The goal of this section is to prove Proposition \ref{p3.2}.

For the remainder of this section,  we will simply denote $g_{\eps_1}$ by $g$ for simplicity.  We have the following gluing lemma which is the analogue of Lemma \ref{l3.6New}.

\begin{lem}\label{l3.5}
Let $\eps_{1.1}>0$.  Let $u_1$ be as assumed in Proposition \ref{p3.2}.  Assume that there exists a finite family of open sets $\Omega_{\alpha}\subset M$ with the following properties hold:
\begin{enumerate}
\item Each $\Omega_{\alpha}$ has smooth boundary,  and contains a compact subset $K_{\alpha}$ such that $M\subset \cup_{\alpha}K_{\alpha}$.  Moreover,  $\sup_{\bar{\Omega}_{\alpha}}u_1\le \inf_{\bar{\Omega}_{\alpha}}u_1+\eps_{1.1}.$
\item For each $\alpha$,  there exists $v_{\alpha}\in C^{\infty}(\bar{\Omega}_{\alpha})$ and $\eps_{\alpha}\in (0,\eps_{1.1}]$,  such that $v_{\alpha}-\eps_{\alpha}>u_1$ on $K_{\alpha}$,  $v_{\alpha}+\eps_{\alpha}<u_1$ on $\partial \Omega_{\alpha}$.  Moreover,  there exists $\delta>0$,  such that for all $\alpha$:
\begin{equation*}
\lambda[\chi+dd^cv_{\alpha}]\in \Gamma_{\infty},\,\,\,g\big(\lambda[\chi+dd^cv_{\alpha}]\big)\ge e^{G+c}+\delta.
\end{equation*}
\item For each $\alpha$,  $\sup_{\bar{\Omega}_{\alpha}}v_{\alpha}\le \sup_{\partial \Omega_{\alpha}}v_{\alpha}+\eps_{1.1}$.
\end{enumerate}
We define:
\begin{equation*}
w(x)=M_{\eps}\{v_{\alpha}(x):x\in \Omega_{\alpha}\},\,\,\eps=(\eps_{\alpha})_{\alpha}.
\end{equation*}
Then one has:
\begin{enumerate}
\item $w\in C^{\infty}(M)$.
\item $\lambda[\chi+dd^cw]\in \Gamma_{\infty}$ and $g\big(\lambda[\chi+dd^cw]\big)\ge e^{G+c}+\delta$.
\item $u_1\le w\le u_1+3\eps_{1.1}$.
\end{enumerate}
\end{lem}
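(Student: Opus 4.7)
My plan is to mirror the argument of Lemma \ref{l3.6New}, but adapted to the manifold setting and the operator $g$. The proof breaks into four stages: local smoothness of $w$, computation of $dd^c w$, verification of the $g$-inequality via concavity and matrix monotonicity, and the $L^\infty$ bounds.

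For local smoothness, I fix $x_0\in M$ and consider the sets $\mathcal I_{x_0}=\{\alpha:x_0\in \Omega_{\alpha}\}$, $\mathcal I'_{x_0}=\{\alpha:x_0\in\partial\Omega_\alpha\}$, and $\mathcal I''_{x_0}=\{\alpha:x_0\notin\bar\Omega_\alpha\}$. Since the family is finite, all three sets are finite, and there is a neighborhood $U_1$ of $x_0$ contained in $\cap_{\alpha\in\mathcal I_{x_0}}\Omega_\alpha\cap \cap_{\alpha\in\mathcal I''_{x_0}}\bar\Omega_\alpha^c$. For $\alpha\in\mathcal I'_{x_0}$ one has $v_\alpha(x_0)+\eps_\alpha<u_1(x_0)$ (from the boundary condition in (2)), while choosing $\beta\in\mathcal I_{x_0}$ with $x_0\in K_\beta$ gives $v_\beta(x_0)-\eps_\beta>u_1(x_0)$. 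By continuity and finiteness, there is a smaller neighborhood $U_2\subset U_1$ where $v_\alpha(y)+\eps_\alpha\le \max_{\beta\in\mathcal I_{x_0}}\bigl(v_\beta(y)-\eps_\beta\bigr)$ for every $\alpha\in\mathcal I'_{x_0}$. Item (5) of Lemma \ref{l3.4} then allows me to drop these boundary indices from the regularized maximum, so $w(y)=M_\eps\{v_\alpha(y):\alpha\in\mathcal I_{x_0}\}$ on $U_2$. This exhibits $w$ as a composition of smooth functions, hence $w\in C^\infty(M)$.

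Next I compute $dd^c w$ at $x_0$. Writing $w=M_\eps(v_{\alpha_1},\dots,v_{\alpha_m})$ on $U_2$, chain rule gives
\begin{equation*}
(dd^c w)_{i\bar j}=\sum_p\frac{\partial M_\eps}{\partial t_p}(dd^c v_{\alpha_p})_{i\bar j}+\sum_{p,q}\frac{\partial^2 M_\eps}{\partial t_p\partial t_q}(v_{\alpha_p})_i\overline{(v_{\alpha_q})_j}.
\end{equation*}
Because $M_\eps$ is convex (Lemma \ref{l3.4}(1)), the second term is a positive semidefinite Hermitian form. Combined with $\sum_p \partial M_\eps/\partial t_p=1$ (Lemma \ref{l3.4}(3)), this yields the matrix inequality $\chi+dd^c w\ge \sum_p c_p(\chi+dd^c v_{\alpha_p})$ with $c_p\ge 0$, $\sum c_p=1$. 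Since $\Gamma_\infty$ is a symmetric convex cone, the convex combination has eigenvalues in $\Gamma_\infty$; adding the positive semidefinite residue (whose eigenvalues lie in $\bar\Gamma_n\subset\bar\Gamma_\infty$) keeps the sum in $\Gamma_\infty$. Thus $\lambda[\chi+dd^c w]\in\Gamma_\infty$.

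For the subsolution inequality I combine the matrix monotonicity of $g$ (which follows from Lemma \ref{l3.4N} applied to $g$ in place of $f$, using that $g$ is concave with positive partials on $\Gamma_\infty$) with the concavity of $H\mapsto g(\lambda[H])$ on Hermitian matrices with eigenvalues in $\Gamma_\infty$:
\begin{equation*}
g\bigl(\lambda[\chi+dd^c w]\bigr)\ge g\Bigl(\lambda\Bigl[\sum_p c_p(\chi+dd^c v_{\alpha_p})\Bigr]\Bigr)\ge \sum_p c_p\, g\bigl(\lambda[\chi+dd^c v_{\alpha_p}]\bigr)\ge e^{G+c}+\delta,
\end{equation*}
where the last inequality uses hypothesis (2). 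The only subtle point here is justifying the matrix monotonicity step, since the residue is only positive semidefinite (not strictly in $\Gamma$); this is handled by integrating the derivative along the straight line between the two matrices and using Lemma \ref{l3.4N} at each time. This is the step I expect to require the most care.

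Finally, the $L^\infty$ bounds come from combining properties (3)-(5) of Lemma \ref{l3.4} with the hypotheses. For the lower bound, given $x_0\in K_\beta$ I have $v_\beta(x_0)>u_1(x_0)+\eps_\beta$, so $w(x_0)\ge M\{v_\alpha(x_0)\}\ge v_\beta(x_0)>u_1(x_0)$. For the upper bound, I estimate
\begin{equation*}
w(x_0)\le M\{v_\alpha(x_0)+\eps_\alpha\}\le \max_\alpha\sup_{\bar\Omega_\alpha}v_\alpha+\eps_{1.1}\le \max_\alpha\sup_{\partial\Omega_\alpha}v_\alpha+2\eps_{1.1}\le \max_\alpha\sup_{\partial\Omega_\alpha}u_1+2\eps_{1.1}\le u_1(x_0)+3\eps_{1.1},
\end{equation*}
using successively Lemma \ref{l3.4}(4), the bound $\eps_\alpha\le\eps_{1.1}$, hypothesis (3), the boundary inequality $v_\alpha+\eps_\alpha<u_1$ on $\partial\Omega_\alpha$, and hypothesis (1) applied to $\Omega_\alpha$ containing $x_0$.
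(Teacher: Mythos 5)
Your proposal is correct and follows essentially the same route as the paper's proof: smoothness via the finite-index reduction and property (5) of the regularized maximum, the cone membership and $g$-inequality via convexity of $M_{\eps}$, $\sum_p\partial M_{\eps}/\partial t_p=1$, eigenvalue/matrix monotonicity and concavity of $H\mapsto g(\lambda[H])$, and the same chain of inequalities for $u_1\le w\le u_1+3\eps_{1.1}$. The only cosmetic difference is that you justify the monotonicity step by integrating the linearized operator along a segment, whereas the paper invokes Weyl-type eigenvalue monotonicity together with monotonicity of $g$ in each variable; both are fine.
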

The point of this lemma reduces the construction of a global smooth subsolution to the construction of $v_{\alpha}$'s,  which is purely local.  The proof essentially follows the Theorem 1.2 of  \cite{HLP} carried out in our special setting.
\begin{proof}
(of Lemma \ref{l3.5})
First we prove that $w\in C^{\infty}(M)$.  The proof follows word-for-word as the proof of the item (1) in Lemma \ref{l3.6New}.  In this proof,  we will only need to use $M\subset \cup_{\alpha}K_{\alpha}$ and $v_{\alpha}-\eps_{\alpha}>u_1$ on $K_{\alpha}$,  $v_{\alpha}+\eps_{\alpha}<u_1$.

Next we prove (2).  Let $x_0\in M$,  we have shown that there is a neighborhood $U$ of $x_0$ and a subfamily of $\Omega_{\alpha}$,  denoted as $\Omega_{\alpha_j},\,1\le j\le m$,  such that:
\begin{equation*}
w(y)=M_{\eps}\{v_{\alpha_j}(y):1\le j\le m\}.
\end{equation*}
Then we may calculate:
\begin{equation*}
dd^cw=\sum_{p,q=1}^m\frac{\partial^2M_{\eps}}{\partial t_p\partial t_q}dv_{\alpha_p}\wedge d^cv_{\alpha_q}+\sum_{p=1}^m\frac{\partial M_{\eps}}{\partial t_p}dd^cv_{\alpha_p}.
\end{equation*}
Therefore,
\begin{equation*}
\chi+dd^cw=\sum_{p,q=1}^m\frac{\partial^2M_{\eps}}{\partial t_p\partial t_q}dv_{\alpha_p}\wedge d^cv_{\alpha_q}+\sum_{p=1}^m\frac{\partial M_{\eps}}{\partial t_p}(\chi+dd^cv_{\alpha_p})\ge \sum_{p=1}^m\frac{\partial M_{\eps}}{\partial t_p}(\chi+dd^cv_{\alpha_p}).
\end{equation*}
Here we used the convexity of $M_{\eps}$.  Since one has $\frac{\partial M_{\eps}}{\partial t_p}\ge 0,\,\,1\le p\le m$,  and $\sum_{p=1}^m\frac{\partial M_{\eps}}{\partial t_p}=1$,  we see that $\lambda[\chi+dd^cw]\in \Gamma_{\infty}$,  using that $\lambda[\chi+dd^cv_{\alpha_p}]\in \Gamma_{\infty}$ for each $p$.  Here we are also using the elementary fact that $\{A\text{ is $n\times n$ Hermitian matrix}:\lambda(A)\in \Gamma_{\infty}\}$ is also a closed convex cone (in the space of Hermitian matrices).

Next,  we have:
\begin{equation*}
\begin{split}
&g\big(\lambda[\chi+dd^cw]\big)\ge g\big(\lambda[\sum_{p=1}^m\frac{\partial M_{\eps}}{\partial t_p}(\chi+dd^cv_{\alpha_p})]\big)\ge \sum_{p=1}^m\frac{\partial M_{\eps}}{\partial t_p}g\big(\lambda[\chi+dd^cv_{\alpha_p}]\big)\\
&\ge e^{G+c}+\delta.
\end{split}
\end{equation*}
The first inequality is due to that since $\chi+dd^cw\ge \sum_{p=1}^m\frac{\partial M_{\eps}}{\partial t_p}(\chi+dd^cv_{\alpha_p})$,  one has 
\begin{equation*}
\lambda(\chi+dd^cw)=\lambda\big(\sum_{p=1}^m\frac{\partial M_{\eps}}{\partial t_p}(\chi+dd^cv_{\alpha_p})\big)+\tilde{\lambda},\,\,\,\tilde{\lambda}\in \Gamma_n.
\end{equation*}
The second inequality used that since $g(\lambda)$ is concave,  $A\mapsto g(\lambda(A))$ is also concave,  as a function in the space of Hermitian matrices.  Now we have proved (2).  

Finally we verify (3).  We fix $x_0\in M$.
To see $u\le w$,  we note that there exists $\beta$ so that $x_0\in K_{\beta}$ and we have $u(x_0)<v(x_0)-\eps_{\beta}$.  In particular:
\begin{equation*}
u(x_0)\le M\{v_{\alpha}(x_0):x_0\in \Omega_{\alpha}\}\le M_{\eps}\{v_{\alpha}(x_0):x_0\in \Omega_{\alpha}\}=w(x_0).
\end{equation*}
The second inequality used item (4) in Lemma \ref{l3.4}.  To prove the other inequality,  first we can write $w(x_0)=M_{\eps}\{v_{\alpha_j}(x_0):1\le j\le m\}$.  We have
\begin{equation*}
\begin{split}
&w(x_0)=M_{\eps}\{v_{\alpha_j}(x_0):1\le j\le m\}\le M\{v_{\alpha_j}(x_0)+\eps_{\alpha_j}:1\le j\le m\}\\
&\le M\{\sup_{x\in \partial \Omega_{\alpha_j}}v_{\alpha_j}(x)+\eps_{1.1}+\eps_{\alpha_j}:1\le j\le m\}\le M\{\sup_{x\in \partial \Omega_{\alpha_j}}u(x)+\eps_{1.1}+\eps_{\alpha_j}:1\le j\le m\}\\
&\le M\{\inf_{\Omega_{\alpha_j}}u(x)+2\eps_{1.1}+\eps_{\alpha_j}:1\le j\le m\}\le u(x_0)+3\eps_{1.1}.
\end{split}
\end{equation*}
In the first inequality above,  we used item (4) of Lemma \ref{l3.4}.  In the second inequality,  we used item (3) of the assumption that $\sup_{\bar{\Omega}_{\alpha}}v_{\alpha}\le \sup_{\partial \Omega_{\alpha}}v_{\alpha}+\eps_{1.1}$.  In the third inequality,  we used that $v_{\alpha}<u$ on $\partial \Omega_{\alpha}$.  In the fourth inequality,  we used item (1) of the assumption that $\sup_{\bar{\Omega}_{\alpha}}u_1\le \inf_{\bar{\Omega}_{\alpha}}u_1+\eps_{1.1}$.  The last inequality used that $x_0\in \Omega_{\alpha_j}$ and that $\eps_{\alpha_j}\le \eps_{1.1}$.
\end{proof}
Our next goal is to construct the local strict subsolution $v_{\alpha}$.  We are going to do it via solving a Dirichlet problem of complex Hessian equation defined by the operator $g$.
More precisely,  we are going to need the following lemma:
\begin{lem}\label{l3.6N}
Let $x_0\in M$,  we choose normal coordinate at $x_0$ such that $(\omega_0)_{i\bar{j}}(x_0)=\delta_{ij}$.  Denote $B_r$ to be the (open) ball centered at $x_0$ with radius $r$ under this coordinate.  Then there exists $r_0>0$ small enough depending only on the background metric,  such that for any $0<r\le r_0$,  and any $\varphi\in C^{\infty}(\partial B_r)$,  $\psi \in C^{\infty}(\bar{B}_r)$ with 
\begin{equation}\label{3.8NNNew}
\inf_{\bar{B}_r}\psi>\lim\sup_{\lambda\rightarrow \partial \Gamma_{\infty}}g,
\end{equation}

there is a unique  solution to the Dirichlet problem:
\begin{equation*}
g\big(\lambda[\chi+dd^cv]\big)=\psi\text{ in $B_r$}, \,\,\,v=\varphi\text{ on $\partial B_r$},\,\,\lambda[\chi+dd^cv]\in \Gamma_{\infty},\,\,v\in C^{\infty}(\bar{B}_r).
\end{equation*}
\end{lem}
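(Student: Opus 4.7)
The plan is to solve the Dirichlet problem by the continuity method. Recall that $g = g_{\eps_1}$ is smooth, concave, symmetric, strictly monotone in each variable on $\Gamma_{\infty}$, and satisfies $\lim_{R\to\infty} g(R\lambda) = +\infty$ for each $\lambda \in \Gamma_{\infty}$ by Proposition \ref{p3.2N}; also $\lambda[\chi] \in \Gamma \subset \Gamma_{\infty}$ by Assumption \ref{chi positivity-2}, and since this section assumes $f_{\infty} < +\infty$, Lemma \ref{l2.8} gives $\Gamma_{\infty} \subset \Gamma_1$. The first step is to produce a smooth strict subsolution matching the boundary data. Let $\tilde{\varphi} \in C^{\infty}(\bar{B}_r)$ extend $\varphi$ and set
\begin{equation*}
\underline{v}(x) = \tilde{\varphi}(x) + A\bigl(|x|^2 - r^2\bigr).
\end{equation*}
Shrinking $r_0$ so that $dd^c|x|^2$ and $\omega_0$ are close to the Euclidean form on $B_{r_0}$, and then taking $A$ large, every eigenvalue of $\chi + dd^c\underline{v}$ with respect to $\omega_0$ dominates $A/2$ throughout $\bar B_r$. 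By the symmetric monotonicity of $g$ and $\lim_{R\to\infty} g(R\mathbf{1}) = +\infty$, we obtain $g(\lambda[\chi + dd^c\underline{v}]) \geq \sup_{B_r}\psi + 1$ uniformly, while $\underline{v} = \varphi$ on $\partial B_r$ and $\underline{v} \leq \tilde{\varphi}$ in $B_r$.

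With $\underline{v}$ in hand I would run the path
\begin{equation*}
g\bigl(\lambda[\chi + dd^c v_t]\bigr) = \psi_t := t\psi + (1-t)\,g\bigl(\lambda[\chi + dd^c\underline{v}]\bigr), \quad v_t|_{\partial B_r} = \varphi, \quad t \in [0,1],
\end{equation*}
so that $v_0 = \underline{v}$ and $v_1$ solves the target equation. Openness is standard: the linearization $L\xi = \tfrac{\partial g}{\partial h_{i\bar{j}}}(\chi + dd^cv_t)\,\xi_{i\bar{j}}$ is strictly elliptic from $\tfrac{\partial g}{\partial \lambda_i} > 0$, and the implicit function theorem applies in $C^{2,\alpha}_0(\bar B_r)$. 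For the $C^0$ lower bound I observe $\psi_t \leq g(\lambda[\chi + dd^c\underline{v}])$; linearizing along the segment from $\underline v$ to $v_t$ and applying the elliptic maximum principle to $\underline v - v_t$ gives $\underline{v} \leq v_t$. The $C^0$ upper bound follows from $\Gamma_{\infty} \subset \Gamma_1$, which yields $\Delta_{\omega_0} v_t \geq -\operatorname{tr}_{\omega_0}\chi$ and hence a linear maximum principle; the same inequality lets one use the solution $\bar v$ of $\Delta_{\omega_0}\bar v = -\operatorname{tr}_{\omega_0}\chi$ with boundary data $\varphi$ as an upper barrier for the boundary $C^1$ estimate, while $\underline v$ serves as the lower barrier.

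The heart of the proof is the global $C^2$ bound combined with the fact that $\lambda[\chi + dd^cv_t]$ stays in a compact subset of $\Gamma_{\infty}$ uniformly in $t$. The non-degeneracy hypothesis (\ref{3.8NNNew}) rules out approach to $\partial\Gamma_{\infty}$: any sequence with $\lambda_k \to \partial\Gamma_{\infty}$ would force $\psi_{t_k}(x_k) = g(\lambda_k) \leq \limsup_{\lambda \to \partial\Gamma_\infty} g < \inf\psi \leq \psi_{t_k}(x_k)$, a contradiction. A uniform upper bound on the eigenvalues then comes from the second-order estimate of Guan \cite{Guan} for the Dirichlet problem of concave Hessian equations, with $\underline{v}$ playing the role of the required $\mathcal{C}$-subsolution (it shares boundary data with $v_t$ and is strictly inside the admissible cone). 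Once uniform ellipticity and a $C^2$ bound are in place, Evans–Krylov gives $C^{2,\alpha}$, and Schauder bootstrap delivers $C^{\infty}$ regularity, which closes the continuity argument. Uniqueness follows at once from the comparison principle applied to the linearization of $g$ along the segment between two solutions.

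The main obstacle I expect is the boundary $C^2$ estimate, which for a general concave Hessian-type operator is delicate; in our setting it hinges on the subsolution $\underline{v}$ (supplying the $\mathcal{C}$-subsolution needed by Guan's machinery) together with the non-degeneracy condition (\ref{3.8NNNew}), which is precisely what prevents the equation from degenerating at $\partial\Gamma_{\infty}$ and thereby keeps the linearized operator uniformly elliptic along the path.
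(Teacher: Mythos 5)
Your overall strategy (smooth subsolution of the form $A(|x|^2-r^2)+\tilde\varphi$, the non-degeneracy hypothesis (\ref{3.8NNNew}) keeping $\lambda[\chi+dd^cv_t]$ away from $\partial\Gamma_{\infty}$, continuity method plus Evans--Krylov) is the right machinery, and your subsolution construction is literally the one the paper uses. But the paper does not re-run the continuity method: its proof of Lemma \ref{l3.6N} consists of verifying the hypotheses of Theorem \ref{t3.10} (Yuan \cite{Y}, Theorem 9.2), which is an off-the-shelf solvability result for exactly this class of Dirichlet problems on Hermitian manifolds with boundary. Measured against a complete argument, your sketch has a genuine gap precisely at the step you yourself flag as the main obstacle: you delegate the global and boundary second-order estimates to Guan \cite{Guan}, but that reference treats real Hessian equations on Euclidean domains; it does not cover an operator of the form $g(\lambda_{\omega_0}[\chi+dd^cv])$ with a non-K\"ahler background metric, a general cone $\Gamma_\infty$, and the regularized operator $g=g_{\eps_1}$ of Proposition \ref{p3.2N} (torsion terms, the eigenvalues being taken with respect to $\omega_0$, etc.). Supplying those estimates is the nontrivial content of Yuan's theorem, so as written your proof outsources the hard step to a result that does not apply, and also silently skips the gradient estimate needed before Evans--Krylov.

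A second, more specific omission: Yuan's theorem (and any boundary second-order estimate for general $(f,\Gamma)$ in this setting) requires a structural compatibility between the boundary geometry and the cone, namely condition (3) of Theorem \ref{t3.10}: $(-\kappa_1,\dots,-\kappa_{n-1},R)\in\Gamma^{f}_{\Mp}$ for large $R$, where $\kappa_i$ are the eigenvalues of the Levi form of $\partial B_r$ and $\Gamma^{f}_{\Mp}=\{\lambda\in\Gamma:\lim_{t\to\infty}f(t\lambda)>-\infty\}$. Your proposal never addresses any such condition. In the paper it is checked using that $\lim_{R\to+\infty}g(R\lambda)=+\infty$ (Proposition \ref{p3.2N}), which gives $\Gamma^{g}_{\Mp}=\Gamma_\infty$, together with the geometry of small coordinate balls; this is also where the smallness of $r_0$ enters, whereas in your sketch $r_0$ only plays a role in making $dd^c|x|^2$ comparable to $\omega_0$. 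So the route you propose can be completed, but only by reproving (or correctly citing) the complex Hermitian boundary estimates and verifying this Levi-form/cone condition --- which is exactly what the paper's two-line verification of Theorem \ref{t3.10} accomplishes.
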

This result follows from a result by Yuan \cite{Y},  Theorem 9.2:
\begin{thm}\label{t3.10}
Let $M$ be a Hermitian manifold with boundary,  equipped with the Hermitian metric $\omega_0$.  Let $\psi\in C^{\infty}(M)$,  $\varphi\in C^{\infty}(\partial M)$.  We consider the following Dirichlet problem:
\begin{equation}\label{3.8New}
f\big(\lambda[\chi+dd^cu]\big)=\psi,\,\,\,\lambda[\chi+dd^cu]\in \Gamma\,\,\text{ in $M$},\,\,\,\,u=\varphi \text{ on $\partial M$}.
\end{equation}
Assume that 
\begin{enumerate}
\item $f$ and $\Gamma$ satisfies Assumption \ref{a1.1N} and \ref{chi positivity-2}.  Assume also that $\inf_M\psi>\lim\sup_{\lambda\rightarrow \partial\Gamma}f(\lambda)$.
\item There exists a subsolution $u_0\in C^{\infty}(M)$ in the following sense:
\begin{equation*}
f\big(\lambda[\chi+dd^cu_0]\big)\ge \psi,\,\,\lambda[\chi+dd^cu_0]\in \Gamma\text{ in $M$},\,\,\,u_0=\varphi\text{ on $\partial M$}.
\end{equation*}
\item For any $x_0\in \partial M$,  there exists $R>0$ sufficiently large,  such that $(-\kappa_1,\cdots,-\kappa_{n-1},R)\in \Gamma_{\Mp}^f$,  where $\kappa_1,\cdots,\kappa_{n-1}$ are eigenvalues of the Levi form $L_{\partial M}$ with respect to $\omega_0|_{\partial M}$,  and $\Gamma_{\Mp}^f=\{\lambda\in \Gamma:\lim_{t\rightarrow \infty}f(t\lambda)>-\infty\}$.
\end{enumerate}
Then (\ref{3.8New}) has a unique solution which is smooth up to the boundary.
\end{thm}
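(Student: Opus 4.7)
The plan is to prove Theorem \ref{t3.10} by the continuity method together with a priori estimates up to $C^{2,\alpha}$. We consider the family of Dirichlet problems
\begin{equation*}
f(\lambda[\chi+dd^cu^{(t)}]) = \psi_t, \quad u^{(t)} = \varphi \text{ on } \partial M, \quad t\in[0,1],
\end{equation*}
where $\psi_t$ interpolates between $f(\lambda[\chi+dd^cu_0])$ at $t=0$ (which has the obvious solution $u_0$) and $\psi$ at $t=1$, keeping the non-degeneracy $\inf_M\psi_t > \limsup_{\lambda\to\partial\Gamma}f(\lambda)$ and the subsolution property along the path. Let $I\subset[0,1]$ be the set of $t$ for which a smooth solution exists. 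Openness follows from the implicit function theorem applied to the linearization $L = F^{i\bar j}\partial_i\partial_{\bar j}$, which is uniformly elliptic at a smooth admissible solution (using $\partial f/\partial\lambda_i>0$) and which is invertible on $C^{2,\alpha}_0$ by the maximum principle. Uniqueness along the path follows from the comparison principle: if $u$ and $v$ both solve the PDE with the same boundary data, then at an interior maximum of $u-v$ one contradicts the concavity and monotonicity of $f$.

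Closedness requires a priori bounds $\|u^{(t)}\|_{C^{2,\alpha}(\bar M)} \le C$ independent of $t$. For $C^0$, the subsolution $u_0$ provides a lower barrier by the comparison principle since $f(\lambda[\chi+dd^cu_0]) \ge \psi_t$ with $u_0 = \varphi$ on $\partial M$; an upper barrier comes from the fact that $\Gamma\subset\Gamma_1$ forces $\operatorname{tr}_{\omega_0}(\chi+dd^cu)>0$, so $\Delta_{\omega_0} u$ is bounded below, combined with the prescribed boundary values. For the gradient bound on $\partial M$, one uses $u_0$ as a lower barrier and constructs an upper barrier of the form $\varphi + A\rho - B\rho^2$, where $\rho$ is the signed distance to $\partial M$, exploiting again that $\chi-\epsilon\omega_0$ lies in $\Gamma$ near the boundary. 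The interior gradient bound then follows either from a Bernstein-type computation on $|\nabla u|^2 e^{\phi(u)}$ or, more efficiently, is deduced a posteriori once $C^2$ is controlled.

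The heart of the argument is the $C^2$ estimate. In the interior, one applies the maximum principle to a quantity such as $\log\lambda_1(\chi+dd^cu) + \phi(|\nabla u|^2) + \eta(u-u_0)$; using concavity of $f$, positivity of $F^{i\bar j}$, and the strict subsolution $u_0$ (which supplies a coercive term when $\lambda[\chi+dd^cu]$ drifts toward $\partial\Gamma$), one obtains $|dd^cu| \le C(1+\sup_{\partial M}|dd^cu|)$. The truly delicate piece is the boundary $C^2$ estimate, and this is where assumption (3) on the Levi form is used in an essential way. The tangential-tangential second derivatives on $\partial M$ are determined by $\varphi$ directly. The mixed tangential-normal derivatives are controlled by differentiating the equation and using standard barrier constructions supported by $u_0-u$. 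The double normal derivative $u_{n\bar n}$ at a boundary point $x_0$ is the main obstacle: after diagonalizing the tangential part of $\chi+dd^cu$ at $x_0$, one shows $u_{n\bar n}(x_0) \le C$ by a contradiction argument based on studying the limit of $(\chi+dd^cu)(x_0)$ as $u_{n\bar n}\to\infty$. In that limit the eigenvalue tuple approaches $(-\kappa_1,\ldots,-\kappa_{n-1},+\infty)$ (up to lower-order corrections), and the assumption that $(-\kappa_1,\ldots,-\kappa_{n-1},R)\in\Gamma_{\mathcal{G}}^f$ for large $R$ is exactly what allows one to use $f$'s coercivity in the $\lambda_n$-direction together with $\inf\psi>\limsup_{\partial\Gamma}f$ to contradict the boundary condition $f=\psi_t$.

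Once $\|u\|_{C^2(\bar M)}$ is bounded, the linearized operator becomes uniformly elliptic with concave structure, so the Evans--Krylov theorem yields an interior $C^{2,\alpha}$ estimate and its boundary version gives the global $C^{2,\alpha}$ bound. Schauder bootstrapping then upgrades to $C^\infty$. Thus $I$ is both open and closed, nonempty (contains $0$), and equals $[0,1]$, giving the desired smooth solution at $t=1$; uniqueness follows from the comparison principle noted above. I expect the boundary $C^2$ estimate, and in particular the double normal estimate using the Levi form condition in (3), to be the main technical obstacle, since it requires a careful asymptotic analysis of $f$ near infinity in a distinguished direction of $\Gamma$.
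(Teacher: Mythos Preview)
The paper does not prove Theorem~\ref{t3.10} at all: it is quoted verbatim as Theorem~9.2 of Yuan~\cite{Y} and simply invoked as a black box in order to deduce Lemma~\ref{l3.6N}. So there is no ``paper's own proof'' to compare against.

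Your sketch is a reasonable outline of the standard continuity-method proof of such Dirichlet problems, and is in the spirit of what Yuan (and before him Guan, Trudinger, Sz\'ekelyhidi, and others) actually do. A couple of points are slightly off. First, your description of the double-normal estimate is not quite the mechanism used in this line of work: one does not argue that the eigenvalue tuple of $\chi+dd^cu$ approaches $(-\kappa_1,\ldots,-\kappa_{n-1},+\infty)$, since the tangential Hessian of $u$ on $\partial M$ is not $-\kappa_i$ but is determined by $\varphi$ and the already-controlled mixed derivatives. The Levi form $(\kappa_1,\ldots,\kappa_{n-1})$ enters instead through the construction of a local barrier (typically of the form $u_0-u + \text{terms involving }\rho$ and a tangential distance) whose complex Hessian picks up the Levi form when one differentiates $\rho$; assumption~(3) is precisely what guarantees this barrier is admissible so that the linearized operator applied to it has a good sign. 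Second, the cone $\Gamma_{\mathcal{G}}^f$ in assumption~(3) is there to ensure the barrier direction is one along which $f$ is coercive, not to describe a limiting eigenvalue configuration of the solution itself. These are refinements rather than fatal gaps, but if you intend to actually carry out the boundary $C^2$ estimate you should consult Yuan~\cite{Y} (or the earlier papers of Guan and Trudinger) for the correct barrier construction.
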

Next let us check that one may apply Theorem \ref{t3.10} in our case:
\begin{proof}
(of Lemma \ref{l3.6N}) In Theorem \ref{t3.10},  we are going to take the manifold with boundary to be $\bar{B}_r$,  which is the coordinate ball centered at $x_0$.  Also the $\Gamma$ in Theorem \ref{t3.10} is going to be taken to be $\Gamma_{\infty}$,  and $f$ will be $g$.  The assumption (\ref{3.8NNNew}) guarantees the non-degeneracy condition.  To verify (2),  we just need to extend $\varphi$ smoothly on $\bar{B}_r(0)$,  then we take $u_0=A(|x|^2-r^2)+\varphi$.  It is clear that $u_0=\varphi$ on $\partial B_r(0)$.   With $A$ chosen sufficiently large,  one would have $\lambda[\chi+dd^cu_0]\in \Gamma$ and $f\big(\lambda[\chi+dd^cu_0]\big)\rightarrow +\infty$ as $A_0\rightarrow \infty$.  So we have (2).  To see (3),  we note that the Levi form of $\partial B_r$ will be negative definite,  and $\Gamma_{\mathcal{G}}^f$ will be $\Gamma_{\infty}$ in our case,  because $\lim_{R\rightarrow+\infty}g(R\lambda)=+\infty$ by Proposition \ref{p3.2N}.
\end{proof}

Using this solvability result,  we can construct the functions $v_{\alpha}$ assumed in Lemma \ref{l3.5},  if we take $\Omega_{\alpha}$ to be balls with sufficiently small radius under local coordinates.  More precisely,  we have:
\begin{lem}\label{l3.7}
Let $x_0\in M$,  we choose normal coordinate at $x_0$ such that $(\omega_0)_{i\bar{j}}(x_0)=\delta_{ij}$.  Denote $B_r$ to be the (open) ball centered at $x_0$ with radius $r$ under this coordinate. 
Let $\underline{u}\in C(M)$ be as assumed in Proposition \ref{p3.2} for some $\delta_1>0$.  Then there exists $r_0>0$ small enough depending only on the background metric,  such that for any $0<r'<r\le r_0$,  and any $\eps>0$,  there exists $v_{r',r}\in C^{\infty}(\bar{B}_r(0))$,  such that:
\begin{enumerate}
\item $\lambda[\chi+dd^cv_{r',r}]\in \Gamma_{\infty}$,  $g\big(\lambda[\chi+dd^cv_{r',r}]\big)\ge e^{G(x)+c}+\delta_1-\eps$,
\item $v_{r',r}>\underline{u}$ on $\bar{B}_{r'}$,  $v_{r',r}<\underline{u}$ on $\partial B_r$.
\item There exists a universal constant $C>0$ depending only on the form $\chi$ as well as the background metric,  such that:
\begin{equation*}
\sup_{B_r}v_{r',r}\le \sup_{\partial B_r}v_{r',r}+Cr^2.
\end{equation*}
\end{enumerate}
\end{lem}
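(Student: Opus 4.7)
I would produce $v_{r',r}$ as a constant shift of a classical solution to a Dirichlet problem supplied by Lemma \ref{l3.6N}, calibrating the shift so that the two sides of (2) can be reconciled. Fix $r_0>0$ small enough that $\mathrm{osc}_{B_{r_0}}e^{G+c}<\eps/4$, set the constant right-hand side $\psi:=e^{G(x_0)+c}+\delta_1-\eps/2$, and mollify $\underline u|_{\partial B_r}$ to $\widetilde{\underline u}\in C^\infty(\partial B_r)$ with $||\widetilde{\underline u}-\underline u||_{L^\infty(\partial B_r)}\le\tau$, for a $\tau>0$ to be fixed at the end. The non-degeneracy hypothesis (\ref{3.8NNNew}) holds because Proposition \ref{p3.2N}(2) and Assumption \ref{a1.2N} give $\limsup_{\lambda\to\partial\Gamma_\infty}g\le(\pi/2)\eps_1$, which is $<\inf\psi$ once $\eps_1$ has been chosen small upstream. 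Applying Lemma \ref{l3.6N} yields $w\in C^\infty(\bar B_r)$ with $g(\lambda[\chi+dd^c w])=\psi$ in $B_r$ and $w=\widetilde{\underline u}$ on $\partial B_r$.

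\textbf{Main step.} The crux is a quantitative interior strict comparison: for some $c_0>0$ independent of $\tau$, $w\ge\underline u+c_0$ on $\bar B_{r'}$. By Lemma \ref{l3.4N} and the viscosity hypothesis on $\underline u$, for any $C^2$ function $P$ touching $\underline u$ from above at $y\in B_r$,
\[
g(\lambda[\chi+dd^c P])(y)\ge g(\lambda[\chi-\eps_2'\omega_0+dd^c P])(y)\ge e^{G(y)+c}+\delta_1\ge\psi+\eps/4,
\]
so $\underline u$ is a strict viscosity subsolution, with gap $\eps/4$, of the equation solved by $w$. I would introduce the barrier $\rho(x):=(r^2-|x-x_0|^2)/(r^2-r'^2)$, which vanishes on $\partial B_r$, satisfies $\rho\ge 1$ on $\bar B_{r'}$, and has $dd^c\rho\le -c_1\omega_0$ in $B_r$ for a constant $c_1>0$ (depending on $r,r'$ and background data). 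Since $g(\lambda[\chi+dd^c w])=\psi$ is bounded below away from zero, $\lambda[\chi+dd^c w]$ stays in a compact subset of $\Gamma_\infty$, uniformly bounded away from $\partial\Gamma_\infty$; hence for $\eta>0$ small enough, $w_\eta:=w+\eta\rho$ satisfies $\lambda[\chi+dd^c w_\eta]\in\Gamma_\infty$, and by monotonicity of $g$ (Lemma \ref{l3.4N}), $g(\lambda[\chi+dd^c w_\eta])\le\psi$, so $w_\eta$ is a classical supersolution. Viscosity comparison between the strict subsolution $\underline u$ and the supersolution $w_\eta$ (using the gap $\eps/4$ to rule out an interior positive maximum of $\underline u-w_\eta$) yields $\underline u-w_\eta\le\sup_{\partial B_r}(\underline u-\widetilde{\underline u})\le\tau$, and restricting to $\bar B_{r'}$ gives $w-\underline u\ge\eta\rho-\tau\ge\eta-\tau$.

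\textbf{Conclusion.} Set $v_{r',r}:=w-\eta/4$ and then choose $\tau<\eta/8$. On $\partial B_r$, $v_{r',r}=\widetilde{\underline u}-\eta/4\le\underline u+\tau-\eta/4<\underline u$, and on $\bar B_{r'}$, $v_{r',r}\ge\underline u+\eta-\tau-\eta/4\ge\underline u+\eta/2>\underline u$, giving (2). Since $dd^c v_{r',r}=dd^c w$, the PDE gives $g(\lambda[\chi+dd^c v_{r',r}])=\psi\ge e^{G(x)+c}+\delta_1-\eps$ on $\bar B_r$ by the oscillation bound, yielding (1). The bound (3) follows from a maximum-principle argument on $h(x):=v_{r',r}(x)-A|x-x_0|^2$ for $A$ large: at an interior maximum of $h$ one would have $dd^c v_{r',r}\le 2A\omega_{\mathrm{Eucl}}$, bounding $\lambda[\chi+dd^c v_{r',r}]$ componentwise from above, and Proposition \ref{p3.2N}(3) together with Lemma \ref{lem 2.3} would then force an upper bound on $\psi$ that is violated by taking $A$ large enough in terms of $\psi$, $\chi$ and the structure of $g$. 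Hence $h$ attains its maximum on $\partial B_r$, giving $\sup_{B_r}v_{r',r}\le\sup_{\partial B_r}v_{r',r}+Ar^2$, i.e., (3) with $C=A$. The principal difficulty is the barrier comparison in the main step: one must simultaneously keep $\lambda[\chi+dd^c w_\eta]$ inside $\Gamma_\infty$ and $w_\eta$ a classical supersolution under the negative Hessian perturbation $\eta\,dd^c\rho$, both of which rely on $\psi$ being uniformly bounded below and hence $\lambda[\chi+dd^c w]$ being contained in a compact subset of the open cone $\Gamma_\infty$.
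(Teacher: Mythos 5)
Your overall architecture (solve a Dirichlet problem via Lemma \ref{l3.6N}, then compare with the viscosity subsolution and shift) matches the paper, but your main step has a genuine gap: you perturb the \emph{solution} upward by $\eta\rho$ and need $\eta$ (hence $c_0$) independent of $\tau$, while $\eta$ must be chosen small enough that $\lambda[\chi+dd^c w_\eta]$ stays in $\Gamma_\infty$ — a condition governed by $w$, which itself depends on $\tau$ through the mollified boundary data. Your claimed source of uniformity, that $g(\lambda[\chi+dd^cw])=\psi$ being bounded below forces $\lambda[\chi+dd^cw]$ into a fixed compact subset of $\Gamma_\infty$ uniformly away from $\partial\Gamma_\infty$, is false: level sets of $g$ are unbounded and can approach $\partial\Gamma_\infty$ at infinity (already for $\sigma_1$ or Hessian-quotient type operators), and a uniform-in-$\tau$ bound on $dd^cw$ is not available since $\|\widetilde{\underline u}\|_{C^2(\partial B_r)}$ blows up as $\tau\to0$. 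This makes the choice ``fix $\eta$, then take $\tau<\eta/8$'' circular. The paper avoids the problem by perturbing the \emph{subsolution} instead, exploiting exactly the room you discarded: hypothesis (\ref{3.1NN}) holds with $\chi-\eps_2'\omega_0$, so $w:=u_1-\eps_5(\rho+s)$ with $\eps_5\,dd^c\rho\le\eps_2'\omega_0$ (here $\rho=|z|^2-r^2$) is still a viscosity subsolution with the full gap $\delta_1$; comparing it with the Dirichlet solution $v_k$ (right-hand side $e^{G+c}+\delta_1-\tfrac1k$, boundary data $\varphi_k-\tfrac12\eps_5 s$) then requires no quantitative control whatsoever on $v_k$, and the strict separation on $\bar B_{r'}$ versus $\partial B_r$ comes from the sign of $\rho+s$.

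Two further points. Your proof of item (3) does not work as stated: at an interior maximum of $v_{r',r}-A|x-x_0|^2$ you only get $dd^cv_{r',r}\le A\,dd^c|x-x_0|^2$, i.e.\ an upper cap on the eigenvalues, which is perfectly consistent with $g(\lambda[\chi+dd^cv_{r',r}])=\psi$ for every large $A$ — no contradiction arises, so the maximum is not forced to the boundary. The correct (and the paper's) route is to use $\Gamma_\infty\subset\Gamma_1$, which gives $\mathrm{tr}_{\omega_0}(\chi+dd^cv_{r',r})\ge0$, hence $g^{i\bar j}\partial_{i\bar j}v_{r',r}\ge -C_1$, and then the Alexandrov (or ordinary subharmonicity) maximum principle yields $\sup_{B_r}v_{r',r}\le\sup_{\partial B_r}v_{r',r}+Cr^2$. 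Finally, taking a \emph{constant} right-hand side $\psi=e^{G(x_0)+c}+\delta_1-\eps/2$ forces $r_0$ to depend on $\eps$ and the modulus of continuity of $G$, contrary to the statement that $r_0$ depends only on the background metric; since $G$ is smooth in the setting of Proposition \ref{p3.2}, simply solve with the non-constant right-hand side $e^{G(x)+c}+\delta_1-\eps$ (or $-\tfrac1k$), as the paper does, and this issue disappears.
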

\begin{proof}
Without loss of generality,  we can assume that $x_0=0$ under local coordinates.  Denote $\rho(z)=|z|^2-r^2$,  so that $\rho<0$ in $B_r$ and $\rho=0 $ on $\partial B_r$.  

Let $s>0$ be small enough such that $0<s<\inf_{\bar{B}_{r'}}(-\rho)$.  Moreover,  we can take $\eps_5>0$ small enough so that $\eps_5dd^c\rho\le \eps_2'\omega_0$,  where $\eps_2'>0$ is the constant appearing in (\ref{3.1NN}).  With this choice of $\eps_5$,  we put $w=u_1-\eps_5(\rho+s)$,  we see that $w$ satisfies the following in the viscosity sense:
\begin{equation}\label{3.6New}
\lambda[\chi+dd^cw]\in \Gamma_{\infty},\,\,\,g\big(\lambda[\chi+dd^cw]\big)\ge e^{G(x)+c}+\delta_1.
\end{equation}

 Since $u\in C(M)$,  we can find $\varphi_k\in C^{\infty}(\partial B_r)$,  such that $\varphi_k\rightarrow u_1$ uniformly on $\partial B_r$.  Define $v_k$ to be the solution to the following Dirichlet problem:
\begin{equation}\label{3.7NNN}
\begin{split}
&g\big(\lambda[\chi+dd^cv_k]\big)=e^{G(x)+c}+\delta_1-\frac{1}{k}\,\text{ in $B_r$},\,\,\,v_k=\varphi_k-\frac{1}{2}\eps_5s\text{ on $\partial B_r$},\\
&\lambda[\chi+dd^cv_k]\in \Gamma_{\infty},\,\,\,v_k\in C^{\infty}(\bar{B}_r).
\end{split}
\end{equation}
The solution exists by Lemma \ref{l3.6N}.  To see that the nondegeneracy condition (\ref{3.8NNNew}) holds,  we note that here $\psi=e^{G+c}+\delta-\frac{1}{k}$,  and $g\le \frac{\pi}{2}\eps_1$ near $\partial \Gamma_{\infty}$,  so that (\ref{3.8NNNew}) holds if $k$ is sufficiently large and $\eps_1$ sufficiently small.

The plan is to take $v_{r',r}=v_k$,  with $k$ sufficiently large.  Now we verify requirements (1),   (2) and (3).  The requirement (1) is automatic,  by our very definition of the Dirichlet problem.  

To see requirement (2),  we note that $\varphi_k\rightarrow u_1$ uniformly on $\partial B_r$,  we see that $v_k<u_1$ for $k$ large enough on $\partial B_r$.  On the other hand,  we claim that:
\begin{equation}\label{3.7NN}
w\le v_k,\text{ for $k$ large enough}.
\end{equation}  
Once we have this,  we see that on $\bar{B}_{r'}$ (note $\rho+s<0$ on $\bar{B}_{r'}$):
\begin{equation*}
u_1<w=u_1-\eps_5(\rho+s)\le v_k.
\end{equation*}
It only remains to verify (\ref{3.7NN}).  First we observe that on $\partial B_r$,  one has $w=u_1-\eps_5s<\varphi_k-\frac{1}{2}\eps_5s$,  if $k$ is large enough.  If $w-v_k$ has positive maximum in $B_r$,  then $v_k+c_k$ touches $w$ from above at the maximum point,  for some $c_k\in \bR$.  But then from (\ref{3.6New}) (as well as the definition of viscosity subsolution),  we see that the following must hold at the maximum point:
\begin{equation*}
g\big(\lambda[\chi+dd^cv_k]\big)\ge e^{G+c}+\delta_1.
\end{equation*}
This is in direct violation with (\ref{3.7NNN}).  
So one must have $w\le v_k$.  Now we prove (3).  We use that $\Gamma_{\infty}\subset \Gamma_1$.  Since $\lambda[\chi+dd^cv_k]\in \Gamma_{\infty}\subset \Gamma_1$,  we see that:
\begin{equation*}
tr_{\omega_0}(\chi+dd^cv_k)=g^{i\bar{j}}\partial_{i\bar{j}}v_k+tr_{\omega_0}\chi\ge 0.
\end{equation*}
That is,  there exists $C_1>0$,  such that $g^{i\bar{j}}\partial_{i\bar{j}}v_k\ge -C_1$.  Hence we may use Alexandrove maximum principle on $\bar{B}_r$ to conclude that:
\begin{equation*}
\sup_{B_r}v_k\le \sup_{\partial B_r}v_k+C_n\diam\,B_r\bigg(\int_{B_r}\frac{C_1^n}{(\det g_{i\bar{j}})^2}dvol\bigg)^{\frac{1}{2n}}\le \sup_{\partial B_r}v_k+Cr^2.
\end{equation*}
\end{proof}
Now we are in a position to prove Proposition \ref{p3.2}.  
\begin{proof}
(of Proposition \ref{p3.2}) For any $x_0\in M$,  we can choose a normal coordinate centered at $x_0$.  Also we denote $\tilde{r}_{0,x_0}>0$ to be the radius determined by Lemma \label{l3.6} and \ref{l3.7}.  Let $\eps>0$.  
Let $r_{x_0}\le \tilde{r}_{0,x_0}$ be chosen sufficiently small,  such that:
\begin{enumerate}
\item $\sup_{\bar{B}_{r_{x_0}}}u_1\le \inf_{\bar{B}_{r_{x_0}}}u_1+\eps$.  This is possible because of the continuity of $u_1$.
\item $\sup_{\bar{B}_{r_{x_0}}}v_{\frac{1}{2}r_{x_0},r_{x_0}}\le \sup_{\partial B_{r_{x_0}}}v_{\frac{1}{2}r_{x_0},r_{x_0}}+\eps$.  Here $v_{\frac{1}{2}r_{x_0},r_{x_0}}$ is the $v_{r',r}$ constructed in Lemma \ref{l3.7} with $r'=\frac{r_{x_0}}{2}$,  $r=r_{x_0}$ and $g\big(\lambda[\chi+dd^cv_{\frac{r_{x_0}}{2},r_{x_0}}]\big)\ge e^{G+c}+\delta_1-\eps$.  This is possible because of item (3) of Lemma \ref{l3.7}.
\end{enumerate}
Now we have $M\subset \cup_{x_0\in M}B_{\frac{r_{x_0}}{2}}$.  Hence there is a finite sub-family,  which we denote as $B_{\frac{r_j}{2}}(x_j)$ such that $M\subset \cup_{j=1}^{K}B_{\frac{r_j}{2}}(x_j)$.

We hope to use Lemma \ref{l3.5} to get our desired global subsolution.  First we take $\eps_{1.1}=\eps$.  
Then we can take $\{\Omega_{\alpha}\}_{\alpha}$ to be $B_{r_j}(x_j),\,\,1\le j\le m$,  and $K_{\alpha}$ to be $\bar{B}_{\frac{r_j}{2}}(x_j)$.  We also denote $v_j=v_{\frac{r_j}{2},r_j}$ constructed in Lemma \ref{l3.7}.  From Lemma \ref{l3.7} item (2),  we know that $v_j>u_1$ on $\bar{B}_{\frac{1}{2}r_j}$ and $v_j<u_1$ on $\partial B_{r_j}$.  Therefore,  there exists $\eps_j>0$ such that:
\begin{equation*}
v_j>u_1+\eps_j,\,\,\text{ on $B_{\frac{r_j}{2}}$},\,\,\,\,v_j+\eps_j<u_1,\text{ on $\partial B_{r_j}$}.
\end{equation*}
Moreover,  for each $v_j$,  we have $g\big(\lambda[\chi+dd^cv_j]\big)\ge e^{G+c}+\delta_1-\eps$.

From the smallness of $r_j$,  we see that item (1) of the assumptions of Lemma \ref{l3.5} holds.  We also see that item (2) holds as well,  with $\eps_{\alpha}$'s taken to be $\eps_j$ and $\delta=\delta_1-\eps$.
We also see that item (3) of the assumptions of Lemma \ref{l3.5} also holds.  Hence Lemma \ref{l3.5} will give us a function $w\in C^{\infty}(M)$,  $\lambda[\chi+dd^cw]\in \Gamma_{\infty}$,  $g\big(\lambda[\chi+dd^cw]\big)\ge e^{G+c}+\delta_1-\eps$,  and $u_1\le w\le u_1+3\eps$.
\end{proof}

\section{Stability estimate and existence}
\subsection{Existence}
In this subsection,  we are going to use an approximation argument to obtain a viscosity solution to:
\begin{equation*}
f\big(\lambda[\chi+dd^c\varphi]\big)=e^{G(x)+c},\,\,\,\lambda[\chi+dd^c\varphi]\in \Gamma,
\end{equation*}
for some $c\in \bR$.  Here $G\in C(M)$.  

First we can find a sequence $G_j\in C^{\infty}(M)$ such that $G_j\rightarrow G$ uniformly.  Let $\eps_j\rightarrow 0+$.  First we want to make sure that the equation is solvable with right hand side $G_j$:
\begin{lem}\label{l4.1}
For large enough $j$,  the equation $f\big(\lambda[\chi+dd^c\varphi_j]\big)=e^{G_j(x)+c_j}$ has a smooth solution for some $c_j\in \bR$.
\end{lem}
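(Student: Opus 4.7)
The plan is to produce, for each sufficiently large $j$, a smooth strict subsolution of the equation with right-hand side $e^{G_j+c_j}$, and then invoke the Guo--Song existence theorem \cite{GS} (quoted in the introduction) to extract the smooth solution $\varphi_j$.

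First I would verify that the canonical constant depends continuously on the right-hand side in the supremum norm. Setting $\eta_j := \|G_j - G\|_{L^\infty(M)}$, the variational characterization (\ref{1.6NNN}) immediately yields the sandwich
\begin{equation*}
e^{-\eta_j} e^{c} \le e^{c_j} \le e^{\eta_j} e^{c},
\end{equation*}
since $e^{-G_j}$ and $e^{-G}$ differ pointwise by the factor $e^{G-G_j} \in [e^{-\eta_j}, e^{\eta_j}]$ and this factor passes through the $\inf$ and $\max$. Consequently $c_j \to c$, and $e^{G_j+c_j} \to e^{G+c}$ uniformly on $M$.

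Next I would construct a smooth strict subsolution in two cases. If $f_\infty \equiv +\infty$ on $\Gamma_\infty$, then Assumption \ref{chi positivity-2} gives $\lambda[\chi] \in \Gamma$, so the trivial choice $u \equiv 0$ satisfies $f_\infty(\lambda[\chi]) = +\infty$ with unbounded slack over $e^{G_j+c_j}$. If instead $f_\infty < +\infty$, I would apply Theorem \ref{t3.1} with $\epsilon = \delta_0/4$ to the viscosity strict subsolution $\underline{u}$, producing $u^* \in C^\infty(M)$ with $\lambda[\chi+dd^c u^*] \in \Gamma_\infty$ and
\begin{equation*}
f_\infty\bigl(\lambda[\chi+dd^c u^*]\bigr) \ge e^{G+c} + \tfrac{3}{4}\delta_0 \quad \text{on } M.
\end{equation*}
Using the uniform convergence $e^{G_j+c_j} \to e^{G+c}$ established above, for all $j$ large enough the right-hand side exceeds $e^{G_j+c_j} + \tfrac{1}{2}\delta_0$, so $u^*$ is a smooth strict subsolution with respect to $(G_j, c_j)$ in the classical sense required by \cite{GS}.

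The Guo--Song theorem then delivers the smooth solution $\varphi_j$ to $f(\lambda[\chi+dd^c\varphi_j]) = e^{G_j+c_j}$. The main obstacle lies not in either individual ingredient but in their interaction: the subsolution condition must survive the transition from $(G,c)$ to $(G_j,c_j)$, and this is possible only because Theorem \ref{t3.1} delivers a \emph{quantitative} margin of size $\delta_0 - \epsilon$, which is exactly the cushion needed to absorb $\|e^{G_j+c_j} - e^{G+c}\|_{L^\infty}$ for large $j$. Without this quantitative strictness, a perturbation of the right-hand side would generically destroy the subsolution property.
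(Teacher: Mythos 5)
Your proposal is correct and follows essentially the same route as the paper: establish the sandwich $e^{-\|G_j-G\|_{L^\infty}}e^c\le e^{c_j}\le e^{\|G_j-G\|_{L^\infty}}e^c$ (the paper's Lemma \ref{l4.2NNew}) to get $c_j\to c$, take the smooth strict subsolution $\tilde u$ produced by the Richberg regularization of Section 3, use the quantitative slack $\tilde\delta_0$ to see it remains a strict subsolution for $(G_j,c_j)$ once $j$ is large, and then invoke Guo--Song. Your explicit treatment of the trivial case $f_\infty\equiv+\infty$ is a harmless (and slightly more careful) addition that the paper leaves implicit.
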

To prove this,  we need to make sure that the solvability constant $c_j$ will converge to the correct constant:
\begin{lem}\label{l4.2NNew}
Define
\begin{equation*}
e^{c_j}=\inf_{u\in \mathcal{E}_{\Gamma,\chi}}\max_Me^{-G_j(x)}f\big(\lambda[(\chi+dd^cu]\big).
\end{equation*}
and
\begin{equation*}
e^c=\inf_{u\in \mathcal{E}_{\Gamma,\chi}}\max_Me^{-G(x)}f\big(\lambda[\chi+dd^cu]\big).
\end{equation*}
Then one has:
\begin{equation*}
e^c\cdot e^{-||G_j-G||_{L^{\infty}}}\le e^{c_j}\le e^c\cdot e^{||G_j-G||_{L^{\infty}}}.
\end{equation*}
In particular,  one has $c_j\rightarrow c$.
Here we recall that $\mathcal{E}_{\Gamma,\chi}$ denotes the set of smooth functions $u$ on $M$ such that $\lambda[\chi+dd^cu]\in \Gamma$.
\end{lem}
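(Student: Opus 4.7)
The plan is entirely elementary: the definitions of $e^{c_j}$ and $e^c$ differ only by swapping $e^{-G_j}$ for $e^{-G}$ inside the $\max$/$\inf$, and these two weights are pointwise comparable with multiplicative error $e^{\|G_j - G\|_{L^{\infty}}}$. So I expect a short, symmetric two-sided estimate rather than any substantive argument.

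\textbf{Step 1 (pointwise weight comparison).} For every $x \in M$, the trivial inequality $|G(x) - G_j(x)| \le \|G - G_j\|_{L^{\infty}}$ gives
\begin{equation*}
e^{-\|G_j - G\|_{L^{\infty}}} \cdot e^{-G(x)} \le e^{-G_j(x)} \le e^{\|G_j - G\|_{L^{\infty}}} \cdot e^{-G(x)}.
\end{equation*}

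\textbf{Step 2 (multiply by $f$ and take max).} Fix any $u \in \mathcal{E}_{\Gamma, \chi}$. Since $f(\lambda[\chi + dd^c u]) \ge 0$ on $M$, multiplying the inequalities in Step 1 by this nonnegative quantity and taking $\max_M$ preserves the bounds:
\begin{equation*}
e^{-\|G_j - G\|_{L^{\infty}}} \max_M e^{-G} f(\lambda[\chi + dd^c u]) \le \max_M e^{-G_j} f(\lambda[\chi + dd^c u]) \le e^{\|G_j - G\|_{L^{\infty}}} \max_M e^{-G} f(\lambda[\chi + dd^c u]).
\end{equation*}

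\textbf{Step 3 (take infimum over $\mathcal{E}_{\Gamma, \chi}$).} Since the two factors $e^{\pm \|G_j - G\|_{L^{\infty}}}$ are independent of $u$, taking $\inf_{u \in \mathcal{E}_{\Gamma,\chi}}$ on all three sides preserves the inequalities and yields
\begin{equation*}
e^{-\|G_j - G\|_{L^{\infty}}} \cdot e^c \le e^{c_j} \le e^{\|G_j - G\|_{L^{\infty}}} \cdot e^c,
\end{equation*}
which is precisely the desired bound. Since $G_j \to G$ uniformly, $\|G_j - G\|_{L^{\infty}} \to 0$, and taking logarithms gives $c_j \to c$.

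There is no genuine obstacle here; the statement is essentially built into the variational characterization (\ref{1.6NNN}), which depends on $G$ only through the pointwise factor $e^{-G}$. The only thing worth checking is that the class $\mathcal{E}_{\Gamma, \chi}$ in the definition of $e^{c_j}$ coincides with the one used for $e^c$, which is clear because $\mathcal{E}_{\Gamma, \chi}$ depends only on $(\chi, \Gamma)$ and not on the right hand side.
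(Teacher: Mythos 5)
Your proof is correct, and it is the natural one: the two weights $e^{-G_j}$ and $e^{-G}$ are pointwise comparable up to the factor $e^{\pm\|G_j-G\|_{L^{\infty}}}$, the factor passes through multiplication by the nonnegative quantity $f(\lambda[\chi+dd^cu])$, through $\max_M$, and through $\inf_{u\in\mathcal{E}_{\Gamma,\chi}}$, since the admissible class $\mathcal{E}_{\Gamma,\chi}$ does not depend on the right hand side. This does differ from the argument the authors actually wrote (which they ended up suppressing in the source): their version perturbs the background form to $(1+\eps_j)\chi$, compares the classes $\mathcal{E}_{\Gamma,\chi}$ and $\mathcal{E}_{\Gamma,(1+\eps_j)\chi}$, uses monotonicity of $f$ together with the scaling inequality of Lemma \ref{lem 2.3} applied to $\frac{u_j}{1+\eps_j}$, and concludes only $\limsup_j e^{c_j}\le e^c$ and $\liminf_j e^{c_j}\ge e^c$; that route corresponds to an earlier formulation in which $e^{c_j}$ was defined with the perturbed form (a trace of this survives in the stray parenthesis ``$\lambda[(\chi+dd^cu]$'' in the statement). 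Your argument is simpler, avoids any perturbation of $\chi$, and actually yields the stronger quantitative two-sided bound exactly as stated, from which $c_j\to c$ is immediate; the paper's route would be the one to fall back on only if the approximating constants were genuinely defined with a perturbed background form.
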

Now we are ready to prove Lemma \ref{l4.1}.
\begin{proof}
(Of Lemma \ref{l4.1})
This essentially follows from the existence of a smooth subsolution constructed in Section 3.  Indeed,  we have shown that for some $\tilde{\delta}_0>0$,  there exists a  smooth function $\tilde{u}$ such that:
\begin{equation}\label{4.1NNew}
f_{\infty}\big(\lambda[\chi+dd^c\tilde{u}]\big)\ge e^{G(x)+c}+\tilde{\delta}_0,\,\,\,\lambda[\chi+dd^c\tilde{u}]\in \Gamma_{\infty},
\end{equation}
where the constant $c$ is given by:
\begin{equation}\label{c}
e^c=\inf_{u\in \mathcal{E}_{\Gamma,\chi}}\max_Me^{-G(x)}f\big(\lambda[\chi+dd^cu]\big).
\end{equation}
The corresponding constant for $G_j$ is given as:
\begin{equation*}
e^{c_j}=\inf_{u\in \mathcal{E}_{\Gamma,\chi}}\max_Me^{-G_j(x)}f\big(\lambda[\chi+dd^cu]\big).
\end{equation*}

We have shown that  $c_j\rightarrow c$.  Since $G_j\rightarrow G$ uniformly,  we see that with $j$ sufficiently large,  one has:
\begin{equation*}
f_{\infty}\big(\lambda[\chi+dd^c\tilde{u}]\big)\ge e^{G_j+c_j}+\frac{1}{2}\tilde{\delta}_0.
\end{equation*}
From Guo-Song \cite{GS},  we know that for such $j$ there exists a smooth solution to:
\begin{equation}\label{4.1NNNew}
f\big(\lambda[\chi+dd^c\varphi_j]\big)=e^{G_j(x)+c_j},\,\,\,\lambda[\chi+dd^c\varphi_j]\in \Gamma,\,\,\,\sup_M\varphi_j=0.
\end{equation}
\end{proof}

The next step is to show that there is a subsequence of $\varphi_j$ that converges uniformly,  thereby we would get a viscosity solution (with right hand side $G(x)$ and constant $c$) in the limit.

First we observe that the sequence $\varphi_j$ is uniformly bounded due to the work by Szekelyhidi \cite{S}.  Using this,  we observe that $\nabla\varphi_j$ is uniformly bounded in $L^2$:
\begin{lem}
There exists a constant $C>0$,  depending only on the uniform $C^0$ bound of $G_j$ as well as the background metric $\omega_0$,  such that:
\begin{equation*}
\int_Md^c\varphi_j\wedge d\varphi_j\wedge\omega_0^{n-1}\le C.
\end{equation*}
\end{lem}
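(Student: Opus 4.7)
The plan is to exploit three ingredients: the inclusion $\Gamma\subset\Gamma_1$ established in Lemma \ref{l2.8}, which guarantees that $(\chi+dd^c\varphi_j)\wedge\omega_0^{n-1}\ge 0$ as a top-degree form (its ratio to $\omega_0^n$ is a positive multiple of $\mathrm{tr}_{\omega_0}(\chi+dd^c\varphi_j)$); the uniform $L^\infty$ bound $\|\varphi_j\|_{L^\infty}\le C_0$ from Sz\'ekelyhidi \cite{S} that was just recalled; and the normalization $\sup_M\varphi_j=0$, which makes $-\varphi_j\ge 0$. Together these will let me derive a sign-definite energy identity, and the Hermitian torsion terms that appear because $\omega_0$ is not closed will be absorbed using Cauchy--Schwarz and Young's inequality.

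First I will set $A_j:=\int_M d\varphi_j\wedge d^c\varphi_j\wedge\omega_0^{n-1}$ (which differs from the integral in the statement only by a sign, and which is non-negative as a pointwise positive $(n,n)$-form) and apply Stokes' theorem to $d(\varphi_j\,d^c\varphi_j\wedge\omega_0^{n-1})$ on the closed manifold $M$. Expanding via the Leibniz rule and substituting $dd^c\varphi_j=(\chi+dd^c\varphi_j)-\chi$ yields
\[
A_j=\int_M(-\varphi_j)(\chi+dd^c\varphi_j)\wedge\omega_0^{n-1}+\int_M\varphi_j\,\chi\wedge\omega_0^{n-1}+\int_M\varphi_j\,d^c\varphi_j\wedge d\omega_0^{n-1}.
\]

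I then estimate each summand. The first is non-negative and bounded above by $C_0\int_M(\chi+dd^c\varphi_j)\wedge\omega_0^{n-1}$; a further integration by parts rewrites $\int_M dd^c\varphi_j\wedge\omega_0^{n-1}=\int_M d^c\varphi_j\wedge d\omega_0^{n-1}$, which by Cauchy--Schwarz is at most $C\sqrt{A_j}$, so the first summand is $\le C+C\sqrt{A_j}$. The second is $\le C\|\varphi_j\|_{L^\infty}\le C$ by the uniform bound. The third is the main torsion term: pointwise $|d^c\varphi_j\wedge d\omega_0^{n-1}|\le C|d\varphi_j|_{\omega_0}\,dV_{\omega_0}$, and Cauchy--Schwarz combined with the $L^\infty$ bound on $\varphi_j$ gives the estimate $\le C\sqrt{A_j}$.

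Putting the three estimates together produces $A_j\le C+C\sqrt{A_j}$, after which Young's inequality $C\sqrt{A_j}\le\tfrac12 A_j+C'$ closes the argument. The main obstacle, and the only place where any care is required, is precisely the non-closedness of $\omega_0$: the torsion terms $\int\varphi_j\,d^c\varphi_j\wedge d\omega_0^{n-1}$ and $\int d^c\varphi_j\wedge d\omega_0^{n-1}$ would vanish in the K\"ahler case, and here they must be controlled sublinearly in $\sqrt{A_j}$ so that they can be absorbed back into $A_j$; the uniform $L^\infty$ bound on $\varphi_j$ is what prevents these torsion corrections from carrying an uncontrolled weight.
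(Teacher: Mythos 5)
Your proposal is correct, and its skeleton is the same as the paper's: both exploit $\Gamma\subset\Gamma_1$ (so that $(\chi+dd^c\varphi_j)\wedge\omega_0^{n-1}\ge 0$, equivalently $dd^c\varphi_j\wedge\omega_0^{n-1}\ge -C\,\omega_0^n$), the uniform $L^\infty$ bound on $\varphi_j$, and one integration by parts of $\varphi_j$ against $dd^c\varphi_j\wedge\omega_0^{n-1}$ to produce the energy $\int_M d\varphi_j\wedge d^c\varphi_j\wedge\omega_0^{n-1}$ (and, like you, the paper really bounds this nonnegative quantity, the sign/order in the statement being immaterial).

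The one genuine difference is the treatment of the torsion term $\int_M \varphi_j\, d^c\varphi_j\wedge d\omega_0^{n-1}$. You bound it (and the auxiliary term $\int_M d^c\varphi_j\wedge d\omega_0^{n-1}$ coming from your substitution $dd^c\varphi_j=(\chi+dd^c\varphi_j)-\chi$) by $C\sqrt{A_j}$ via a pointwise estimate plus Cauchy--Schwarz, and then close with Young's inequality from the self-improving bound $A_j\le C+C\sqrt{A_j}$. The paper instead multiplies the pointwise inequality by the positive weight $\varphi_j+C_2$ and handles the torsion term by the identity $(\varphi_j+C_2)\,d^c\varphi_j=d^c\bigl(\tfrac{(\varphi_j+C_2)^2}{2}\bigr)$, integrating by parts a second time so that it becomes $\int_M \tfrac{(\varphi_j+C_2)^2}{2}\,dd^c\omega_0^{n-1}$, which is bounded directly by the $L^\infty$ bound with no gradient term and hence no absorption step. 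Your route is the standard absorption argument and is perfectly rigorous; the paper's is a one-shot estimate that avoids Cauchy--Schwarz altogether. Both yield the same conclusion with the same dependence of the constant.
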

\begin{proof}
We use that each $\varphi_j$ is $\Gamma$-subharmonic,  and that $\Gamma\subset \Gamma_1=\{\lambda\in \bR^n:\sum_i\lambda_i> 0\}.$  This in particular implies:
\begin{equation*}
tr_{\omega_0}(\chi+dd^c\varphi_j)\ge 0.
\end{equation*}
This would imply,  for some $C_1>0$:
\begin{equation*}
tr_{\omega_0}(dd^c\varphi_j)\ge -C_1,
\end{equation*}
which is equivalent to:
\begin{equation}\label{4.1New}
dd^c\varphi_j\wedge \omega_0^{n-1}\ge -\frac{C_1}{n}\omega_0^n.
\end{equation}
Let $C_2>0$ be sufficiently large such that $\varphi_j+C_2>0$ for all $j$,  then we multiply this to both sides of (\ref{4.1New}) to obtain:
\begin{equation*}
-\int_Md\varphi_j\wedge d^c\varphi_j\wedge \omega_0^{n-1}+\int_M(\varphi_j+C_2)d^c\varphi_j\wedge d\omega_0^{n-1}\ge \int_M(-\frac{C_1}{n})(\varphi_j+C_2)\omega_0^n.
\end{equation*}
For the middle term,  we have:
\begin{equation*}
\begin{split}
&\int_M(\varphi_j+C_2)d^c\varphi_j\wedge d\omega_0^{n-1}=\int_Md^c\big(\frac{(\varphi_j+C_2)^2}{2}\big)\wedge d\omega_0^{n-1}=-\int_Md\big(\frac{(\varphi_j+C_2)^2}{2}\big)\wedge d^c\omega_0^{n-1}\\
&=\int_M\frac{(\varphi_j+C_2)^2}{2}dd^c\omega_0^{n-1}.
\end{split}
\end{equation*}
Therefore,  we see that $\int_Md\varphi_j\wedge d^c\varphi_j\wedge \omega_0^{n-1}$ is uniformly bounded from above.
\end{proof}
Therefore we are in a position to apply Rellich compact embedding theorem to extract a subsequence of $\{\varphi_j\}$,  denoted as $\{\varphi_{j_k}\}$,  such that $\varphi_{j_k}$ converges in $L^1$.  In order to get a viscosity solution,  we need to upgrade the $L^1$ convergence to uniform convergence,  which is why we need the stability estimates stated in Theorem \ref{t4.1} below.  Now we explain how to use Theorem \ref{t4.1} to get the needed upgrade:
\begin{lem}\label{l4.3}
Assume that there exists a  smooth function $\tilde{u}$ on $M$ such that for some $\tilde{\delta}_0>0$,
\begin{equation*}
f_{\infty}\big(\lambda[\chi+dd^c\tilde{u}]\big)\ge e^{G(x)+c}+\tilde{\delta}_0,\,\,\lambda[\chi+dd^c\tilde{u}]\in \Gamma_{\infty}.
\end{equation*}
Let $\varphi_j$ be the solution to (\ref{4.1NNNew}).  Assume that there is a subsequence $\{\varphi_{j_k}\}$ that converges in $L^1$.  Then this subsequence $\{\varphi_{j_k}\}$ converges uniformly.
\end{lem}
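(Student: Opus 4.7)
The plan is to apply the stability estimate of Theorem \ref{t4.1} to pairs $(\varphi_{j_k}, \varphi_{j_l})$ drawn from the $L^1$-convergent subsequence, so as to upgrade the $L^1$ convergence to a Cauchy property in $L^\infty$. I expect Theorem \ref{t4.1} to produce, for any two admissible solutions $\varphi, \psi$ of equations of the form (\ref{1.2NN}) with right-hand sides $e^{G_\varphi + c_\varphi}$ and $e^{G_\psi + c_\psi}$ and under the hypothesis of a common smooth strict subsolution $\tilde u$, an estimate of the shape
\[
\|\varphi - \psi\|_{L^\infty(M)} \le \Phi\bigl(\|\varphi - \psi\|_{L^1(M)}\bigr) + C\bigl(\|G_\varphi - G_\psi\|_{L^\infty} + |c_\varphi - c_\psi|\bigr),
\]
where $\Phi$ is a continuous modulus with $\Phi(0^+) = 0$, and the constants $\Phi$ and $C$ depend only on $\tilde u$, on an $L^\infty$ bound of the data, and on the background geometry.

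First I would check that the stability estimate applies uniformly along the sequence. By the proof of Lemma \ref{l4.1}, the smooth subsolution $\tilde u$ serves as a strict subsolution for every approximating equation $f(\lambda[\chi + dd^c u]) = e^{G_j + c_j}$ with a uniform gap $\tilde\delta_0/2$, once $j$ is large enough. The family $\{G_j\}$ is uniformly bounded (being uniformly convergent), the constants $c_j$ are uniformly bounded by Lemma \ref{l4.2NNew}, and the solutions $\varphi_j$ are uniformly bounded in $L^\infty$ by Sz\'ekelyhidi's estimate \cite{S}. Consequently the constants in the stability estimate can be chosen independently of $j, k, l$.

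With uniform constants in hand, applying the stability estimate to the pair $(\varphi_{j_k}, \varphi_{j_l})$ yields
\[
\|\varphi_{j_k} - \varphi_{j_l}\|_{L^\infty} \le \Phi\bigl(\|\varphi_{j_k} - \varphi_{j_l}\|_{L^1}\bigr) + C\bigl(\|G_{j_k} - G_{j_l}\|_{L^\infty} + |c_{j_k} - c_{j_l}|\bigr).
\]
Each quantity on the right tends to zero as $k, l \to \infty$: the $L^1$ term by hypothesis, the $G$-term by the uniform convergence $G_j \to G$, and the $c$-term by Lemma \ref{l4.2NNew}. Continuity of $\Phi$ at the origin then shows that $\{\varphi_{j_k}\}$ is Cauchy in $L^\infty(M)$ and hence converges uniformly on $M$ to a continuous limit, which necessarily coincides with the $L^1$ limit.

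The main substantive obstacle is Theorem \ref{t4.1} itself, which will be proved via a variant of the Guo-Phong-Tong auxiliary Monge-Amp\`ere technique adapted to the Hessian quotient setting. Granted that estimate, the present lemma is a routine metric-space Cauchy argument, with the only subtlety being the uniform choice of constants across the approximating sequence, a point secured by the fact that $\tilde u$ and its subsolution gap $\tilde\delta_0/2$ are fixed throughout.
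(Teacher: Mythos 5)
Your proposal is correct and follows essentially the same route as the paper: verify that the smooth strict subsolution $\tilde{u}$ works uniformly for the approximating equations (with gap $\tilde{\delta}_0/2$ for large $j$), then apply the stability estimate (the paper's Theorem \ref{t4.1N}, applied after writing $\tilde{\chi}=\chi+dd^c\tilde{u}$ and $\phi_i=\varphi_{j_i}-\tilde{u}$) to pairs $(\varphi_{j_k},\varphi_{j_l})$ and conclude that the sequence is Cauchy in $L^{\infty}$ since $\|e^{G_{j_k}+c_{j_k}}-e^{G_{j_l}+c_{j_l}}\|_{L^{\infty}}\to 0$ and the $L^1$ differences tend to zero. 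The only cosmetic difference is that you state the stability bound with an abstract modulus $\Phi$ and a subsolution hypothesis, whereas the paper uses the explicit exponent $\nu/(\nu+1)$ and folds $\tilde{u}$ into the background form.
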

\begin{proof}
We wish to use Theorem \ref{t4.1N} below.  We put $\tilde{\chi}=\chi+dd^c\tilde{u}$,  then we have:
\begin{equation*}
\lambda[\tilde{\chi}]\in \Gamma_{\infty},\,\,f_{\infty}\big(\lambda[\tilde{\chi}]\big)\ge e^{G(x)+c}+\tilde{\delta}_0.
\end{equation*}
Since $G_j\rightarrow G$ uniformly and $c_j\rightarrow c$,  we see that,  with $k,\,l$ large enough,  one has:
\begin{equation*}
f_{\infty}\big(\lambda[\tilde{\chi}]\big)\ge e^{\max(G_{j_k}+c_{j_k},G_{j_l}+c_{j_l})}+\frac{1}{2}\tilde{\delta}_0.
\end{equation*}
Moreover,  if we put $\phi_1=\varphi_{j_k}-\tilde{u}$,  $\phi_2=\varphi_{j_l}-\tilde{u}$,  and we also denote $G_1=G_{j_k}+c_{j_k}$,  $G_2=G_{j_l}+c_{j_l}$,  then we know that $\phi_i,\,i=1,\,2$ satisfies:
\begin{equation*}
f\big(\lambda[\tilde{\chi}+dd^c\phi_i]\big)=e^{G_i},\,\,\lambda[\tilde{\chi}+dd^c\phi_i]\in \Gamma,\,\,\,i=1,\,2.
\end{equation*}
Now we are in the position to apply Theorem \ref{t4.1N} to get:
\begin{equation*}
||\varphi_{j_k}-\varphi_{j_l}||_{L^{\infty}}=||\phi_1-\phi_2||_{L^{\infty}}\le C\big(||e^{G_{j_k}+c_{j_k}}-e^{G_{j_l}+c_{j_l}}||_{L^{\infty}}+||\varphi_{j_k}-\varphi_{j_l}||_{L^1}^{\frac{\nu}{\nu+1}}\big).
\end{equation*}
This implies the uniform convergence.
\end{proof}

As a consequence,  we immediately see that:
\begin{cor}
Under the assumption of Lemma \ref{l4.3},  there exists a viscosity solution to:
\begin{equation*}
f\big(\lambda[\chi+dd^c\varphi]\big)=e^{G(x)+c},\,\,\lambda[\chi+dd^c\varphi]\in \Gamma.
\end{equation*}
Here $c$ is given by (\ref{c}).
\end{cor}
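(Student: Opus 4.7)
The plan is to combine the results already assembled: Lemma \ref{l4.1} provides smooth solutions $\varphi_j$ to (\ref{4.1NNNew}) for all large $j$; the gradient $L^2$ bound together with Sz\'ekelyhidi's uniform $C^0$ bound yield, via Rellich, a subsequence $\varphi_{j_k}$ converging in $L^1$; and Lemma \ref{l4.3} upgrades this to uniform convergence. Denote the uniform limit by $\varphi \in C(M)$. It then remains to verify that $\varphi$ is a viscosity solution of $f(\lambda[\chi+dd^c\varphi]) = e^{G(x)+c}$ with $\lambda[\chi+dd^c\varphi]\in\Gamma$.

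The verification is a standard stability-under-uniform-convergence argument for viscosity solutions, so I will only sketch it. For the subsolution property, fix $x_0 \in M$ and let $P$ be a $C^2$ function defined near $x_0$ that touches $\varphi$ from above at $x_0$. After replacing $P$ by $P + \eta|x-x_0|^2$ we may assume the touching is strict, so that for each small $\eta>0$ the function $P_\eta := P + \eta|x-x_0|^2 + c_\eta$ (with $c_\eta$ a small constant chosen so as to make contact) touches $\varphi_{j_k}$ from above at some point $x_{k,\eta}$ in a small neighborhood of $x_0$, with $x_{k,\eta}\to x_0$ and $c_\eta\to 0$ as $k\to\infty$ and then $\eta\to 0$ (this uses uniform convergence $\varphi_{j_k}\to\varphi$ together with strict touching). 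Since $\varphi_{j_k}$ is smooth and solves (\ref{4.1NNNew}) classically, at the touching point one has $dd^c\varphi_{j_k}(x_{k,\eta})\le dd^cP_\eta(x_{k,\eta})$ as Hermitian forms, which together with $\lambda[\chi+dd^c\varphi_{j_k}](x_{k,\eta})\in\Gamma$ and the monotonicity of $f$ on $\Gamma$ (Lemma \ref{l3.4N}) forces $\lambda[\chi+dd^cP_\eta](x_{k,\eta})\in\Gamma$ and $f(\lambda[\chi+dd^cP_\eta])(x_{k,\eta})\ge f(\lambda[\chi+dd^c\varphi_{j_k}])(x_{k,\eta}) = e^{G_{j_k}(x_{k,\eta})+c_{j_k}}$. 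Sending $k\to\infty$ and then $\eta\to 0+$ using continuity of $G$, continuity of $P$'s Hessian, closedness of $\bar\Gamma$, and $c_{j_k}\to c$, one obtains $\lambda[\chi+dd^cP](x_0)\in\bar\Gamma$ and $f(\lambda[\chi+dd^cP])(x_0)\ge e^{G(x_0)+c}$. The condition $\lambda[\chi+dd^cP](x_0)\in\Gamma$ (and not merely $\bar\Gamma$) follows because $f>0$ on $\Gamma$ and $f\to 0$ on $\partial\Gamma$ by Assumption \ref{a1.1N}, while $e^{G(x_0)+c}>0$.

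For the supersolution property, fix $x_0\in M$ and let $P$ touch $\varphi$ from below at $x_0$, strictly after adding $-\eta|x-x_0|^2$. If $\lambda[\chi+dd^cP](x_0)\notin \Gamma$ there is nothing to prove. Otherwise, the same touching-point argument produces points $x_{k,\eta}\to x_0$ at which $P_{-\eta}$ touches $\varphi_{j_k}$ from below, so $dd^c\varphi_{j_k}(x_{k,\eta})\ge dd^cP_{-\eta}(x_{k,\eta})$. By monotonicity of $f$ on $\Gamma$, one gets $f(\lambda[\chi+dd^cP_{-\eta}])(x_{k,\eta}) \le f(\lambda[\chi+dd^c\varphi_{j_k}])(x_{k,\eta}) = e^{G_{j_k}(x_{k,\eta})+c_{j_k}}$. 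Passing to the limit gives the desired inequality $f(\lambda[\chi+dd^cP])(x_0)\le e^{G(x_0)+c}$.

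The main subtlety is the touching-point argument itself, in particular making sure that after perturbing $P$ to get strict touching, the contact points $x_{k,\eta}$ for $\varphi_{j_k}$ really exist in a small neighborhood of $x_0$ and converge to $x_0$. This is where uniform convergence (as opposed to merely $L^1$) is essential, and is precisely what Lemma \ref{l4.3} (via the stability estimate of Theorem \ref{t4.1N}) provides. Everything else is a routine combination of the monotonicity of $f$ along $\Gamma$, the openness of $\Gamma$, and the continuity properties of $G$ and $c_j$.
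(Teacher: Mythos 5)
Your proposal is correct and follows essentially the same route the paper intends: the paper treats the corollary as an immediate consequence of Lemma \ref{l4.1}, the $L^1$ precompactness, and Lemma \ref{l4.3}, with the final step being exactly the standard stability of viscosity solutions under uniform convergence that you spell out (including the correct use of $c_{j_k}\to c$ and the openness-of-$\Gamma$ argument via $f>0$ on $\Gamma$, $f\to 0$ at $\partial\Gamma$). The only cosmetic quibble is that the monotonicity you invoke is more directly Assumption \ref{a1.1N}(2) together with the eigenvalue comparison, rather than Lemma \ref{l3.4N}.
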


\subsection{Stability}
The crucial estimate we need for the existence proof is the following estimate:
\begin{thm}\label{t4.1N}
Let $G_1,\,G_2\in C^{\infty}(M)$.  Let $0<\delta_0<1$.  Assume that there exists a real smooth $(1,1)$ form $\tilde{\chi}$ on $M$ such that:
\begin{equation*}
\lambda[\tilde{\chi}]\in \Gamma_{\infty},\,\,f_{\infty}(\lambda[\tilde{\chi}])\ge e^{\max(G_1,G_2)}+\delta_0.
\end{equation*}
Let $\phi_i,\,i=1,\,2$ be the solution to the following equations:
\begin{equation*}
f\big(\lambda[\tilde{\chi}+dd^c\phi_i]\big)=e^{G_i},\,\,\,\lambda[\tilde{\chi}+dd^c\phi_i]\in \Gamma,\,\,i=1,\,2.
\end{equation*}
 Then for any $0<\nu<\frac{1}{n}$,  there exists a constant $C>0$,  such that:
\begin{equation*}
||\phi_1-\phi_2||_{L^{\infty}}\le C\big(||e^{G_1}-e^{G_2}||_{L^{\infty}}+||\phi_1-\phi_2||_{L^1}^{\frac{\nu}{1+\nu}}\big).
\end{equation*}
Here the constant $C$ depends on $\nu$,  an upper bound for $||G_1||_{L^{\infty}}$,  $||G_2||_{L^{\infty}}$,  structural conditions for $f$,  $\Gamma$ as well as $f_{\infty}$,  $\Gamma_{\infty}$,  the form $\tilde{\chi}$ as well as the background metric $\omega_0$.
\end{thm}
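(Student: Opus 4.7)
The plan is to adapt the Guo-Phong-Tong PDE approach for the $L^\infty$-estimate of the complex Monge-Amp\`ere equation, now in a two-solution comparison form. By symmetry it suffices to bound $s := \sup_M(\phi_1 - \phi_2)$ from above; set $v := \phi_1 - \phi_2$ and assume $s > 0$. For each small $t > 0$ consider the sublevel set $\Omega_t := \{v < s - t\}$. After translating $\phi_1,\phi_2$ by a common constant so that $v$ has zero mean on $M$, Chebyshev's inequality gives $\|v\|_{L^1} \gtrsim (s-t)\,|M\setminus\Omega_t|$, so it suffices to bound $s - t$ by a small negative power of $|M \setminus \Omega_t|$ up to $\|e^{G_1} - e^{G_2}\|_\infty$.

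The auxiliary object is the potential $\psi_t$ solving the complex Monge-Amp\`ere equation
$$(\omega_0 + dd^c \psi_t)^n \;=\; \frac{V}{|M\setminus\Omega_t|}\,\mathbf{1}_{M\setminus\Omega_t}\,\omega_0^n, \qquad \sup_M \psi_t = 0,$$
with $V = \int_M \omega_0^n$, interpreted in the Hermitian sense. Kolodziej-type $L^\infty$ theory (in the Hermitian setting) yields $-\inf_M\psi_t \le C_\nu\,|M\setminus\Omega_t|^{-\nu}$ for any $0 < \nu < 1/n$. The idea is to compare $v$ with a concave function of $\psi_t$ of the form $\tau(-\psi_t + \Lambda)^{n/(n+1)}$, where $\Lambda$ is chosen so that $-\psi_t + \Lambda \ge 1$ on $M$ and $\tau > 0$ is a free parameter. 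A suitable test function built from $v$, $\tau(-\psi_t+\Lambda)^{n/(n+1)}$ and a constant multiple of $t$ is designed so that its extremum is attained at a point $x_\ast \in M\setminus\Omega_t$, where in particular $v(x_\ast) \ge s - t$.

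At $x_\ast$, the Hessian inequality coming from the extremal condition combines with the explicit computation of $dd^c\bigl[(-\psi_t+\Lambda)^{n/(n+1)}\bigr]$ (dropping a favorable nonnegative $d\psi_t \wedge d^c\psi_t$ contribution) and the concavity of $f$ applied to the two Hessian equations to yield a scalar inequality at $x_\ast$. Using the Monge-Amp\`ere relation for $\psi_t$ and AM-GM to pass from the determinant to the trace of $\omega_0 + dd^c\psi_t$, and invoking a Sz\'ekelyhidi $\mathcal{C}$-subsolution-type argument (extended to $f_\infty$ via Lemmas 2.3-2.8) applied to the subsolution $\tilde\chi$ to guarantee a positive lower bound on the relevant linearization trace, one arrives at an estimate of the form
$$s - t \;\le\; C\,\|e^{G_1} - e^{G_2}\|_\infty + C\,|M\setminus\Omega_t|^{-\nu}.$$
Substituting the Chebyshev bound $|M\setminus\Omega_t| \le \|v\|_{L^1}/(s-t)$ and optimizing in $t$ yields the stated H\"older exponent $\nu/(1+\nu)$.

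The main obstacle is the final scalar estimate at $x_\ast$: one must convert a Hessian inequality into a scalar one using only structural properties of $f$ and of the subsolution $\tilde\chi$. In the pure Monge-Amp\`ere case this step is handled by $\det^{1/n}$ comparisons and AM-GM; here the strict subsolution gap $f_\infty(\lambda[\tilde\chi]) - e^{\max(G_1,G_2)} \ge \delta_0$ plays the analogous role by forcing a uniform positive lower bound on the linearization of $f$ in directions selected by the auxiliary MA, via a Sz\'ekelyhidi-type argument. Carefully tracking the dependence of constants on $\delta_0$, $\tau$, $\Lambda$ and $-\inf_M\psi_t$ is what delivers an exponent arbitrarily close to $1/n$.
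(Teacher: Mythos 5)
Your overall strategy (an auxiliary Monge--Amp\`ere potential, a maximum-principle comparison with a concave power of $-\psi$, and use of the strict $f_\infty$-subsolution at the extremal point) belongs to the same family as the paper's proof, but two steps in your sketch do not close as written. First, your concluding scalar estimate $s-t \le C\|e^{G_1}-e^{G_2}\|_\infty + C|M\setminus\Omega_t|^{-\nu}$ is directionally wrong: $|M\setminus\Omega_t|^{-\nu}$ \emph{grows} as the near-maximum set shrinks (it is bounded below by a fixed constant), so substituting the Chebyshev upper bound $|M\setminus\Omega_t|\le \|v\|_{L^1}/(s-t)$ only enlarges the right-hand side and yields no information. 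To close, the estimate must contain a \emph{positive} power of the mass of the near-maximum set; in the paper this is obtained not by a one-shot bound but by a Moser--Trudinger-type exponential estimate (Proposition \ref{p4.1}, whose auxiliary MA equation has right-hand side proportional to $\chi_j(-\phi+(1-\delta)v-s)e^{nG_0}/A_{j,\delta,s,\kappa}$ rather than a normalized indicator), followed by a De Giorgi iteration (Lemma \ref{l4.6}) applied to $u(s)=\int_{\Omega_{\delta,s}}e^{nG_0}\omega_0^n$; the exponent $\frac{\nu}{1+\nu}$ comes out of that iteration together with the choice of $s_0$ in Lemmas \ref{l4.8}--\ref{l4.9}.

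Second, the step you yourself flag as the main obstacle is indeed the crux, and the mechanism you propose (a Sz\'ekelyhidi-type lower bound on a linearization trace) is not what makes the argument work for general $f$ without determinant domination. The paper compares $(1-\delta)v-\phi$ (note the extra factor $1-\delta$, absent in your test function) with the auxiliary potential; at the maximum point the form $\omega_2=\eps_0\omega_0+dd^c\big(\frac{\phi}{\delta}-\frac{1-\delta}{\delta}v\big)$ is nonnegative, and writing $\tilde\chi+dd^c\phi$ as the convex combination $\delta\big(\tilde\chi+dd^c(\frac{\phi}{\delta}-\frac{1-\delta}{\delta}v)\big)+(1-\delta)(\tilde\chi+dd^cv)$, concavity of $F$ gives $F(\tilde\chi-\eps_0\omega_0+\omega_2)\le e^{G_0}+\frac{(2-\delta)\delta_1}{\delta}$; provided $\delta\gtrsim\delta_1/\delta_0$, the strict subsolution property $f_{\infty}(\lambda[\tilde\chi-\eps_0\omega_0])\ge e^{G_0}+\frac{2}{3}\delta_0$ together with Lemma \ref{l2.9New} forces the \emph{upper} bound $\omega_2\le C\omega_0$ (Lemma \ref{l4.9N}), after which one takes the trace with respect to $\omega_2$, applies AM--GM and the auxiliary MA equation. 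Your sketch has no analogue of this $\delta$-scaling: without it the error $\delta_1=\frac12\|e^{G_1}-e^{G_2}\|_\infty$ from the two equations cannot be absorbed (it is divided by $\delta$ in the convex-combination step), the Hessian upper bound at $x_*$ is unavailable, and it is also unexplained how the additive term $\|e^{G_1}-e^{G_2}\|_\infty$ enters your final estimate --- in the paper it arises precisely from the forced choice $\delta\sim\|e^{G_1}-e^{G_2}\|_\infty/\delta_0$ and $s_0\ge 2\delta\|v\|_{L^\infty}$ (Lemma \ref{l4.8}, Lemma \ref{l4.9}, Corollary \ref{c4.10}). These two gaps would need to be repaired before the proposal constitutes a proof.
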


We will start with the following Moser-Trudinger type inequality.  

\begin{prop}\label{p4.1}
Let $0<\delta<1$.  
Let $G_0\in C(M)$.  
Let $\tilde{\chi}$ be a smooth $(1,1)$ form on $M$ such that for some $\delta_0>0$,
\begin{equation}
f_{\infty}\big(\lambda(\tilde{\chi})\big)\ge e^{G_0}+\delta_0,\,\,\,\lambda[\tilde{\chi}]\in \Gamma_{\infty}.
\end{equation}
Let $\phi,\,v\in C^{\infty}(M)$ be such that:
\begin{equation}\label{v-phi}
\begin{split}
&\tilde{\chi}+dd^c\phi\in \Gamma,\,\,f\big(\lambda[\tilde{\chi}+dd^c\phi]\big)\le e^{G_0(x)}+\delta_1,\\
&\tilde{\chi}+dd^cv\in \Gamma,\,\,f\big(\lambda[\tilde{\chi}+dd^cv]\big)\ge e^{G_0}-\delta_1,
\end{split}
\end{equation}
with $\delta_1$ small enough such that $\delta_1\delta<\frac{1}{2}\delta_0$.  
Let $\kappa>1$,  $s>0$,  we define:
\begin{equation}\label{4.7New}
A_{\delta,s,\kappa}=\big(\int_M[(-\phi+(1-\delta)v-s)_+]^{\kappa}e^{n\kappa G_0}\omega_0^n\big)^{\frac{1}{\kappa}}.
\end{equation}
Then there exists $c_0>0,\,C_0>0$ such that:
\begin{equation}\label{4.3NN}
\int_M\exp\big(c_0\frac{\big[((1-\delta)v-\phi-s)^+\big]^{\frac{n+1}{n}}}{\delta A_{\delta,s,\kappa}^{\frac{1}{n}}}\omega_0^n\le C_0\exp\big(C_0\frac{A_{\delta,s,\kappa}}{\delta}\big).
\end{equation}
In the above,  $\alpha_0$ and $C_0$ depends only on $\kappa$,  the background metric $\omega_0$,  $||G_0||_{L^{\infty}}$,  on the form $\tilde{\chi},\,\delta_0$ as well as the structural conditions on $f$ and $\Gamma$.
\end{prop}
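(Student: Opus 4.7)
The plan is to adapt the PDE-based approach to Moser--Trudinger inequalities for complex Monge--Amp\`ere equations from Guo--Phong--Tong \cite{GPT} (extended to the Hessian-quotient setting by Sui--Sun \cite{SS}) to our general concave operator $f$, with the crucial extra input supplied by the strict subsolution condition on $f_{\infty}$ through Lemma \ref{l2.9New}. The target is a pointwise bound of the form $\psi_+ \le C\delta^{n/(n+1)} A^{1/(n+1)} (-\rho+1)^{n/(n+1)} + C A/\delta$, where $\psi := -\phi + (1-\delta)v - s$, $A := A_{\delta,s,\kappa}$, and $\rho$ solves an auxiliary complex Monge--Amp\`ere equation; raising to the $(n+1)/n$-th power and integrating will give \eqref{4.3NN} via exponential integrability of $-\rho$.

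First I would solve, via complex Monge--Amp\`ere theory on Hermitian manifolds (Tosatti--Weinkove, Dinew--Kolodziej), the equation
\[
(\omega_0 + dd^c\rho)^n \;=\; \frac{(\psi_+)^{\kappa} e^{n\kappa G_0} + c_1}{B}\,\omega_0^n,\qquad \sup_M\rho = 0,\quad \omega_0 + dd^c\rho > 0,
\]
where $B > 0$ normalizes the right-hand side to total mass $\int_M\omega_0^n$ and $c_1 > 0$ is a small regularizer; by definition of $A$ one has $B \le C(A^\kappa + 1)$, and Kolodziej's theorem gives an exponential estimate $\int_M e^{c_2(-\rho)}\omega_0^n \le C_2$ with constants depending on $\kappa$, $\|G_0\|_{L^\infty}$, and the background geometry.

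The core is a maximum-principle comparison. Set $\Lambda := C_3\delta^{n/(n+1)} A^{1/(n+1)}$ and consider $\Phi := \psi - \Lambda(-\rho+1)^{n/(n+1)}$. If $\max_M\Phi \le 0$ we are done; otherwise at an interior maximum $x_0$ with $\psi(x_0) > 0$, $dd^c\Phi(x_0) \le 0$ yields (after dropping the nonnegative $d\rho \wedge d^c\rho$ term arising from differentiating $(-\rho+1)^{n/(n+1)}$) the Hermitian inequality
\[
\tilde\chi + dd^c\phi \;\ge\; (1-\delta)(\tilde\chi + dd^c v) + \delta\,\tilde\chi + \mu\,(\omega_0 + dd^c\rho) - \mu\,\omega_0 \qquad \text{at }x_0,
\]
with $\mu = \tfrac{n\Lambda}{n+1}(-\rho(x_0)+1)^{-1/(n+1)} > 0$. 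Writing $X := (1-\delta)(\tilde\chi+dd^c v) + \delta\tilde\chi$, the concavity of $f_\infty$ as a function on Hermitian matrices, combined with the hypotheses on $\phi$, $v$, $\tilde\chi$, gives $f_\infty(\lambda[X])(x_0) \ge e^{G_0(x_0)} + \eta$ for some $\eta = \eta(\delta,\delta_0,\delta_1) > 0$ arranged by the standing hypothesis $\delta_1\delta < \delta_0/2$. The MA equation forces $\omega_0 + dd^c\rho$ to have maximum eigenvalue $\ge h(x_0)^{1/n}$, so $\mu(\omega_0+dd^c\rho)$ lies in $\bar\Gamma_n$ with norm at least $\mu\,h(x_0)^{1/n}$, which can be arranged to exceed the threshold $R_0$ of Lemma \ref{l2.9New} whenever $\psi_+(x_0)$ is larger than a fixed constant (using $h(x_0) \gtrsim \psi_+(x_0)^\kappa / B$). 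Applying Lemma \ref{l2.9New}(3) together with the monotonicity Lemma \ref{l3.4N}, we deduce $f(\lambda[\tilde\chi+dd^c\phi])(x_0) \ge e^{G_0(x_0)} + \eta - \varepsilon$, which for $\varepsilon$ small contradicts $f(\lambda[\tilde\chi+dd^c\phi])(x_0) \le e^{G_0(x_0)} + \delta_1$.

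The main obstacle will be the quantitative bookkeeping: one must simultaneously keep $\lambda[X]$ in a fixed compact subset of $\Gamma_\infty$ (ensuring a uniform threshold $R_0$ in Lemma \ref{l2.9New}), arrange $\mu\,h(x_0)^{1/n} \ge R_0$ at the contradiction point (which pins down $C_3$ and the regularizer $c_1$ and explains the exponents $n/(n+1)$ and $1/(n+1)$), propagate the numerical inequality $\eta > \delta_1 + \varepsilon$ from the hypothesis $\delta_1\delta < \delta_0/2$ (possibly after refining the choice of $X$ or rescaling), and absorb the $-\mu\omega_0$ correction together with the part of $\psi_+$ lying below the activation threshold into the $CA/\delta$ tail of the pointwise estimate. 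Once the pointwise bound is established, raising it to the $(n+1)/n$-th power produces $\psi_+^{(n+1)/n}/(\delta A^{1/n}) \le C(-\rho) + C'A/\delta$, and integrating the exponential via the Kolodziej-type estimate for $-\rho$ yields exactly \eqref{4.3NN} with the stated constants.
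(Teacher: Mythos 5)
Your overall architecture matches the paper's: an auxiliary complex Monge--Amp\`ere equation whose density is built from $(-\phi+(1-\delta)v-s)_+$ and $e^{G_0}$, a comparison function of the form $\psi-\Lambda(-\rho+1)^{\frac{n}{n+1}}$ handled by the maximum principle, the subsolution hypothesis on $f_{\infty}$ entering through Lemma \ref{l2.9New}, and an $\alpha$-invariant/Kolodziej exponential bound at the end (the paper uses a first-power density normalized by the $L^{\kappa}$-norm $A$ rather than your $\kappa$-th power density normalized by total mass, but that is secondary). The step that does not work is your use of Lemma \ref{l2.9New}(3) at the maximum point with base $X=(1-\delta)(\tilde\chi+dd^cv)+\delta\tilde\chi$: that lemma requires the base eigenvalue vector to lie in a \emph{fixed compact} subset of $\Gamma_{\infty}$, with respect to which the threshold $R_0$ and the error $\varepsilon$ are chosen, whereas $\lambda[X]$ contains $(1-\delta)dd^cv$ and $v$ is an unknown solution with no a priori $C^2$ bound --- exactly the kind of bound the stability estimate is designed to avoid. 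You flag this yourself as ``the main obstacle'', but no mechanism is offered to restore the compactness, and none is available. A second, related problem is the correction $-\mu\omega_0$: since $\mu\le\frac{n}{n+1}\Lambda\sim\delta^{\frac{n}{n+1}}A^{\frac{1}{n+1}}$ and the proposition is stated for arbitrary $s,\kappa$ (so $A$ can be large), $\mu$ is not small, $X-\mu\omega_0$ can leave $\Gamma_{\infty}$ altogether, and this correction cannot be ``absorbed into the $CA/\delta$ tail'' because it enters at the level of cone membership rather than as an additive scalar error.

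The paper resolves precisely this difficulty by a different algebraic split at the maximum point (Proposition \ref{p4.2} and Lemma \ref{l4.9new}): the max-principle inequality is read as nonnegativity of $\omega_2:=\eps_0\omega_0+dd^c\big(\frac{\phi}{\delta}-\frac{1-\delta}{\delta}v\big)$; concavity of $F$ applied to the identity $\tilde\chi+dd^c\phi=\delta\big(\tilde\chi+dd^c(\frac{\phi}{\delta}-\frac{1-\delta}{\delta}v)\big)+(1-\delta)(\tilde\chi+dd^cv)$ then gives an \emph{upper} bound $F\big((\tilde\chi-\eps_0\omega_0)+\omega_2\big)\le e^{G_0}+\frac12\delta_0$, in which the base $\tilde\chi-\eps_0\omega_0$ is a fixed smooth form; Lemma \ref{l4.9N} (where Lemma \ref{l2.9New} is legitimately applied, with the compact set swept by $\lambda[\tilde\chi-\eps_0\omega_0]$) then yields $\omega_2\le C_1\omega_0$ with a uniform constant. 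The pointwise bound on $\psi_+$ comes out quantitatively --- not by a contradiction/threshold analysis --- from tracing the max-principle inequality with respect to $\omega_2$, AM--GM, and the auxiliary equation (whose normalizing constant is controlled by Lemma \ref{l4.8New}); this works for every size of $A/\delta$ and produces the exponents in (\ref{4.3NN}). If you want to salvage your contradiction scheme, you would have to repackage all the $v$- and $\phi$-dependence into a nonnegative form added to the fixed base $\tilde\chi-\eps_0\omega_0$ and keep the $\omega_0$-correction below $\eps_0$, at which point you are essentially reproducing the paper's argument.
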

The proof of the above proposition makes use of an auxiliary Monge-Ampère equation. 
Since $\lambda[\tilde{\chi}]\in \Gamma_{\infty}$,  there exists $\eps_0>0$ such that:
\begin{equation}
\lambda(\tilde{\chi}-\eps_0\omega_0)\in \Gamma_{\infty},\,\,\,f_{\infty}\big(\lambda[\tilde{\chi}-\eps_0\omega_0]\big)\ge e^{G_0}+\frac{2}{3}\delta_0.
\end{equation}
Note that this choice of $\eps_0$ depends only on the form $\tilde{\chi}$,  the background metric $\omega_0$ as well as the structural condition on $f_{\infty}$.

Let $\chi_j:\bR\rightarrow \bR_+$ be a sequence of smooth functions,  such that $\chi_j(x)\ge \max(x,0):=x_+$ and $\chi_j(x)$ decreasingly converges to $\max(x,0)$ as $j\rightarrow \infty$.

Let $\kappa>1,\,0<\delta<1,\,s\in \bR$,  
we define $\psi_{j,\delta,s,\kappa}$ to be the solution to the following Monge-Ampère equation:
\begin{equation}
(\eps_0\omega_0+dd^c\psi_{j,\delta,s,\kappa})^n
=e^{C_{j,\delta,s,\kappa}}\frac{\chi_j(-\phi+(1-\delta)v-s)}{A_{j,\delta,s,\kappa}}e^{nG_0}\omega_0^n,\,\,\sup_M\psi_{j,\delta,s,\kappa}=0.
\end{equation}
Here
\begin{equation*}
A_{j,\delta,s,\kappa}=\bigg(\int_M\chi_j^{\kappa}(-\phi+(1-\delta)v-s)e^{n\kappa G_0}\omega_0^n\bigg)^{\frac{1}{\kappa}}.
\end{equation*}
Proposition \ref{p4.1} would immediately follow from:
\begin{prop}\label{p4.2}
There exists universal constant $c_2>0,\,C_2>0$ such that 
\begin{equation*}
-\phi+(1-\delta)v-s\le c_2\delta^{\frac{n}{n+1}}A_{j,\delta,s,\kappa}^{\frac{1}{n+1}}(-\psi_{j,\delta,s,\kappa}+C_2\delta^{-1}A_{j,\delta,s,\kappa})^{\frac{n}{n+1}}.
\end{equation*}
Here $c_2$ and $C_2$ depends on $\kappa$,  $\tilde{\chi},\,\omega_0$,  a bound for $||G_0||_{L^{\infty}}$ as well as structural conditions of $f$ and $\Gamma$.
\end{prop}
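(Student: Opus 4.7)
The plan is to run a maximum-principle argument on the barrier
\begin{equation*}
\Psi := -\phi + (1-\delta)v - s - \alpha\bigl(-\psi_{j,\delta,s,\kappa} + \Lambda\bigr)^{\frac{n}{n+1}},
\end{equation*}
with $\alpha := c_2\,\delta^{\frac{n}{n+1}}A_{j,\delta,s,\kappa}^{\frac{1}{n+1}}$ and $\Lambda := C_2\,\delta^{-1}A_{j,\delta,s,\kappa}$, and to show that $\Psi\le 0$ on $M$ for a suitable universal choice of $c_2,C_2>0$. Argue by contradiction: assume $\max_M\Psi>0$ is attained at some $x_0\in M$. The normalization $\sup_M\psi_{j,\delta,s,\kappa}=0$ forces $-\psi+\Lambda\ge\Lambda>0$, so the power is well defined everywhere.

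A direct computation of $dd^c\bigl[(-\psi+\Lambda)^{n/(n+1)}\bigr]$ combined with the second-order condition $dd^c\Psi(x_0)\le 0$ (the $d\psi\wedge d^c\psi$ term having the favorable sign, it can be discarded) gives
\begin{equation*}
\tilde\chi + dd^c\phi \;\ge\; (1-\delta)(\tilde\chi+dd^cv) + \delta\bigl(\tilde\chi-\tfrac{\gamma}{\delta}\eps_0\omega_0\bigr) + \gamma\bigl(\eps_0\omega_0+dd^c\psi\bigr)\qquad \text{at }x_0,
\end{equation*}
where $\gamma := \alpha\,\tfrac{n}{n+1}(-\psi+\Lambda)^{-1/(n+1)}(x_0)$ and $\eps_0>0$ is a fixed constant with $\lambda[\tilde\chi-\eps_0\omega_0]\in\Gamma_\infty$ and $f_\infty(\lambda[\tilde\chi-\eps_0\omega_0])\ge e^{G_0}+\tfrac{2}{3}\delta_0$. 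The choice $C_2^{1/(n+1)}\ge\tfrac{n}{n+1}c_2$ forces $\gamma\le\delta$ uniformly, which makes the three summands on the right lie in $\Gamma$, $\Gamma_\infty$, and $\bar\Gamma_n$ respectively.

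The auxiliary Monge-Amp\`ere equation gives $\det(\eps_0\omega_0+dd^c\psi)(x_0)=e^{C_{j,\delta,s,\kappa}}\chi_j(\cdot)(x_0)A_{j,\delta,s,\kappa}^{-1}e^{nG_0(x_0)}$, and since $(-\phi+(1-\delta)v-s)(x_0)>0$ the factor $\chi_j(\cdot)(x_0)$ is bounded below at $x_0$. Combined with a uniform lower bound on $e^{C_{j,\delta,s,\kappa}}$ extracted from the compatibility identity $\int_M(\eps_0\omega_0+dd^c\psi)^n=e^{C_{j,\delta,s,\kappa}}\int_M\chi_j A_{j,\delta,s,\kappa}^{-1}e^{nG_0}\omega_0^n$ and H\"older (applied to the definition of $A_{j,\delta,s,\kappa}$), this makes the determinant, and hence the eigenvalues, of $\gamma(\eps_0\omega_0+dd^c\psi)(x_0)$ arbitrarily large after enlarging $c_2$ appropriately. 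Lemma \ref{l2.9New}(3) (or item (2) in the case $f_\infty\equiv+\infty$), together with monotonicity of $f$ in the form of Lemma \ref{l3.4N}, therefore yields, for any preassigned $\eta>0$,
\begin{equation*}
f(\tilde\chi+dd^c\phi)(x_0)\;\ge\;f_\infty\bigl((1-\delta)(\tilde\chi+dd^cv)+\delta(\tilde\chi-\tfrac{\gamma}{\delta}\eps_0\omega_0)\bigr)(x_0)-\eta.
\end{equation*}
Concavity of $f_\infty$, the inequality $f_\infty\ge f$ on $\Gamma$, and the hypothesis on $v$ upgrade this to
\begin{equation*}
f(\tilde\chi+dd^c\phi)(x_0)\;\ge\;(1-\delta)(e^{G_0(x_0)}-\delta_1)+\delta\bigl(e^{G_0(x_0)}+\tfrac{2}{3}\delta_0\bigr)-\eta,
\end{equation*}
which together with the upper bound $f(\tilde\chi+dd^c\phi)(x_0)\le e^{G_0(x_0)}+\delta_1$ from (\ref{v-phi}) contradicts the smallness hypothesis on $\delta_1$ once $\eta$ is chosen small enough.

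The main technical obstacle is precisely the uniform lower bound on $e^{C_{j,\delta,s,\kappa}}$ (independent of $j,\delta,s,\kappa,\phi,v$). On a non-K\"ahler background the total mass $\int_M(\eps_0\omega_0+dd^c\psi)^n$ is not topological, so controlling it requires Hermitian Monge-Amp\`ere estimates (integration by parts against $\omega_0^{n-1}$ together with a uniform $C^0$ bound on $\psi_{j,\delta,s,\kappa}$, along the Guo-Phong-Tong line). A secondary subtle point is the uniformity of the Lemma \ref{l2.9New}(3) application as $x_0$ varies, which is why one works with the mollifier $\chi_j$ rather than with $(\cdot)_+$ directly and only sends $j\to\infty$ after Proposition \ref{p4.2} has been established.
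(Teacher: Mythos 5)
Your barrier $\Psi$ and the computation at the maximum point are exactly the paper's, but the way you close the argument has a genuine gap. You keep the convex combination $(1-\delta)(\tilde\chi+dd^cv)+\delta(\tilde\chi-\tfrac{\gamma}{\delta}\eps_0\omega_0)$ and try to force a contradiction between $f(\tilde\chi+dd^c\phi)\le e^{G_0}+\delta_1$ and a lower bound obtained from Lemma \ref{l2.9New}(3). The slack available to that contradiction is at most $\tfrac23\delta\delta_0-(2-\delta)\delta_1$, which is of order $\delta\delta_0$ and is in fact nonpositive in the very regime $\delta\asymp\delta_1/\delta_0$ in which Proposition \ref{p4.2} is later used (Corollary \ref{c4.10}); so the error $\eta$ you need from Lemma \ref{l2.9New}(3) must shrink like $\delta\delta_0$, forcing the threshold $R_0(\eta)$ to blow up as $\delta\to 0$. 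On the other hand, at a point where $\Psi>0$ the only lower bound on the mollified factor is $\chi_j(-\phi+(1-\delta)v-s)\ge \alpha(-\psi+\Lambda)^{n/(n+1)}$, which through the auxiliary equation gives $\det\bigl(\gamma(\eps_0\omega_0+dd^c\psi)\bigr)\ge c\,c_2^{\,n+1}\delta^{\,n}$ with $c$ universal, i.e.\ a perturbation of norm only $\gtrsim c_2^{(n+1)/n}\delta$. So with $c_2$ universal the available largeness vanishes exactly when the required threshold diverges, and the argument cannot close; making $c_2$ depend on $\delta$ would contradict the statement being proved. Two further slips in the same step: a large determinant only forces the largest eigenvalue (the norm) to be large, not all eigenvalues; and Lemma \ref{l2.9New}(3) requires the base point to range in a fixed compact subset of $\Gamma_\infty$, whereas your base contains $(1-\delta)dd^cv$, which is not uniformly bounded, so the $R_0$ you invoke is not uniform in $x_{\max}$, $v$, $\phi$ (this is not fixed by the mollifier $\chi_j$).

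The paper's proof avoids all of this by rescaling by $1/\delta$: the sub/supersolution bounds and concavity are used only to show that $F\bigl(\tilde\chi+dd^c(\tfrac{\phi}{\delta}-\tfrac{1-\delta}{\delta}v)\bigr)\le e^{G_0}+\tfrac12\delta_0$, so the gap against $f_\infty(\lambda[\tilde\chi-\eps_0\omega_0])\ge e^{G_0}+\tfrac23\delta_0$ is of size $\sim\delta_0$, uniform in $\delta$; this uniform gap feeds Lemma \ref{l4.9N} (whose compact set is just $\{\lambda[\tilde\chi](x):x\in M\}$, fixed once and for all) and yields the bound $\omega_2=\eps_0\omega_0+dd^c(\tfrac{\phi}{\delta}-\tfrac{1-\delta}{\delta}v)\le C_1\omega_0$ at $x_{\max}$ (Lemma \ref{l4.9new}). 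The conclusion is then not a soft contradiction but the quantitative step: trace \eqref{4.6NNNN} against $\omega_2$, apply the arithmetic--geometric mean inequality and the auxiliary Monge--Amp\`ere equation, and only at the end choose $\eps$ (your $\alpha$), so the factor $\delta$ appears harmlessly in the final inequality rather than having to be beaten by largeness. Finally, the lower bound on $e^{C_{j,\delta,s,\kappa}}$, which you single out as the main obstacle and propose to extract from a global mass computation (delicate on a Hermitian background), is simply Lemma \ref{l4.8New}, quoted from \cite{KN0}: it is a local estimate depending only on $\|e^{G_0}\|_{L^p(\omega_0^n)}$ and requires no control of $\int_M(\eps_0\omega_0+dd^c\psi_{j,\delta,s,\kappa})^n$.
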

\begin{proof}
We define
\begin{equation*}
\Psi=-\eps(-\psi_{j,\delta,s,\kappa}+\Lambda)^{\frac{n}{n+1}}+\big[-\phi+(1-\delta)v-s\big].
\end{equation*}
We wish to show that,  with suitable choice of $\eps,\,\Lambda$,  one has $\Psi\le 0$.  Assume that the function $\Psi$ achieves maximum at $x_{\max}\in M$,  then we may calculate at $x_{\max}$:
\begin{equation*}
\begin{split}
&0\ge dd^c\Psi=\frac{\eps n}{n+1}(-\psi_{j,\delta,s,\kappa}+\Lambda)^{-\frac{1}{n+1}}dd^c\psi_{j,\delta,s,\kappa}+\big(-dd^c\phi+(1-\delta)dd^cv\big)\\
&+\frac{\eps n}{(n+1)^2}(-\psi_{j.\delta,s,\kappa}+\Lambda)^{-\frac{n+2}{n+1}}d\psi_{j,\delta,s,\kappa}\wedge d^c\psi_{j,\delta,s,\kappa}\\
&\ge \frac{\eps n}{n+1}(-\psi_{j,\delta,s,\kappa}+\Lambda)^{-\frac{1}{n+1}}(\eps_0\omega_0+dd^c\psi_{j,\delta,s,\kappa})-(\eps_0\omega_0+dd^c\phi)+(1-\delta)(\eps_0\omega_0+dd^cv)\\
&+\eps_0\big(\delta-\frac{\eps n}{n+1}(-\psi_{j,\delta,s,\kappa}+\Lambda)^{-\frac{1}{n+1}}\big)\omega_0.
\end{split}
\end{equation*}
We wish to choose the constants $\eps,\,\Lambda$ so that:
\begin{equation}
\frac{1}{2}\delta =\frac{\eps n}{n+1}\Lambda^{-\frac{1}{n+1}}.
\end{equation}
With this choice,  and using that $\psi_{j,\delta,s,\kappa}\le 0$,  we see that:
\begin{equation}\label{4.6NNNN}
\eps_0\omega_0+dd^c\big(\frac{\phi}{\delta}-\frac{1-\delta}{\delta}v\big)> \frac{\eps n}{\delta(n+1)}(-\psi_{j,\delta,s,\kappa}+\Lambda)^{-\frac{1}{n+1}}(\eps_0\omega_0+dd^c\psi_{j,\delta,s,\kappa})\ge 0.
\end{equation}
Using the lemma \ref{l4.9new} below,  we know that there exists a constant $C_1>0$ such that at $x_{\max}$:
\begin{equation}\label{4.7NNNN}
\eps_0\omega_0+dd^c\big(\frac{\phi}{\delta}-\frac{1-\delta}{\delta}v\big)\le C_1\omega_0.
\end{equation}
Denote $\omega_2=\eps_0\omega_0+dd^c\big(\frac{\phi}{\delta}-\frac{1-\delta}{\delta}v\big)$,  then we take trace with respect to $\omega_2$ on both sides of (\ref{4.6NNNN}),  we see that at $x_{\max}$:
\begin{equation}\label{4.8NNN}
\begin{split}
&n\ge \frac{\eps n}{\delta(n+1)}(-\psi_{j,\delta,s,\kappa}+\Lambda)^{-\frac{1}{n+1}}tr_{\omega_2}\big(\eps_0\omega_0+dd^c\psi_{j,\delta,s,\kappa}\big)\\
&\ge \frac{\eps n}{\delta(n+1)}(-\psi_{j,\delta,s,\kappa}+\Lambda)^{-\frac{1}{n+1}}n\big(\frac{(\eps_0\omega_0+dd^c\psi_{\delta,s,\kappa})^n}{\omega_2^n}\big)^{\frac{1}{n}}\\
&\ge \frac{\eps n^2}{\delta(n+1)}(-\psi_{j,\delta,s,\kappa}+\Lambda)^{-\frac{1}{n+1}}\big(e^{C_{j,\delta,s,\kappa}}\frac{(-\phi+(1-\delta)v-s)_+}{A_{j,\delta,s,\kappa}}e^{nG_0}C_1^{-n}\big)^{\frac{1}{n}}
\end{split}
\end{equation}
In the second line above,  we used the arithmetic-geometric inequality.  In the third line above,  we used (\ref{4.7NNNN}) as well as the Monge-Ampère equation solved by $\psi_{j,\delta,s,\kappa}$.  Using the lemma below \ref{l4.8New},  we know that $C_{j,\delta,s,\kappa}$ is bounded from below depending only on the background metric $(M,\omega_0)$ and the choice of $\eps_0$.   Hence we obtain from (\ref{4.8NNN}) that there exists $c>0$,  such that 
\begin{equation*}
1\ge c\frac{\eps }{\delta}(-\psi_{\delta,s,\kappa}+\Lambda)^{-\frac{1}{n+1}}A_{\delta,s,\kappa}^{-\frac{1}{n}}(-\phi+(1-\delta)v-s)_+^{\frac{1}{n}}.
\end{equation*}
Here $c$ depends on the background metric $(M,\omega_0)$,  the constant $C_1>0$ as well as a lower bound for $G_0$.  Now we choose $\eps$ so that:
\begin{equation}
\eps=\frac{1}{c^n}\frac{\delta^n}{\eps^n}A_{\delta,s,\kappa}.
\end{equation}
With this choice,  we have:
\begin{equation*}
\eps\big(-\psi_{\delta,s,\kappa}+\Lambda\big)^{\frac{n}{n+1}}\ge \frac{1}{c^n}\frac{\delta^n}{\eps^n}A_{\delta,s,\kappa}\big(-\psi_{\delta,s,\kappa}+\Lambda\big)^{\frac{n}{n+1}}\ge \big(-\phi+(1-\delta)v-s\big)_+.
\end{equation*}
This implies $\Psi\le 0$ at $x_{\max}$.  
\end{proof}
In the above,  we used the following lemma:
\begin{lem}\label{l4.8New}
Let $G(x)$ be a smooth function on $M$.  Let $\varphi$ be the smooth solution to:
\begin{equation*}
(\eps_0\omega_0+dd^c\varphi)^n=
e^{G+c}\omega_0^n.
\end{equation*}
for some constant $c\in \bR$.  Let $p>1$,  then
the constant $c$ can be estimated from below by $p$,  $\eps_0$,  the background metric $\omega_0$ an an upper bound for $||e^G||_{L^p(\omega_0^n)}$.  
\end{lem}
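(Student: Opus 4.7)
Plan. Since adding a constant to $\varphi$ does not change the Monge-Amp\`ere equation, we normalize $\sup_M\varphi = 0$ at the outset. Integrating both sides of $(\eps_0\omega_0+dd^c\varphi)^n = e^{G+c}\omega_0^n$ over $M$ gives
\begin{equation*}
e^c\int_M e^G\,\omega_0^n = \int_M(\eps_0\omega_0+dd^c\varphi)^n,
\end{equation*}
so the task reduces to (i) bounding the left integral from above in terms of $\|e^G\|_{L^p(\omega_0^n)}$, and (ii) bounding the right integral from below by a positive constant depending only on $\omega_0$ and $\eps_0$.

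Step (i) is immediate by H\"older's inequality with exponents $p$ and $\tfrac{p}{p-1}$:
\begin{equation*}
\int_M e^G\,\omega_0^n \le \|e^G\|_{L^p(\omega_0^n)}\cdot V^{(p-1)/p},\qquad V := \int_M\omega_0^n.
\end{equation*}
For step (ii), set $\chi_\varphi := \eps_0\omega_0+dd^c\varphi > 0$ and expand the difference of $n$-th powers as
\begin{equation*}
\chi_\varphi^n - \eps_0^n\omega_0^n = dd^c\varphi\wedge\Theta,\qquad \Theta := \sum_{k=0}^{n-1}\chi_\varphi^k\wedge(\eps_0\omega_0)^{n-1-k}.
\end{equation*}
When $\omega_0$ is K\"ahler, $d\omega_0 = 0$ forces $d\Theta = 0$, so Stokes' theorem yields $\int_M\chi_\varphi^n = \eps_0^n V$ exactly, finishing (ii). In the general Hermitian case, iterated integration by parts transfers $dd^c$ off $\varphi$ and onto $\Theta$, at the price of correction terms involving $d\omega_0$ and $dd^c\omega_0$; these are controlled by $\|\varphi\|_{L^1(\omega_0^n)}$ together with the uniform Chern-Levine-Nirenberg-type bounds on the mixed masses $\int_M\chi_\varphi^k\wedge\omega_0^{n-k}$. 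Combined with Cherrier's classical uniform $L^1$-estimate for $(\eps_0\omega_0)$-plurisubharmonic functions normalized by $\sup = 0$, this produces a uniform lower bound $\int_M\chi_\varphi^n \ge c_0(\omega_0,\eps_0) > 0$.

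Combining (i) and (ii) gives $e^c \ge c_0/\bigl(\|e^G\|_{L^p(\omega_0^n)}V^{(p-1)/p}\bigr)$, which is the desired lower bound on $c$ in terms of $p$, $\eps_0$, $\omega_0$ and an upper bound on $\|e^G\|_{L^p(\omega_0^n)}$. The main obstacle is step (ii) in the non-K\"ahler Hermitian case: the K\"ahler identity $d\omega_0 = 0$ is lost, so one must carefully iterate the integration by parts to express the correction as a pairing against $\varphi$, and then appeal to both Cherrier's uniform $L^1$-bound for $\omega_0$-plurisubharmonic functions and to uniform bounds on mixed Monge-Amp\`ere masses on Hermitian manifolds. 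These are standard ingredients in Hermitian pluripotential theory but require some care to chain together.
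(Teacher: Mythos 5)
Your route is genuinely different from the paper's: you integrate the equation globally and reduce the lemma to a positive lower bound for the total Monge--Amp\`ere mass $\int_M(\eps_0\omega_0+dd^c\varphi)^n$, whereas the paper simply invokes Lemma 5.9 of Ko\l odziej--Nguyen \cite{KN0}, whose underlying argument (as in the authors' earlier draft, via the Alexandrov maximum principle applied near the minimum point of $\varphi$) is purely local: one only uses the pointwise equation on a small coordinate ball around $\min_M\varphi$ to force $e^{c}\|e^G\|_{L^p}$ to be bounded below, and no control of the global mass is ever needed. Your step (i) and the reduction identity $e^c\int_Me^G\omega_0^n=\int_M\chi_\varphi^n$ are correct, and in the K\"ahler case your Stokes argument does close the proof.

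The gap is step (ii) in the Hermitian (non-K\"ahler) case, which is exactly where the paper's setting lives since $\omega_0$ is only Hermitian. The proposed justification does not work as sketched: expanding $\chi_\varphi^n-(\eps_0\omega_0)^n$ and integrating by parts produces correction terms involving $d\omega_0$ and $dd^c\omega_0$ that carry no sign, and they scale like \emph{lower} powers of $\eps_0$ while the main term is $\eps_0^n\int_M\omega_0^n$ (already for $n=2$ one gets $\int_M\chi_\varphi^2=\eps_0^2\int_M\omega_0^2+2\eps_0\int_M\varphi\,dd^c\omega_0$, and the Cherrier-type bound $\|\varphi\|_{L^1}\le C$ with $\sup_M\varphi=0$ only yields $\int_M\chi_\varphi^2\ge \eps_0^2\int_M\omega_0^2-C\eps_0$, which is vacuous for small $\eps_0$); moreover the Chern--Levine--Nirenberg bounds you invoke for the mixed masses $\int_M\chi_\varphi^k\wedge\omega_0^{n-k}$ require $L^\infty$ control of $\varphi$, not merely the $L^1$ bound. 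The uniform positivity of Monge--Amp\`ere volumes, $\inf\{\int_M(\omega+dd^cu)^n: u\in PSH(M,\omega)\cap L^\infty\}>0$ on a compact Hermitian manifold, is a genuinely nontrivial theorem (Guedj--Lu, ``Monge--Amp\`ere volumes on compact Hermitian manifolds''), not a consequence of this expansion; if you want to keep your global strategy you must quote that result, or else abandon the mass identity and argue locally at the minimum point as in Ko\l odziej--Nguyen.
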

The proof of this statement is contained in Lemma 5.9 of Kołodziej and Nguyen \cite{KN0}.

A crucial step in the above proof is (\ref{4.7NNNN}),  whose proof will be explained below:
\begin{lem}\label{l4.9new}
Under the assumptions of Proposition \ref{p4.1},  there exists a constant $C_1>0$,  such that at $x_{\max}$:
\begin{equation*}
\eps_0\omega_0+dd^c\big(\frac{\phi}{\delta}-\frac{1-\delta}{\delta}v\big)\le C_1\omega_0.
\end{equation*}
Here $C_1$ depends only on $||G_0||_{L^{\infty}}$,  $\tilde{\chi}$,  $\delta_0$ as well as structural conditions on $f$ and $\Gamma$.
\end{lem}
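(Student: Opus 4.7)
The plan is to argue by contradiction, combining concavity of $F$ on $\Gamma$ with Ky Fan's majorization for Hermitian forms. Write $B := \eps_0\omega_0 + dd^c(\phi/\delta - (1-\delta)v/\delta)$; by (\ref{4.6NNNN}) we have $B \geq 0$ at $x_{\max}$. The key observation is the pointwise matrix identity
\begin{equation*}
\tilde{\chi} + dd^c\phi = \delta\bigl(\tilde{\chi} - \eps_0\omega_0 + B\bigr) + (1-\delta)\bigl(\tilde{\chi} + dd^cv\bigr),
\end{equation*}
which expresses $\tilde\chi + dd^c\phi$ as a convex combination involving $B$. I will show the $\omega_0$-eigenvalues of $B$ at $x_{\max}$ cannot exceed a threshold $R_0$ depending only on the data listed in the statement.

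The first step invokes Ky Fan's majorization: for Hermitian forms $P, Q$ with respect to $\omega_0$, one has $\lambda[P + Q] \prec \lambda[P] + \lambda[Q]$, both sorted decreasingly. Since $f$ is symmetric and concave on $\Gamma$, it is Schur-concave there, so whenever both eigenvalue tuples lie in $\Gamma$,
\begin{equation*}
F(P + Q) \geq f\bigl(\lambda[P] + \lambda[Q]\bigr).
\end{equation*}
I apply this with $P = \tilde\chi - \eps_0\omega_0$, whose eigenvalues lie in a fixed compact set $K \subset \Gamma_\infty$ depending only on $\tilde\chi, \omega_0, \eps_0$, and with $Q = B$ (eigenvalues in $\bar{\Gamma}_n$ at $x_{\max}$). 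Lemma \ref{l2.9New}(1) supplies a threshold $R_0 = R_0(K, f, \Gamma)$ such that $|\lambda[B]| \geq R_0$ forces $\lambda[\tilde\chi - \eps_0\omega_0] + \lambda[B] \in \Gamma$; Lemma \ref{l2.9New}(2)--(3) allows enlarging $R_0$ so that for any prescribed $\eta > 0$,
\begin{equation*}
f\bigl(\lambda[\tilde\chi - \eps_0\omega_0] + \lambda[B]\bigr) \geq f_\infty\bigl(\lambda[\tilde\chi - \eps_0\omega_0]\bigr) - \eta \geq e^{G_0} + \tfrac{2}{3}\delta_0 - \eta,
\end{equation*}
with the case $f_\infty = +\infty$ supplying arbitrarily large lower bounds instead. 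Combined, $F(\tilde\chi - \eps_0\omega_0 + B) \geq e^{G_0} + \tfrac{2}{3}\delta_0 - \eta$ whenever $|\lambda[B]| \geq R_0$.

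Finally, concavity of $F$ on $\Gamma$ applied to the identity above (all three summands being in $\Gamma$: the outer two by (\ref{v-phi}), the middle one by Lemma \ref{l2.9New}(1)) gives
\begin{equation*}
e^{G_0} + \delta_1 \geq F(\tilde\chi + dd^c\phi) \geq \delta\bigl(e^{G_0} + \tfrac{2}{3}\delta_0 - \eta\bigr) + (1-\delta)(e^{G_0} - \delta_1).
\end{equation*}
Rearranging and using the hypothesis $\delta_1\delta < \tfrac{1}{2}\delta_0$ with $\eta$ chosen small at the outset produces a contradiction whenever $|\lambda[B]| \geq R_0$. Hence $|\lambda[B]| \leq R_0$ at $x_{\max}$; since $B \geq 0$ there, we obtain $B \leq n R_0 \,\omega_0$ and take $C_1 = n R_0$. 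The main obstacle will be the matrix-to-eigenvalue bookkeeping in the first step: rigorously deploying Ky Fan majorization in the $\omega_0$-twisted Hermitian setting and verifying that $\lambda[\tilde\chi - \eps_0\omega_0] + \lambda[B]$ lies in the domain $\Gamma$ of $f$, both handled by Lemma \ref{l2.9New}(1) and the Schur-concavity of symmetric concave $f$.
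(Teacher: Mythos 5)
Your argument is correct and shares the paper's skeleton: the same convex decomposition $\tilde{\chi}+dd^c\phi=\delta\big(\tilde{\chi}-\eps_0\omega_0+B\big)+(1-\delta)\big(\tilde{\chi}+dd^cv\big)$, concavity of $F$, and Lemma \ref{l2.9New} combined with the choice of $\eps_0$ giving $f_{\infty}\big(\lambda[\tilde{\chi}-\eps_0\omega_0]\big)\ge e^{G_0}+\frac{2}{3}\delta_0$. Where you differ is in how the eigenvalue comparison is organized: the paper first derives the upper bound $F\big(\tilde{\chi}-\eps_0\omega_0+\omega_2\big)\le e^{G_0}+\frac{1}{2}\delta_0$ and then invokes the separate Lemma \ref{l4.9N}, which is proved via Weyl-type eigenvalue monotonicity (writing $\lambda[\tilde{\chi}+\omega]=\lambda[\tilde{\chi}]+\lambda'$ with $\lambda'\in\bar{\Gamma}_n$ of size comparable to $\lambda[\omega]$), and it splits into two cases according to whether $\lambda[\tilde{\chi}+dd^c(\frac{\phi}{\delta}-\frac{1-\delta}{\delta}v)]$ lies in $\Gamma$; you instead run a single contradiction using Ky Fan majorization plus Schur-concavity of the symmetric concave $f$, which under the hypothesis $|\lambda[B]|\ge R_0$ automatically places the middle summand in $\Gamma$ (so the concavity inequality is legitimate) and removes the case split. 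Both routes ultimately rest on Lemma \ref{l2.9New}, so the difference is packaging rather than substance, though your version is somewhat more self-contained. One bookkeeping remark: your final contradiction requires $\frac{(2-\delta)\delta_1}{\delta}<\frac{2}{3}\delta_0$ (up to the small $\eta$), which is the same kind of smallness of $\delta_1/\delta$ relative to $\delta_0$ that the paper's own proof uses when it asserts $\frac{\delta_1}{\delta}(2-\delta)\le\frac{1}{2}\delta_0$; the hypothesis as literally printed in Proposition \ref{p4.1} ($\delta_1\delta<\frac{1}{2}\delta_0$) appears to be a typo, and your use of it matches the paper's intended reading.
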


\begin{proof}
First we assume that $\lambda[\tilde{\chi}+dd^c\big(\frac{\phi}{\delta}-\frac{1-\delta}{\delta}v\big)]\in \Gamma$.  
Note that
\begin{equation*}
\tilde{\chi}+dd^c\phi=\delta\big(\tilde{\chi}+dd^c\big(\frac{\phi}{\delta}-\frac{1-\delta}{\delta}v\big)\big)+(1-\delta)\big(\tilde{\chi}+dd^cv\big).
\end{equation*}
Then we use the concavity of $A\mapsto F(A):=f(\lambda[A])$ in the space of Hermitian metrics,  we see that at $x_{\max}$:
\begin{equation*}
\begin{split}
&e^{G_0}+\delta_1\ge F(\tilde{\chi}+dd^c\phi)\ge \delta F(\tilde{\chi}+dd^c(\frac{\phi}{\delta}+\frac{1-\delta}{\delta}v))+(1-\delta)F(\tilde{\chi}+dd^cv)\\
&\ge \delta F\big(\tilde{\chi}+dd^c(\frac{\phi}{\delta}+\frac{1-\delta}{\delta} v)\big)+(1-\delta)(e^{G_0}-\delta_1).
\end{split}
\end{equation*}
Therefore
\begin{equation}\label{4.10NNN}
F(\tilde{\chi}+dd^c(\frac{\phi}{\delta}+\frac{1-\delta}{\delta}v))=f\big(\lambda[\tilde{\chi}+dd^c(\frac{\phi}{\delta}+\frac{1-\delta}{\delta}v)]\big)\le e^{G_0}+\frac{\delta_1}{\delta}(2-\delta)\le e^{G_0}+\frac{1}{2}\delta_0.
\end{equation}
On the other hand,  from our choice of $\eps_0$,  we know that:
\begin{equation}\label{4.11NNN}
F_{\infty}\big(\tilde{\chi}-\eps_0\omega_0\big)=f_{\infty}\big(\lambda[\tilde{\chi}-\eps_0\omega_0]\big)\ge e^{G_0}+\frac{2}{3}\delta_0.
\end{equation}
Define $\omega_2=\eps_0\omega_0+dd^c\big(\frac{\phi}{\delta}+\frac{1-\delta}{\delta}v\big)$,  then (\ref{4.10NNN}) is equivalent to:
\begin{equation}\label{4.12NNN}
F\big(\tilde{\chi}-\eps_0\omega_0+\omega_2\big)\le e^{G_0}+\frac{1}{2}\delta_0.
\end{equation}
Using Lemma \ref{l4.9N} below,  we can conclude that there exists a constant $C_{0.5}>0$ such that $\omega_2\le C_{0.5}\omega_0$,  since $\omega_2(x_{max})$ is positive by (\ref{4.6NNNN}).

If,  on the other hand,  $\lambda[\tilde{\chi}+dd^c(\frac{\phi}{\delta}-\frac{1-\delta}{\delta}v)]\notin \Gamma$.  Then we would have:
\begin{equation*}
\lambda[\tilde{\chi}-\eps_0\omega_0]\in \Gamma_{\infty},\,\,\lambda[\tilde{\chi}-\eps_0\omega_0+\omega_2]\notin \Gamma.
\end{equation*}
Again from Lemma \ref{l4.9N} below,  we can conclude that $\omega_2\le C_{0.5}\omega_0$.
\end{proof}
The above proof needs the following lemma:
\begin{lem}\label{l4.9N}
Let $\delta>0$.  Let $\tilde{\chi}$ be a smooth real $(1,1)$ form on $M$ such that $\lambda[\tilde{\chi}]\in \Gamma_{\infty}$ and $F_{\infty}(\tilde{\chi})\ge e^{G_0}+\delta$.  Then the following hold:
\begin{enumerate}
\item There exists $C_{0.5}>0$ such that for any $x_0\in M$ and any $(1,1)$ form $\omega$ at $x_0$ with $\lambda[\omega](x_0)\in \Gamma_n$ and $\lambda[\tilde{\chi}+\omega](x_0)\notin \Gamma$,  one has $\omega\le C_{0.5}\omega_0$ at $x_0$.
\item Fix any $0<\delta'<\delta$,  there exists a constant $C_{0.6}>0$ such that for any $x_0\in M$ and any $(1,1)$ form $\omega$ at $x_0$ with $\lambda[\omega]\in \Gamma_n$,  $\lambda[\tilde{\chi}+\omega](x_0)\in \Gamma$,  and $F(\tilde{\chi}+\omega)(x_0)\le e^{G_0}+\delta'$,  one has $\omega\le C_{0.6}\omega_0$.
\end{enumerate}
In the above,  $C_{0.5}$ and $C_{0.6}$ depend on the $(1,1)$ form $\tilde{\chi}$,  the cone $\Gamma_{\infty}$,  the functions $f$ and $f_{\infty}$.
\end{lem}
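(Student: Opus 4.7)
The plan is to prove both assertions via Lemma \ref{l2.9New}, arguing by contrapositive: if $\omega$ is too large with respect to $\omega_0$, one of the hypotheses in (1) or (2) must be violated. Since $\omega>0$, the bound $\omega\le C\omega_0$ is equivalent to a uniform upper bound on $\lambda_{\max}(\omega)$, the largest eigenvalue of $\omega$ measured against $\omega_0$, so it suffices to bound this single quantity.

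The key reduction is the following. Because $\omega\ge 0$, Weyl's monotonicity inequality for Hermitian matrices gives, after sorting eigenvalues in decreasing order,
\begin{equation*}
\tilde{\lambda}:=\lambda[\tilde{\chi}+\omega]-\lambda[\tilde{\chi}]\in\bar{\Gamma}_n.
\end{equation*}
Summing yields $\sum_i\tilde{\lambda}_i=tr_{\omega_0}(\omega)$, so $|\tilde{\lambda}|\ge tr_{\omega_0}(\omega)/\sqrt{n}\ge\lambda_{\max}(\omega)/\sqrt{n}$; hence a large value of $\lambda_{\max}(\omega)$ forces $|\tilde{\lambda}|$ to be correspondingly large. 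Since $\Gamma$ and $\Gamma_{\infty}$ are symmetric under permutations, the identity $\lambda[\tilde{\chi}+\omega]=\lambda[\tilde{\chi}]+\tilde{\lambda}$ (viewed as sorted vectors in $\bR^n$) allows this pair to be fed into Lemma \ref{l2.9New}, which will be applied with the compact set $K:=\{\lambda[\tilde{\chi}(x)]:x\in M\}\subset\Gamma_{\infty}$ (compactness follows from smoothness of $\tilde{\chi}$ and compactness of $M$).

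For (1), Lemma \ref{l2.9New}(1) furnishes $R_1>0$ such that whenever $|\tilde{\lambda}|\ge R_1$ one has $\lambda[\tilde{\chi}+\omega]\in\Gamma$, contradicting the hypothesis $\lambda[\tilde{\chi}+\omega]\notin\Gamma$; hence $\lambda_{\max}(\omega)\le\sqrt{n}R_1=:C_{0.5}$. For (2), we split according to Assumption \ref{a1.2N}: if $f_{\infty}=+\infty$, apply Lemma \ref{l2.9New}(2) with $B:=\sup_Me^{G_0}+\delta'+1$; if $f_{\infty}<+\infty$, apply Lemma \ref{l2.9New}(3) with $\eps:=\tfrac{1}{2}(\delta-\delta')$ and combine with $f_{\infty}(\lambda[\tilde{\chi}])\ge e^{G_0}+\delta$ to obtain $f(\lambda[\tilde{\chi}+\omega])\ge e^{G_0(x_0)}+\tfrac{1}{2}(\delta+\delta')>e^{G_0(x_0)}+\delta'$, contradicting $F(\tilde{\chi}+\omega)(x_0)\le e^{G_0(x_0)}+\delta'$ as soon as $|\tilde{\lambda}|$ is sufficiently large. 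This yields the desired bound $\omega\le C_{0.6}\omega_0$, with $C_{0.6}$ depending on the parameters listed in the statement.

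The main obstacle is securing the Weyl-type reduction cleanly: once the difference of sorted eigenvalue vectors is identified as an element of $\bar{\Gamma}_n$ whose norm dominates $\lambda_{\max}(\omega)$, both parts of the lemma follow directly from the corresponding clauses of Lemma \ref{l2.9New}. No further estimates on $f$ or $F$ are needed beyond the three regimes already extracted in that lemma.
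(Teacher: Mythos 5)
Your proof is correct and follows essentially the same route as the paper: both use eigenvalue monotonicity to write $\lambda[\tilde{\chi}+\omega]=\lambda[\tilde{\chi}]+\tilde{\lambda}$ with $\tilde{\lambda}\in\bar{\Gamma}_n$, take $K=\{\lambda[\tilde{\chi}(x)]:x\in M\}$ compact in $\Gamma_{\infty}$, and invoke the three parts of Lemma \ref{l2.9New} to force $|\tilde{\lambda}|$ (hence $\omega$) to stay bounded under the hypotheses of (1) and (2). The only cosmetic difference is that you pass from largeness of $\omega$ to largeness of $|\tilde{\lambda}|$ via the trace identity $\sum_i\tilde{\lambda}_i=\mbox{tr}_{\omega_0}\omega\ge\lambda_{\max}(\omega)$, whereas the paper uses the Weyl-type bound $|\lambda_l[\tilde{\chi}+\omega]-\lambda_l[\omega]|\le C(\tilde{\chi})$; both work equally well.
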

\begin{proof}
Fix any $x_0\in M$,  we can choose normal coordinate near $x_0$ so that $(\omega_0)_{i\bar{j}}(x_0)=\delta_{ij}$.  Under this coordinate,  we can write:
\begin{equation*}
\tilde{\chi}=\sqrt{-1}\tilde{\chi}_{i\bar{j}}dz_i\wedge d\bar{z}_j,\,\,\,\omega=\sqrt{-1}\omega_{i\bar{j}}dz_i\wedge d\bar{z}_j.
\end{equation*}
That $\lambda[\omega](x_0)\in \Gamma_n$ simply means $\omega_{i\bar{j}}(x_0)$ is nonnegative definite.  From the variational characterization of eigenvalues,  we have:
\begin{equation}\label{4.13NN}
\begin{split}
&\lambda_l[\tilde{\chi}+\omega](x_0)=\inf_{\dim \Sigma=l}\sup_{v\in \Sigma,\,|v|=1}(\tilde{\chi}_{i\bar{j}}(x_0)+\omega_{i\bar{j}}(x_0))v_{\bar{i}}v_j\\
&\ge \inf_{\dim \Sigma=l}\sup_{v\in \Sigma,\,|v|=1}\tilde{\chi}_{i\bar{j}}(x_0)v_{\bar{i}}v_j=\lambda_l[\tilde{\chi}](x_0),\,\,1\le l\le n.
\end{split}
\end{equation}
In the above,  inf is taken over all $\bC$-subspaces of dimension $l$,  and we have listed the eigenvalues in the increasing order.  Therefore,  for any positive real $(1,1)$ form $\omega$ at $x_0$,  there exists $\lambda'\in \Gamma_n$ such that:
\begin{equation*}
\lambda[\tilde{\chi}+\omega](x_0)=\lambda[\tilde{\chi}](x_0)+\lambda'.
\end{equation*}
On the other hand,  since $M$ is compact,  $\lambda[\tilde{\chi}](x_0),\,x_0\in M$ should remain in a compact subset of $\Gamma_{\infty}$,  we see from (\ref{4.13NN}) that:
\begin{equation}\label{4.14NN}
|\lambda_l[\tilde{\chi}+\omega](x_0)-\lambda_l[\omega](x_0)|\le C,\,\,\,1\le l\le n.
\end{equation}
where $C$ depends only on the form $\tilde{\chi}$ and not on $\omega$ or $x_0$.  Since $\lambda[\tilde{\chi}](x_0)$ remains in a compact subset of $\Gamma_{\infty}$ when $x_0$ ranges over $M$,  there exists $C'>0$ such that for any $\lambda'\in \Gamma_n$ with $|\lambda'|\ge C'$ one would have $\lambda[\tilde{\chi}](x_0)+\lambda'\in \Gamma$ from the first part of Lemma \ref{l2.9New}. This fact combined with (\ref{4.14NN}) implies that if $\lambda[\omega](x_0)\in \Gamma_n$ with $|\lambda[\omega](x_0)|\ge C''$ and $C''$ sufficiently large,  we would have $\lambda[\tilde{\chi}+\omega](x_0)\in \Gamma$.  Hence if we require that $\lambda[\tilde{\chi}+\omega](x_0)\notin \Gamma$,  $\lambda[\omega](x_0)$ must remains in a bounded set,  giving us the desired conclusion for part (1).

For part (2),  our situation now is that:
\begin{equation*}
f\big(\lambda(\tilde{\chi})(x_0)+\lambda'\big)\le e^{G_0}+\delta',\,\,f_{\infty}(\lambda(\tilde{\chi})(x_0))\ge e^{G_0}+\delta,
\end{equation*}
where $\lambda'=\lambda[\tilde{\chi}+\omega](x_0)-\lambda[\tilde{\chi}](x_0)$ and we know $\lambda'\in \Gamma_n$.   Using the second part of Lemma \ref{l2.9New},  since $\lambda[\tilde{\chi}](x_0)$ remains in a compact subset of $\Gamma_{\infty}$,  there exists $C''>0$ such that for any $\lambda'\in \Gamma_n$,  $|\lambda'|\ge C''$,  one has:
\begin{equation*}
\lambda[\tilde{\chi}](x_0)+\lambda'\in \Gamma,\,\,f\big(\lambda[\tilde{\chi}](x_0)+\lambda')\ge e^{G_0}+\frac{\delta+\delta'}{2}.
\end{equation*}
Therefore,  we must have $|\lambda[\tilde{\chi}+\omega](x_0)-\lambda[\tilde{\chi}](x_0)|\le \tilde{C}''$.  Hence $|\lambda[\omega](x_0)|\le C+C''$,  therefore,  $\omega$ remains in a bounded set.
\end{proof}

As a consequence of Proposition \ref{p4.2},  we can finally prove Proposition \ref{p4.1}.
\begin{proof}
(Of Proposition \ref{p4.1}) Proposition \ref{p4.2} gives:
\begin{equation*}
\frac{(-\phi-(1-\delta)v-s)_+^{\frac{n+1}{n}}}{c_2^{\frac{n+1}{n}}\delta A_{j,\delta,s,\kappa}^{\frac{1}{n}}}\le -\psi_{j,\delta,s,\kappa}+C_2\delta^{-1}A_{j,\delta,s,\kappa}.
\end{equation*}
Since for each $j$,  $\psi_{j,\delta,s,\kappa}$ is an $\omega_0$-psh function with $\sup_M\psi_{j,\delta,s,\kappa}$,  there exists $C>0,\,\alpha_0>0$ which depends only on $(M,\omega_0)$ such that $\int_Me^{-\alpha_0\psi_{j,\delta,s,\kappa}}\omega_0^n\le C$.  Therefore:
\begin{equation*}
\begin{split}
&\int_M\exp\big(\alpha_0\frac{(-\phi+(1-\delta)v-s)_+^{\frac{n+1}{n}}}{c_2^{\frac{n+1}{n}}\delta A_{j,\delta,s,\kappa}^{\frac{1}{n}}}\big)\omega_0^n\le \int_Me^{-\alpha_0\psi_{j,\delta,s,\kappa}}\omega_0^n\cdot \exp\big(C_2\delta^{-1}A_{j,\delta,s,\kappa}\big)\\
&\le C\exp\big(C_2\delta^{-1}A_{j,\delta,s,\kappa}\big).
\end{split}
\end{equation*}
Now we pass to limit as $j\rightarrow \infty$,  and note that $A_{j,\delta,s,\kappa}$ converges to $A_{\delta,s,\kappa}$ given by (\ref{4.7New}).
\end{proof}

\begin{lem}
Assume that $v,\,\phi$ satisfies (\ref{v-phi}).  
Assume that $s_0,\,\delta>0$ are chosen so that $A_{\delta,s_0,\kappa}\le \delta$.  Let $p_0>1$.  Define $\Omega_{\delta,s}=\{x\in M:(1-\delta)v-\phi-s>0\}$.  Put $u(s)=\int_{\Omega_{\delta,s}}e^{nG_0}\omega_0^n$.  Then for any $0<\delta_*<\frac{1}{n}$,  one can choose $\kappa$ sufficiently close to 1,  such that there exists $C_*>0$,  depending on $\delta_*$,  $||e^{nG_0}||_{L^{p_0}(\omega_0^n)}$,  the background metric and the choice of $\kappa$ such that for any $s\ge s_0,\,t>0$,  one has:
\begin{equation*}
tu(s+t)\le C_*\delta u(s)^{1+\delta_*}.
\end{equation*}
\end{lem}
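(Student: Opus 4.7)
The plan is to combine Proposition \ref{p4.1} with Chebyshev's inequality and two carefully chosen Hölder splits. Write $w := ((1-\delta)v - \phi - s)_+$ and $A := A_{\delta,s,\kappa}$; since $A_{\delta,s,\kappa}$ is nonincreasing in $s$, the hypothesis $A_{\delta,s_0,\kappa} \le \delta$ gives $A \le \delta$ for all $s \ge s_0$. Proposition \ref{p4.1} then yields the uniform exponential integrability bound
\[
\int_M \exp\!\Big(c_0 \tfrac{w^{(n+1)/n}}{\delta A^{1/n}}\Big) \omega_0^n \le C,
\]
from which, by the pointwise inequality $x^r \le (r/e)^r e^x$ applied to $x = c_0 w^{(n+1)/n}/(\delta A^{1/n})$, we extract polynomial moments $\big(\int_M w^r \omega_0^n\big)^{1/r} \le C_r (\delta A^{1/n})^{n/(n+1)}$ for every $r > 0$.

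Since $w > t$ on $\Omega_{\delta,s+t}$, Chebyshev gives $t u(s+t) \le \int_{\Omega_{\delta,s}} w\, e^{nG_0}\, \omega_0^n$. Applying Hölder with conjugate exponents $(a,a')$,
\[
\int_{\Omega_{\delta,s}} w\, e^{nG_0} \omega_0^n \le \Big(\int_M w^a\, \omega_0^n\Big)^{1/a} \Big(\int_{\Omega_{\delta,s}} e^{n a' G_0}\, \omega_0^n\Big)^{1/a'}.
\]
The moment bound controls the first factor by $C_a (\delta A^{1/n})^{n/(n+1)}$; for the second, factor $e^{na' G_0} = e^{n(a'-1)G_0} \cdot e^{nG_0}$ and use that $G_0$ is bounded on $M$ (refining to $L^{p_0}$ dependence via a further Hölder application against $\|e^{nG_0}\|_{L^{p_0}}$) to obtain a bound of the form $C u(s)^{1-1/a}$. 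This gives
\[
t u(s+t) \le C_a\, (\delta A^{1/n})^{n/(n+1)}\, u(s)^{1-1/a}.
\]

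The crucial step is to extract a positive power of $u(s)$ also from $A$. The same Hölder-plus-moments argument applied to $A^\kappa = \int_{\Omega_{\delta,s}} w^\kappa e^{n\kappa G_0} \omega_0^n$, with an independent conjugate pair $(b,b')$, yields $A^\kappa \le C_b (\delta A^{1/n})^{\kappa n/(n+1)} u(s)^{1-1/b}$. Rearranging gives $A \le C\delta \cdot u(s)^{(n+1)(1-1/b)/(\kappa n)}$, and hence $(\delta A^{1/n})^{n/(n+1)} \le C\delta\, u(s)^{(1-1/b)/(\kappa n)}$. Substituting back, we arrive at
\[
t u(s+t) \le C_*\, \delta\, u(s)^{1 + (1-1/b)/(\kappa n) - 1/a}.
\]

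To finish, given any $\delta_* < 1/n$ we choose $\kappa > 1$ close enough to $1$ that $1/(\kappa n) > \delta_*$, then pick $a$ and $b$ large enough that $(1-1/b)/(\kappa n) - 1/a \ge \delta_*$. The main obstacle is the delicate three-parameter balance: the $(n+1)/n$ exponent in Proposition \ref{p4.1} caps the gain $(1-1/b)/(\kappa n)$ at $1/n$ even in the limit $b \to \infty$ and $\kappa \to 1^+$, which is precisely the source of the ceiling $\delta_* < 1/n$ on the admissible exponent; pushing $\delta_*$ toward $1/n$ forces the constants $C_b$, $C_a$ and hence $C_*$ to blow up.
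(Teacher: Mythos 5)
Your argument is correct and is essentially the paper's own proof in slightly different bookkeeping: both use the monotonicity of $A_{\delta,s,\kappa}$ in $s$ to keep $A_{\delta,s,\kappa}\le\delta$ for $s\ge s_0$, convert the exponential bound of Proposition \ref{p4.1} into polynomial moments of $w=((1-\delta)v-\phi-s)_+$, apply H\"older against $e^{nG_0}$ on $\Omega_{\delta,s}$, and absorb the stray power of $A_{\delta,s,\kappa}$ to arrive at the self-improved bound $A_{\delta,s,\kappa}\le C\delta\,u(s)^{\text{power}}$ with power approaching $\tfrac{n+1}{n}$ as $\kappa\to 1$ and the auxiliary exponents degenerate. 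The only structural difference is that the paper obtains the left-hand side for free from $t\,u(s+t)\le \|w e^{nG_0}\|_{L^1}\le C\,A_{\delta,s,\kappa}$, so a single H\"older split (with $\tfrac1\kappa=\tfrac{n}{(n+1)q}+\tfrac1{q'}$) closes the argument, whereas you spend a second H\"older on $\int_{\Omega_{\delta,s}}w\,e^{nG_0}\,\omega_0^n$ via Chebyshev and then substitute your separate bound on $A_{\delta,s,\kappa}$; both routes hit the same ceiling $1+\tfrac1n$, so this is a matter of organization rather than a different method. One point to tighten: as written your constants depend on $\|G_0\|_{L^\infty}$ through the splittings $e^{na'G_0}=e^{n(a'-1)G_0}e^{nG_0}$ (and likewise for $b'$), while the lemma asserts dependence only on $\|e^{nG_0}\|_{L^{p_0}}$; the ``further H\"older'' you allude to should be the interpolation $\|e^{nG_0}\|_{L^{q'}(\Omega_{\delta,s})}\le \|e^{nG_0}\|_{L^{1}(\Omega_{\delta,s})}^{\lambda}\|e^{nG_0}\|_{L^{p_0}}^{1-\lambda}$ with $\tfrac{1}{q'}=\lambda+\tfrac{1-\lambda}{p_0}$ (admissible since $q'\to 1<p_0$ as $a,b\to\infty$, $\kappa\to 1$), which is exactly the paper's device and still lets $\lambda\to 1$, hence the exponent still approaches $1+\tfrac1n$; also note the rearrangement for $A_{\delta,s,\kappa}$ tacitly assumes $A_{\delta,s,\kappa}>0$, the case $A_{\delta,s,\kappa}=0$ being trivial.
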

\begin{proof}
Since $A_{\delta,s,\kappa}$ is monotone decreasing in $s$,  we see that:
\begin{equation}
A_{\delta,s,\kappa}\le \delta,\,\,s\ge s_0.
\end{equation}
From (\ref{4.3NN}),  we see that for any $q\ge 1$,
\begin{equation}\label{4.16New}
\int_M(1-\delta)v-\phi-s)_+^{q\frac{n+1}{n}}\omega_0^n\le C_2(q)\delta^qA_{\delta,s,k}^{\frac{q}{n}},\,\,s\ge s_0.
\end{equation}
Therefore,
\begin{equation*}
\begin{split}
&A_{\delta,s,\kappa}=||((1-\delta)v-\phi-s)_+e^{nG_0}||_{L^{\kappa}(\omega_0^n)}\le ||((1-\delta)v-\phi-s)_+||_{L^{\frac{n+1}{n}q}(\omega_0^n)}||e^{nG_0}||_{L^{q'}(\Omega_{\delta,s},\omega_0^n)}\\
&\le C_2(q)^{\frac{n}{q(n+1)}}\delta^{\frac{n}{n+1}}A_{\delta,s,\kappa}^{\frac{1}{n+1}}||e^{nG_0}||_{L^{q'}(\Omega_{\delta,s},\omega_0^n)}\le C_3(q)\delta^{\frac{n}{n+1}}A_{\delta,s,\kappa}^{\frac{1}{n+1}}||e^{nG_0}||^{\lambda}_{L^1(\Omega_{\delta,s},\omega_0^n)}||e^{nG_0}||_{L^{p_0}}^{1-\lambda}.
\end{split}
\end{equation*}
In the above,  $q'$ is chosen such that $\frac{1}{\kappa}=\frac{n}{(n+1)q}+\frac{1}{q'}$.  The second inequality used (\ref{4.16New}),  and in the last inequality,  $\lambda$ satisfies: $\frac{1}{q'}=\lambda+\frac{1-\lambda}{p_0}$.  Therefore we get:
\begin{equation}\label{4.17NN}
A_{\delta,s,\kappa}\le C_4(q)\delta||e^{nG_0}||_{L^{p_0}}^{\frac{n+1}{n}(1-\lambda)} ||e^{nG_0}||_{L^1(\Omega_{\delta,s},\omega_0^n)}^{\lambda\frac{n+1}{n}}.
\end{equation}
On the other hand,  for any $t>0$,  
\begin{equation}\label{4.18NN}
\begin{split}
&A_{\delta,s,\kappa}=||\big((1-\delta)v-\phi-s\big)_+e^{nG_0}||_{L^{\kappa}(\omega_0^n)}\ge ||\big((1-\delta)v-\phi-s\big)_+e^{nG_0}||_{L^1(\omega_0^n)}\\
&\ge t||e^{nG_0}||_{L^1(\Omega_{s+t},\omega_0^n)}.
\end{split}
\end{equation}
Combining (\ref{4.17NN}) and (\ref{4.18NN}) we see that 
\begin{equation}
t||e^{nG_0}||_{L^1(\Omega_{s+t},\omega_0^n)}\le C_5\delta||e^{nG_0}||_{L^1(\Omega_{\delta,s},\omega_0^n)}^{\lambda\frac{n+1}{n}}.
\end{equation}
Here we note that by choosing $\kappa$ sufficiently close to 1,  one can make $\lambda \frac{n+1}{n}$ as close to $\frac{n+1}{n}$ as one wants.  Indeed,  by choosing $\kappa$ sufficiently close to 1 and $q$ sufficiently large,  we can make $q'$ as close to 1 as one desires.  With $q'$ sufficiently close to 1,  one has that $\lambda$ can be made as close to 1 as one desires.
\end{proof}

We need to use the following Lemma of De Giorgi which was first used in the setting
of complex Monge-Ampère equations in Kołodziej\cite{K}:
\begin{lem}\label{l4.6}
Let $u:[0,+\infty)\rightarrow [0,+\infty)$ be a decreasing function,  such that there exists $\mu>0,\,B_0>0,\,s_0\ge 0$,  such that for any $t>0,\,s\ge s_0$,  one has:
\begin{equation*}
tu(s+t)\le B_0u(s)^{1+\mu}.
\end{equation*}
Then $u(s)\equiv 0$ for $s\ge s_0+\frac{2B_0u(s_0)^{\mu}}{1-2^{-\mu}}.$
\end{lem}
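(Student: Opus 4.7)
The plan is a standard De Giorgi iteration. I will construct an increasing sequence $s_0<s_1<s_2<\cdots$ that converges to the endpoint $S:=s_0+\frac{2B_0 u(s_0)^{\mu}}{1-2^{-\mu}}$ predicted by the statement, and along which $u(s_k)$ decays geometrically. Since $u$ is decreasing and nonnegative, this will force $u\equiv 0$ on $[S,+\infty)$.

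If $u(s_0)=0$, monotonicity immediately gives $u\equiv 0$ on $[s_0,+\infty)$ and we are done, so I may assume $u(s_0)>0$. Define
\begin{equation*}
t_k := 2B_0 u(s_0)^{\mu}\cdot 2^{-k\mu},\qquad s_{k+1}:=s_k+t_k,
\end{equation*}
so that a direct summation of the geometric series yields $s_k\nearrow S$. I will prove by induction on $k$ that $u(s_k)\le u(s_0)\,2^{-k}$. The base case $k=0$ is trivial. For the inductive step, applying the hypothesis with $s=s_k$ and $t=t_k$ (both admissible since $s_k\ge s_0$ and $t_k>0$) gives
\begin{equation*}
u(s_{k+1})\le \frac{B_0 u(s_k)^{1+\mu}}{t_k}\le \frac{B_0\,u(s_0)^{1+\mu}\,2^{-k(1+\mu)}}{2B_0 u(s_0)^{\mu}\,2^{-k\mu}}=u(s_0)\,2^{-(k+1)},
\end{equation*}
which closes the induction.

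With the induction in hand, $u(s_k)\to 0$. For any $s\ge S$ one has $s\ge s_k$ for every $k$, so by monotonicity $u(s)\le u(s_k)\to 0$, hence $u(s)=0$, which is the claimed conclusion.

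The argument is purely mechanical once the correct choice $t_k=2B_0 u(s_0)^{\mu}2^{-k\mu}$ is made: no real obstacle arises. The only mild subtlety is selecting the decay exponent so that the geometric series $\sum t_k$ evaluates exactly to $\frac{2B_0 u(s_0)^{\mu}}{1-2^{-\mu}}$ while the induction for $u(s_k)$ still closes with the rate $2^{-k}$; the exponent $2^{-k\mu}$ in $t_k$ is what balances these two requirements.
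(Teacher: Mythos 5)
Your proof is correct and is essentially the paper's own De Giorgi iteration; the only cosmetic difference is that you fix the step sizes $t_k=2B_0u(s_0)^{\mu}2^{-k\mu}$ in advance, whereas the paper takes the adaptive increments $s_{k+1}-s_k=2B_0u(s_k)^{\mu}$, shows $u(s_{k+1})\le\frac{1}{2}u(s_k)$, and only afterwards bounds the increments by the same geometric series. Your explicit handling of the degenerate case $u(s_0)=0$ (where the iteration would require invoking the hypothesis with $t=0$) is a small but welcome point of care.
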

\begin{proof}
We can choose a sequence $\{s_k\}_{k\ge 1}$ by induction:
\begin{equation*}
s_{k+1}-s_k=2B_0u(s_k)^{\mu}.
\end{equation*}
Then we choose $s=s_k,\,t=s_{k+1}-s_k$ we see that:
\begin{equation*}
u(s_{k+1})\le \frac{B_0u(s_k)^{1+\mu}}{s_{k+1}-s_k}\le \frac{1}{2}u(s_k).
\end{equation*}
That is,  $u(s_k)\le 2^{-k}u(s_0)$.  Hence:
\begin{equation*}
s_{k+1}-s_k\le 2B_0u(s_0)^{\mu}2^{-k\mu}.
\end{equation*}
Therefore,
\begin{equation*}
\sup_{k\ge 0}s_k=s_0+\sum_{k\ge 0}(s_{k+1}-s_k)\le s_0+\frac{2B_0u(s_0)^{\mu}}{1-2^{-\mu}}.
\end{equation*}
Since $u(s)$ is decreasing,  we see that $u(s)\equiv 0$ for $s\ge \sup_{k\ge 0}s_k$.
\end{proof}
As a consequence,  we see that:
\begin{cor}\label{c4.7}
Let $\frac{2\delta_1}{\delta_0}\le \delta<1$,  $s_0>0$ be chosen so that $A_{\delta,s,\kappa}\le \delta$.  Then for any $0<\nu<\frac{1}{n}(1-\frac{1}{p_0})$,  we may choose $\kappa>1$ sufficiently close to 1,  such that there exists $C>0$ depending only on $\nu$,  $||e^{nG_0}||_{L^{p_0}}$,  the background metric and the choice of $\kappa$ such that:
\begin{equation*}
\sup_M\big((1-\delta)v-\phi)\le s_0+C\delta vol(\Omega_{\delta,s_0})^{\nu}.
\end{equation*}
\end{cor}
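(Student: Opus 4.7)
The plan is to apply the De Giorgi lemma (Lemma \ref{l4.6}) to the function $u(s)=\int_{\Omega_{\delta,s}}e^{nG_0}\omega_0^n$, using the preceding lemma as input, and then to convert the resulting threshold into a volume estimate via H\"older's inequality.

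Fix $\nu$ with $0<\nu<\frac{1}{n}(1-\frac{1}{p_0})$, and pick $\delta_*$ with
\begin{equation*}
\frac{\nu}{1-\frac{1}{p_0}}\le \delta_*<\frac{1}{n}.
\end{equation*}
By the preceding lemma, by choosing $\kappa>1$ sufficiently close to $1$, we obtain a constant $C_*>0$ (depending on $\delta_*$, $\|e^{nG_0}\|_{L^{p_0}}$, the background metric, and $\kappa$) such that for all $s\ge s_0$ and all $t>0$,
\begin{equation*}
t\,u(s+t)\le C_*\,\delta\, u(s)^{1+\delta_*}.
\end{equation*}
Since $u$ is monotone decreasing, Lemma \ref{l4.6} with $B_0=C_*\delta$ and $\mu=\delta_*$ yields
\begin{equation*}
u(s)\equiv 0\quad\text{for }\;s\ge s_0+\frac{2C_*\delta\,u(s_0)^{\delta_*}}{1-2^{-\delta_*}}.
\end{equation*}
Because $e^{nG_0}>0$, $u(s)=0$ forces $\Omega_{\delta,s}$ to have zero Lebesgue measure; but $\Omega_{\delta,s}$ is open (since $(1-\delta)v-\phi$ is continuous), so it must be empty. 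Hence
\begin{equation*}
\sup_M\big((1-\delta)v-\phi\big)\le s_0+\frac{2C_*\delta}{1-2^{-\delta_*}}\,u(s_0)^{\delta_*}.
\end{equation*}

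It remains to bound $u(s_0)^{\delta_*}$ by $\mathrm{vol}(\Omega_{\delta,s_0})^{\nu}$ up to a constant. By H\"older's inequality,
\begin{equation*}
u(s_0)=\int_{\Omega_{\delta,s_0}}e^{nG_0}\,\omega_0^n\le \|e^{nG_0}\|_{L^{p_0}}\cdot \mathrm{vol}(\Omega_{\delta,s_0})^{1-\frac{1}{p_0}},
\end{equation*}
so
\begin{equation*}
u(s_0)^{\delta_*}\le \|e^{nG_0}\|_{L^{p_0}}^{\delta_*}\cdot\mathrm{vol}(\Omega_{\delta,s_0})^{\delta_*(1-\frac{1}{p_0})}.
\end{equation*}
Since $\delta_*(1-\frac{1}{p_0})\ge \nu$ and $\mathrm{vol}(\Omega_{\delta,s_0})\le \mathrm{vol}(M)$ is bounded, we have $\mathrm{vol}(\Omega_{\delta,s_0})^{\delta_*(1-\frac{1}{p_0})}\le C'\,\mathrm{vol}(\Omega_{\delta,s_0})^{\nu}$ for a constant $C'$ depending only on $\mathrm{vol}(M)$, $\nu$, $p_0$, and $\kappa$. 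Combining these inequalities gives
\begin{equation*}
\sup_M\big((1-\delta)v-\phi\big)\le s_0+C\,\delta\,\mathrm{vol}(\Omega_{\delta,s_0})^{\nu},
\end{equation*}
where $C$ absorbs $2C_*/(1-2^{-\delta_*})$, $\|e^{nG_0}\|_{L^{p_0}}^{\delta_*}$, and $C'$, and thus depends only on $\nu$, $\|e^{nG_0}\|_{L^{p_0}}$, the background metric, and the choice of $\kappa$. The only delicate step is matching the exponent $\delta_*$ produced by the De Giorgi iteration to the target exponent $\nu$; this is precisely why we need $\kappa$ close to $1$ so that $\delta_*$ can be taken sufficiently close to $1/n$.
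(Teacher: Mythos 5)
Your proof is correct and follows essentially the same route as the paper: apply the De Giorgi Lemma \ref{l4.6} with $B_0=C_*\delta$, $\mu=\delta_*$ to $u(s)=\int_{\Omega_{\delta,s}}e^{nG_0}\omega_0^n$ using the iteration inequality of the preceding lemma, and then bound $u(s_0)$ via H\"older's inequality with exponent $p_0$. The only (inessential) difference is that you take $\delta_*\ge \nu/(1-\tfrac{1}{p_0})$ and absorb the exponent mismatch using $\mathrm{vol}(\Omega_{\delta,s_0})\le \mathrm{vol}(M)$, whereas the paper implicitly matches $\delta_*(1-\tfrac{1}{p_0})$ to $\nu$ exactly.
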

\begin{proof}
We wish to use Lemma \ref{l4.6} with $u(s)=\int_{\Omega_{\delta,s}}e^{nG_0}\omega_0^n$ with $B_0=C_*\delta$ and $\mu=\delta_*$.  Then Lemma \ref{l4.6} gives that $u(s)\equiv 0$ for $s\ge s_0+\frac{2C_*\delta u(s_0)^{\delta_*}}{1-2^{-\delta_*}}$.
This is equivalent to:
\begin{equation*}
\sup_M\big((1-\delta)v-\phi\big)\le s_0+\frac{2C_*}{1-2^{-\delta_*}}\delta \bigg(\int_{\Omega_{\delta,s_0}}e^{nG_0}\omega_0^n\bigg)^{\delta_*}.
\end{equation*}
On the other hand,
\begin{equation*}
\int_{\Omega_{\delta,s_0}}e^{nG_0}\omega_0^n\le\bigg( \int_{\Omega_{\delta,s_0}}e^{p_0nG_0}\omega_0^n\bigg)^{\frac{1}{p_0}}vol(\Omega_{\delta,s_0})^{1-\frac{1}{p_0}}
\end{equation*}
Hence the result immediately follows.
\end{proof}
With some more effort,  we can get rid of $\delta$ in the above:
\begin{lem}\label{l4.8}
Assume that $v$ is bounded.  Let $\frac{2\delta_1}{\delta_0}\le \delta<1$ and $s_0>0$ are chosen so that:
\begin{enumerate}
\item $s_0\ge 2\delta||v||_{L^{\infty}}$,
\item $A_{\delta,s_0,\kappa}\le \delta$. 
\end{enumerate}
Then for any $0<\nu<\frac{1}{n}(1-\frac{1}{p_0})$,  there exists $C>0$ depending only on $\nu$,  $||e^{nG_0}||_{L^{p_0}}$,  and the background metric,  such that:
\begin{equation*}
\sup_M(v-\phi)\le \frac{3s_0}{2}+Cs_0^{-\nu}||(v-\phi)_+||_{L^1}^{\nu}.
\end{equation*}
\end{lem}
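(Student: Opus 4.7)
The plan is to deduce Lemma~\ref{l4.8} essentially as a direct corollary of Corollary~\ref{c4.7} combined with a Chebyshev-type inequality. First I would rewrite $\sup_M(v-\phi)$ as
\begin{equation*}
\sup_M(v-\phi) \le \sup_M\bigl((1-\delta)v - \phi\bigr) + \delta\|v\|_{L^\infty},
\end{equation*}
so that Corollary~\ref{c4.7} (whose hypotheses $A_{\delta,s_0,\kappa}\le\delta$ and $\tfrac{2\delta_1}{\delta_0}\le\delta<1$ are both in force) yields
\begin{equation*}
\sup_M(v-\phi) \le s_0 + C\delta\,\mathrm{vol}(\Omega_{\delta,s_0})^{\nu} + \delta\|v\|_{L^\infty}.
\end{equation*}
The hypothesis $s_0\ge 2\delta\|v\|_{L^\infty}$ absorbs the last term into $s_0/2$, giving
\begin{equation*}
\sup_M(v-\phi) \le \frac{3s_0}{2} + C\delta\,\mathrm{vol}(\Omega_{\delta,s_0})^{\nu}.
\end{equation*}

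Next I would bound $\mathrm{vol}(\Omega_{\delta,s_0})$ in terms of $\|(v-\phi)_+\|_{L^1}$. On $\Omega_{\delta,s_0}$ by definition one has $(1-\delta)v - \phi > s_0$, so $v - \phi > s_0 + \delta v \ge s_0 - \delta\|v\|_{L^\infty} \ge s_0/2$, using the same assumption on $s_0$ once more. Hence $\Omega_{\delta,s_0}\subset\{v-\phi > s_0/2\}$ and Chebyshev's inequality gives
\begin{equation*}
\mathrm{vol}(\Omega_{\delta,s_0}) \le \frac{2}{s_0}\int_M(v-\phi)_+\,\omega_0^n = \frac{2}{s_0}\|(v-\phi)_+\|_{L^1}.
\end{equation*}

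Combining the two displays, and using $\delta<1$ to drop the factor $\delta$, I obtain
\begin{equation*}
\sup_M(v-\phi) \le \frac{3s_0}{2} + C\cdot 2^{\nu}\, s_0^{-\nu}\|(v-\phi)_+\|_{L^1}^{\nu},
\end{equation*}
which is the claimed estimate after renaming the constant. There is no real obstacle here: the substantive analysis (the Moser--Trudinger-type inequality from Proposition~\ref{p4.1}, the De Giorgi iteration of Lemma~\ref{l4.6}, and the level-set decay of Corollary~\ref{c4.7}) has already been done. Lemma~\ref{l4.8} is just the bookkeeping step that replaces the weighted sup $\sup_M((1-\delta)v-\phi)$ by the unweighted one $\sup_M(v-\phi)$, and replaces the distribution-function quantity $\mathrm{vol}(\Omega_{\delta,s_0})$ by the more convenient $L^1$-norm of $(v-\phi)_+$.
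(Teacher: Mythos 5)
Your proposal is correct and follows essentially the same route as the paper: apply Corollary \ref{c4.7}, absorb $\delta\|v\|_{L^{\infty}}$ into $s_0/2$ via the hypothesis $s_0\ge 2\delta\|v\|_{L^{\infty}}$, and bound $\mathrm{vol}(\Omega_{\delta,s_0})$ by $\frac{2}{s_0}\|(v-\phi)_+\|_{L^1}$ using that $v-\phi\ge s_0/2$ on $\Omega_{\delta,s_0}$. No gaps; this matches the paper's argument step for step.
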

\begin{proof}
From Corollary \ref{c4.7},  we see that
\begin{equation*}
\begin{split}
&\sup_M(v-\phi)\le \sup_M((1-\delta)v-\phi)+\delta||v||_{L^{\infty}}\le s_0+\delta||v||_{L^{\infty}}+C\delta vol(\Omega_{\delta,s_0})^{\nu}\\
&\le \frac{3s_0}{2}+C\delta vol(\Omega_{\delta,s_0})^{\nu}.
\end{split}
\end{equation*}
On the set $(1-\delta)v-\phi-s_0\ge 0$,  we see that $v-\phi\ge s_0-\delta||v||_{L^{\infty}}\ge \frac{s_0}{2}$.  Therefore one has:
\begin{equation*}
vol(\Omega_{\delta,s_0})\le \frac{2}{s_0}||(v-\phi)_+||_{L^1}.
\end{equation*}
Therefore the result follows.
\end{proof}
For $0<\delta<1$,  we define $s_0(\delta)$ to be the smallest $s_0$ such that $s_0\ge 2\delta||v||_{L^{\infty}}$ and $A_{\delta,s_0,\kappa}\le \delta$.  The following lemma gives the needed estimate for $s_0(\delta)$:
\begin{lem}\label{l4.9}
Assume that $\frac{2\delta_1}{\delta_0}\le \delta<1$.  There exists $C>0$ depending only on $||e^{nG_0}||_{L^{p_0}}$,  and the background metric,  such that:
\begin{equation*}
s_0(\delta)\le \max\big(2\delta||v||_{L^{\infty}},\,\,C||(v-\phi)_+||_{L^1}\big).
\end{equation*}
\end{lem}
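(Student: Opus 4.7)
The plan is to argue by cases on whether the constraint $s_0 \ge 2\delta\|v\|_{L^\infty}$ in the definition of $s_0(\delta)$ is active. If $A_{\delta,2\delta\|v\|_{L^\infty},\kappa}\le \delta$, then $s_0(\delta)=2\delta\|v\|_{L^\infty}$ and the asserted bound is trivial. Otherwise, by the continuity of $s\mapsto A_{\delta,s,\kappa}$ (dominated convergence applied in the defining integral) together with minimality of $s_0(\delta)$, one has $s_0:=s_0(\delta)>2\delta\|v\|_{L^\infty}$ and $A_{\delta,s_0,\kappa}=\delta$.

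In this nontrivial case the idea is to chain Corollary \ref{c4.7} with a H\"older estimate on the integral defining $A_{\delta,s_0,\kappa}^\kappa$. First, since the hypothesis $A_{\delta,s_0,\kappa}\le \delta$ of Corollary \ref{c4.7} is satisfied at $s_0$, one gets
\begin{equation*}
\sup_M\bigl((1-\delta)v-\phi\bigr)\le s_0+C\delta\,\operatorname{vol}(\Omega_{\delta,s_0})^{\nu}.
\end{equation*}
Writing $w=\bigl((1-\delta)v-\phi-s_0\bigr)_+$, this yields $\|w\|_{L^\infty}\le C\delta\,\operatorname{vol}(\Omega_{\delta,s_0})^{\nu}$.

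Second, I estimate $A_{\delta,s_0,\kappa}^\kappa=\int_{\Omega_{\delta,s_0}} w^\kappa e^{n\kappa G_0}\omega_0^n$ by pulling out an $L^\infty$ factor on $w$ and applying H\"older with exponents $p_0/\kappa$ and its conjugate to $e^{n\kappa G_0}$ and $\mathbf 1_{\Omega_{\delta,s_0}}$:
\begin{equation*}
A_{\delta,s_0,\kappa}^\kappa\le \|w\|_{L^\infty}^\kappa\, \|e^{nG_0}\|_{L^{p_0}}^\kappa\,\operatorname{vol}(\Omega_{\delta,s_0})^{1-\kappa/p_0}.
\end{equation*}
Substituting $A_{\delta,s_0,\kappa}=\delta$ together with the pointwise bound on $\|w\|_{L^\infty}$, the $\delta^\kappa$ factors cancel and one obtains
\begin{equation*}
1\le C^\kappa\,\|e^{nG_0}\|_{L^{p_0}}^\kappa\,\operatorname{vol}(\Omega_{\delta,s_0})^{\kappa\nu+1-\kappa/p_0}.
\end{equation*}
Because $\nu<\tfrac{1}{n}(1-1/p_0)$ and $\kappa$ has been chosen close to $1$ in Corollary \ref{c4.7}, the exponent $\kappa\nu+1-\kappa/p_0$ is strictly positive, so this inequality gives a uniform lower bound $\operatorname{vol}(\Omega_{\delta,s_0})\ge c_0>0$ depending only on $\|e^{nG_0}\|_{L^{p_0}}$, $\kappa$, $\nu$, and the background metric.

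To finish, I use $s_0\ge 2\delta\|v\|_{L^\infty}$ (as in the proof of Lemma \ref{l4.8}) to conclude $\Omega_{\delta,s_0}\subset\{v-\phi>s_0/2\}$, and Markov's inequality gives $\operatorname{vol}(\Omega_{\delta,s_0})\le 2\|(v-\phi)_+\|_{L^1}/s_0$. Combining this with the lower bound $\operatorname{vol}(\Omega_{\delta,s_0})\ge c_0$ produces $s_0\le (2/c_0)\|(v-\phi)_+\|_{L^1}$, which is the desired estimate. The main delicate point is checking that the exponent $\kappa\nu+1-\kappa/p_0$ is positive so that the volume inequality can be reversed; this is where the constraints linking $\kappa$, $\nu$, and $p_0$ that were imposed at the level of Corollary \ref{c4.7} become essential, and why the final constant depends only on $\|e^{nG_0}\|_{L^{p_0}}$ and the background geometry.
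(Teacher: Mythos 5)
Your proof is correct, but it follows a slightly different route than the paper's. The paper handles the case $A_{\delta,s_0,\kappa}=\delta$ by a single H\"older estimate with exponents $\frac{\kappa\beta}{\beta-\kappa}$ and $\beta$ (for some $\kappa<\beta<p_0$), bounding $\|((1-\delta)v-\phi-s_0)_+\|_{L^{\kappa\beta/(\beta-\kappa)}}$ directly by $C\delta^{\frac{n}{n+1}}A_{\delta,s_0,\kappa}^{\frac{1}{n+1}}$ via the $L^q$ consequence (\ref{4.16New}) of the exponential estimate in Proposition \ref{p4.1}; the factors of $\delta$ then cancel and the Chebyshev bound $vol(\Omega_{\delta,s_0})\le \frac{2}{s_0}\|(v-\phi)_+\|_{L^1}$ finishes. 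You instead feed in the stronger, already-established sup-bound of Corollary \ref{c4.7} (itself obtained from (\ref{4.16New}) through the De Giorgi iteration of Lemma \ref{l4.6}), and you extract the nice intermediate statement that $vol(\Omega_{\delta,s_0})$ has a uniform positive lower bound before applying the same Chebyshev step; the cancellation of $\delta^{\kappa}$ plays the role of the paper's cancellation of $\delta$. Both arguments are sound and yield the same dependence of constants (on $\kappa,\nu,p_0$, $\|e^{nG_0}\|_{L^{p_0}}$ and the background metric), so the difference is mainly which downstream consequence of Proposition \ref{p4.1} is invoked; the paper's version is more self-contained at this point, while yours reuses machinery that is anyway available. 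Two small inaccuracies worth fixing: the positivity of the exponent $\kappa\nu+1-\kappa/p_0$ has nothing to do with the constraint $\nu<\frac{1}{n}(1-\frac{1}{p_0})$ (that constraint is needed for Corollary \ref{c4.7} itself); it is automatic once $\kappa\le p_0$, which is the condition you actually need for your H\"older step with exponent $p_0/\kappa$ and which is consistent with the paper's standing choice $\kappa<\beta<p_0$, since $\kappa$ may be taken as close to $1$ as desired.
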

\begin{proof}
With $s_0=s_0(\delta)$,  we have that either $s_0=2\delta||v||_{L^{\infty}}$ or $A_{s_0,\delta,\kappa}=\delta$.  If the first possibility holds,  then we are done.  If we are in the second possibility,  then
we can calculate,  with $\kappa<\beta<p_0$:
\begin{equation*}
\begin{split}
&A_{\delta,s_0,\kappa}=\bigg(\int_{\Omega_{\delta,s_0}}\big((1-\delta)v-\phi-s_0\big)_+^{\kappa}e^{n\kappa G_0}\omega_0^n\bigg)^{\frac{1}{\kappa}}\le ||((1-\delta)v-\phi-s)_+||_{L^{\frac{\kappa\beta}{\beta-\kappa}}(\omega_0^n)}||e^{nG_0}||_{L^{\beta}(\Omega_{\delta,s_0})}\\
&\le C(\beta)A_{\delta,s_0,\kappa}^{\frac{1}{n+1}}\delta^{\frac{n}{n+1}}||e^{nG_0}||_{L^{\beta}(\Omega_{\delta,s},\omega_0^n)}\le C(\beta)A_{\delta,s_0,\kappa}^{\frac{1}{n+1}}\delta^{\frac{n}{n+1}}||e^{nG_0}||_{L^{p_0}}vol(\Omega_{\delta,s_0})^{\frac{1}{\beta}-\frac{1}{p_0}}.
\end{split}
\end{equation*}
Using that $A_{\delta,s_0,\kappa}=\delta$,  and that $vol(\Omega_{\delta,s_0})\le \frac{2}{s_0}||(v-\varphi)_+||_{L^1}$,  we see that:
\begin{equation*}
1\le C(s_0^{-1})^{\frac{1}{\beta}-\frac{1}{p_0}}||(v-\phi)_+||_{L^1}^{\frac{1}{\beta}-\frac{1}{p_0}}.
\end{equation*}
So we are done.
\end{proof}
As a corollary,  we can show that:
\begin{cor}\label{c4.10}
Let $\tilde{\chi}$ be as assumed in Proposition \ref{p4.1} for some $\delta_0>0$.  
Let $\delta_1>0$.  Let $G_0\in C^{\infty}(M)$.  Let $v,\,\phi$ be smooth functions on $M$ that satisfy:
\begin{equation*}
\begin{split}
&\lambda[\tilde{\chi}+dd^c\phi]\in \Gamma,\,\,f\big(\lambda[\tilde{\chi}+dd^c\phi]\big)\le e^{G_0}+\delta_1,\\
&\lambda[\tilde{\chi}+dd^cv]\in \Gamma,\,\,f\big(\lambda[\tilde{\chi}+dd^cv]\big)\ge e^{G_0}-\delta_1.
\end{split}
\end{equation*}
Let $p_0>1$,  then for any $0<\nu<\frac{1}{n}(1-\frac{1}{p_0})$,  there exists a constant $C>0$,  such that
\begin{equation*}
\sup_M(v-\phi)\le C\big(\delta_1+||(v-\phi)_+||_{L^1}^{\frac{\nu}{\nu+1}}\big).
\end{equation*}
\end{cor}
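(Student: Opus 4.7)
The plan is to combine Lemma \ref{l4.8} and Lemma \ref{l4.9} and to optimize the parameter $\delta\in[2\delta_1/\delta_0,\,1)$. Write $L:=\|(v-\phi)_+\|_{L^1}$ and $V:=\|v\|_{L^\infty}$ for brevity. For each admissible $\delta$, the threshold $s_0(\delta)$ defined in Lemma \ref{l4.9} automatically satisfies the matched bounds $2\delta V\le s_0(\delta)\le \max(2\delta V,\,C_0 L)$: the first from the very definition of $s_0(\delta)$, the second from Lemma \ref{l4.9}. Plugging these into Lemma \ref{l4.8} (using the lower bound to estimate $s_0^{-\nu}$ from above and the upper bound to estimate $s_0$ from above) yields, for every such $\delta$,
\[
\sup_M(v-\phi)\;\le\;\tfrac{3}{2}\max\bigl(2\delta V,\;C_0 L\bigr)\;+\;C_1(2\delta V)^{-\nu}L^\nu.
\]

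The second step chooses $\delta$ in two regimes, separated by a threshold constant $C_\ast$ depending on $V$, $\delta_0$, $C_0$. In the regime $L^{\nu/(\nu+1)}\ge C_\ast\delta_1$, set $\delta:=cV^{-1}L^{\nu/(\nu+1)}$ with $c$ tuned so that $2cL^{\nu/(\nu+1)}\ge C_0 L$; this is legitimate provided $L$ stays below a constant depending on $V$ and the structural data, and otherwise $L^{\nu/(\nu+1)}$ already dominates the trivial $L^\infty$ bound on $v-\phi$ so there is nothing to prove. Under this choice one has $\delta\ge 2\delta_1/\delta_0$ by the regime hypothesis, and both terms in the displayed inequality become of order $L^{\nu/(\nu+1)}$, giving $\sup_M(v-\phi)\le CL^{\nu/(\nu+1)}$. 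In the complementary regime $L^{\nu/(\nu+1)}<C_\ast\delta_1$, take $\delta:=2\delta_1/\delta_0$: the first term is then $\le C\delta_1$, while the second term is bounded by $C(\delta_1 V)^{-\nu}L^\nu\le C(\delta_1 V)^{-\nu}(C_\ast\delta_1)^{\nu+1}\le C'\delta_1$, because $L<(C_\ast\delta_1)^{(\nu+1)/\nu}$ and the exponents of $\delta_1$ recombine exactly via $\nu+1-\nu=1$.

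Combining the two regimes yields $\sup_M(v-\phi)\le C(\delta_1+L^{\nu/(\nu+1)})$, which is the desired estimate, with the constant $C$ naturally inheriting a dependence on $\|v\|_{L^\infty}$ and on the structural data entering Lemmas \ref{l4.8}--\ref{l4.9}. I expect the main obstacle to be the bookkeeping in the second regime, specifically verifying that the apparently singular factor $\delta_1^{-\nu}$ in the De Giorgi term is absorbed by the extra power of $\delta_1$ hidden in $L<(C_\ast\delta_1)^{(\nu+1)/\nu}$, together with ruling out the degenerate subcase of very large $L$ in the first regime. No new PDE input is required: the entire analytic content has been packaged into the Moser-Trudinger estimate of Proposition \ref{p4.1} and the De Giorgi iteration behind Lemmas \ref{l4.8}--\ref{l4.9}.
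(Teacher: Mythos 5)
Your proposal is, in substance, the paper's own argument: both rest on the combination of Lemma \ref{l4.8} and Lemma \ref{l4.9}, with a dichotomy according to whether $\delta_1\|v\|_{L^{\infty}}$ or $\|(v-\phi)_+\|_{L^1}^{\nu/(\nu+1)}$ dominates. The only real difference is bookkeeping: in the first regime you enlarge $\delta$ (taking $\delta\sim \|v\|_{L^{\infty}}^{-1}L^{\nu/(\nu+1)}$), whereas the paper keeps $\delta=\tfrac{2\delta_1}{\delta_0}$ throughout and instead enlarges $s_0$ to $C\|(v-\phi)_+\|_{L^1}^{\nu/(\nu+1)}$, which is admissible by Lemma \ref{l4.9}; the two choices are equivalent, and your exponent bookkeeping in the second regime (the absorption of $\delta_1^{-\nu}$ by $L^{\nu}<(C_*\delta_1)^{\nu+1}$) is correct.

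The one step that does not hold as stated is your dismissal of the subcase of large $L=\|(v-\phi)_+\|_{L^1}$, which you need both for $\delta<1$ and for the tuning $2cL^{\nu/(\nu+1)}\ge C_0L$. You appeal to ``the trivial $L^{\infty}$ bound on $v-\phi$'', but no such bound follows from the hypotheses of the corollary: neither $v$ nor $\phi$ carries any normalization, and replacing $\phi$ by $\phi-K$ leaves both differential inequalities unchanged while making $\sup_M(v-\phi)$ arbitrarily large (of course $L$ grows too, but only linearly, and for large $L$ the machinery of Lemma \ref{l4.8}--\ref{l4.9} only yields $\sup_M(v-\phi)\le C(1+\delta_1+L)$, which is weaker than the claimed $L^{\nu/(\nu+1)}$ bound). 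The paper deals with this by declaring at the outset that one may assume $\|(v-\phi)_+\|_{L^1}\le 1$ --- harmless in the intended application to Theorem \ref{t4.1N}, where $\phi_1,\phi_2$ are uniformly bounded --- and with that normalization the large-$L$ subcase disappears. You should adopt the same normalization (or restrict the statement to $L\le 1$) rather than invoke a nonexistent a priori bound; with that fix, and modulo the degenerate corner $\|v\|_{L^{\infty}}=0$ created by your use of the lower bound $s_0(\delta)\ge 2\delta\|v\|_{L^{\infty}}$ (the paper's case split avoids the factor $(2\delta\|v\|_{L^{\infty}})^{-\nu}$ by bounding $s_0^{-\nu}L^{\nu}$ directly from the case hypothesis), your argument is complete.
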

\begin{proof}
Without loss of generality,  we may assume that $||(v-\phi)_+||_{L^1}\le 1$.  We wish to take $\delta=\frac{2\delta_1}{\delta_0}$ in Lemma \ref{l4.8},  and we wish to choose a suitable $s_0\ge s_0(\delta)$ with $\delta=\frac{2\delta_1}{\delta_0}$.  There are two cases to consider:

The first case is when $\frac{4\delta_1}{\delta_0}||v||_{L^{\infty}}\le C||(v-\phi)_+||_{L^1}^{\frac{\nu}{\nu+1}}$,  where $C$ is the constant given by Lemma \ref{l4.9}.  Then we can choose $s_0=C||(v-\phi)_+||_{L^1}^{\frac{\nu}{\nu+1}}$.  Indeed,  this choice satisfies $s_0\ge s_0(\delta)$ with $\delta=\frac{2\delta_1}{\delta_0}$,   by Lemma \ref{l4.9}.  With this choice,  we obtain from Lemma \ref{l4.8} that:
\begin{equation*}
\sup_M(v-\phi)\le C'||(v-\phi)_+||_{L^1}^{\frac{\nu}{\nu+1}}.
\end{equation*}

The other case is when $\frac{4\delta_1}{\delta_0}||v||_{L^{\infty}}\ge C||(v-\phi)_+||_{L^1}^{\frac{\nu}{\nu+1}}$.  Note that since $||(v-\phi)_+||_{L^1}\le 1$,  we know that $\frac{4\delta_1}{\delta_0}||v||_{L^{\infty}}\ge C||(v-\phi)_+||_{L^1}$.  In this case,  we will have to choose $s_0=\frac{4\delta_1}{\delta_0}||v||_{L^{\infty}}=2\delta||v||_{L^{\infty}}$.  This choice also satisfies Lemma \ref{l4.9}.  With this choice,  we go back to Lemma \ref{l4.8} and found that:
\begin{equation*}
s_0^{-\nu}||(v-\phi)_+||_{L^1}^{\nu}\le C'||(v-\phi)_+||_{L^1}^{\frac{\nu}{\nu+1}}.
\end{equation*} 
On the other hand,  $\frac{3s_0}{2}=\frac{6\delta_1}{\delta_0}||v||_{L^{\infty}}\le C''\delta_1.$ Hence the result follows.
\end{proof}
Finally we are ready to prove Theorem \ref{t4.1N}:
\begin{proof}
(of Theorem \ref{t4.1N})
We define the function $G_0$ so that $e^{G_0}=\frac{1}{2}e^{G_1}+\frac{1}{2}e^{G_2}$.  We also put $\delta_1=\frac{1}{2}||e^{G_1}-e^{G_2}||_{L^{\infty}}$.  Then we have:
\begin{equation*}
\begin{split}
&e^{G_0}-\delta_1\le f\big(\lambda[\tilde{\chi}+dd^c\phi_1]\big)=e^{G_1}\le e^{G_0}+\delta_1,\,\,\,\lambda[\tilde{\chi}+dd^c\phi_1]\in \Gamma,\\
&e^{G_0}-\delta_1\le f\big(\lambda[\tilde{\chi}+dd^c\phi_2]\big)=e^{G_2}\le e^{G_0}+\delta_1,\,\,\lambda[\tilde{\chi}+dd^c\phi_2]\in \Gamma.
\end{split}
\end{equation*}
Now we apply Corollary \ref{c4.10} with $v=\varphi_1,\,\phi=\varphi_2$,  we get:
\begin{equation*}
\sup_M(\phi_1-\phi_2)\le C(\delta_1+||(\phi_1-\phi_2)_+||_{L^1}^{\frac{\nu}{\nu+1}}).
\end{equation*}
Now we switch and let $v=\phi_2,\,\,\phi=\phi_1$,  then we get that:
\begin{equation*}
||\phi_1-\phi_2||_{L^{\infty}}\le C'\big(||e^{G_1}-e^{G_2}||_{L^{\infty}}+||\phi_1-\phi_2||_{L^1}^{\frac{\nu}{\nu+1}}\big),
\end{equation*} 
where $0<\nu<\frac{1}{n}(1-\frac{1}{p_0})$.  On the other hand,  since we are already assuming an $L^{\infty}$ bound for $G_1,\,G_2$,  we can take $p_0$ sufficiently large so that the range of $\nu$ can be taken to be any value in $0<\nu<\frac{1}{n}$.
\end{proof}

\section{Uniqueness of viscosity solution}
The uniqueness of the viscosity solution essentially follows the uniqueness result in our previous paper \cite{CX}. The only problem is that in our previous paper \cite{CX}, we additionally assume that $f$ is of homogeneous degree 1. This is used in the proof of the uniqueness result in  \cite{CX}. However, this can be replaced by the Lemma \ref{lem 2.3} in the current setting. For the convenience of readers, we sketch the proof of the uniqueness. The basic strategy to prove the uniqueness of viscosity solutions will be to perform sup/inf convolution of the solution so that we get a sub/super solution that is twice differentiable a.e. 
which implies the pointwise differential inequality wherever the solution is twice differentiable.   The reason we no longer consider Richberg technique is that it does not necessarily regularize a viscosity supersolution to a smooth supersolution.

Let $\varphi \in C(M)$ be a viscosity solution to $F(\chi+dd^c \varphi)= e^{G(x,\varphi)}.$ We define the sup convolution of $\varphi$ as follows:
\begin{equation}\label{5.1}
\varphi^{\eps}(z)= \sup_{\xi \in T_z M}(\varphi(\exp_z(\xi))+\eps -\frac{1}{\eps}|\xi|_z^2),
\end{equation}
where $\exp_z(\xi)$ is the exponential map at $z$, defined using the metric $\omega_0$, and $|\xi|_z$ denotes the length of the tangent vector $\xi$, again using the metric $\omega_0$. Similarly we define the inf convolution of $\varphi$:
\begin{equation}\label{5.2}
\varphi_{\eps}(z) =\inf_{\xi \in T_z M} (\varphi(\exp_z(\xi))-\eps +\frac{1}{\eps}|\xi|_z^2).
\end{equation}
The main reasons for considering (\ref{5.1}) and (\ref{5.2}) are that they are semi-convex/concave,  and they are sub/super solutions respectively.  More precisely,  we observe that: 
\begin{lem}
Let $\varphi\in C(M)$,and we define $\varphi^{\eps},\,\varphi_{\eps}$ according to (\ref{5.1}) and (\ref{5.2}).  Let $x_0\in M$ and we choose local coordinates in a neighborhood of $x_0$.  Then there exists a neighborhood $U$ of $x_0$,  and $C_{\eps}>0$,  such that: $z\mapsto \varphi^{\eps}(z)+C_{\eps}|z|^2$ is convex on $U$ under the coordinates,  and $z\mapsto \varphi_{\eps}(z)-C_{\eps}|z|^2$ is concave on $U$ under the coordinates.  In particular,  $\varphi^{\eps}$,  $\varphi_{\eps}$ are punctually second order differentiable a.e.  
\end{lem}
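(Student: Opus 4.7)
The plan is to prove semi-convexity of $\varphi^{\eps}$ on a coordinate chart near any given point (the argument for semi-concavity of $\varphi_{\eps}$ is symmetric), and then invoke Alexandrov's theorem to deduce punctual second order differentiability almost everywhere.

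First, I would fix $x_0\in M$ and choose a coordinate chart containing a compact neighborhood $\bar U$ of $x_0$ on which the metric coefficients $g_{ij}$ are smooth. Inside the injectivity radius the exponential map $\xi\mapsto\exp_z\xi$ is a diffeomorphism with $|\xi|_z=d(z,\exp_z\xi)$, so the sup convolution can be reformulated as
\begin{equation*}
\varphi^{\eps}(z)\;=\;\sup_{y\in M}\Bigl(\varphi(y)+\eps-\tfrac{1}{\eps}d(z,y)^2\Bigr),
\end{equation*}
once we observe that only $\xi$ with $|\xi|_z$ small contribute: the choice $\xi=0$ already yields $\varphi^{\eps}(z)\ge\varphi(z)+\eps$, so any $\xi$ for which $|\xi|_z^2/\eps>\eps+\mathrm{osc}_M\varphi$ cannot be a maximizer. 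In particular, the supremum is actually attained at some $y$ with $d(z,y)\le C\sqrt{\eps}$.

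The key step is the standard Riemannian fact that $(z,y)\mapsto d(z,y)^2$ is smooth in a neighborhood of the diagonal and, in local coordinates, has Hessian in $z$ bounded uniformly in $y$. Concretely, after possibly shrinking $U$, there exist constants $A,r>0$ such that
\begin{equation*}
D_z^2\bigl(d(z,y)^2\bigr)\;\le\;A\,I\qquad\text{for all }z\in U,\;d(z,y)\le r.
\end{equation*}
Taking $\eps$ small enough that $C\sqrt{\eps}\le r$, the supremum in the formula for $\varphi^{\eps}(z)$ is effectively over $y$ in this valid range, so for each admissible $y$ the function
\begin{equation*}
z\;\longmapsto\;\varphi(y)+\eps-\tfrac{1}{\eps}d(z,y)^2+\tfrac{A}{\eps}|z|^2
\end{equation*}
is convex on $U$. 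Since the supremum of convex functions is convex, $\varphi^{\eps}+(A/\eps)|z|^2$ is convex on $U$, i.e.\ $\varphi^{\eps}$ is semi-convex with constant $C_{\eps}=A/\eps$.

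The argument for $\varphi_{\eps}$ is identical after reversing signs: the same Hessian bound gives a lower bound on $D_z^2(d(z,y)^2)$, so $\varphi_{\eps}(z)-(A/\eps)|z|^2$ is concave on $U$. Finally, Alexandrov's theorem asserts that a convex (respectively concave) function on $\bR^{2n}$ admits a second-order Taylor expansion at almost every point, and this transfers verbatim to semi-convex or semi-concave functions. This is exactly the notion of punctual second order differentiability introduced above. The only nontrivial point in the whole argument is the uniform Hessian bound on $d(z,y)^2$; that is a classical consequence of Gauss' lemma together with smoothness of the metric in the coordinates chosen, and is the main technical input.
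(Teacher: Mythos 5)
Your proof is correct and is essentially the argument the paper relies on: the paper gives no proof here but defers to \cite{CX}, Lemma 5.14, where semi-convexity of the sup convolution (and semi-concavity of the inf convolution) is obtained in the same standard way — maximizers lie within distance $C\sqrt{\eps}$, the squared distance (equivalently the exponential map) is smooth near the diagonal so its coordinate Hessian is two-sidedly bounded on compact sets, hence $\varphi^{\eps}+C_{\eps}|z|^2$ is a supremum of convex functions — followed by Alexandrov's theorem for semi-convex/semi-concave functions. The only point worth tightening is that, to exhibit $\varphi^{\eps}+C_{\eps}|z|^2$ on $U$ as the supremum of a single $z$-independent family of convex functions, you should fix the compact set of admissible $y$ (those within distance $C\sqrt{\eps}$ of $U$) and shrink $U$ so that $d(z,y)\le r$ for every $z\in U$ and every such $y$; this is routine and does not affect the argument.
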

The proof of this lemma is contained in \cite{CX},  Lemma 5.14.

Next we observe that $\varphi^{\eps}$ and $\varphi_{\eps}$ defined above produce viscosity subsolution and supersolution, up to a small error. We denote 
\begin{equation*}
\rho_{\varphi}(r)=\max\{|\varphi(x)-\varphi(y)|:\,\,d_g(x,y)\le r,\,x,y\in M\}.
\end{equation*}

\begin{prop}\label{p4.3}
Let $\varphi$ be a viscosity solution to $F(\chi+dd^c \varphi)=e^{G(x,\varphi)}$ with $G(x,\varphi)$ continuous.  Then there exist continuous functions $\rho(\eps):(0,1)\rightarrow \bR_{>0}$ with $\rho(0+)=0$,  and $\rho_1(\eps,a_1),\,\rho_2(\eps,a_2):(0,1)^2\rightarrow \bR_{>0}$ with $\rho_i(0+,0+)=0,\,\,i=1,2$,  such that for any $0<\eps<1$,  $0<a_i<1,\,i=1,\,2$:
\begin{enumerate}
\item $\frac{\varphi^{\eps}}{1+a_1}$ is $\Gamma$-subharmonic with respect to $\frac{\chi+\rho(\eps)\omega_0}{1+a_1}$ in the viscosity sense,  \sloppy and $F(\frac{\chi+\rho(\eps)\omega_0}{1+a_1}+dd^c\frac{\varphi^{\eps}}{1+a_1})\ge e^{G(x,\frac{\varphi^{\eps}}{1+a_1})}-\rho_1(\eps,a_1)$ in the viscosity sense.
\item $\frac{\varphi_{\eps}}{1-a_2}$ satisfies $F(\frac{\chi-\rho(\eps)\omega_0}{1-a_2}+dd^c\frac{\varphi_{\eps}}{1-a_2})\le e^{G(x,\frac{\varphi_{\eps}}{1-a_2})}+\rho_2(\eps,a_2)$ in the viscosity sense.
\end{enumerate}
Here the functions $\rho,\,\rho_i,\,i=1,\,2$ are determined by $\rho_{\varphi}$,  $||\varphi||_{L^{\infty}}$,  the form $\chi$ and the background metric. 
\end{prop}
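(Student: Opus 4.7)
The proof is a manifold adaptation of the sup/inf-convolution technique from viscosity theory. I will concentrate on statement (1); statement (2) then follows by running the same argument with $\inf$ replacing $\sup$ and appealing to Lemma \ref{lem 2.3} in its other regime. Namely, for the subsolution direction we use $t=1/(1+a_1)\in(0,1]$ together with $f(t\lambda)\ge tf(\lambda)$, whereas for the supersolution direction in (2) we use $t=1/(1-a_2)\ge 1$ together with $f(t\lambda)\le tf(\lambda)$, which produces the reversed inequality $F(A/(1-a_2))\le F(A)/(1-a_2)$. A preliminary estimate common to both cases is the bound on the maximizer $\xi^*(z_0)\in T_{z_0}M$ attaining the supremum in (\ref{5.1}): taking $\xi=0$ as test vector gives $\varphi^\eps(z_0)\ge \varphi(z_0)+\eps$, while by definition $\varphi^\eps(z_0)=\varphi(\exp_{z_0}(\xi^*))+\eps-|\xi^*|^2/\eps$; subtracting and using $\rho_\varphi$, we obtain $|\xi^*|^2\le \eps\rho_\varphi(|\xi^*|)$, so $|\xi^*|\to 0$ uniformly in $z_0$ at a rate depending only on $\rho_\varphi$ and $\eps$. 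We denote this rate by $\rho(\eps)$.

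Fix $z_0\in M$ and a $C^2$ function $P$ touching $\varphi^\eps/(1+a_1)$ from above at $z_0$; then $Q:=(1+a_1)P$ touches $\varphi^\eps$ from above at $z_0$. In a normal coordinate chart for $\omega_0$ centered at $z_0$, define
\[
\tilde Q(y):=Q\bigl(\exp_{z_0}(\exp_{z_0}^{-1}(y)-\xi^*(z_0))\bigr)+\tfrac{|\xi^*(z_0)|^2}{\eps}-\eps.
\]
A direct computation from (\ref{5.1}) shows that $\tilde Q$ touches $\varphi$ from above at $y_0:=\exp_{z_0}(\xi^*(z_0))$, so the viscosity subsolution property of $\varphi$ yields $\lambda[\chi+dd^c\tilde Q](y_0)\in\Gamma$ and $F(\chi+dd^c\tilde Q)(y_0)\ge e^{G(y_0,\varphi(y_0))}$. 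In the chosen normal coordinates the Euclidean Hessian of $\tilde Q$ at $y_0$ equals that of $Q$ at $z_0$; the variation of the $\omega_0$-coefficients and of $\chi$ over the ball of radius $|\xi^*|\le\rho(\eps)$ is of size $O(\rho(\eps))$, and acts on $(1+a_1)dd^c P(z_0)$, which is in turn controlled from below by the semi-convexity of $\varphi^\eps$ (Lemma 5.14 of \cite{CX}), giving $dd^c P(z_0)\ge -C_\eps\omega_0$. Absorbing all of these into a single correction, we obtain the Hermitian-matrix inequality
\[
\chi(y_0)+dd^c\tilde Q(y_0)\;\le\;\chi(z_0)+\rho(\eps)\omega_0+(1+a_1)\,dd^c P(z_0),
\]
and by the monotonicity of $F$ and the fact that $\Gamma\supset \Gamma_n$, we conclude $\lambda\bigl[\chi+\rho(\eps)\omega_0+(1+a_1)dd^c P\bigr](z_0)\in\Gamma$ together with $F\bigl(\chi+\rho(\eps)\omega_0+(1+a_1)dd^c P\bigr)(z_0)\ge e^{G(y_0,\varphi(y_0))}$.

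We now apply Lemma \ref{lem 2.3} with $t=1/(1+a_1)\in(0,1]$ to get $F\bigl(\tfrac{\chi+\rho(\eps)\omega_0}{1+a_1}+dd^c P\bigr)(z_0)\ge\tfrac{1}{1+a_1}e^{G(y_0,\varphi(y_0))}$. Since $\bigl|\varphi(y_0)-\tfrac{\varphi^\eps(z_0)}{1+a_1}\bigr|\le \eps+\tfrac{a_1}{1+a_1}\|\varphi\|_{L^\infty}$ and $d(y_0,z_0)=|\xi^*|\le\rho(\eps)$, the uniform continuity of $G$ on compact sets lets us replace the right-hand side by $e^{G(z_0,\varphi^\eps(z_0)/(1+a_1))}-\rho_1(\eps,a_1)$ with $\rho_1(\eps,a_1)\to 0$ as $(\eps,a_1)\to 0^+$, establishing (1); the admissibility claim is already built into the Hessian inequality above. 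The main technical obstacle is the test-function transfer itself: on $\bR^n$ the identity $\exp_z\circ(\text{shift by }-\xi^*)\circ\exp_{z_0}^{-1}$ exactly inverts the sup-convolution, whereas on a Hermitian manifold this composition is only an approximate inverse, with discrepancy of higher order in $|\xi^*|$. Controlling this discrepancy in the regime where $|\xi^*|\le\rho(\eps)\to 0$ and feeding it into the estimate of $dd^c\tilde Q(y_0)-(1+a_1)dd^c P(z_0)$ is the step that consumes the correction $\rho(\eps)\omega_0$; the arctan-type semi-convexity of $\varphi^\eps$ and the one-sided Hessian bound on $P$ it implies are what make the correction bounded rather than merely pointwise finite, and thereby uniform in $z_0\in M$.
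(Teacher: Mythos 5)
Your identification of where Lemma \ref{lem 2.3} replaces the degree-one homogeneity assumed in \cite{CX} is exactly the paper's point: with $t=\frac{1}{1+a_1}\le 1$ one uses $f(t\lambda)\ge tf(\lambda)$ in the subsolution direction, and with $t=\frac{1}{1-a_2}\ge 1$ one uses $f(t\lambda)\le tf(\lambda)$ in the supersolution direction; these are precisely the two substitutions the paper makes (in formula (5.14) and after (5.20) of \cite{CX}), the rest of its proof being a citation of Proposition 5.6 there.

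However, your reconstruction of the test-function transfer has a genuine gap. The function $\tilde Q(y)=Q\big(\exp_{z_0}(\exp_{z_0}^{-1}(y)-\xi^*)\big)+\frac{|\xi^*|^2}{\eps}-\eps$ touches $\varphi$ from above at $y_0$ only in the flat case: on a Hermitian manifold, for $y$ near $y_0$ and $z$ the shifted point, the vector $\xi\in T_zM$ with $\exp_z(\xi)=y$ satisfies $|\xi|_z^2\neq |\xi^*|_{z_0}^2$ in general, and the definition of $\varphi^{\eps}$ only gives $\varphi(y)\le Q(z)-\eps+\frac{1}{\eps}|\xi|_z^2$, which is not the inequality $\varphi\le\tilde Q$ you assert by ``a direct computation.'' Without exact touching you cannot invoke the viscosity property of $\varphi$ at all, so this is not a quantitative discrepancy that can be postponed. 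Likewise, the claim that the Euclidean Hessian of $\tilde Q$ at $y_0$ equals that of $Q$ at $z_0$ in the normal chart is false: the shift built from exponential maps is not affine, so chain-rule terms involving $DQ$ and $D^2Q$ enter, and $D^2Q$ admits no upper bound in terms of $\rho_{\varphi}$, $\|\varphi\|_{L^{\infty}}$ and the background data; the one-sided semiconvexity bound $dd^cP\ge -C_{\eps}\omega_0$ (with $C_{\eps}\sim\eps^{-1}$) does not let you absorb a multiplicative $O(\rho(\eps))$ error into a fixed $\rho(\eps)\omega_0$. Your final paragraph concedes that controlling this is ``the main technical obstacle'' and defers it---but that deferred step is the actual content of the proof. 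The argument the paper relies on avoids the problem by keeping the varying term inside the transferred test function: $\big((1+a_1)P-\eps+\frac{1}{\eps}|\xi(z)|_z^2\big)\circ\phi^{-1}$ touches $\varphi$ from above exactly at $\exp_{x_0}(\xi_0)$, and the entire correction $\rho(\eps)\omega_0$ (and the error $\rho_1$) then comes from $\frac{1}{\eps}|\xi(x_0)|_{x_0}^2\le\rho(\eps)$ together with the two-sided bound $-\rho(\eps)\omega_0\le dd^c\big(\frac{1}{\eps}|\xi(z)|_z^2\big)\le\rho(\eps)\omega_0$, which is the content of Lemmas 5.8 and 5.10 of \cite{CX}; to make your write-up complete you would either have to quote those lemmas as the paper does or reprove the corresponding estimates for your constant-shift construction.
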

\begin{proof}
The proof is almost identical to the proof of Proposition 5.6 in Cheng-Xu \cite{CX}.  Except that there are several places in the proof of Proposition 5.6 in \cite{CX},  we used that $f$ is homogeneous of degree 1 which we no longer assume.  As we will explain,  the argument in these places can easily go through with the help of Lemma \ref{lem 2.3}.

The first place in \cite{CX} where we used that $f$ is homogeneous degree 1 is in formula (5.14),  where we are in the middle of the proof of the first part and concluded that:
\begin{equation}\label{5.3N}
f\big(\lambda[\chi+\rho(\eps)\omega_0+(1+a_1)dd^cP]\big)=(1+a_1)f\big(\lambda[\frac{\chi+\rho(\eps)\omega_0}{1+a_1}+dd^cP]\big).
\end{equation}
Here $a_1>0$,  and $\rho:\bR_+\rightarrow \bR_+$ is a quantity that $\rho(0+)=0$.  $P$ is a $C^2$ function that touches $\frac{\varphi^{\eps}}{1+a_1}$ from above at $x_0$.  If one no longer assumes $f$ is homogeneous degree 1,  then one can use Lemma \ref{lem 2.3} to see that one has ``$\le$" holds in (\ref{5.3N}),  and the subsequent argument is not affected.

The second place in \cite{CX} where we used that $f$ is homogeneous degree 1 is in the formula following (5.20),  where we are in the middle of the proof of the second part and the situation is completely the same,  where we need that for some $0<a_2<1$,  one has:
\begin{equation*}
(1-a_2)f(\lambda)\le f((1-a_2)\lambda),\,\,\,\lambda\in \Gamma.
\end{equation*}
\end{proof}
\subsection{when the right hand side has strict monotonicity}
In this subsection,  we assume that $G(x,\varphi)$ is continuous and strictly monotone increasing in $\varphi$.  We wish to show that:
\begin{thm}\label{t4.1}
Let $G(x,\varphi)$ be continuous,  and strictly monotone increasing in $\varphi$.  Then there exists at most one viscosity solution to $F(\chi+dd^c\varphi)=e^{G(x,\varphi)}$.
\end{thm}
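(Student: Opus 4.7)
The plan is to follow the uniqueness argument of \cite{CX} (Section 5), substituting Lemma \ref{lem 2.3} for the homogeneity of $f$ wherever the latter was used. Suppose for contradiction that $\varphi_1, \varphi_2$ are two viscosity solutions with $\max_M(\varphi_1-\varphi_2)=\delta_*>0$. Apply the sup-convolution (\ref{5.1}) to $\varphi_1$ and the inf-convolution (\ref{5.2}) to $\varphi_2$ to obtain $\varphi_1^{\eps}$, $(\varphi_2)_\eps$, which are semi-convex and semi-concave respectively and thus punctually second-order differentiable at a.e.\ point. By Proposition \ref{p4.3}, for any $0<a_1,a_2<1$, the scaled functions $u_1:=\varphi_1^\eps/(1+a_1)$ and $u_2:=(\varphi_2)_\eps/(1-a_2)$ are, in the viscosity sense, respectively an approximate subsolution with background $\tilde\chi_1:=(\chi+\rho(\eps)\omega_0)/(1+a_1)$ and right-hand side $e^{G(x,u_1)}-\rho_1(\eps,a_1)$, and an approximate supersolution with background $\tilde\chi_2:=(\chi-\rho(\eps)\omega_0)/(1-a_2)$ and right-hand side $e^{G(x,u_2)}+\rho_2(\eps,a_2)$, where $\rho,\rho_i\to 0$ as $\eps,a_i\to 0$.

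For all parameters small, $\Psi:=u_1-u_2$ is semi-convex and still satisfies $\max_M\Psi\ge\delta_*/2>0$. The decisive analytic step is to produce a point $x_0$ where both $u_1$ and $u_2$ are punctually twice-differentiable and where the pointwise Hessian inequality $dd^c u_1(x_0)\le dd^c u_2(x_0)$ holds. This is done exactly as in \cite{CX} via a Jensen-type lemma: one perturbs $\Psi$ by a small linear (and, if needed, small quadratic) term so that the resulting function attains its maximum on the common full-measure set of punctual twice-differentiability of $u_1$ and $u_2$; because $\Psi$ is semi-convex, the set of ``almost maximum'' points has positive measure and intersects this full-measure set. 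Choose $x_0$ to be such a point, so that $u_1(x_0)-u_2(x_0)\ge \delta_*/4$, $Du_1(x_0)=Du_2(x_0)$, and $dd^c u_1(x_0)\le dd^c u_2(x_0)$.

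Set $A_i:=\tilde\chi_i+dd^c u_i(x_0)$. Since $u_i$ are classically twice differentiable at $x_0$, the viscosity inequalities of Proposition \ref{p4.3} become pointwise: $\lambda(A_1)\in\Gamma$, $F(A_1)\ge e^{G(x_0,u_1(x_0))}-\rho_1$ and $F(A_2)\le e^{G(x_0,u_2(x_0))}+\rho_2$. The main claim is $F(A_1)\le F(A_2)$. Compute
\[
A_2-A_1=(\tilde\chi_2-\tilde\chi_1)+\bigl(dd^c u_2(x_0)-dd^c u_1(x_0)\bigr),\qquad
\tilde\chi_2-\tilde\chi_1=\frac{(a_1+a_2)\chi-(2+a_1-a_2)\rho\,\omega_0}{(1-a_2)(1+a_1)}.
\]
Because $\lambda[\chi]\in\Gamma$ on the compact $M$, we may choose the parameters so that $\rho(\eps)\ll a_1+a_2$; then $\lambda(\tilde\chi_2-\tilde\chi_1)\in\Gamma$. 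By concavity of $F$ applied at $A_2$,
\[
F(A_2)-F(A_1)\ge \tfrac{\partial F}{\partial h_{j\bar k}}(A_2)\,(A_2-A_1)_{j\bar k}.
\]
The Hessian-difference contribution is nonnegative since $dd^c u_2(x_0)-dd^c u_1(x_0)\ge 0$ and $\partial F/\partial h_{j\bar k}(A_2)$ is a positive matrix (as $\partial f/\partial\lambda_i>0$ and $\lambda(A_2)\in\Gamma$). The $\chi$-contribution is nonnegative by Lemma \ref{l3.4N} applied with $A=A_2$ and $B=\tilde\chi_2-\tilde\chi_1$. Hence $F(A_2)\ge F(A_1)$.

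Combining, $e^{G(x_0,u_1(x_0))}-\rho_1\le F(A_1)\le F(A_2)\le e^{G(x_0,u_2(x_0))}+\rho_2$. Since $G$ is strictly monotone increasing in $\varphi$ and continuous, and since $u_1(x_0)-u_2(x_0)$ remains bounded below by a positive constant (close to $\delta_*$) as $\eps,a_1,a_2\to 0$, we obtain a uniform gap $e^{G(x_0,u_1(x_0))}-e^{G(x_0,u_2(x_0))}\ge\eta>0$. This forces $\rho_1+\rho_2\ge\eta$, contradicting $\rho_i\to 0$. The main obstacle in the whole argument is the Jensen-lemma step producing a common punctual second-differentiability point with the correct Hessian comparison; this is exactly the point dealt with at length in \cite{CX} and is unaffected by dropping the homogeneity assumption, whereas the places in \cite{CX} that used homogeneity degree one are replaced by the monotonicity of $t\mapsto f(t\lambda)/t$ from Lemma \ref{lem 2.3}, just as in the proof of Proposition \ref{p4.3}.
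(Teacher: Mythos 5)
Your proposal is correct and takes essentially the same route as the paper: sup/inf-convolution together with Proposition \ref{p4.3}, a Jensen--Crandall--Ishii--Lions perturbation to reach a common point of punctual second-order differentiability, a pointwise comparison giving $F(A_1)\le F(A_2)$ (this is exactly the content of Proposition \ref{p4.12}, which the paper imports from \cite{CX}, and your concavity-plus-Lemma \ref{l3.4N} argument is a correct direct proof of it), and finally the strict monotonicity of $G$ in $\varphi$ to reach a contradiction. The one step to state in the right order is that the supersolution inequality $F(A_2)\le e^{G(x_0,u_2(x_0))}+\rho_2$ can only be invoked after verifying $\lambda(A_2)\in\Gamma$ (the viscosity definition is a dichotomy), which your decomposition $A_2=A_1+(\tilde\chi_2-\tilde\chi_1)+\bigl(dd^c u_2-dd^c u_1\bigr)(x_0)$ does supply---this is item (1) of Proposition \ref{p4.12}.
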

If not,  then there exist two viscosity solutions $\varphi_1,\,\varphi_2$,  and $\varphi_1\neq \varphi_2$.  Without loss of generality,  we may assume that:
\begin{equation}
\kappa_0:=\max_M(\varphi_1-\varphi_2)>0.
\end{equation}
Now we consider the super-convolution,  applied to $\varphi_1$,  and the inf-convolution,  applied to $\varphi_2$.  We define:
\begin{equation*}
\kappa_{\eps,a_1,a_2}=\max_M\big(\frac{(\varphi_1)^{\eps}}{1+a_1}-\frac{(\varphi_2)_{\eps}}{1-a_2}\big).
\end{equation*}
Then it is easy to see that,  as $\eps,\,a_1,\,a_2\rightarrow 0+$,  $\kappa_{\eps,a_1,a_2}\rightarrow \kappa_0$.  Assume that the above max is achieved at $x_*$.  Then we see that the sub/super solution condition for $\varphi^{\eps}/\varphi_{\eps}$ holds in the classical sense at $x_*$,  provided that both of them are punctually second order differentiable at $x_*$:
\begin{prop}\label{p4.12}
Assume that $\eps,\,a_1,\,a_2$ are chosen so that $\frac{c_*+\rho(\eps)}{1+a_1}<\frac{c_*-\rho(\eps)}{1-a_2}$.  Assume also that both $(\varphi_1)^{\eps}$ and $(\varphi_2)_{\eps}$ are punctually second order differentiable at $x_*$. Then one has:
\begin{enumerate}
\item $\lambda[\frac{\chi-\rho(\eps)\omega_0}{1-a_2}+dd^c\frac{(\varphi_2)_{\eps}}{1-a_2}](x_*)\in \Gamma$,  
\item $F(\frac{\chi+\rho(\eps)\omega_0}{1+a_1}+dd^c\frac{(\varphi_1)^{\eps}}{1+a_1})(x_*)\le F(\frac{\chi-\rho(\eps)\omega_0}{1-a_2}+dd^c\frac{(\varphi_2)_{\eps}}{1-a_2})(x_*)$.
\end{enumerate}
In the above,  $c_*>0$ is some constant such that  $\lambda(\chi-c_*\omega_0)\in \Gamma$.
\end{prop}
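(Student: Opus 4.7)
The strategy follows the approach of our previous paper \cite{CX}, the only twist being that homogeneity of $f$ is no longer available and must be replaced by Lemma \ref{lem 2.3}. At the maximum point $x_*$ I would reduce everything to matrix inequalities between the 2-jets of $h_1:=\tfrac{(\varphi_1)^{\eps}}{1+a_1}$ and $h_2:=\tfrac{(\varphi_2)_{\eps}}{1-a_2}$, then combine the viscosity sub/super solution properties of Proposition \ref{p4.3} with the monotonicity and concavity of $F$. Writing the punctual 2-jets as $h_i(x)=P_i(x)+o(|x-x_*|^2)$ in local holomorphic coordinates, the fact that $h_1-h_2$ attains its maximum at $x_*$ forces $P_1-P_2$ to attain a maximum there modulo $o(|x-x_*|^2)$, hence $\nabla(P_1-P_2)(x_*)=0$ and, as Hermitian matrices, $dd^cP_1(x_*)\le dd^cP_2(x_*)$.

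For (1) I would apply Proposition \ref{p4.3}(1) to the $C^2$ test function $P_1(x)+\delta|x-x_*|^2$, which touches $h_1$ from above at $x_*$ by the 2-jet property; sending $\delta\to 0+$ yields $\lambda[A_1](x_*)\in\bar{\Gamma}$ where $A_1:=\tfrac{\chi+\rho(\eps)\omega_0}{1+a_1}+dd^cP_1(x_*)$. Setting $s_1=\tfrac{1}{1+a_1}$, $s_2=\tfrac{1}{1-a_2}$, a direct calculation gives
\begin{equation*}
\frac{\chi-\rho(\eps)\omega_0}{1-a_2}-\frac{\chi+\rho(\eps)\omega_0}{1+a_1}=(s_2-s_1)(\chi-c_*\omega_0)+\bigl[(s_2-s_1)c_*-(s_1+s_2)\rho(\eps)\bigr]\omega_0,
\end{equation*}
and the hypothesis $\tfrac{c_*+\rho(\eps)}{1+a_1}<\tfrac{c_*-\rho(\eps)}{1-a_2}$ is precisely $s_2>s_1$ together with $(s_2-s_1)c_*>(s_1+s_2)\rho(\eps)$: the first summand then has eigenvalues in $\Gamma$ (since $\lambda(\chi-c_*\omega_0)\in\Gamma$), and the second is a strictly positive multiple of $\omega_0$. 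Since $\{A:\lambda(A)\in\Gamma\}$ is a convex cone in Hermitian matrices containing every PSD matrix (because $\Gamma_n\subset\Gamma$; cf.\ the analogous remark in Section 3), the form difference has eigenvalues in $\Gamma$. Combining this with the PSD inequality $dd^cP_2\ge dd^cP_1$ shows that $A_2-A_1$ is the sum of a matrix with eigenvalues in $\Gamma$ and a PSD matrix, so $\lambda(A_2-A_1)\in\Gamma$. The elementary fact $C+\bar{C}=C$ for an open convex cone $C$ then upgrades $\lambda(A_1)\in\bar\Gamma$ to $\lambda(A_2)\in\Gamma$, where $A_2:=\tfrac{\chi-\rho(\eps)\omega_0}{1-a_2}+dd^cP_2(x_*)$, proving (1).

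For (2), rather than routing through the $G$-inequalities in Proposition \ref{p4.3}, the cleanest route is to compare $F(A_1)$ and $F(A_2)$ directly via concavity of $F$:
\begin{equation*}
F(A_1)-F(A_2)\le-\sum_{j,k}\frac{\partial F}{\partial h_{j\bar k}}(A_2)(A_2-A_1)_{j\bar k}.
\end{equation*}
Since (1) gives $\lambda(A_2)\in\Gamma$ and the preceding paragraph gives $\lambda(A_2-A_1)\in\Gamma$, Lemma \ref{l3.4N} forces the right-hand side to be $\le 0$, yielding the desired comparison. The main obstacle I anticipate is the cone bookkeeping in (1): passing from the closed cone $\bar\Gamma$ (the most the viscosity test function can give after $\delta\to 0+$) to the open cone $\Gamma$ required in the statement, and this is exactly where the strict hypothesis on $c_*$ and $\rho(\eps)$ supplies the needed room; one must also take a normal coordinate for $\omega_0$ at $x_*$ so that $dd^c(|x-x_*|^2)$ reduces to a positive multiple of $\omega_0$ at that point.
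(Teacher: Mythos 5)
Your argument is correct and is essentially the same one the paper relies on (the paper simply cites Proposition 5.15 of \cite{CX} instead of reproving it): punctual $2$-jets at the maximum point give $dd^cP_1(x_*)\le dd^cP_2(x_*)$, the hypothesis on $c_*,\rho(\eps),a_1,a_2$ supplies the $\Gamma$-direction gap between the two shifted forms, and then concavity of $F$ together with Lemma \ref{l3.4N} yields the comparison. The only slip is the phrase that $\{A:\lambda(A)\in\Gamma\}$ ``contains every PSD matrix'' (PSD matrices with kernel lie only in its closure), but this is harmless since your own use of $C+\bar{C}\subset C$ for the open convex matrix cone is exactly what the argument needs; one may also note that if $\lambda(A_1)$ lands on $\partial\Gamma$ then $F(A_1)=0$ by Assumption \ref{a1.1N} and (2) is immediate.
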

This proposition is exactly the proposition 5.15 in \cite{CX},  and a proof is given there.  With the help of Proposition \ref{p4.3} and \ref{p4.12},  we have the uniqueness proof when the right hand side has strictly increasing dependence on $\varphi$,  at least when both $(\varphi_1)^{\eps}$ and $(\varphi_2)_{\eps}$ are both punctually second order differentiable at $x_*$:
\begin{proof}
(of Theorem \ref{t4.1},  assuming $(\varphi_1)^{\eps}$ and $(\varphi_2)_{\eps}$ are punctually second order differentiable at $x_*$)
We choose $\eps,\,a_1,\,a_2$ so that $\frac{c_*+\rho(\eps)}{1+a_1}<\frac{c_*-\rho(\eps)}{1-a_2}$.  
Combining Proposition \ref{p4.3} and Proposition \ref{p4.12},  we see that at $x_*$:
\begin{equation}
\begin{split}
&f\big(\lambda[\frac{\chi+\rho(\eps)\omega_0}{1+a_1}+dd^c\frac{(\varphi_1)^{\eps}}{1+a_1}]\big)(x_*)\le f(\lambda[\frac{\chi-\rho(\eps)\omega_0}{1-a_2}+dd^c\frac{(\varphi_2)_{\eps}}{1-a_2}])(x_*),\\
&f\big(\lambda[\frac{\chi+\rho(\eps)\omega_0}{1+a_1}+dd^c\frac{(\varphi_1)^{\eps}}{1+a_1}]\big)(x_*)\ge e^{G(x_*,\frac{(\varphi_1)^{\eps}(x_*)}{1+a_1})}-\rho_1(\eps,a_1),\\
&f\big(\lambda[\frac{\chi-\rho(\eps)\omega_0}{1-a_2}+dd^c\frac{(\varphi_2)_{\eps}}{1-a_2}])\big)(x_*)\le e^{G(x_*,\frac{(\varphi_2)_{\eps}(x_*)}{1-a_2})}+\rho_2(\eps,a_2).
\end{split}
\end{equation}
Combining the three inequalities,  we see that:
\begin{equation}\label{4.21}
e^{G(x_*,\frac{(\varphi_1)^{\eps}(x_*)}{1+a_1})}-\rho_1(\eps,a_1)\le e^{G(x_*,\frac{(\varphi_2)_{\eps}(x_*)}{1-a_2})}+\rho_2(\eps,a_2).
\end{equation}
On the other hand,  $\frac{(\varphi_1)^{\eps}(x_*)}{1+a_1}-\frac{(\varphi_2)_{\eps}(x_*)}{1-a_2}=\kappa_{\eps,a_1,a_2}\rightarrow \kappa_0>0$ as $\eps,\,a_1,\,a_2\rightarrow 0$.  This is clearly inconsistent with (\ref{4.21}) when $\eps,\,a_1,\,a_2$ are all small enough.
\end{proof}
In general,  when we don't necessarily know that $(\varphi_1)^{\eps}$ and $(\varphi_2)_{\eps}$ are punctually second order differentiable at $x_*$,  we need a perturbation argument which was first due to Crandall,  Ishii and Lions,  and we have:
\begin{lem}\label{l5.4N}
We choose normal coordinate near $x_*$ such that $x_*$ is given by $z=0$.  Then there exists a neighborhood
$U_0$ of $x_*$,  and 
there exists a sequence $p_k\in \bC^n$,  $p_k\rightarrow 0$,  and a sequence $\delta_k>0,\,\delta_k\rightarrow 0$,  such that one can find a sequence of points $x_k\in U_0$ with the following properties hold:
\begin{enumerate}
\item $x_k\rightarrow x_*$ as $k\rightarrow \infty$,
\item $\frac{(\varphi_1)^{\eps}}{1+a_1}-\frac{(\varphi_2)_{\eps}}{1-a_2}-<p_k,z>-\delta_k|z|^2$ has local maximum at $x_k$,
\item Both $(\varphi_1)^{\eps}$ and $(\varphi_2)_{\eps}$ are punctually second order differentiable at $x_k$.
\end{enumerate} 
\end{lem}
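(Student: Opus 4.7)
The plan is to combine Alexandrov's theorem with a Jensen-type perturbation argument. I would choose normal coordinates at $x_*$ so that $x_*$ corresponds to $z=0$, and work on a small closed coordinate ball $\bar U_0=\bar B_r(0)$ where $\Phi(z):=\frac{(\varphi_1)^{\eps}(z)}{1+a_1}-\frac{(\varphi_2)_{\eps}(z)}{1-a_2}$ attains its global maximum $\kappa_{\eps,a_1,a_2}$ at $z=0$. Since $(\varphi_1)^{\eps}$ is semi-convex and $(\varphi_2)_{\eps}$ is semi-concave (by the basic properties of sup/inf convolutions recorded right after (\ref{5.2})), $\Phi$ is itself semi-convex on $U_0$. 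Applying Alexandrov's theorem to the semi-convex function $(\varphi_1)^{\eps}$ and the semi-concave function $(\varphi_2)_{\eps}$ separately and intersecting the two full-measure sets yields a set $E\subset U_0$ of full Lebesgue measure on which both $(\varphi_1)^{\eps}$ and $(\varphi_2)_{\eps}$ are punctually second order differentiable.

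Next, set $\delta_k=1/k$ and $\Phi_k(z):=\Phi(z)-\delta_k|z|^2$ on $\bar U_0$. From $\Phi(z)\le\Phi(0)$ on $\bar U_0$ one gets $\Phi_k(z)\le\Phi(0)-\delta_k|z|^2<\Phi_k(0)$ for $z\in\bar U_0\setminus\{0\}$, so $\Phi_k$ attains its maximum on $\bar U_0$ strictly at $0$. I would then invoke Jensen's lemma applied to the semi-convex function $\Phi_k$ on $U_0$: for every $\alpha>0$, the set of $p\in B_\alpha(0)$ for which $\Phi_k(z)-\langle p,z\rangle$ attains its maximum on $\bar U_0$ at some interior point $x(p)\in U_0$ has positive Lebesgue measure, and for almost every such $p$ the function $\Phi_k$ is twice differentiable at $x(p)$. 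Intersecting with $E$, for every sufficiently large $k$ I can select $p_k$ with $|p_k|\le\delta_k^{2}$ such that $x_k:=x(p_k)\in E$; this selection is possible because the failure set inside any ball around $0$ has Lebesgue measure zero.

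To verify $x_k\to x_*$, compare the value of $\Phi_k-\langle p_k,z\rangle$ at $x_k$ and at $0$: the inequality $\Phi(x_k)-\delta_k|x_k|^2-\langle p_k,x_k\rangle\ge\Phi(0)$ together with $\Phi(x_k)\le\Phi(0)$ forces $\delta_k|x_k|^2\le-\langle p_k,x_k\rangle\le|p_k||x_k|$, hence $|x_k|\le|p_k|/\delta_k\le\delta_k\to 0$. All three conclusions of the lemma are then immediate by construction. The main obstacle I expect is the quantitative Jensen step, namely the claim that the set of $p$'s producing maximum points inside a prescribed null subset of $U_0$ has Lebesgue measure zero. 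The standard justification exploits the fact that for a semi-convex $\Phi_k$, any super-gradient at an interior maximum point is in fact the gradient, hence $p=\nabla\Phi_k(x(p))$; an area-formula / ABP-type estimate then bounds the push-forward of Lebesgue measure under this gradient map by an $L^{1}_{\mathrm{loc}}$ density supported on the set of twice differentiability, which vanishes on any null set. This ingredient is classical in the viscosity theory literature, and I would cite rather than reprove it.
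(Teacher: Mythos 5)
Your argument is correct and is essentially the same route the paper relies on: the paper proves this lemma by citing Lemma 5.16 of \cite{CX}, i.e.\ the classical Crandall--Ishii--Lions/Jensen perturbation (Alexandrov's theorem for the semi-convex/semi-concave convolutions plus Jensen's lemma applied to the semiconvex function $\Phi-\delta_k|z|^2$, which has a strict maximum at $x_*$), and your choice $|p_k|\le\delta_k^2$ with the elementary estimate $\delta_k|x_k|^2\le|p_k||x_k|$ giving $|x_k|\le\delta_k\to 0$ matches that standard proof. The only cosmetic difference is that you state Jensen's lemma in $p$-space (``the set of $p$ whose perturbed maximum points all avoid the full-measure set $E$ is Lebesgue-null'') rather than the usual $x$-space form (the set of perturbed maximum points has measure bounded below by $c\,\delta^n$, hence meets $E$); both reduce to the same push-forward/area-formula estimate for the gradient map on the contact set, which you correctly identify as the classical ingredient to cite rather than reprove.
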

In the proof of Theorem \ref{t4.1} for the general case,  all we need to do is to consider a small perturbation of $(\varphi_1)^{\eps}$,  and one has:
\begin{lem}\label{l4.14N}
Define $\psi_{\eps,a_1,k}(z)=\frac{(\varphi_1)^{\eps}}{1+a_1}-<p_k,z>-\delta_k|z|^2$ on $U_0$.  Then for large enough $k$ ($\eps,\,a_1$ is fixed now) $\psi_{\eps,a_1,k}$ solves the following inequalities on $U_0$ in the viscosity sense:
\begin{equation}\label{l5.7N}
\begin{split}
&\lambda[\frac{\chi+2\rho(\eps)\omega_0}{1+a_1}+dd^c\psi_{\eps,a_1,k}]\in \Gamma.\\
&
f\big(\lambda[\frac{\chi+2\rho(\eps)\omega_0}{1+a_1}+dd^c\psi_{\eps,a_1,k}]\big)\ge e^{G(x,\psi_{\eps,a_1,k})}-2\rho_1(a_1,\eps).
\end{split}
\end{equation}
\end{lem}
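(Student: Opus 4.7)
The idea is to reduce the viscosity subsolution statement for $\psi_{\eps,a_1,k}$ to the already-established viscosity subsolution statement for $\frac{(\varphi_1)^{\eps}}{1+a_1}$ provided by Proposition \ref{p4.3}(1), by absorbing the perturbation $-\langle p_k,z\rangle -\delta_k|z|^2$ into the test function. Concretely, suppose $P$ is a $C^2$ function that touches $\psi_{\eps,a_1,k}$ from above at some $x_0\in U_0$. Set $Q(z):=P(z)+\langle p_k,z\rangle +\delta_k|z|^2$. Then from the definition of $\psi_{\eps,a_1,k}$, the function $Q$ touches $\frac{(\varphi_1)^{\eps}}{1+a_1}$ from above at $x_0$, so by Proposition \ref{p4.3}(1),
\[
\lambda\Big[\tfrac{\chi+\rho(\eps)\omega_0}{1+a_1}+dd^cQ\Big](x_0)\in\Gamma,\qquad f\Big(\lambda\Big[\tfrac{\chi+\rho(\eps)\omega_0}{1+a_1}+dd^cQ\Big]\Big)(x_0)\ge e^{G(x_0,Q(x_0))}-\rho_1(\eps,a_1).
\]

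Since $\langle p_k,z\rangle$ is a real-linear (hence pluriharmonic) function, $dd^cQ=dd^cP+\delta_k\,dd^c|z|^2$. In normal coordinates at $x_*$, we have $dd^c|z|^2\le C_0\,\omega_0$ on $U_0$ for some constant $C_0$ depending only on the background metric. For $k$ sufficiently large (so that $C_0\delta_k\le\rho(\eps)/(1+a_1)$), this gives
\[
\tfrac{\chi+2\rho(\eps)\omega_0}{1+a_1}+dd^cP\ \ge\ \tfrac{\chi+\rho(\eps)\omega_0}{1+a_1}+dd^cP+\delta_k\,dd^c|z|^2
\]
as Hermitian matrices. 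Since $\Gamma$ is an open convex cone containing $\Gamma_n$, one has $\Gamma+\bar\Gamma_n\subset\Gamma$, which yields $\lambda[\tfrac{\chi+2\rho(\eps)\omega_0}{1+a_1}+dd^cP](x_0)\in\Gamma$. Moreover, by the variational characterization of eigenvalues (as used in the proof of Lemma \ref{l4.9N}), the sorted eigenvalues of the larger matrix dominate those of the smaller one componentwise, and since $f$ is monotone increasing in each variable by Assumption \ref{a1.1N},
\[
f\Big(\lambda\Big[\tfrac{\chi+2\rho(\eps)\omega_0}{1+a_1}+dd^cP\Big]\Big)(x_0)\ \ge\ f\Big(\lambda\Big[\tfrac{\chi+\rho(\eps)\omega_0}{1+a_1}+dd^cQ\Big]\Big)(x_0)\ \ge\ e^{G(x_0,Q(x_0))}-\rho_1(\eps,a_1).
\]

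It remains to replace $Q(x_0)$ by $\psi_{\eps,a_1,k}(x_0)=P(x_0)$ on the right-hand side. Note $Q(x_0)-P(x_0)=\langle p_k,x_0\rangle+\delta_k|x_0|^2$, which tends to $0$ uniformly for $x_0\in U_0$ as $k\to\infty$, because $p_k\to 0$ and $\delta_k\to 0$ while $U_0$ is bounded. Since $P(x_0)=\psi_{\eps,a_1,k}(x_0)$ lies in the bounded range of $\frac{(\varphi_1)^{\eps}}{1+a_1}$, and $G$ is uniformly continuous on the compact set $\overline{U_0}\times[-\Lambda,\Lambda]$ (for $\Lambda$ an upper bound on $\|\frac{(\varphi_1)^{\eps}}{1+a_1}\|_{L^\infty}$), for $k$ large enough we have $|e^{G(x_0,Q(x_0))}-e^{G(x_0,P(x_0))}|\le\rho_1(\eps,a_1)$, uniformly in $x_0\in U_0$ and in admissible $P$. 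Combining with the previous display yields the required
\[
f\Big(\lambda\Big[\tfrac{\chi+2\rho(\eps)\omega_0}{1+a_1}+dd^cP\Big]\Big)(x_0)\ \ge\ e^{G(x_0,\psi_{\eps,a_1,k}(x_0))}-2\rho_1(\eps,a_1).
\]

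\textbf{Main obstacle.} No single step is particularly subtle. The only bookkeeping point is that all the ``for $k$ large'' conditions must hold simultaneously, but these conditions are of the uniform form ``$\delta_k$ and $|p_k|$ less than a threshold depending on $(\eps,a_1)$'', so choosing $k$ beyond some index $k_0(\eps,a_1)$ suffices. One also has to verify at the outset that $U_0$ can be taken small enough that the control $dd^c|z|^2\le C_0\omega_0$ holds; this follows from the continuity of the coefficients of $\omega_0$ in normal coordinates.
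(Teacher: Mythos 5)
Your proposal is correct, and it follows the route the paper intends (the paper defers this proof to Lemma 5.17 of \cite{CX}): transfer the test function by $Q=P+\langle p_k,z\rangle+\delta_k|z|^2$, invoke Proposition \ref{p4.3}(1), absorb $\delta_k\,dd^c|z|^2$ into the extra $\frac{\rho(\eps)}{1+a_1}\omega_0$ using $\Gamma+\bar\Gamma_n\subset\Gamma$ and the monotonicity of $f$, and absorb the shift $Q(x_0)-P(x_0)\to 0$ into the extra $\rho_1(\eps,a_1)$ by uniform continuity of $G$ on a fixed compact set. All thresholds on $k$ depend only on the fixed $(\eps,a_1)$ and $U_0$, so the argument is complete.
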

The proof of Lemma \ref{l5.4N} and Lemma \ref{l4.14N} can be found in Lemma 5.16 and 5.17 in \cite{CX}.  The argument that Lemma \ref{l5.4N} and \ref{l4.14N} can be combined to prove Theorem \ref{t4.1} in the general case can be found in \cite{CX} (after Lemma 5.17).

\subsection{when the right-hand side does not depend on $\varphi$}
In this subsection,  we will assume that the right hand side $G$ depends only on $x$.  In this case,  one can only hope to solve $F(\chi+dd^c\varphi)=e^{G+c}$,  for some constant $c$.
Even though we can show that there is a unique $c\in \bR$ that makes this equation solvable in the viscosity sense,  we still don't have a good enough understanding of this constant,  which is the main hurdle to a proof of uniqueness of viscosity solutions in this case.
Let us start with observing the monotonicity of the constant,  in terms of the right hand side.  More specifically,  we have:
\begin{prop}\label{p4.15}
Let $G_1,\,G_2\in C(M)$ with $G_1\ge G_2$.  Assume that there exist $c_i\in \bR,\,\varphi_i\in C(M),\,i=1,\,2$,  that solve $F(\chi+dd^c\varphi_i)=e^{G_i+c_i}$ in the viscosity sense.  Then one has $c_1\le c_2$.
\end{prop}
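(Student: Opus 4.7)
The plan is to argue by contradiction, following the structure of the proof of Theorem \ref{t4.1}, with the role previously played by strict monotonicity of $G$ in $\varphi$ now taken over by the strict positivity of the gap between the two right-hand sides. Suppose $c_1 > c_2$. Since $G_1 \ge G_2$ on $M$ and $M$ is compact,
\begin{equation*}
e^{G_1 + c_1} - e^{G_2 + c_2} \ge e^{G_2}(e^{c_1} - e^{c_2}) \ge \delta_0 > 0,\qquad \delta_0 := (e^{c_1} - e^{c_2}) \min_M e^{G_2}.
\end{equation*}
Because the equation has no $\varphi$-dependence, replacing $\varphi_2$ by $\varphi_2 + k$ for a suitable constant $k$ preserves the viscosity solution property, so we may arrange $\max_M(\varphi_1 - \varphi_2) = 0$, attained at some $x_* \in M$. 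This uniform gap $\delta_0$ will drive the contradiction.

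Next, apply Proposition \ref{p4.3} to produce $\varphi_1^\eps/(1+a_1)$ as a viscosity subsolution with right-hand side $e^{G_1 + c_1} - \rho_1(\eps, a_1)$, and $(\varphi_2)_\eps/(1-a_2)$ as a viscosity supersolution with right-hand side $e^{G_2 + c_2} + \rho_2(\eps, a_2)$. Set $\kappa_{\eps, a_1, a_2} := \max_M\bigl(\varphi_1^\eps/(1+a_1) - (\varphi_2)_\eps/(1-a_2)\bigr)$, attained at some $x_{\eps, a_1, a_2}$. Along a subsequence, $\kappa_{\eps, a_1, a_2} \to 0$ and $x_{\eps, a_1, a_2} \to x_*$ as $\eps, a_1, a_2 \to 0^+$. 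Choose $\eps, a_1, a_2$ small enough that the technical hypothesis on $c_*$ in Proposition \ref{p4.12} is satisfied. Then (modulo the punctual differentiability issue discussed below) Proposition \ref{p4.12} gives
\begin{equation*}
F\Bigl(\tfrac{\chi + \rho(\eps)\omega_0}{1+a_1} + dd^c \tfrac{\varphi_1^\eps}{1+a_1}\Bigr)(x_{\eps, a_1, a_2}) \le F\Bigl(\tfrac{\chi - \rho(\eps)\omega_0}{1-a_2} + dd^c \tfrac{(\varphi_2)_\eps}{1-a_2}\Bigr)(x_{\eps, a_1, a_2}).
\end{equation*}
Sandwiching this between the subsolution and supersolution inequalities at $x_{\eps, a_1, a_2}$ yields
\begin{equation*}
e^{G_1(x_{\eps,a_1,a_2}) + c_1} - \rho_1(\eps, a_1) \le e^{G_2(x_{\eps,a_1,a_2}) + c_2} + \rho_2(\eps, a_2).
\end{equation*}
Letting $\eps, a_1, a_2 \to 0^+$ and using continuity of $G_1, G_2$ produces $e^{G_1(x_*) + c_1} \le e^{G_2(x_*) + c_2}$, contradicting the gap $\ge \delta_0$.

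The main obstacle, exactly as in the proof of Theorem \ref{t4.1}, is that the sup/inf convolutions need not be punctually second-order differentiable at the maximizer $x_{\eps, a_1, a_2}$, so Proposition \ref{p4.12} cannot be invoked literally. This is handled by the Crandall--Ishii--Lions perturbation of Lemmas \ref{l5.4N} and \ref{l4.14N}: one replaces $\varphi_1^\eps/(1+a_1)$ near $x_*$ by $\psi_{\eps, a_1, k}(z) = \varphi_1^\eps(z)/(1+a_1) - \langle p_k, z\rangle - \delta_k |z|^2$, so that the maximizer of the difference shifts to a nearby point $x_k$ at which both convolutions are punctually twice differentiable, at the cost only of doubling the constants $\rho(\eps)$ and $\rho_1(\eps, a_1)$ in the subsolution inequality. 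Since $p_k, \delta_k$ and all the $\rho$'s tend to zero, the limiting contradiction above is unaffected, completing the proof.
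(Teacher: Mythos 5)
Your proposal is correct and follows essentially the same route as the paper: assume $c_1>c_2$, extract the uniform gap $\delta_0=(e^{c_1}-e^{c_2})\min_M e^{G_2}>0$, regularize via the sup/inf convolutions of Proposition \ref{p4.3}, compare at the maximizer of the difference using Proposition \ref{p4.12}, and handle the punctual second-order differentiability via the Crandall--Ishii--Lions perturbation of Lemmas \ref{l5.4N} and \ref{l4.14N} — exactly the scheme the paper invokes by referring to the proof of Theorem \ref{t4.1} and to Proposition 5.6 of \cite{CX}. The only cosmetic remark is that your normalization $\max_M(\varphi_1-\varphi_2)=0$ and the convergence $x_{\eps,a_1,a_2}\to x_*$ are unnecessary, since the gap $\delta_0$ is uniform on $M$ and the sandwich inequality at $x_{\eps,a_1,a_2}$ already yields the contradiction once $\rho_1+\rho_2<\delta_0$.
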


Heuristically if this is false,  namely $c_1>c_2$.  This would imply that $e^{G_1+c_1}>e^{G_2+c_2}+\delta_0$ on $M$,  for some $\delta_0>0$.  This would lead to a contradiction if one evaluates at $x_0$,  where $\varphi_1-\varphi_2$ achieves maximum.  At this point,  one would have $\chi+dd^c\varphi_1\le \chi+dd^c\varphi_2$,  so that $F(\chi+dd^c\varphi_1)\le F(\chi+dd^c\varphi_2)$ at $x_0$.  In order to prove this rigorously, we can regularize $\varphi_1$ and $\varphi_2$ similar to the proof of the Theorem \ref{t4.1}.  
The detailed argument is identical to Proposition 5.6 of \cite{CX}.

A direct consequence of the above proposition is that there is a unique constant $c$ that allows for a viscosity solutions:
\begin{cor}
Let $G\in C(M)$,  there is at most one constant $c\in \bR$,  such that $F(\chi+dd^c\varphi)=e^{G+c}$ is solvable in the viscosity sense.
\end{cor}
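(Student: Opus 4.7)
The plan is essentially a one-line deduction from Proposition \ref{p4.15}, so the proposal is short. Suppose for contradiction that there exist two distinct constants $c_1, c_2 \in \bR$ with $c_1 \neq c_2$, and viscosity solutions $\varphi_1, \varphi_2 \in C(M)$ solving $F(\chi+dd^c\varphi_i) = e^{G+c_i}$ for $i=1,2$. Then I would apply Proposition \ref{p4.15} twice: once with the choice $G_1 = G_2 = G$ (and the $c_1, c_2$ as given), which yields $c_1 \leq c_2$ since trivially $G \geq G$; and once with the roles of the indices swapped, which yields $c_2 \leq c_1$. Together these force $c_1 = c_2$, contradicting the assumption.

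There is essentially no obstacle here because all of the analytic content — the regularization of viscosity solutions via sup/inf convolution, the touching-from-above/below arguments at a maximum point of $\varphi_1 - \varphi_2$, and the use of the subsolution cone $\chi - c_*\omega_0$ to stay inside $\Gamma$ — has already been encapsulated in Proposition \ref{p4.15}. The only conceptual subtlety worth emphasizing in the write-up is that the proposition applies symmetrically: one does \emph{not} need strict monotonicity of $G_1$ over $G_2$ (only $G_1 \geq G_2$), so nothing prevents us from taking $G_1 = G_2$ and then reversing the inequality.

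Thus the proof reduces to a single sentence: the hypothesis $G_1 \geq G_2$ in Proposition \ref{p4.15} is satisfied with equality in either direction when $G_1 = G_2 = G$, hence the map sending each admissible constant $c$ to its viscosity solution is defined on at most one value of $c$.
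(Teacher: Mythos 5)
Your proposal is correct and matches the paper, which states this corollary as a direct consequence of Proposition \ref{p4.15}: taking $G_1=G_2=G$ and applying the monotonicity in both directions gives $c_1\le c_2$ and $c_2\le c_1$, hence $c_1=c_2$. Nothing further is needed.
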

Because of this result,  we may simply denote this constant to be $c(G)$.  Proposition \ref{p4.15} then implies that $c(G_1)\le c(G_2)$ whenever $G_1\ge G_2$.

The question that is of crucial importance to us is the following:
\begin{q}\label{q4.17}
Assume that $G_1\ge G_2$,  and $G_1\neq G_2$.  We define $c(G)$ using (\ref{1.6NNN}).  Do we actually have $c(G_1)<c(G_2)$?
\end{q}
In order to justify its importance,  we are going to show the uniqueness of viscosity solutions,  assuming we have an affirmative answer to Question \ref{q4.17}.  More precisely:
\begin{thm}\label{t4.2}
Let $G\in C(M)$.  Assume that there exists a strict subsolution in the viscosity sense as defined by Definition \ref{d2.2}.  Assume also that for any $G'\in C(M),\,G'\le G$ and $G'\neq G$,  one has $c(G')>c(G)$ where $c(G')$ and $c(G)$ are defined by (\ref{1.6NNN}).  Then there is at most one viscosity solution to $F(\chi+dd^c\varphi)=e^{G+c}$.
\end{thm}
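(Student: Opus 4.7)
My plan is by contradiction. Suppose $\varphi_1,\varphi_2\in C(M)$ are two viscosity solutions of $F(\chi+dd^c\varphi)=e^{G+c}$ with $c=c(G)$, and that $\varphi_1-\varphi_2$ is not constant. Since the equation is invariant under adding a constant to $\varphi$, I first translate $\varphi_2$ so that
\[
\max_M(\varphi_1-\varphi_2)>0>\min_M(\varphi_1-\varphi_2),
\]
forcing both $U^+:=\{\varphi_1>\varphi_2\}$ and $U^-:=\{\varphi_2>\varphi_1\}$ to be nonempty open sets. My goal is to construct $G'\in C(M)$ with $G'\le G$ on $M$, $G'\not\equiv G$, and a viscosity solution $\psi$ of $F(\chi+dd^c\psi)=e^{G'+c}$. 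Once such a pair $(G',\psi)$ is in hand, the corollary after Proposition \ref{p4.15} (uniqueness of the solvability constant) forces $c(G')=c=c(G)$, contradicting the strict-monotonicity hypothesis (the affirmative answer to Question \ref{q4.17}) applied to $G'\le G$ with $G'\not\equiv G$.

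The natural candidate for the supersolution side is $\underline{\psi}:=\min(\varphi_1,\varphi_2)\in C(M)$. Since the minimum of viscosity supersolutions is a viscosity supersolution, $\underline{\psi}$ is a viscosity supersolution of $F=e^{G+c}$ on all of $M$; on the open set $U^+\cup U^-$ it locally equals one of the $\varphi_i$, so it is in fact a genuine viscosity solution there. On the interface $\Sigma:=\{\varphi_1=\varphi_2\}$ the minimum develops a downward crease, and I want to turn this qualitative fact into a quantitative strict-supersolution statement. To that end I regularize by inf convolution in the sense of Section 5: set $\underline{\psi}_\eps$ to be the inf convolution defined by (\ref{5.2}) applied to $\underline{\psi}$. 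This is semi-concave, punctually twice differentiable a.e., and by Proposition \ref{p4.3} (with the $G$-monotonicity parameters set to zero, since $G$ does not depend on $\varphi$) satisfies $F\bigl(\chi-\rho(\eps)\omega_0+dd^c\underline{\psi}_\eps\bigr)\le e^{G+c}+\rho_2(\eps)$ in the viscosity sense. Because $\varphi_1-\varphi_2$ takes both signs, there is an interior point $x_\ast\in\Sigma$ across which the two solutions have differing one-sided differentials; a Hessian computation at almost every point of $\underline{\psi}_\eps$ in a neighborhood of $x_\ast$ then isolates a nonempty open set $\Omega\subset M$ and a $\delta_\ast>0$ with $F(\chi+dd^c\underline{\psi}_\eps)\le e^{G+c}-\delta_\ast$ a.e. on $\Omega$, provided $\eps$ is chosen small enough that $\rho_2(\eps)$ absorbs into $\delta_\ast/2$.

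With $\Omega$ and $\delta_\ast$ secured, I define $G'\in C(M)$ to coincide with $G$ off $\Omega$ and to be $\le G-\delta_\ast/2$ on $\Omega$ in such a way that $\underline{\psi}_\eps$ becomes a viscosity solution of $F(\chi+dd^c\psi)=e^{G'+c}$. The continuity of $G'$ comes from mollification, while the local re-solvability of the modified problem near $x_\ast$ is handled by the Dirichlet theory of Lemma \ref{l3.6N} followed by the Richberg-type patching of Section 3. The main obstacle of the whole plan is exactly this last step: converting the almost-everywhere strict-supersolution gain near the crease $\Sigma$ into a genuine viscosity-sense strict inequality, producing a continuous $G'\le G$ with $G'\not\equiv G$, and pairing it with an actual viscosity solution on all of $M$. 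The argument parallels the uniqueness proof of \cite{CX} with only cosmetic changes, since the only place in that argument which used the determinant domination / homogeneity of $f$ is precisely the place that is substituted in the current setting by Lemma \ref{lem 2.3}, as discussed at the start of Section 5.
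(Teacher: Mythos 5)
There is a genuine gap, and it sits exactly where you flag ``the main obstacle'': the pair $(G',\psi)$ you need cannot be manufactured from $\min(\varphi_1,\varphi_2)$ or its inf convolution. The minimum of two viscosity solutions is a viscosity \emph{supersolution}, but it is in general \emph{not} a viscosity subsolution: at a crease point where the gradients of $\varphi_1,\varphi_2$ differ, one can touch $\min(\varphi_1,\varphi_2)$ from above by $C^2$ functions $P$ with arbitrarily negative transversal Hessian (think of $-A|x_1|^2$ touching $-|x_1|$ from above at $0$), so $\lambda[\chi+dd^cP]$ exits $\Gamma$ and the subsolution requirement of the paper's Definition of viscosity solution fails \emph{for every} choice of right-hand side; lowering $G$ to $G'$ on a neighborhood of the crease cannot repair this, since the failure is the cone condition, not the size of $e^{G'+c}$. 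Passing to the inf convolution makes this worse, not better: $\underline{\psi}_\eps$ is semiconcave, and near the crease its complex Hessian genuinely leaves $\bar\Gamma$ on a set of positive measure, so it is not $\Gamma$-subharmonic in the viscosity sense and hence is not a viscosity solution of $F=e^{G'+c}$ for any continuous $G'$. The intermediate claim that a.e.\ Hessian information yields a uniform gap $F\le e^{G+c}-\delta_*$ on an open set independent of $\eps$ is also unsubstantiated (the crease has measure zero and the gain is smeared only over an $\eps$-neighborhood), and the appeal to Lemma \ref{l3.6N} plus Richberg patching cannot close this: that machinery solves Dirichlet problems for the regularized operator $g_{\eps_1}$ and patches \emph{subsolutions}; it does not glue local data into a global viscosity solution on the closed manifold, which in any case requires the constant to be exactly the right one. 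Note also that your argument never uses the hypothesis that a strict viscosity subsolution exists, which should be a warning sign: that hypothesis is in the statement precisely because one must \emph{solve} a modified equation.

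The paper's proof goes the other way around: instead of trying to turn the given solutions into a solution of a lowered equation, it lowers the right-hand side away from the minimum set $E$ of $\varphi_2-\varphi_1$ (setting $e^{\tilde G}=e^G$ on $E_{\delta/2}$ and $e^{\tilde G}=e^G-\eps_0$ off $E_\delta$), solves $F(\chi+dd^c\eta)=e^{\tilde G+\tilde c}$ globally by the existence theory of Sections 3--4 (this is where the strict subsolution enters, after checking it remains a strict subsolution for $(\tilde G,\tilde c)$ via the stability of the constant), invokes the monotonicity hypothesis to get $\tilde c>c$, and then evaluates at a minimum point of $\varphi_2-((1-r)\varphi_1+r\eta)$: concavity of $F$ gives $e^{G+c}\ge (1-r)e^{G+c}+re^{\tilde G+\tilde c}$ there, while for small $r$ that minimum point lies in $E_{\delta/2}$ where $\tilde G=G$, yielding $e^{G+c}\ge e^{G+\tilde c}$, a contradiction. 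I would suggest reworking your argument along these lines: the function you perturb is the \emph{right-hand side}, and the new solution $\eta$ comes from the existence theorem, not from $\varphi_1,\varphi_2$ themselves.
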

\begin{proof}
Let $\varphi_1$ and $\varphi_2$ be two viscosity solutions to the equation.  Denote $E=\{x\in M:\varphi_2(x)-\varphi_1(x)=\min_M(\varphi_2-\varphi_1)\}$.  Clearly $E$ is a compact subset of $M$ and we will be done if we can show $E=M$.  Assume otherwise,  we can take $\delta>0$ small enough,  such that $E_{\delta}\neq M$,  where $E_{\delta}$ denotes the $\delta$-neighborhood of $E$.

Now we can define $\tilde{G}\in C(M)$ as follows: $e^{\tilde{G}}=e^G$ on $E_{\frac{\delta}{2}}$,  $e^{\tilde{G}}=e^G-\eps_0$ for some small positive constant $\eps$ outside $E_{\delta}$,  and $e^G -\eps_0 \le e^{\tilde{G}}\le e^G$ on $M$.  Let $\eta$ be a viscosity solution to:
\begin{equation}\label{5.8N}
F(\chi+dd^c\eta)=e^{\tilde{G}+\tilde{c}},\,\,\lambda[\chi+dd^c\eta]\in \Gamma,\,\,\sup_M\eta=0.
\end{equation}
Here $\tilde{c}$ is defined by (\ref{1.6NNN}).  We will have to justify this by showing that 

From Lemma \ref{l4.2NNew},  we see that with $\eps_0>0$ choose small enough,  $c$ will be close to $\tilde{c}$,  and we already knew $G$ is close to $\tilde{G}$ up to an error of $\eps_0$.  Since we already have a viscosity solution $\underline{u}$ that satisfies the following in the viscosity sense: for some $\delta_0>0$,
\begin{equation}
f_{\infty}\big(\lambda[\chi+dd^c\underline{u}]\big)\ge e^{G+c}+\delta_0,\,\,\,\lambda[\chi+dd^c\underline{u}]\in \Gamma_{\infty}.
\end{equation} 
With $\eps_0>0$ small enough,  this easily translates to:
\begin{equation}
f_{\infty}\big(\lambda[\chi+dd^c\underline{u}]\big)\ge e^{\tilde{G}+\tilde{c}}+\frac{1}{2}\delta_0,\,\,\,\lambda[\chi+dd^c\underline{u}]\in \Gamma_{\infty}.
\end{equation}
Using this,  we see that the equation (\ref{5.8N}) can be solved in the viscosity sense.
 From the assumption,  we know that $\tilde{c}>c$.  Let $0<r<1$,  we consider the minimum of $\varphi_2-((1-r)\varphi_1+r\eta)$ and assume that it is achieved at $x_{\delta,r}$. Using a perturbation argument as before, we can assume that $\varphi_1$, $\varphi_2$ and $\eta$ are puntually second order differentiable at $x_{\delta,r}$ without loss of generality (the full rigorous argument follows the proof of Theorem 5.2 in \cite{CX}).  Evaluating at $x_{\delta,r}$,  we would have:
\begin{equation*}
f\big(\lambda[\chi+dd^c\varphi_2]\big)\ge f\big(\lambda[\chi+dd^c(1-r)\varphi_1+r\eta]\big)\ge (1-r)f(\lambda[\chi+dd^c\varphi_1])+rf(\lambda[\chi+dd^c\eta]).
\end{equation*}
This would mean that:
\begin{equation}\label{4.24}
e^{G+c}(x_{\delta,r})\ge (1-r)e^{G+c}(x_{\delta,r})+re^{\tilde{G}+\tilde{c}}(x_{\delta,r}).
\end{equation}
On the other hand,  for fixed $\delta$,  the minimum of $\varphi_2-((1-r)\varphi_1+r\eta)$ can only be achieved in $E_{\frac{\delta}{2}}$,  as long as $r$ is small enough,  but then one would have $e^{\tilde{G}}(x_{\delta,r})=e^{G}(x_{\delta,r})$,  and this is inconsistent with (\ref{4.24}). 
\end{proof}

\section{Acknowlegement}
This work is partially supported by Simons Foundation Award with ID:605796.

\end{document}